\documentclass[a4paper,11pt]{article}
\usepackage[utf8]{inputenc}
\usepackage{mathtools}
\usepackage{amsmath,amsfonts}
\usepackage{amsthm}
\usepackage{amssymb}
\usepackage{multirow}
\usepackage{latexsym}
\usepackage{graphicx}
\usepackage{subcaption}
\usepackage[colorlinks=true,linkcolor=blue,citecolor=blue]{hyperref}{}
\usepackage{xcolor}
\usepackage{algorithm}
\usepackage{algpseudocode}

\usepackage[round]{natbib}
\usepackage[margin=1in]{geometry}

\DeclareFontFamily{U}{mathx}{}
\DeclareFontShape{U}{mathx}{m}{n}{<-> mathx10}{}
\DeclareSymbolFont{mathx}{U}{mathx}{m}{n}
\DeclareMathAccent{\widehat}{0}{mathx}{"70}
\DeclareMathAccent{\widecheck}{0}{mathx}{"71}

\newtheorem{theorem}{Theorem}[section]
\newtheorem{lemma}[theorem]{Lemma}
\newtheorem{corollary}[theorem]{Corollary}
\newtheorem{definition}[theorem]{Definition}

\newtheorem{remark}[theorem]{Remark}

\newcommand{\argmin}[1]{\underset{#1}{\operatorname{argmin}}}

\allowdisplaybreaks

\usepackage[parfill]{parskip}

\usepackage{bbm}

\newcommand{\IR}{\mathbb{R}}
\newcommand{\IC}{\mathbb{C}}
\newcommand{\IE}{\mathbb{E}}
\newcommand{\IP}{\mathbb{P}}
\newcommand{\Ind}{\mathbbm{1}}
\newcommand{\IN}{\mathbb{N}}

\newcommand{\cR}{{\cal R}}
\newcommand{\LS}{\textrm{LS}}

\renewcommand{\tilde}{\widetilde}
\renewcommand{\epsilon}{\varepsilon}

\renewcommand{\hat}{\widehat}
\renewcommand{\check}{\widecheck}

\newcommand{\bdot}{\boldsymbol{\cdot}}

\begin{document}
\title{Common Drivers in Sparsely Interacting Hawkes Processes}
\date{\today}
\author{Alexander~Kreiss \\ Leipzig University, Institute of Mathematics \\ and \\ Enno Mammen \\ Heidelberg University, Institute of Applied Mathematics \\ and \\ Wolfgang Polonik \\ UC Davis, Department of Statistics}
\maketitle

\begin{abstract} 
We study a multivariate Hawkes process as a model for time-continuous relational event networks. The model does not assume the network to be known, it includes covariates, and it allows for both common drivers, parameters common to all the actors in the network, and also local parameters specific for each actor. We derive rates of convergence for all of the model parameters when both the number of actors and the time horizon tends to infinity. To prevent an exploding network,  sparseness is assumed. We also discuss numerical aspects.
\end{abstract}

\section{Introduction}
In this work we model and analyze a time-continuous relational network.  We develop theory in an asymptotic framework where the number of actors (nodes) and the time horizon converge to infinity. Our model contains local parameters that are specific to each actor and global parameters that describe the development of the whole network. The network structure is estimated by high-dimensional parameters which are assumed to fulfill sparsity constraints. The theory is complicated by the fact that the parameters can be estimated with different rates of convergence. In particular, if the major focus lies on global characteristics of the network it is important to estimate global parameters without having bias terms that arise from the estimation of local individual parameters which can be estimated only with slower rates of convergence. We allow that the network dynamics depend on covariate processes. This is a further tool for understanding the global structure of the network but it makes the network process nonstationary which further complicates the mathematical analysis. 

The guiding example of our study is a social network with a follower/friendship structure, where a sender is affecting a set of receivers, her neighbors, consisting of a subset of the actors in the network paying attention to the sender. The assumption then is that once an event has been sent, the receivers themselves are getting active (think of re-tweeting). More formally, the sender is causing an increase in the activity rate of the receivers. To model this exhibiting dynamic structure we use a certain Hawkes model \citep{HO71}. Such ideas have also been pursued by \citet{CZG22}. In contrast to related work, we allow for common drivers or (global) covariates along with additional node-wise (local) covariates to impact the current event rates of the actors. We argue that this is interesting from a practical perspective for two reasons. First, due to our model, the appearance of events at similar time points might no longer be due to the network structure but could be caused by a common driver. This is similar in nature to the causal inference literature where a common driver would be a confounding factor. In other words, inclusion of such covariates can facilitate a causal interpretation of the network. Second, the covariate effects by themselves might be of interest in a practical application. For instance, when analyzing social media data, one might wonder about whether the age impacts the behavior of the users. However, since it is not uncommon that related actors in a social network are of similar age and that their behavior is influenced by their neighbors in the network, we have to take the network effect into account to extract the actual effect of age. Furthermore, we would like to stress that our approach does not assume the network structure to be known, i.e. the set of receivers for each sender needs to be estimated simultaneously with other parameters in the model. \\[-9pt]

{\em The model:} Formally, our model is as follows. For any actor $i\in\{1,...,n\}$ let $t_i^{(1)},t_i^{(2)},...$ be the random time points of events emanating from $i$ in the observation period $[0,T]$, and for each actor define a counting process $N_{n,i} : (-\infty,T] \to \IN_0 = \{0,1,2,\ldots\}$ via
$$N_{n,i}(t):=\sum_{j=1}^{\infty}\Ind(t_i^{(j)}\leq t),$$ i.e., $N_{n,i}(t)$ denotes the number of events spreading from actor $i$ up to and including time $t$ (where we assume that there were no events before time $0$). 
We suppose that the multivariate counting process $N_n:=(N_{n,1},....,N_{n,n})$ (cf. \citet{ABGK93}) forms a multivariate Hawkes process, where the intensity function $\lambda_{n,i}$ of $N_{n,i}$ has the following form: First, for observed time-dependent covariates $X_{n,i}:(-\infty,T]\rightarrow\IR^q$ with $X_{n,i}(t) = 0$ for $t < 0$, the intensity functions of the counting processes $N_{n,i}(t)$ are modeled as 
\begin{align}
\label{eq:Hawkes_dynamics}
\lambda_{n,i}(t)&:=\alpha^*_{n,i}\cdot\nu_0(X_{n,i}(t);\beta^*_n)+\sum_{j=1}^nC^*_{n,ij}\int_{-\infty}^{t-}g(t-r;\gamma_n^*)dN_{n,j}(r), 
\end{align}
where $\alpha_n^* := (\alpha_{n,1}^*,\ldots,\alpha_{n,n}^*)\in[0,\infty)^n$, $C_n^* = (C_{n,ij}^*)_{ij} \in[0,\infty)^{n\times n}$, and $\theta_n^*:=(\beta_n^*,\gamma_n^*)\in\IR^{p+1}$ denote the true parameters that we will aim to estimate. The functions $\nu_0:\IR^q\times\IR^p\to[0,\infty)$ and $g:[0,\infty)\times\IR\to[0,\infty)$ are known functions, where standard choices are $\nu_0(x;\beta)=\exp(x^T\beta)$ and $g(u;\gamma)=\exp(-\gamma\cdot u)$. The matrix $C_n^*$ is considered an adjacency matrix of a weighted network $G_n = (V_n,E_n)$ with $V_n = \{1,\ldots,n\}$ (the actors) and $E_n \subset V_n \times V_n$. Thus, $C_{n,ij}^* = 0$ means $(i,j) \notin E_n.$
\begin{figure}[h]
    \centering
    \includegraphics[height=4cm, width=4cm]{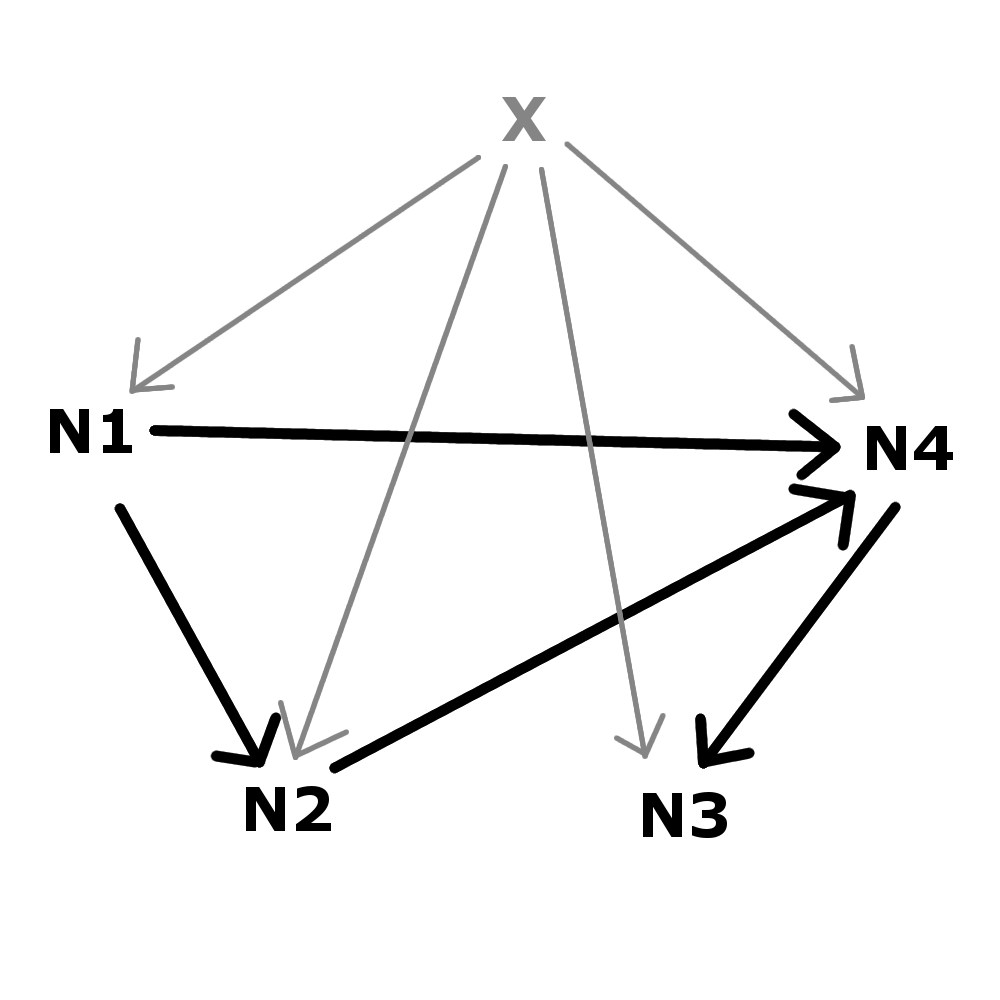}
    \caption{{\small Example of causal DAG for a multivariate process $N=(N_1,...,N_4)$ and confounder $X$.}}
    \label{fig:DAG}
\end{figure}
As we will not assume the covariate processes to be stationary, the Hawkes processes considered here are in general not stationary.\\[-9pt] 

{\em Contributions:} The contributions of this paper can be summarized as follows:
    \begin{itemize}
    \item We consider a complex time-continuous model that includes possibly non-stationary covariate processes and both global and local parameters. Also the underlying network is not assumed known.
    \item We provide a complete asymptotic analysis with both $n$ and $T$ tending to infinity, showing that the global model parameters can the estimated with a faster rate of convergence as compared to the local parameters. While the latter is not unexpected, the challenge is the bias incurred on the estimates of the local parameters by the penalization approach. This bias also effects the estimation of the global parameters, and thus a de-biasing procedure needs to be considered.
    \item We describe practical benefits of the model and thoroughly discuss numerical aspects, and provide an R-package.
    \end{itemize}
{\em Discussion of the model:} The general intuition underlying our model is that the larger $\lambda_{n,i}(t)$, the higher is the probability of an event happening in a small neighborhood around time $t$. Hence, an event in process $N_{n,j}$ at a given time $t$ (corresponding to an action of actor $j$) will increase the probability of an action of actor $i$ at time $t$ instantaneously by an amount depending on $C_{n,ij}^*$. The increase declines over time depending on $\gamma$, where the exact rate of decay is determined by $\gamma_n^*$. The mutual excitation among the vertices is hence very explicit in the model and therefore resembles a linear structural causal model as discussed, e.g., in \citet{P00}. Figure \ref{fig:DAG} shows an example of the causal graph induced by a multivariate Hawkes process in which events in $N_1$ have impact on $N_2$ and $N_4$, events in $N_2$ have an impact on $N_4$ and so forth. We have argued that such a causal interpretation of Hawkes processes is useful for interpreting the network, but it is more plausible if common drivers/confounding factors are included in the model as in Figure \ref{fig:DAG}. Ignoring this confounder would prohibit a causal interpretation of the Hawkes structure. In this paper we provide a first step on how to include such confounders in the model. Moreover, since the covariates are actor specific they can also capture a group dependence. Finally, $\alpha_n^*$ allows for heterogeneity in the actors. Hence, the weights $C_{n,ij}^*$ are really only used to model cross-excitation behavior. In contrast to a non-parametric point of view as in \citet{CZG22}, we suggest a parametric approach (in the baseline) which provides a parsimonious model. A non-parametric model might, e.g., set all weights $C_{n,ij}^*$ to zero and use only an actor specific non-parametric baseline to explain the interactions. Moreover, the parametric covariate specific form allows for predictions in settings with different covariates.

Returning to our guiding example of twitter-like social media networks: Suppose our interest is in understanding the structure of information spread originating from one root user $R_0$, and we observe the re-tweets of a tweet of $R_0$. Let $X_{n,i}(t)=X(t)$ for all users other than $R_0$, where $X(t)$ jumps to a higher level if $R_0$ generates an original post and from there it decays. According to our model, the intensity of user $i$ increases once $R_0$ sends a post unless $\alpha_{n,i}^*=0$. Hence, $\alpha_n^*$ may be interpreted as connectedness to the root user $R_0$. If $\alpha_{n,i}^*=0$ for a user $i$, we find that the row $C_{n,i{\boldsymbol \cdot}}^*$ indicates those senders whose retweets can reach $i$.\\[-9pt]

{\em Discussion of related literature. }Recently multivariate counting processes with covariates have been used in applied work, e.g., to model product sales \citep{PMR24}, further examples for applications of multivariate Hawkes processes include \cite{RXSCYvH17} who forecast the popularity of youtube videos and \cite{ZEHRL15} who predict the number of re-tweets of a given tweet. 

Recently, some theoretical work has been done on multivariate Hawkes processes  in the 
literature, see e.g. \citet{HRBR15,YSB21, MM23}. 
The papers \citet{BBGM20,CWS17}
considered high-dimensional processes with a constant baseline intensity, and 
\citet{CZG22} studied multivariate Hawkes process models in a high-dimensional setting, where the transferring functions are estimated non-parametrically using B-spline approximations along with a group penalty. In these papers the Hawkes processes for each individual, i.e. the components of the multivariate Hawkes process, are estimated separately. In contrast to our model, this is possible because these models do not contain global parameters and no covariate process. 
In \citet{WKS24} asymptotic theory is developed that allows for the implementation of tests for high-dimensional Hawkes processes.
\citet{WS21} proposes a deconfounding procedure to
estimate  networks with only a subset of the nodes being observed.
Bayesian estimation  of finite dimensional Hawkes processes has been studied in
 \cite{SRR24}. 
In
\citet{FXXZG23} 
latent group memberships of the individuals are assumed which allow dimension reduction by clustering methods. Instead of imposing sparsity on the network, the matrix of excitation kernels can also be assumed to be of low rank as in \cite{LSK17} in order to reduce the number of parameters. In \citet{TL23} the coefficients of the transferring functions are interpreted as a three-way tensor and shrinked by
low-rank, sparsity, and subgroup constraints. 
Further examples from this strand of research are \citet{ZRWM21} who study the reconstruction of the influence network in a time discrete domain depending on contexts of the events. 
\\[-9pt]

The remaining sections are organized as follows. Our estimators are introduced and discussed in section \ref{subsec:estimation}. Our theoretical results are presented in section~\ref{sec:results}, where stage 1-3 of the estimation procedure is respectively analyzed in Sections~\ref{subsec:single_consistency}-\ref{subsec:consistency}. In Section~\ref{sec:empirical_results}, we present simulation results and discuss the implementation. Section \ref{sec:conclusion} concludes the paper.

\section{Estimation}
\label{subsec:estimation}

\textbf{Estimation Task:} \emph{Fix $\nu_0$ and $g$. Make inference about the parameters of the model formulated in \eqref{eq:Hawkes_dynamics} using observations of $N_{n,i}$ and $X_{n,i}$ for $i=1,...,n$. We consider the asymptotic regime where $n\to\infty$ and $T\to\infty$ simultaneously.}

We will use a least squares approach. To facilitate the estimation of the unknown parameters, we will penalize the $L_1$-norms of the rows of $C_{n}^*$ (see below for a discussion). The global parameters $\beta$ and $\gamma$ are not penalized, which results in a partially penalized estimation problem. So our rationale is that we prefer models that are primarily explained through the covariates and use effects between the processes only if necessary. The presence of covariates confounds the estimation process which we resolve using a multi-stage procedure.
\begin{remark}
\label{rem:penalties}
Penalizing the estimators has practical and theoretical motivations as well as consequences.
\begin{itemize}
\item Penalizing estimates of the rows of $C_n^*$ is essentially saying that actor $i$ only has a limited number of actors she is following. That is, we belief in a sparse information flow.
\item In order to obtain a non-exploding Hawkes process, it is required that the row sums of $C_n^*$ are bounded (this is explicit in Lemma \ref{lem:goodN}). A penalty is a convenient way to enforce this constraint.
\item One might believe that many actors are not acting on their own initiative but are only reacting to others. This could be reflected by promoting sparsity in the estimation of $\alpha_n^*$, e.g., by introducing an $L^1$-penalty also on the estimates of $\alpha_n^*$. We will mention below the optimization problems \eqref{eq:estimator} and \eqref{eq:split_estimator} how they have to be changed in order to achieve sparsity in $\alpha_n^*$. We will, however, not pursue this in our theory because, if $\alpha_n^*$ is in fact sparse, this would impact the estimation of $\beta_n^*$ because only a few processes would be available for estimation. Therefore, the convergence rate of $\beta_n^*$ must be related to the sparsity. While this situation might be of its own interest and could be part of future research, for simplicity in our theory, we will not assume sparsity in $\alpha_n^*$ and will also not use a penalty when estimating these parameters.
\end{itemize}
\end{remark}
Formally, the underlying parameter space is defined as
\begin{align*}
\mathcal{H}_n:=\left\{(C,\alpha,\theta)\in[0,\infty)^{n\times n}\times[0,\infty)^n \times\Theta:\,\max_{i\in\{1,...,n\}}\,\|C_{i\bdot}\|_1<\left(\int_0^{\infty}g(t;\gamma)dt\right)^{-1}\right\},
\end{align*}
where $\|C_{i\bdot}\|_1$ denotes the $L_1$-norm of the $i$-th row of $C$, and $n \in \IN$. We discuss in Section \ref{subsec:existence} that Hawkes processes with dynamics described in \eqref{eq:Hawkes_dynamics} indeed exist for $(C_n^*,\alpha_n^*,\theta_n^*)\in\mathcal{H}_n$. The non-negativity constraint in $\mathcal{H}_n$ guarantees that \eqref{eq:Hawkes_dynamics} yields a non-negative intensity function $\lambda_{n,i}$. As an alternative, one could consider non-linear Hawkes processes $N_{n,i}$ with intensity function of the form $\phi(\Psi_{n,i}(t;C_i,\alpha_i,\theta))$, where 
the auxiliary processes $\Psi_{n,i}$ are defined as
\begin{align}
\label{eq:approx_model}
    \Psi_{n,i}(t;c,a,\theta):= a\cdot\nu_0(X_{n,i}(t);\beta)+\sum_{j=1}^n c_j\int_{-\infty}^{t-}g(t-r;\gamma)dN_{n,j}(r),
\end{align}
with $c,a\in [0,\infty)^n$ and $\theta \in \Theta.$ The function $\phi:\IR\to[0,\infty)$ makes sure that the intensity is non-negative \citep[see, for instance][] {BM96}.

Let
\begin{align*}
\lambda_n(t)&:=(\lambda_{n,1}(t),...,\lambda_{n,n}(t)),\\
\Psi_n(t;C,\alpha,\theta)&:=\left(\Psi_{n,1}(t;C_{1\bdot},\alpha_1,\theta),...,\Psi_{n,n}(t;C_{n\bdot},\alpha_n,\theta\right),
\end{align*}
and denote for any stochastic process $F:[0,\infty)\to\IR^n$ with $F=(F_1,...,F_n)$ the random path-wise norm $\|F\|_T^2:=\sum_{i=1}^n\int_0^T|F_i(t)|^2dt$ for $T>0$. With this notation, let $\LS_i(C,\alpha,\theta)$ be the least squares criterion defined as
$$\LS_i(C_{i\bdot},\alpha_i,\theta):=\int_0^T\Psi_{n,i}(t;C_{i\bdot},\alpha_i,\theta)^2dt-2\int_0^T\Psi_{n,i}(t;C_{i\bdot},\alpha_i,\theta)dN_{n,i}(t).$$
Note that we have
\begin{align}
\mathcal{E}(C,\alpha,\theta):=&\left\|\Psi_n(\cdot;C,\alpha,\theta)-\lambda_n\right\|^2_T-\|\lambda_n\|_T^2 \nonumber \\
=&\sum_{i=1}^n\left(\LS_i(C_{i\bdot},\alpha_i,\theta)+2\int_0^T\Psi_{n,i}(t;C_{i\cdot},\alpha_i,\theta)dM_{n,i}(t)\right). \label{eq:mloss}
\end{align}
Here $M_{n,i}(t) := N_{n,i}(t) - \int_0^t \lambda_{n,i}(s) ds$, and we note that under appropriate conditions, the $M_{n,i}$ are local, square-integrable martingales (see section~\ref{subsec:existence}). 

Now we define penalized (partial) least-squares type estimators estimators as
\begin{align}
(\check{C}_n,\check{\alpha}_n,\check{\theta}_n)
:=&\argmin{(C,\alpha,\theta)\in\mathcal{H}_n}\frac{1}{n}\sum_{i=1}^n\left(\frac{1}{T}\LS_i(C_{i\bdot},\alpha_i,\theta)+2\omega_i\|C_{i\bdot}\|_1\right) \label{eq:estimator}
\end{align}
where $\omega:=(\omega_1,...,\omega_n)\in[0,\infty)^n$ are the tuning parameters for the LASSO penalty. Recall Remark \ref{rem:penalties} for a discussion of the penalties. If one wants to promote sparsity also in $\alpha_n^*$, one has to add $2\omega_{\alpha}\|\alpha\|_1$ for $\omega_{\alpha}\geq0$ to the criterion function in the above optimization problem.

Using a least squares criterion rather than the log-likelihood as objective function allows for efficient numerical computation of the estimators (see section \ref{subsec:LAR} for more on this). Theory about the least squares estimator for Hawkes processes can be found, e.g., in \cite{RBS10,HRBR15}.

It has been noted also in other places that the optimization in \eqref{eq:estimator} with respect to $(C,\alpha)$ can be performed for each $i$ separately because $\LS_i(C,\alpha,\theta)$ is in fact only a function of $C_{i\bdot}$ and $\alpha_i$. More precisely, we denote for any fixed $\theta$
\begin{equation}
\label{eq:split_estimator}
\left(\widehat{C}_{n,i\bdot}(\theta),\widehat{\alpha}_{n,i}(\theta)\right):=\argmin{c\in[0,\infty)^n,a\geq0}\frac{1}{T}\LS_i(c,a,\theta)+2\omega_i\|c\|_1,
\end{equation}
where the minimum is taken over all vectors $c$ and numbers $a$ that may appear as $i$-th row of matrices $C$ and $i$-th entry of $\alpha$, for which $(C,\alpha,\theta)\in\mathcal{H}_n$. Then, $(\check{C}_n,\check{\alpha}_n)=(\widehat{C}_n(\check{\theta}_n),\widehat{\alpha}_n(\check{\theta}_n))$. If sparsity in $\alpha_n^*$ is desired, one has to add $2n\omega_{\alpha}a$ in \eqref{eq:split_estimator}. Note that \eqref{eq:split_estimator} implies that, conditionally on $\check{\theta}_n$, we have to solve $n$ individual LASSO problems. This will be useful for the computations and has been noted also in related work, e.g., \cite{CZG22}. However, in contrast, we emphasize that the presence of $\check{\theta}_n$ links all estimation problems. We suggest the following two-stage estimator.
\begin{enumerate}
\item Find $(\widecheck{C}_n,\widecheck{\alpha}_n,\widecheck{\theta}_n)$, the solution of the joint problem \eqref{eq:estimator}.
\item Compute the de-biased version $\overline{\theta}_n$ of $\widecheck{\theta}_n$.
\end{enumerate}
The second stage is required because the presence of the penalties in the first stage introduces a bias in the estimation. Therefore, we adopt the framework of \citet{vdGBRD14} to our non-linear model to compute a de-biased version $\overline{\theta}_n$ of $\check{\theta}_n$. The relation between \eqref{eq:estimator} and \eqref{eq:split_estimator} suggest a natural third stage
\begin{enumerate}
\item[3.] Find $(\widehat{C}_n,\widehat{\alpha}_n):=(\hat{C}_n(\overline{\theta}_n),\hat{\alpha}_n(\overline{\theta}_n))$, the solution of \eqref{eq:split_estimator}, using the de-biased estimator $\overline{\theta}_n$.
\end{enumerate}
The last stage is particularly natural because, with $\theta=\overline{\theta}_n$ fixed, problem \eqref{eq:split_estimator} is computationally quick to solve. Moreover, in theoretical terms, the third stage estimator allows for actor level guarantees (cf. Corollary \ref{cor:ind_conv_rate}), while the first stage estimator is on average over all actors (cf. Lemma \ref{lem:convergence_rates}). The following sections provide details for step 2. As already indicated, the computing is discussed further in Appendix~\ref{sec:computation}.

\subsection{De-biasing in a Hawkes Model}
\label{subsec:debiasing}
In this section, we show how to use the methodology from \cite{vdGBRD14} to de-bias our estimators $(\check{C}_n,\check{\alpha}_n,\check{\theta}_n)$. While the discussion in this section also applies to $\check{C}_n$ and $\check{\alpha}_n$, our main interest is  $\check{\theta}_n$. Denote
$$L_n(C,\alpha,\theta):=\frac{1}{nT}\sum_{i=1}^n\textrm{LS}_i(C_{i\bdot},\alpha_i,\theta)+\frac{1}{n}\sum_{i,j=1}^n2\omega_iC_{i,j}.$$
Then, $(\check{C}_n,\check{\alpha}_n,\check{\theta}_n)=\argmin{(C,\alpha,\theta)\in\mathcal{H}_n}L_n(C,\alpha,\theta)$ (note that $C$ is constrained to be non-negative). Let $\partial_C L_n(C,\alpha,\theta)$ be the vector of first derivatives of $L_n(C,\alpha,\theta)$ with respect to $C_{i,j}$ for all $i,j=1,...,n$, i.e., $\partial_C L_n(C,\alpha,\theta)\in\IR^{n^2}$, where we begin with $C_{1,1},...,C_{1,n}$ and continue with the second row and so forth. $\partial_{\alpha}$ and $\partial_{\theta}$ are similarly defined. Second derivatives are, e.g., denoted by $\partial_C^2$. Denote
$$\Sigma_n(C,\alpha,\theta):=\frac{1}{nT}\sum_{i=1}^n\begin{pmatrix}
    \partial_{\theta} \\ \partial_{\alpha} \\ \partial_C
\end{pmatrix}^2\textrm{LS}_i(C_{i\bdot},\alpha_i,\theta).$$
The general motivation behind the de-biasing strategy is very well explained in Sections 2.1 and 3.1 of \citet{vdGBRD14}. We therefore do not repeat it here and simply state the de-biased estimator as defined through the following formula
\begin{equation}
    \label{eq:debiased_lasso}
\begin{pmatrix}
    \overline{\theta}_n \\ \overline{\alpha}_n \\ \overline{C}_n
\end{pmatrix}:=\begin{pmatrix}
  \check{\theta}_n \\  \check{\alpha}_n \\ \check{C}_n
\end{pmatrix}-\frac{1}{nT}\sum_{i=1}^n\Theta_n\begin{pmatrix}
    \partial_{\theta} \\ \partial_{\alpha} \\ \partial_C
\end{pmatrix}\textrm{LS}_i(\check{C}_{n,i\bdot},\check{\alpha}_{n,i},\check{\theta}_n),
\end{equation}
where $\Theta_n\in\IR^{(p+1+n+n^2)\times(p+1+n+n^2)}$ is a matrix that we will define shortly. In the proof of the below Theorem \ref{thm:de-biasing}, we show that the motivation from \citet{vdGBRD14} essentially transfers to our setting. We have left to define the matrix $\Theta_n$. The idea is that $\Theta_n$ approximates $\Sigma_n(\check{C}_n,\check{\alpha}_n,\check{\theta}_n)^{-1}$ (even though the latter might not exist). We use a procedure inspired by and very similar to the node-wise Lasso as in \cite{vdGBRD14}. It is, however, different because $\Sigma:=\Sigma_n(\check{C}_n,\check{\alpha}_n,\check{\theta}_n)$ is in our case potentially indefinite. We, therefore, find firstly $\tilde{\Theta}_n$ as an approximate inverse of the positive semi-definite matrix $\Sigma^2$ as follows. Let for $j=1,...,p+1+n+n^2$
\begin{equation}
\label{eq:nodewise_lasso}
v_j:=\argmin{v\in\IR^{p+n+n^2}}\left\|\Sigma_{\cdot,j}-\Sigma_{\cdot,-j}v\right\|_2^2+2\sigma_j\|v\|_1,
\end{equation}
where $\sigma_j>0$ is a set of tuning parameters, and where, e.g., $\Sigma_{\cdot,-j}\in\IR^{(p+1+n+n^2)\times(p+n+n^2)}$ denotes the sub-matrix of $\Sigma$ after removing the $j$-th column. Define furthermore $\tau_j:=(\Sigma^2)_{j,j}-(\Sigma^2)_{j,-j}v_j$. With this, we define the matrix $\tilde{\Theta}_n$ as follows ($v_{j,i}$ denotes the $i$-th entry of $v_j$)
$$\tilde{\Theta}_n:=\begin{pmatrix}
        \frac{1}{\tau_1}           &         -\frac{v_{1,1}}{\tau_1}           &             -\frac{v_{1,2}}{\tau_1}           & \dots & -\frac{v_{1,p+n+n^2}}{\tau_1} \\
 -\frac{v_{2,1}}{\tau_2}           &                \frac{1}{\tau_2}           &             -\frac{v_{2,2}}{\tau_2}           & \dots & -\frac{v_{2,p+n+n^2}}{\tau_2} \\
         \vdots                    & \vdots                                    & \vdots                                       &       & \vdots                       \\
        -\frac{v_{p+1+n+n^2,1}}{\tau_{p+1+n+n^2}} & -\frac{v_{p+1+n+n^2,2}}{\tau_{p+1+n+n^2}} &     -\frac{v_{p+1+n+n^2,3}}{\tau_{p+1+n+n^2}} & \dots &             \frac{1}{\tau_{p+1+n+n^2}}
\end{pmatrix}.$$
Let $e_j$ denote the $j$-th unit vector. With the same arguments as for (10) in \citet{vdGBRD14}, we obtain for $\Theta_n:=\tilde{\Theta}_n\Sigma$
\begin{equation}
\label{eq:node_wise_lasso}
\|\Theta_{n,j\bdot}\Sigma-e_j^T\|_{\infty}=\|\tilde{\Theta}_{n,j\bdot}\Sigma^2-e_j\|_{\infty}\leq\frac{\sigma_j}{\tau_j}.
\end{equation}
Therefore, $\Theta_n$ can be thought of as an approximate inverse of $\Sigma$. Note furthermore the discussion in Section \ref{subsec:lasso_without_intercept} on how to efficiently solve \eqref{eq:nodewise_lasso}, and note that we do not have to compute the complete matrix $\Theta_n$ if we are just interested, e.g., in $\theta$. In that case, we just have to compute the first $p+1$ rows in \eqref{eq:debiased_lasso} and hence only the first $p+1$ rows of $\Theta_n$ are required. As a last remark, we mention here that when solving \eqref{eq:nodewise_lasso}, we do not penalize the entries of $v$ corresponding to entries in $\theta$.

\section{Results}
\label{sec:results}
We present in this section the theoretic results for our estimators and follow the structure of the estimation presented in Section \ref{subsec:estimation}: In \ref{subsec:single_consistency}, we study the first stage estimator $(\check{C}_n,\check{\alpha}_n,\check{\theta}_n)$, afterwards we study the de-biasing in Section \ref{subsec:results_debiasing}, and finally, in Section \ref{subsec:consistency}, we study the combined estimator.

In Appendix \ref{subsec:existence}, we discuss in more detail the existence of a multivariate Hawkes process as introduced. Here, we state the following assumptions, which guarantee existence and identifiability of the Hawkes process following the dynamics described in \eqref{eq:Hawkes_dynamics}. The first set of assumptions are standard \citep[cf.][]{ABGK93}:\\[-9pt]

\textbf{Assumption (A0)}  \emph{There is a common probability space $(\Omega,\mathcal{F},\IP)$ such that for each $n\in\IN$, there exist sub-$\sigma$-fields $\mathcal{F}_n\subseteq\mathcal{F}$ and filtrations $(\mathcal{F}_{n,t})_{t\geq0}, \mathcal{F}_{n,t} \subseteq \mathcal{F}_n$ which are right-continuous, i.e., $\mathcal{F}_{n,t}=\cap_{s>t}\mathcal{F}_{n,s}$ for all $n\in\IN$. The processes $N_{n,i}$ and $X_{n,i}$ are adapted w.r.t. $\mathcal{F}_{n,t}$ and the collection $(N_{n,i})_{i=1}^n$ forms a multivariate counting process with intensity functions $\lambda_{n,i}:(-\infty,T]\to[0,\infty)$.}\\[-9pt]

\textbf{Assumption (A1)} \emph{The processes $X_{n,i}$ are predictable w.r.t. $\mathcal{F}_{n,t}$. There is an open set $\Theta\subseteq\IR^{p+1}$ such that for all $n\in\IN$ and all $\theta=(\beta,\gamma)\in\Theta$, there are $\overline{\nu}_i$ such that $\nu_0(X_{n,i}(t);\beta)\leq\overline{\nu}_i$ almost surely for all $t>0$.}

\subsection{First Stage Consistency}
\label{subsec:single_consistency}
In this section, we study
$$(\check{C}_n,\check{\alpha}_n,\check{\theta}_n):=\argmin{(C,\alpha,\theta)\in\mathcal{H}_n}\frac{1}{n}\sum_{i=1}^n\left(\frac{1}{T}\LS_i(C_{i\bdot},\alpha_i,\theta)+2\omega_i\|C_{i\bdot}\|_1\right).$$
For formulating our main results, we need to introduce some notation and assumptions. \\[-9pt]

\textbf{Assumption (PE1):} \emph{There are constants $K_{\alpha},K_C>0$ and bounded, convex, open sets $K_{\beta}\subseteq\IR^p$ and $K_{\gamma}\subseteq\IR$ such that, for all $n\in\IN$, $(\beta_n^*,\gamma_n^*)\in\Theta:=K_{\beta}\times K_{\gamma}$, $\alpha_n^*\in(0,K_{\alpha})^n$, $C_n^*\in[0,K_C)^{n\times n}$, and $(C_n^*,\alpha_n^*,\theta_n^*)\in\mathcal{H}_n$.}

\vspace{0.1cm}

We will prove below a typical consistency result, Theorem \ref{thm:pre_estimate_consistency}, which holds when the noise can be controlled and a certain \emph{random compatibility condition} holds. First we define the event on which the noise can be controlled. For given numbers $a_n$, $b_n$, $d_{n,i}$, $e_n\in\IR$ with $n\in\IN$ and $i=1,...,n$, we denote $d_n:=(d_{n,1},...,d_{n,n})$ and define the event
\begin{align}
&\mathcal{T}_n(a_n,b_n,d_n,e_n) \nonumber \\
:=&\,\left\{\sup_{i=1,...,n}\sup_{\overline{\beta}\in K_{\beta}}\frac{2}{T}\left|\int_0^T\nu_0\left(X_{n,i}(t);\overline{\beta}\right)dM_{n,i}(t)\right|\leq a_n\right\} \nonumber \\
&\cap\left\{\sup_{\overline{\beta}\in K_{\beta}}\left|\frac{2}{nT}\sum_{i=1}^n\alpha_{n,i}^*\int_0^T\frac{\nu_0(X_{n,i}(t);\overline{\beta})-\nu_0(X_{n,i}(t);\beta_n^*)}{\left\|\overline{\beta}-\beta_n^*\right\|_1}dM_{n,i}(t)\right|\leq b_n\right\} \nonumber \\
&\bigcap_{i=1}^n\left\{\sup_{j=1,...,n}\sup_{\overline{\gamma}\in K_{\gamma}}\frac{2}{T}\left|\int_0^T\int_0^{t-}g(t-r;\overline{\gamma})dN_{n,j}(r)dM_{n,i}(t)\right|\leq d_{n,i}\right\} \nonumber \\
&\cap\left\{\sup_{\overline{\gamma}\in K_{\gamma}}\left|\frac{2}{nT}\sum_{i,j=1}^nC_{n,ij}^*\int_0^T\int_0^{t-}\frac{g(t-r;\overline{\gamma})-g(t-r;\gamma_n^*)}{|\overline{\gamma}-\gamma_n^*|}dN_{n,j}(r)dM_{n,i}(t)\right|\leq e_n\right\}. \label{eq:def_T}
\end{align}
We need the following definition of a random compatibility constant. Below we denote by $\|x\|$ the Euclidean norm of a vector $x\in\IR^d$.
\begin{definition}
\label{def:comp_constant}
For any $C\in\IR^{n\times n}$, $S\subseteq\{1,...,n\}$, and $i\in\{1,...,n\}$, let $C_{iS}\in\IR^{|S|}$ (where $|S|$ denotes the number of elements of $S$) the vector containing the values $C_{ij}$ for $j\in S$. We define the \emph{random compatibility constant for $S_1,...,S_n\subseteq\{1,...,n\}$, $L>0$, and $\tilde{\mathcal{H}}_n\subseteq\mathcal{H}_n$} by
\begin{align*}
&\phi_{\textrm{comp}}(S_1,...,S_n;L;\tilde{\mathcal{H}}_n) \\
:=&\sqrt{\inf_{(C,\alpha,\theta) \in \tilde\cR_n(S_1,...,S_n;L)}\frac{\frac{1}{nT}\|\Psi_n(\cdot;C,\alpha,\theta)-\lambda_n(\cdot)\|^2_T}{\frac{1}{n}\|\alpha-\alpha_n^*\|^2+\frac{1}{n}\sum_{i=1}^n\|C_{iS_i}-C_{n,iS_i}^*\|^2+\|\theta-\theta_n^*\|^2}},
\end{align*}
where $\tilde{\cR}_n(S_1,...,S_n;L)\subseteq\tilde{\mathcal{H}}_n$ contains all tuples $(C,\alpha,\theta)\in\mathcal{H}_n$ for which
\begin{align*}
&\|\alpha-\alpha_n^*\|^2+\sum_{i=1}^n\|C_{iS_i}-C_{n,iS_i}^*\|^2+\|\theta-\theta_n^*\|^2\neq0 \textrm{ and } \\
&\frac{1}{n}\sum_{i=1}^n\|C_{iS_i^c}-C_{n,iS_i^c}^*\|_1\leq L\left(\frac{1}{n}\left\|\alpha-\alpha_n^*\right\|_1+\frac{1}{n}\sum_{i=1}^n\|C_{iS_i}-C_{n,iS_i}^*\|_1+\|\theta_n-\theta_n^*\|_1\right).
\end{align*}
\end{definition}
The above definition can be used to formulate a restricted eigenvalue condition similar to the one that can be found in Chapter 6.2 in \cite{GB11} or \citet{BYT09} and thus falls in the general class of compatibility conditions. Since the compatibility constant is random in our case, the following is a random condition; we call it the \emph{random compatibility-condition}. Let $S_i(C_n^*):=\{j: C_{n,ij}^*\neq0\}$ and $S(\alpha_n^*):=\{j: \alpha_{n,j}^*\neq0\}$ be the active sets of the $i$-th row of a matrix $C_n^*$ and a vector $\alpha_n^*$, respectively.

\vspace{0.1cm}

\textbf{Random Compatibility Condition (RCC)}  \emph{For $L>0$ and $\tilde{\mathcal{H}}_n\subseteq\mathcal{H}_n$ define the event
$$\Omega_{RCC,n}(L,\tilde{\mathcal{H}}_n):=\left\{\phi_{\textrm{comp}}(S_1(C_n^*),...,S_n(C_n^*);L;\tilde{\mathcal{H}}_n)>0\right\}.$$
For each realization in $\Omega_{RCC,n}(L,\tilde{\mathcal{H}}_n)$ we say that the \emph{random compatibility condition (RCC)} holds.}

\vspace{0.1cm}

We are now ready to formulate the main result of this section. It is our version of Theorem 6.2 in \citet{GB11}, and the proof is along the same lines. For completeness we present the proof in our setting in Appendix \ref{sup:single_consistency}.
\begin{theorem}
\label{thm:pre_estimate_consistency}
Let $a_n,b_n,d_{n,i},e_n\in\IR$ for $n\in\IN$ and $i=1,...,n$, and let $\tilde{b}_n\geq\max(b_n,e_n)$. Suppose that $d_{n,i}\leq\omega_i$ and that there is a number $L\in(0,\infty)$ such that
$$L\geq\frac{\max(3\omega_1,...,3\omega_n,2a_n,2\tilde{b}_n)}{\min(\omega_1,...,\omega_n)}.$$
Denote
$$\mathcal{L}(C_n^*,\alpha_n^*):=\sqrt{9a_n^2+\frac{64}{n}\sum_{i=1}^n\omega_i^2|S_i(C_n^*)|+9\tilde{b}_n^2(p+1)}.$$
If we restrict to the parameter space $\tilde{\mathcal{H}}_n$, we have under (PE1) on the event
$$\mathcal{T}_n(a_n,b_n,d_n,e_n)\cap\Omega_{\textrm{RCC},n}(L,\tilde{\mathcal{H}}_n)$$
that
\begin{align*}
&\frac{1}{nT}\left\|\Psi_n(\cdot;\check{C}_n,\check{\alpha}_n,\check{\theta}_n)-\lambda_n\right\|_T^2+\frac{2}{n}\sum_{i=1}^n\omega_i\|\check{C}_{n,i\cdot}-C_{n,i\cdot}^*\|_1+\frac{a_n}{n}\|\check{\alpha}_n-\alpha_n^*\|_1\nonumber+\tilde{b}_n\|\check{\theta}_n-\theta_n^*\|_1 \\
&\leq\frac{\mathcal{L}(C_n^*,\alpha_n^*)^2}{4\phi_{\textrm{comp}}(S_1(C_n^*),...,S_n(C_n^*);L;\tilde{\mathcal{H}}_n)^2}.
\end{align*}
\end{theorem}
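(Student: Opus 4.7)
My plan is to adapt the classical oracle-inequality argument for penalized least squares (Theorem~6.2 of \citet{GB11}) to the multivariate Hawkes setting, with the four events defining $\Tau_n$ in \eqref{eq:def_T} playing the role of the single sup-norm martingale bound in the i.i.d.~case. The starting point is optimality of $(\check C_n,\check\alpha_n,\check\theta_n)$ in \eqref{eq:estimator}: since (PE1) ensures $(C_n^*,\alpha_n^*,\theta_n^*)\in\mathcal H_n$, combining optimality with the identity \eqref{eq:mloss} and $\Psi_n(\cdot;C_n^*,\alpha_n^*,\theta_n^*)=\lambda_n$ yields the basic inequality
$$U+\frac{2}{n}\sum_{i=1}^n\omega_i\|\check C_{n,i\bdot}\|_1\le\frac{2}{nT}\sum_{i=1}^n\int_0^T\bigl(\Psi_{n,i}(t;\check C_{n,i\bdot},\check\alpha_{n,i},\check\theta_n)-\Psi_{n,i}(t;C_{n,i\bdot}^*,\alpha_{n,i}^*,\theta_n^*)\bigr)\,dM_{n,i}(t)+\frac{2}{n}\sum_{i=1}^n\omega_i\|C_{n,i\bdot}^*\|_1,$$
where $U:=\tfrac{1}{nT}\|\Psi_n(\cdot;\check C_n,\check\alpha_n,\check\theta_n)-\lambda_n\|_T^2$. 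I decompose the integrand into four pieces --- a perturbation of $\alpha_i$ at $\check\beta$, of $\beta$ weighted by $\alpha_{n,i}^*$, of $C_{i\bdot}$ at $\check\gamma$, and of $\gamma$ weighted by $C_{n,i\bdot}^*$ --- each controlled by a matching event in \eqref{eq:def_T}. Using $d_{n,i}\le\omega_i$ and $\tilde b_n\ge\max(b_n,e_n)$, the integral is at most $\tfrac{a_n}{n}\|\Delta\alpha\|_1+\tilde b_n\|\Delta\theta\|_1+\tfrac{1}{n}\sum_i\omega_i\|\Delta C_i\|_1$ on $\Tau_n$, where $\Delta\alpha:=\check\alpha_n-\alpha_n^*$, $\Delta\theta:=\check\theta_n-\theta_n^*$, and $\Delta C_i:=\check C_{n,i\bdot}-C_{n,i\bdot}^*$.

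Standard LASSO bookkeeping --- using $\|\check C_{n,i\bdot}\|_1-\|C_{n,i\bdot}^*\|_1\ge\|\Delta C_{iS_i^c}\|_1-\|\Delta C_{iS_i}\|_1$ for $S_i:=S_i(C_n^*)$ and absorbing the $\|\Delta C_i\|_1$-noise term on the right --- then yields the cone inequality
$$U+\frac{1}{n}\sum_i\omega_i\|\Delta C_{iS_i^c}\|_1\le\frac{a_n}{n}\|\Delta\alpha\|_1+\tilde b_n\|\Delta\theta\|_1+\frac{3}{n}\sum_i\omega_i\|\Delta C_{iS_i}\|_1.\qquad(\ast)$$
Dropping the non-negative $U$, dividing by $\min_j\omega_j$, and applying the three lower bounds $L\ge 3\max_j\omega_j/\min_j\omega_j$, $L\ge 2a_n/\min_j\omega_j$, $L\ge 2\tilde b_n/\min_j\omega_j$ places $(\check C_n,\check\alpha_n,\check\theta_n)$ in the set $\tilde\cR_n(S_1,\ldots,S_n;L)$ of Definition~\ref{def:comp_constant}; the factor $2$ rather than $1$ leaves slack for the Young step below. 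On $\Omega_{\textrm{RCC},n}(L,\tilde{\mathcal H}_n)$ the compatibility constant is then positive and gives $B\le\sqrt U/\phi_{\textrm{comp}}$ with $B^2:=\tfrac{1}{n}\|\Delta\alpha\|^2+\tfrac{1}{n}\sum_i\|\Delta C_{iS_i}\|^2+\|\Delta\theta\|^2$.

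To land on the precise constants in $\mathcal L(C_n^*,\alpha_n^*)$, I multiply $(\ast)$ by $2$ and add $\tfrac{a_n}{n}\|\Delta\alpha\|_1+\tilde b_n\|\Delta\theta\|_1+\tfrac{2}{n}\sum_i\omega_i\|\Delta C_{iS_i}\|_1$ to both sides, yielding
$$2U+\frac{2}{n}\sum_i\omega_i\|\Delta C_i\|_1+\frac{a_n}{n}\|\Delta\alpha\|_1+\tilde b_n\|\Delta\theta\|_1\le\frac{3a_n}{n}\|\Delta\alpha\|_1+3\tilde b_n\|\Delta\theta\|_1+\frac{8}{n}\sum_i\omega_i\|\Delta C_{iS_i}\|_1.$$
Cauchy--Schwarz converts each $\ell_1$ term on the RHS to an $\ell_2$ term (with factor $\sqrt n$ on $\|\Delta\alpha\|_1$, $\sqrt{p+1}$ on $\|\Delta\theta\|_1$, and $\sqrt{\sum_i\omega_i^2|S_i|}$ on the $C$-sum), and a final Cauchy--Schwarz on the resulting $3$-vector bounds the RHS by $\mathcal L(C_n^*,\alpha_n^*)\cdot B\le\mathcal L(C_n^*,\alpha_n^*)\sqrt U/\phi_{\textrm{comp}}$. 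Applying $2ab\le a^2+b^2$ with $a=\sqrt U$, $b=\mathcal L(C_n^*,\alpha_n^*)/(2\phi_{\textrm{comp}})$ then gives $\sqrt U\cdot\mathcal L(C_n^*,\alpha_n^*)/\phi_{\textrm{comp}}\le U+\mathcal L(C_n^*,\alpha_n^*)^2/(4\phi_{\textrm{comp}}^2)$, which absorbs one copy of $U$ into the LHS and delivers the stated bound.

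Structurally this is a direct transplant of the classical LASSO oracle inequality. The main obstacle I anticipate is bookkeeping: tracking the constants so that the multiply-by-$2$-and-add-back manipulation combined with the chosen Young step lands exactly on the coefficients $9$, $9$ and $64$ inside $\mathcal L(C_n^*,\alpha_n^*)^2$, and keeping the four pieces of the $\Psi_{n,i}$ decomposition (in particular the variational ``difference-quotient'' representations of the $\beta$- and $\gamma$-pieces) cleanly aligned with the four events defining $\Tau_n$ so that the linear $\ell_1$ noise bound is obtained with the stated constants.
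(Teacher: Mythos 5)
Your proposal is correct and follows essentially the same route as the paper's own proof in Appendix~B.1: the basic inequality from optimality, the four-piece decomposition of the stochastic integral aligned with the four events in $\mathcal{T}_n$, the add-the-active-set-penalty-back bookkeeping leading to the cone inequality $(\ast)$, membership in $\tilde{\cR}_n(S_1,\ldots,S_n;L)$, the two-layer Cauchy--Schwarz producing $\mathcal{L}(C_n^*,\alpha_n^*)\cdot B$, the compatibility bound, and Young's inequality with exactly the weights needed to absorb one copy of $U$ and land on the stated constant. The only small misstatement is your aside that the factor $2$ on $2a_n,2\tilde b_n$ in the $L$-condition ``leaves slack for the Young step'' --- it is unused slack (the paper's own proof invokes only $L\ge\max(3\omega_j,a_n,\tilde b_n)/\min_j\omega_j$), so the theorem hypothesis is simply mildly conservative --- but this does not affect correctness.
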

Theorem \ref{thm:pre_estimate_consistency} is a classical result from the LASSO literature. For our purposes, the most interesting part is the bound it provides on the convergence rate of the estimators. This, in turn, requires a bound on the probability of
$$\mathcal{T}_n(a_n,b_n,d_n,e_n)\cap\Omega_{\textrm{RCC},n}(L,\tilde{\mathcal{H}}_n).$$
Note that the probability of $\Omega_{\textrm{RCC},n}(L,\tilde{\mathcal{H}}_n)$ can be increased by reducing the parameter space $\tilde{\mathcal{H}}_n$. It plausibly has a high probability if, e.g., it imposes a non-zero lower bound on $\alpha$ because then changes in $\beta$ are enforced to be visible in the intensity function. In order to understand the probability of $\mathcal{T}_n(a_n,b_n,d_n,e_n)$, we require further assumptions.\\[-9pt]

\textbf{Assumption (PE2)} \emph{Let $\textrm{supp}(g(\cdot;\gamma))\subseteq[0,A]$ for all $\gamma\in K_{\gamma}$, and suppose that for some $\overline{g}<\infty$ (recall the definition of $\overline{\nu}_i$ from (A1) in Section \ref{subsec:existence})
\begin{align*}
&\sup_{\overline{\beta}\in K_{\beta}}\nu_0(X_{n,i}(t);\overline{\beta})\leq\overline{\nu}_i,\quad\sup_{n\in\IN}\|\overline{\nu}\|_{\infty}<\infty,\quad \sup_{\overline{\gamma}\in K_{\gamma}}\|g(\cdot;\overline{\gamma})\|_{\infty}\leq \overline{g}, \\
&a_0:=\sup_{n\in\IN}\sup_{i=1,...,n}\|C_{n,i\bdot}^*\|_1\int_0^Ag(t;\gamma_n^*)dt<1.
\end{align*}}

\textbf{Assumption (PE3)} 
\emph{Suppose that $\nu_0$ and $g$ are continuously differentiable with respect to $\beta$ and $\gamma$, respectively, such that, for deterministic constants $D_{\nu},L_{\nu},D_g,L_g<\infty$, almost surely 
\begin{align*}
&\left\|\frac{d}{d\beta}\nu_0\left(X_{n,i}(t);\beta_1\right)-\frac{d}{d\beta}\nu_0\left(X_{n,i}(t);\beta_2)\right)\right\|_{\infty}\leq D_{\nu}\|\beta_1-\beta_2\|, \\
&\sup_{\overline{\beta}\in K_{\beta}}\left\|\frac{d}{d\beta}\nu_0\left(X_{n,i}(t);\overline{\beta}\right)\right\|\leq L_{\nu} \\
&\left|\frac{d}{d\gamma}g(r;\gamma_1)-\frac{d}{d\beta}g(r;\gamma_2)\right|\leq D_g|\gamma_1-\gamma_2|, \sup_{\overline{\gamma}\in K_{\gamma}}\left|\frac{d}{d\gamma}g(r;\overline{\gamma})\right|\leq L_g
\end{align*}
for all $i\in\{1,...,n\}$, $n\in\IN$, all $\beta_1,\beta_2\in K_{\beta}$, all $\gamma_1,\gamma_2\in K_{\gamma}$, all $t\in[0,T]$, and all $r\in[0,A]$.}

Before presenting the precise result about $\mathcal{T}_n(a_n,b_n,d_n,e_n)$, we show now the convergence rates that our estimators can achieve. The proof is based on Lemma \ref{lem:lambda_rate} and Corollary \ref{cor:lambda_rate} below and can be found in Section \ref{subsec:proofs_single_consistency}.
\begin{lemma}
\label{lem:convergence_rates}
Suppose that (A0), (A1), (PE1), (PE2), and (PE3) are true. Let $n\to\infty$, $T\to\infty$, and $\alpha_1,...,\alpha_4>0$ be such that
\begin{align*}
&\frac{\max\left(1,\sup_{i=1,...,n}\|C_{n,i\cdot}^*\|_1^p\right)\log T}{(nT)^{\alpha_1}}+\frac{\max\left(1,\sup_{i=1,...,n}\|C_{n,i\cdot}^*\|_1^{2p}\right)\log(nT)}{(nT)^{\alpha_2}} \nonumber \\
&\qquad+\frac{\max(1,\max_{i=1,...,n}\|C_{n,i\cdot}^*\|_1)\log(T)}{T^{\alpha_3}}+\frac{\log(nT)\left(\frac{1}{n}\|C_n^*\|_1+\frac{1}{n}\sum_{i=1}^n\|C_{n,i\cdot}^*\|_1^2\right)}{\max_{i=1,...,n}\|C_{n,i\cdot}^*\|_1(nT)^{\alpha_4}}\to0.
\end{align*}
Assume furthermore that there is $\phi_0>0$ such that $\IP(\phi_{\textrm{comp}}(S_1(C_n^*),...,S_n(C_n^*);L;\tilde{\mathcal{H}}_n)\geq\phi_0)\to1$. Then,
\begin{align*}
\frac{1}{n}\left\|\check{C}_n-C_n^*\right\|_1 &= O_P\left(\frac{\log^2(nT)}{\sqrt{T}}s_n\right), \\
\frac{1}{n}\left\|\check{\alpha}_n-\alpha_n^*\right\|_1& =O_P\left(\frac{\log^3(nT)}{\sqrt{T}}s_n\right), \\
\left\|\check{\theta}_n-\theta_n^*\right\|_1& =O_P\left(\frac{\log^2(nT)}{\sqrt{T}}\frac{s_n}{\sqrt{\max\left(1,\frac{1}{n}\sum_{i=1}^n\|C_{n,i\cdot}^*\|_1^2\right)}}\right).
\end{align*}
where
$$s_n:=1+\frac{1}{n}\sum_{i=1}^n|S_i(C_n^*)|+\frac{1}{n}\sum_{i=1}^n\|C_{n,i\cdot}^*\|_1^2.$$
\end{lemma}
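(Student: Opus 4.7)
The strategy is to combine Theorem~\ref{thm:pre_estimate_consistency} with high-probability bounds on the four martingale suprema that appear in the definition~\eqref{eq:def_T} of $\mathcal{T}_n$. Once those bounds are in hand, the penalty parameters $\omega_i$ and the quantity $\tilde b_n$ can be chosen so that the hypotheses of Theorem~\ref{thm:pre_estimate_consistency} are met on the event $\mathcal{T}_n\cap\Omega_{\text{RCC},n}(L,\tilde{\mathcal H}_n)$, and the stated rates follow by algebraic rearrangement of the resulting inequality and division by the relevant tuning parameter.

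First, I would invoke the auxiliary Lemma~\ref{lem:lambda_rate} and Corollary~\ref{cor:lambda_rate}, whose role is precisely to provide, under (A0), (A1), (PE1)--(PE3), bounds of the form
\[
a_n\lesssim\tfrac{\log(nT)}{\sqrt T},\qquad d_{n,i}\lesssim\tfrac{\log(nT)}{\sqrt T}\,f_i(\|C_{n,i\bdot}^*\|_1),\qquad b_n,\,e_n\lesssim\tfrac{\log(nT)}{\sqrt{nT}}\sqrt{\max\!\bigl(1,\tfrac{1}{n}\textstyle\sum_i\|C_{n,i\bdot}^*\|_1^2\bigr)},
\]
which hold with probability tending to one via Bernstein/Freedman-type inequalities for Hawkes martingales together with chaining over the compact parameter sets $K_\beta\subset\IR^p$ and $K_\gamma\subset\IR$. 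The four scaling conditions in the hypothesis of Lemma~\ref{lem:convergence_rates} are precisely what is needed so that the union-bound/covering contributions (which involve $p$, $\|C_{n,i\bdot}^*\|_1^p$, $n$, etc.) are negligible and $\IP(\mathcal{T}_n(a_n,b_n,d_n,e_n))\to 1$.

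Next, I would pick $\omega_i:=c\,d_{n,i}$ for a constant $c>1$, so that $d_{n,i}\le\omega_i$, and set $\tilde b_n:=\max(b_n,e_n)$. Because $\omega_i$, $a_n$, $\tilde b_n$ are of the same order up to logarithms, the ratio $L=\max(3\omega_i,2a_n,2\tilde b_n)/\min_i\omega_i$ is bounded, so Theorem~\ref{thm:pre_estimate_consistency} is applicable on $\mathcal{T}_n\cap\Omega_{\text{RCC},n}(L,\tilde{\mathcal H}_n)$, which by hypothesis has probability tending to one. Using $\phi_{\text{comp}}\ge\phi_0$, the theorem yields
\[
\tfrac{2}{n}\sum_{i=1}^n\omega_i\|\check C_{n,i\bdot}-C_{n,i\bdot}^*\|_1+\tfrac{a_n}{n}\|\check\alpha_n-\alpha_n^*\|_1+\tilde b_n\|\check\theta_n-\theta_n^*\|_1\le \tfrac{1}{4\phi_0^2}\,\mathcal{L}(C_n^*,\alpha_n^*)^2.
\]
Plugging in $\mathcal{L}^2=9a_n^2+\tfrac{64}{n}\sum_i\omega_i^2|S_i(C_n^*)|+9(p+1)\tilde b_n^2$ and the rates from the previous step, the three terms combine to $\lesssim \log^4(nT)\,s_n/T$: the sparsity $|S_i(C_n^*)|$ and the second moment $n^{-1}\sum_i\|C_{n,i\bdot}^*\|_1^2$ that appear through $\omega_i^2$ and through $\tilde b_n^2$ are exactly the ingredients aggregated in the definition of $s_n$. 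Dividing through by $2\min_i\omega_i$, by $a_n$, and by $\tilde b_n$, respectively, produces the three stated rates; the extra $\log(nT)$ in the rate for $\check\alpha_n$ comes from the $a_n^{-1}$ factor, while the denominator $\sqrt{\max(1,n^{-1}\sum_i\|C_{n,i\bdot}^*\|_1^2)}$ in the $\check\theta_n$-rate is the averaging gain absorbed into $\tilde b_n$ through $b_n$ and $e_n$.

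The main obstacle is the bookkeeping around how the row sums $\|C_{n,i\bdot}^*\|_1$ propagate through each of the four noise terms. The inner expression $\int_0^{t-}g(t-r;\bar\gamma)\,dN_{n,j}(r)$ carries the full excitation history of the network and its moments grow with the global excitation level; ensuring that the resulting $d_{n,i}$ and $e_n$ bounds are simultaneously sharp, compatible with the four scaling conditions in the hypothesis, and aggregate to the clean quantity $s_n$ is the delicate part, and is the job of Lemma~\ref{lem:lambda_rate} and Corollary~\ref{cor:lambda_rate}. Once those are in place, the remainder is essentially the algebraic rearrangement described above.
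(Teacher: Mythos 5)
Your high-level strategy matches the paper: invoke Corollary \ref{cor:lambda_rate} for the noise rates, verify the hypotheses of Theorem \ref{thm:pre_estimate_consistency}, bound $\mathcal{L}(C_n^*,\alpha_n^*)^2=O_P(\log^4(nT)\,s_n/T)$, and divide the resulting inequality by the appropriate tuning parameter. The derivations for $\check C_n$ (dividing by $2\omega_i\asymp\log^2(nT)/\sqrt T$) and for $\check\alpha_n$ (dividing by $a_n\asymp\log(nT)/\sqrt T$) go through exactly as you describe.

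The gap is in your choice $\tilde b_n:=\max(b_n,e_n)$. Theorem \ref{thm:pre_estimate_consistency} only requires $\tilde b_n\ge\max(b_n,e_n)$, and the quantity appearing in the stated rate for $\check\theta_n$ is $\mathcal L^2/\tilde b_n$, so $\tilde b_n$ should be taken as \emph{large} as the other constraints allow --- namely up to the same order $\log^2(nT)/\sqrt T$ as $\omega_i$, so that the condition $L\ge\max(3\omega_i,2a_n,2\tilde b_n)/\min_i\omega_i$ still holds with a constant $L$ (which it does, since the row sums $\|C_{n,i\bdot}^*\|_1$ are bounded under (PE2)). The paper takes $\tilde b_n=K_{\tilde b}\log^2(nT)\sqrt{\max(1,n^{-1}\sum_i\|C_{n,i\bdot}^*\|_1^2)}/\sqrt T$, which is a factor $\sqrt n$ larger than your $\max(b_n,e_n)\asymp\log^2(nT)\sqrt{\max(1,n^{-1}\sum_i\|\cdot\|_1^2)}/\sqrt{nT}$; the extra contribution $9(p+1)\tilde b_n^2$ this causes in $\mathcal L^2$ is still $O_P(\log^4(nT)s_n/T)$ so nothing is lost there, but the division by the larger $\tilde b_n$ is exactly what removes the spurious $\sqrt n$. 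With your choice the division yields only $\|\check\theta_n-\theta_n^*\|_1=O_P\!\bigl(\log^2(nT)\,s_n\sqrt n/(\sqrt T\sqrt{\max(1,n^{-1}\sum_i\|\cdot\|_1^2)})\bigr)$, which misses the claimed rate by a factor $\sqrt n$. So the proposal as written does not establish the third displayed rate; it needs the explicit inflation of $\tilde b_n$ that the paper performs.
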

\begin{remark}
The above rates resemble classical rates in high-dimensional regression: $s_n$ is the average sparsity of the model, while $T$ corresponds to the number of observations. The different $\log$ factors are due to the concentration inequality for Hawkes processes that we use (rather than classical exponential inequalities for sub-Gaussian random variables in high-dimensional regression tasks).
\end{remark}
The following lemma provides now suitable choices of $a_n,b_n,d_n,e_n$, the proof can be found in Section \ref{subsec:proofs_single_consistency}.
\begin{lemma}
\label{lem:lambda_rate}
Suppose that (A0), (A1), (PE1), (PE2), and (PE3) hold. Let $\mathcal{N}_0,\alpha_1,\alpha_2,\alpha_3,\alpha_4>0$ and $\mu\in(0,3)$ be arbitrary such that $\mu>\phi(\mu)$ where $\phi(u)=e^u-u-1$. Denote
\begin{align*}
\hat{V}_{a,\mathcal{T}}^{\mu}:=&\underset{\overline{\beta}\in K_{\beta}}{\sup_{i=1,...,n}}\frac{16\mu\int_0^T\nu_0(X_{n,i}(t);\overline{\beta})^2dN_{n,i}(t)}{(\mu-\phi(\mu))T^2}+\frac{16\|\overline{\nu}\|_{\infty}^2\left(\log(n)+p\log T+\alpha_1\log(nT)\right)}{(\mu-\phi(\mu))T^2},
\end{align*}
\begin{align*}
\hat{V}_{b,\mathcal{T}}^{\mu}:=&\underset{\beta_2: \|\beta_2\|_1=1}{\sup_{\beta_1\in K_{\beta}}}\frac{16K_{\alpha}^2\mu\sum_{i=1}^n\int_0^T\left(\int_0^1\frac{d}{d\beta}\nu_0\left(X_{n,i}(t);(1-s)\beta_n^*+s\beta_1\right)^T\beta_2ds\right)^2dN_{n,i}(t)}{(\mu-\phi(\mu))n^2T^2} \\
&\qquad\qquad+\frac{16K_{\alpha}^2L_{\nu}^2(2p+\alpha_2)\log(nT)}{(\mu-\phi(\mu))n^2T^2}, \\
\hat{V}_{d,\mathcal{T},i}^{\mu}:=&\underset{\overline{\gamma}\in K_{\gamma}}{\sup_{j\in\{1,...,n\}}}\frac{16\mu\int_0^T\left(\int_0^{t-}g(t-r;\overline{\gamma})dN_{n,j}(r)\right)^2dN_{n,i}(t)}{(\mu-\phi(\mu))T^2} \\
&\qquad\qquad+\frac{567\overline{g}^2\mathcal{N}_0^2\log^2(nT)\cdot\left(\log n+\log(nT)+\alpha_3\log T\right)}{(\mu-\phi(\mu))T^2},
\end{align*}
\begin{align*}
\hat{V}_{e,\mathcal{T}}^{\mu}:=&\sup_{\overline{\gamma}\in K_{\gamma}}\frac{\mu}{\mu-\phi(\mu)} \\
&\qquad\times\sum_{i=1}^n\int_0^T\Bigg(\sum_{j=1}^nC_{n,ij}^*\frac{4\int\limits_0^{t-}\int\limits_0^1\frac{d}{d\gamma}g(t-r;s\overline{\gamma}+(1-s)\gamma_n^*)dsdN_{n,j}(r)}{nT}\Bigg)^2 dN_{n,i}(t) \\
&\qquad\qquad+\frac{576L_g^2\mathcal{N}_0^2(1+\alpha_4)\max_{i=1,...,n}\|C_{n,i\cdot}^*\|_1^2\log^3(nT)}{(\mu-\phi(\mu))n^2T^2}.
\end{align*}
Recall the definition of $\Omega_{\mathcal{N}}$ from Lemma \ref{lem:omega_lemma} with $\mathcal{N}:=6\mathcal{N}_0\log(nT)$, and denote
\begin{align*}
a_n:=&\Bigg(2\sqrt{\hat{V}_{a,\mathcal{T}}^{\mu}\left(\log n+p\log T+\alpha_1\log(nT)\right)} \\
&\qquad\qquad+\frac{4\|\overline{\nu}\|_{\infty}\left(\log n+p\log T+\alpha_1\log(nT)\right)}{3T}\Bigg)\Ind_{\Omega_{\mathcal{N}}}, \\
b_n:=&\left(2\sqrt{\hat{V}_{b,\mathcal{T}}^{\mu}(2p+\alpha_2)\log(nT)}+\frac{4K_{\alpha}L_{\nu}(2p+\alpha_2)\log(nT)}{3nT}\right)\Ind_{\Omega_{\mathcal{N}}}, \\
d_{n,i}:=&\Bigg(2\sqrt{\hat{V}_{d,\mathcal{T},i}^{\mu}\left(\log n+\log(nT)+\alpha_3\log T\right)} \\
&\qquad\qquad+\frac{24\overline{g}\mathcal{N}_0 \log(nT)\cdot\left(\log n+\log(nT)+\alpha_3\log T\right)}{3T}\Bigg)\Ind_{\Omega_{\mathcal{N}}}, \\
e_n:=&\left(2\sqrt{\hat{V}_{e,\mathcal{T}}^{\mu}(1+\alpha_4)\log(nT)}+\frac{24L_g\mathcal{N}_0(1+\alpha_4)\max_{i=1,...,n}\|C_{n,i\cdot}^*\|_1\log^2(nT)}{3nT}\right)\Ind_{\Omega_{\mathcal{N}}}.
\end{align*}
Then, there are constants $a,c_{\Omega},c_1,c_2,c_3,c_4>0$, where $a$ depends only on $a_0$ from (PE2), such that
\begin{align}
\IP(\mathcal{T}_n(a_n,b_n,d_n,e_n)^c)&\leq c_1\frac{\max\left(1,\sup_{i=1,...,n}\|C_{n,i\cdot}^*\|_1^p\right)\log T}{(nT)^{\alpha_1}} \nonumber \\
&\qquad+c_2\frac{\max\left(1,\sup_{i=1,...,n}\|C_{n,i\cdot}^*\|_1^{2p}\right)\log(nT)}{(nT)^{\alpha_2}} \nonumber \\
&\qquad+c_3\frac{\max(1,\max_{i=1,...,n}\|C_{n,i\cdot}^*\|_1)\log(T)}{T^{\alpha_3}} \nonumber \\
&\qquad+c_4\frac{\log(nT)\left(\frac{1}{n}\|C_n^*\|_1+\frac{1}{n}\sum_{i=1}^n\|C_{n,i\cdot}^*\|_1^2\right)}{\max_{i=1,...,n}\|C_{n,i\cdot}^*\|_1(nT)^{\alpha_4}}+4c_{\Omega}(Tn)^{1-a\mathcal{N}_0}. \label{eq:probT_bound}
\end{align}
\end{lemma}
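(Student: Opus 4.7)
The event $\mathcal{T}_n(a_n,b_n,d_n,e_n)$ is the intersection of four sub-events, so I would bound each complement separately and combine via a union bound, then add $\IP(\Omega_{\mathcal{N}}^c)\leq c_{\Omega}(Tn)^{1-a\mathcal{N}_0}$ from Lemma~\ref{lem:omega_lemma} to produce the last summand of \eqref{eq:probT_bound}. The indicators $\Ind_{\Omega_{\mathcal{N}}}$ in the definitions of $a_n,b_n,d_{n,i},e_n$ mean we only need to control the four events on $\Omega_{\mathcal{N}}$, where $\sup_{j\leq n}N_{n,j}(T)\leq\mathcal{N}:=6\mathcal{N}_0\log(nT)$ provides uniform jump-size control for the integrands.

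The main probabilistic tool is a self-normalized Bernstein-type exponential inequality for local martingales driven by a counting process: for predictable $H$ with $|H|\leq b$ and $x>0$,
\[
\IP\!\left(\left|\int_0^T H(s)\,dM_{n,i}(s)\right|\geq 2\sqrt{\hat V^{\mu}x}+\tfrac{bx}{3}\right)\leq 2e^{-x},
\]
where $\hat V^{\mu}$ is a random upper bound on the predictable variation of the form $\frac{\mu}{\mu-\phi(\mu)}\int_0^T H(s)^2\,dN_{n,i}(s)$ plus a deterministic slack. The condition $\mu>\phi(\mu)$ with $\phi(\mu)=e^{\mu}-\mu-1$ arises from exponentiating the martingale and computing $\IE[e^{\mu\Delta M}\mid \mathcal{F}_{t-}]$ for counting-process martingales; this is the standard self-normalized Bernstein inequality used in the Hawkes-process literature.

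I would apply this inequality to each sub-event in turn. For the $a_n$ event, set $H(s)=\nu_0(X_{n,i}(s);\bar\beta)$, bounded by $\|\overline\nu\|_{\infty}$. Cover $K_{\beta}$ (dimension $p$, bounded by (PE1)) by an $\epsilon$-net of cardinality $O(\epsilon^{-p})$, use the Lipschitz estimate from (PE3) to bound the discretization error, apply the exponential inequality with $x=\log n+p\log T+\alpha_1\log(nT)$, and union bound over the net and over $i\in\{1,\ldots,n\}$. Choosing $\epsilon\asymp T^{-1}$ gives the leading deterministic component of $\hat V^{\mu}_{a,\mathcal{T}}$; the factor $e^{-p\log T}$ from the exponential tail cancels the $T^p$ from the net union bound, leaving $(nT)^{-\alpha_1}$. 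The polynomial factor $\max(1,\sup_i\|C_{n,i\cdot}^*\|_1^p)$ arises from the peeling device needed to replace the (random, unbounded) variance by $\hat V^{\mu}_{a,\mathcal{T}}$: the number of peeling scales scales with $\IE[N_{n,i}(T)]$, which in turn grows polynomially in $\|C_{n,i\cdot}^*\|_1/(1-a_0)$ through the Hawkes moment bound. The $b_n$ event is handled analogously with $H$ being the difference quotient $\alpha_{n,i}^*\int_0^1\nabla_\beta\nu_0(X_{n,i}(s);\beta_n^*+s(\bar\beta-\beta_n^*))^\top\beta_2\,ds$; the supremum is over a $2p$-dimensional set ($\bar\beta$ and the unit-norm direction $\beta_2$), producing the $2p\log(nT)$ factor. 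For $d_{n,i}$ and $e_n$, $H$ is the stochastic integral $\int_0^{t-}g(t-r;\bar\gamma)\,dN_{n,j}(r)$ or its $\gamma$-derivative, bounded on $\Omega_{\mathcal{N}}$ by $\overline g\mathcal{N}$ and $L_g\mathcal{N}$ respectively (yielding the $\mathcal{N}_0\log(nT)$ factors in the slack terms); discretizing the one-dimensional $K_{\gamma}$ gives a single $\log(nT)$, and union bounds over $i$ and $j$ supply the $\log n$ terms.

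The main obstacle is bookkeeping: matching the peeling scale in the self-normalized Bernstein inequality with the $\epsilon$-scale of the covering argument, while keeping $\hat V^{\mu}$ expressed in observable quantities (the explicit $\hat V^{\mu}_{a,\mathcal{T}},\hat V^{\mu}_{b,\mathcal{T}},\hat V^{\mu}_{d,\mathcal{T},i},\hat V^{\mu}_{e,\mathcal{T}}$). Tracking how the powers $\|C_{n,i\cdot}^*\|_1^p$, $\|C_{n,i\cdot}^*\|_1^{2p}$, and $\|C_{n,i\cdot}^*\|_1$ together with the various $\log$-factors compound through peeling, covering, and union bounds to produce the four distinct rates in \eqref{eq:probT_bound} is the crux of the calculation, but each step individually is standard.
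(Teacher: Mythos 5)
Your overall strategy matches the paper's proof: split $\mathcal{T}_n^c$ into its four sub-events plus $\Omega_{\mathcal{N}}^c$, restrict to $\Omega_{\mathcal{N}}$, discretize the parameter ranges, control the discretization error deterministically via (PE3), and apply Theorem~3 of \citet{HRBR15} on the resulting finite grid. However, your attribution of the polynomial factor $\max(1,\sup_i\|C_{n,i\cdot}^*\|_1^p)$ to ``the peeling device needed to replace the (random, unbounded) variance by $\hat V^{\mu}_{a,\mathcal{T}}$'' is incorrect, and following that reasoning would leave a gap. The peeling factor in Theorem~3 of \citet{HRBR15} is $2(\log_2(v/w)+1)$, which under the $\Omega_{\mathcal{N}}$-bounds produces only the extra $\log T$ (or $\log(nT)$) visible in \eqref{eq:probT_bound}, not a polynomial in $\|C_{n,i\cdot}^*\|_1$; and under (PE2) the branching ratio $a_0$ is bounded uniformly away from $1$, so $\IE[N_{n,i}(T)]$ is linear in $T$ with a bounded constant --- it does not ``grow polynomially in $\|C_{n,i\cdot}^*\|_1/(1-a_0)$.''

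The polynomial factor in fact comes from the \emph{choice of covering radius}, not from peeling. Bounding the discretization error in the $a_n$ part requires controlling
\[
\frac{2}{T}\int_0^T\left|\nu_0(X_{n,i}(t);\overline{\beta})-\nu_0(X_{n,i}(t);P_n(\overline{\beta}))\right|\,d|M_{n,i}|(t)
\]
deterministically on $\Omega_{\mathcal{N}}$, where $d|M_{n,i}|(t)=dN_{n,i}(t)+\lambda_{n,i}(t)\,dt$. The intensity contributes a term $\sum_j C_{n,ij}^*\int_0^{t-}g\,dN_{n,j}\leq\|C_{n,i\cdot}^*\|_1\overline{g}\,\mathcal{N}$, which forces the covering radius $\eta$ to scale like $\bigl(\max(1,\sup_i\|C_{n,i\cdot}^*\|_1)\,T\log(nT)\bigr)^{-1}$ rather than your $\epsilon\asymp T^{-1}$. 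The net cardinality $|K_{\beta,n,\eta}|\leq K_0\eta^{-p}$ then acquires the extra $\max(1,\sup_i\|C_{n,i\cdot}^*\|_1)^p$. The same mechanism yields the powers $2p$ and $1$ in the $b_n$ and $d_n$ rates, matching the dimensions ($p$ for $\beta_1$ plus $p$ for the unit direction $\beta_2$; $1$ for $\gamma$) of the parameter set being discretized.
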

\begin{remark}
\label{rem:tuning}
The formulas for $a_n,b_n,d_n,e_n$ provided above are clumsy, but note that they depend only on observed quantities or on interpretable constants. Therefore, we may compute in particular $d_{n,i}$ in practice and use it as tuning parameter $\omega_i$ according to Theorem \ref{thm:pre_estimate_consistency} avoiding the need of tuning parameter selection. This is due to the main tool of the proof, Theorem 3 of \citet{HRBR15}. We will discuss this further in Section \ref{subsec:cross-validation}. However, to find a convergence rate for the estimators, it is also useful to have deterministic expressions for the four sequences. Such expressions are provided by the following corollary.
\end{remark}
The short proof of the next corollary can be found in Section \ref{subsec:proofs_single_consistency}.
\begin{corollary}
\label{cor:lambda_rate}
Suppose that (A0), (A1), (PE1), (PE2), and (PE3) are true. Let $nT\to\infty$, and choose $\alpha_1,...,\alpha_4>0$ as in Lemma \ref{lem:convergence_rates}. Then, there are finite constants $K_a,K_b,K_d,K_e>0$ such that $\IP(\mathcal{T}_n(a_n,b_n,d_n,e_n))\to1$ when choosing
\begin{align*}
a_n& =K_a\frac{\log(nT)}{\sqrt{T}}, \qquad b_n=K_b\frac{\log(nT)}{\sqrt{nT}}, \\
d_{n,i}& =K_d\frac{\log^2(nT)}{\sqrt{T}}, \qquad e_n=K_e\frac{\log^2(nT)\cdot\sqrt{\frac{1}{n}\sum_{i=1}^n\|C_{n,i\cdot}^*\|_1^2}}{\sqrt{nT}}.
\end{align*}
\end{corollary}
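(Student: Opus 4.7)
The plan is to derive the corollary directly from Lemma \ref{lem:lambda_rate}: on the event $\Omega_{\mathcal{N}}$, on which the counting processes are controlled, I will bound the random variance proxies $\hat V^\mu_{a,\mathcal{T}}, \hat V^\mu_{b,\mathcal{T}}, \hat V^\mu_{d,\mathcal{T},i}, \hat V^\mu_{e,\mathcal{T}}$ by explicit deterministic sequences, substitute these into the formulas for $a_n, b_n, d_{n,i}, e_n$ from Lemma \ref{lem:lambda_rate}, and read off the claimed rates. Each $\hat V$ consists of a stochastic integral of a squared bounded kernel against $dN_{n,j}$ (or a nested such integral), plus an explicit additive log term. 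On $\Omega_{\mathcal{N}}$ the total counts $N_{n,j}(T)$ are of the expected order, and the kernels $\nu_0, \partial_\beta \nu_0, g, \partial_\gamma g$ are uniformly bounded by (PE1)--(PE3). The nested integrals in $\hat V^\mu_{d,\mathcal{T},i}$ and $\hat V^\mu_{e,\mathcal{T}}$ pick up additional $\log$ factors because the inner integral must be bounded uniformly on $[0,T]$, and $\hat V^\mu_{e,\mathcal{T}}$ inherits a factor of $\frac{1}{n}\sum_{i=1}^n \|C^*_{n,i\cdot}\|_1^2$ from squaring the sum over $j$ (together with the use of (PE2) to control row sums).

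Plugging these deterministic bounds back into Lemma \ref{lem:lambda_rate}, the leading square-root term $\sqrt{\hat V \cdot \log(nT)}$ in each of $a_n, b_n, d_{n,i}, e_n$ reduces to the claimed orders $\log(nT)/\sqrt{T}$, $\log(nT)/\sqrt{nT}$, $\log^2(nT)/\sqrt{T}$, and $\log^2(nT)\sqrt{\frac{1}{n}\sum_i \|C^*_{n,i\cdot}\|_1^2}/\sqrt{nT}$, after absorbing the $\alpha_k\log$ contributions into the constants $K_a, K_b, K_d, K_e$ (with $\alpha_1,\dots,\alpha_4>0$ chosen as in Lemma \ref{lem:convergence_rates}). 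The additive correction terms in $a_n$--$e_n$ are of the smaller orders $\log(nT)/T$ or $\log^2(nT)/(nT)$ and are therefore dominated.

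Finally, to conclude $\IP(\mathcal{T}_n(a_n,b_n,d_n,e_n)) \to 1$, I appeal to the probability bound \eqref{eq:probT_bound}: under the assumptions of Lemma \ref{lem:convergence_rates}, the first four terms on the right hand side go to zero by hypothesis, and for $\mathcal{N}_0$ chosen sufficiently large the remainder $4c_\Omega(nT)^{1-a\mathcal{N}_0}$ also vanishes. Since $\IP(\Omega_{\mathcal{N}}) \to 1$, the indicators $\Ind_{\Omega_{\mathcal{N}}}$ in the definitions of $a_n, b_n, d_{n,i}, e_n$ may be absorbed into the probability statement. The main technical obstacle will be bookkeeping the log factors in the nested integrals defining $\hat V^\mu_{d,\mathcal{T},i}$ and $\hat V^\mu_{e,\mathcal{T}}$, and in particular verifying that squaring $\sum_j C^*_{n,ij}(\cdots)$ in $\hat V^\mu_{e,\mathcal{T}}$ yields exactly the $\frac{1}{n}\sum_i \|C^*_{n,i\cdot}\|_1^2$ factor that appears in the stated rate of $e_n$; beyond this bookkeeping, no new estimates beyond those already implicit in Lemma \ref{lem:lambda_rate} are needed.
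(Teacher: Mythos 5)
Your proposal matches the paper's proof in all essentials. Both proofs proceed by bounding the random quantities $\hat V^{\mu}_{a,\mathcal{T}},\dots,\hat V^{\mu}_{e,\mathcal{T}}$ on $\Omega_{\mathcal{N}}$ (using (PE2), (PE3), and the uniform jump control that $\Omega_{\mathcal{N}}$ gives), plugging these into the explicit formulas of Lemma~\ref{lem:lambda_rate} to obtain deterministic upper bounds of the claimed orders, and then combining $\IP(\mathcal{T}_n(a_n^{\text{rand}},\dots)^c)\to 0$ from \eqref{eq:probT_bound} with $\IP(\Omega_{\mathcal{N}})\to 1$ to conclude. The only step you leave implicit is the observation, stated explicitly in the paper, that $\IP(\mathcal{T}_n(a_n,b_n,d_n,e_n))$ is monotone nondecreasing in each of its arguments; this is what licenses replacing the random bounds of Lemma~\ref{lem:lambda_rate} by deterministic dominators (and it is trivially true since $\mathcal{T}_n$ is an intersection of sub-level-set events). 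Your bookkeeping on $\hat V^{\mu}_{e,\mathcal{T}}$ is right: the leading piece of the square-root term carries the $\frac{1}{n}\sum_{i}\|C^*_{n,i\cdot}\|_1^2$ factor, and the additive $\frac{\max_i\|C^*_{n,i\cdot}\|_1\log^2(nT)}{nT}$ correction is dominated because $\max_i\|C^*_{n,i\cdot}\|_1/\sqrt{n}\leq\sqrt{\frac{1}{n}\sum_i\|C^*_{n,i\cdot}\|_1^2}$.
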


\subsection{Second Stage: De-biasing}
\label{subsec:results_debiasing}
To study the theoretical properties of the de-biasing procedure, we formulate general assumptions that do no require usage of the estimator from the previous section.\\[-9pt]

\textbf{Assumption (D1)}  \emph{Let $\nu_0$ and $g$ be twice differentiable with Lipschitz continuous second derivatives.}

This is a typical assumption that will be used to bound remainder terms in second-order Taylor expansions. Since we mainly think of $g$ and $\nu_0$ as exponential functions, we consider it to be not restrictive.

Recall $\Theta_n$ from Section \ref{subsec:debiasing} and let
$$\Theta_{\theta,n}\in\IR^{(p+1)\times(p+1+n+n^2)}$$
denote the first $p+1$ rows of $\Theta_n$. Similarly,
$$J:=\begin{pmatrix}
1 &        &               & 0      & \cdots & 0  \\
  & \ddots &               & \vdots & \ddots & \vdots \\
  &        & 1             & 0      & \cdots & 0
\end{pmatrix}\in\IR^{(p+1)\times(p+1+n+n^2)}$$
denotes the first $p+1$ rows of $I_{p+1+n+n^2}$ (the identity matrix). Let furthermore $\Theta_{0,\theta,n}$ denote the first $p+1$ rows of $\IE(\Sigma_n(C_n^*,\alpha_n^*,\theta_n^*))^{-1}$ and, for ease of notation, we define
$$S_{n,i}(t):=\begin{pmatrix}
    \partial_{\theta} \\ \partial_{\alpha} \\ \partial_C
\end{pmatrix}\Psi_{n,i}(t;C_{n,i\cdot}^*,\alpha_{n,i}^*,\theta_n^*)$$
and 
$$V_n:=\frac{1}{nT}\sum_{i=1}^n \int_0^T\IE\left(S_{n,i}(t)S_{n,i}(t)^T\Psi_{n,i}(t;C_{n,i\cdot}^*,\alpha_{n,i}^*,\theta_n^*)\right)dt.$$
To formulate our assumptions, we define the random sequence $\check{S}_n$ and a sequence $r_n$ as follows
\begin{align*}
&\check{S}_n:=\max_{i=1,...,n}\left(\|C_{n,i\bdot}^*\|_1,\|\check{C}_{n,i\bdot}\|_1,1\right)\left(\frac{1}{n}\sum_{i=1}^n\|C_{n,i\bdot}^*\|_1+1\right) \\
&\left\|\check{\theta}_n-\theta_n^*\right\|_1+\frac{1}{n}\left\|\check{\alpha}_n-\alpha_n^*\right\|_1+\frac{1}{n}\left\|\check{C}_n-C_n^*\right\|_1= O_P(r_ns_n),
\end{align*}
where $s_n$ is defined as in Lemma \ref{lem:convergence_rates}. Recall also the definitions of $\sigma_j$ and $\tau_j$ from \ref{subsec:debiasing}. We make the following assumption on the rates.\\[-9pt]

\textbf{Assumption (D2)} \emph{It holds that
\begin{align*}
n^{\frac{3}{2}}\sqrt{T}\log^2(nT)\check{S}_n\left\|\Theta_{\theta,n}\right\|_{\infty}r_n^2s_n^2&=o_P(1), \\
\max_{j=1,...,p+1}\frac{1}{\tau_j}&=O_P(1), \textrm{ and} \\
n^{\frac{3}{2}}\sqrt{T}r_ns_n\max_{j=1,...,p+1}\sigma_j& =o(1).
\end{align*}}

\textbf{Assumption (D3)}  \emph{It holds that $\Theta_{0,\theta,n}V_n\Theta_{0,\theta,n}^T\in\IR^{(p+1)\times(p+1)}$ is positive definite and converges to a positive definite matrix $M_0\in\IR^{(p+1)\times(p+1)}$ and that
$$\left\|\Theta_{\theta,n}-\Theta_{0,\theta,n}\right\|_{\infty}=o_P\left(\frac{1}{\max(1,\max_{i=1,...,n}\|C_{n,i\bdot}^*\|_1)\log(nT)}\right).$$}

We discuss these assumptions in the context of Subsection \ref{subsec:single_consistency}. It seems plausible that $\|\Theta_{\theta,n}\|_{\infty}$ remains bounded because $\Theta_{\theta,n}$ is assumed to converge to $\IE(\Sigma_n(C_n^*,\alpha_n^*,\theta_n^*))^{-1}$, the invertibility of which is a common assumption. Lemma \ref{lem:convergence_rates} shows that we may choose $r_n=\log^3(nT)/\sqrt{T}$. Thus, the first requirement of (D2) holds if
$$\frac{n^{\frac{3}{2}}\log^8(nT)\check{S}_ns_n^2}{\sqrt{T}}\to0.$$
It seems plausible to assume that $\check{S}_n$ grows very slowly, essentially requiring that $n^3/T\to0$. Note that we aim to estimate $n+n^2+p+1$ many parameters with $T$ observations per vertex. Thus, the model is still high-dimensional although from existing theory for high-dimensional estimation one might expect to see 
the condition $(\log n)/T\to0$. We believe that the reason for our stronger condition is that, in the first step, each vertex brings its own bias to the Lasso estimator. Thus, for the estimation of the global parameter $\theta$ these biases add up. It is unclear to us if this is inherent to the methodology or due of our proof technique. In any case, in a less high-dimensional regime, the biases are small enough to be handled by the de-biasing scheme which we present here. The remaining requirements of Assumption (D2) are less restrictive as we may choose $\sigma_j$ ourselves. Note that (unless there are zero columns in $\Sigma$) it is guaranteed that $\tau_j>0$. More specifically, we have as in \citet{vdGBRD14}
$$\tau_j=\|\Sigma_{\bdot,j}-\Sigma_{\bdot,-j}v_j\|_2^2+\sigma_j\|v_j\|_1.$$
Hence, a small $\sigma_j$ likely yields to a small $\tau_j$ because the regression error is also reduced. Nevertheless, in the less high-dimensional regime, it seems plausible to us that $\tau_j^{-1}$ converges to the corresponding diagonal entry of $\IE(\Sigma_n(C_n^*,\alpha_n^*,\theta_n^*)^2)^{-1}$, which are plausibly bounded from above for $j=1,...,p+1$.

Note, furthermore, for Assumption (D3) that the matrix $\Theta_{0,\theta,n}V_n\Theta_{0,\theta,n}$ is always positive semi-definite. Thus, we assume in (D3) its invertibility which seems in view of the fact that $p$ is fixed a weak assumption. The existence of $M_0$ is also plausible because one might even believe that $V_n$ and $\IE(\Sigma_n(C_n^*,\alpha_n^*,\theta_n^*)$ do not change with $n$. Also the convergence rate of $\Theta_{\theta,n}$ is assumed to be rather slow.\\[-9pt]

\textbf{Assumption (D4)} 
\emph{It holds for any $a\in\IR^{p+1}$ that
\begin{align*}
&\Theta_{0,\theta,n}\frac{1}{nT}\sum_{i=1}^n\int_0^TS_{n,i}(t)S_{n,i}(t)^T\Psi_{n,i}(t;C_{n,i\cdot}^*,\alpha_{n,i}^*,\theta_n^*)dt\Theta_{0,\theta,n}^T\overset{\IP}{\to}M_0, \\
&\frac{1}{(nT)^{\frac{3}{2}}}\sum_{i=1}^n\int_0^T\left|a^T\left(\Theta_{0,\theta,n}V_n\Theta_{0,\theta,n}^T\right)^{-\frac{1}{2}}\Theta_{0,\theta,n}S_{n,i}(t)\right|^3\Psi_{n,i}(t;C_{n,i\cdot}^*,\alpha_{n,i}^*,\theta_n^*)dt=o_P(1).
\end{align*}}

Recall the definition of $M_0$ from Assumption (D3). In Assumption (D4), we require that its sample version converges. The only difficulty here is that $S_{n,i}$ are of increasing dimension, which we assume here to not cause problems. The second part is of (D4) is also plausible in light of (D3). Note furthermore that the first part of Assumption (D4) implicitly requires the covariates the covariates $X_{n,i}(t)$ to stabilize. Thus, $X_{n,i}(t)$ should fulfill some weak stationarity condition. But $X_{n,i}(t)$ being independent of $i$ is not violating this assumptions.

\begin{theorem}
\label{thm:de-biasing}
Suppose that Assumptions (D1)-(D4) hold. Then,
$$\sqrt{nT}\left(\Theta_{0,\theta,n}V_n\Theta_{0,\theta,n}^T\right)^{-\frac{1}{2}}\left(\overline{\theta}_n-\theta_n^*\right)\overset{d}{\to}\mathcal{N}(0,4I_{p+1}).$$
\end{theorem}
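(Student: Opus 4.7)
The plan is to adapt the de-biased Lasso argument of \citet{vdGBRD14} to our partially penalized setting, replacing the independent-sample CLT by a martingale CLT for stochastic integrals against the $M_{n,i}$. Write the unpenalized score as
$$\mathcal{S}_n(C,\alpha,\theta):=\frac{1}{nT}\sum_{i=1}^n\begin{pmatrix}\partial_\theta\\\partial_\alpha\\\partial_C\end{pmatrix}\textrm{LS}_i(C_{i\bdot},\alpha_i,\theta),$$
and note that, since $\lambda_{n,i}=\Psi_{n,i}(\cdot;C_{n,i\bdot}^*,\alpha_{n,i}^*,\theta_n^*)$, a direct calculation gives $\mathcal{S}_n(C_n^*,\alpha_n^*,\theta_n^*)=-\frac{2}{nT}\sum_{i=1}^n\int_0^T S_{n,i}(t)\,dM_{n,i}(t)$. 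Taylor-expanding $\mathcal{S}_n$ around the truth to first order, with the quadratic remainder controlled via the Lipschitz bounds on the second derivatives of $\nu_0$ and $g$ given by (D1), yields $\mathcal{S}_n(\check C_n,\check\alpha_n,\check\theta_n)=\mathcal{S}_n(C_n^*,\alpha_n^*,\theta_n^*)+\Sigma_n d_n+R_n$, where $d_n:=(\check\theta_n-\theta_n^*,\check\alpha_n-\alpha_n^*,\mathrm{vec}(\check C_n-C_n^*))$. Substituting this into the de-biasing formula \eqref{eq:debiased_lasso} produces the basic decomposition
$$\overline{\theta}_n-\theta_n^*=\underbrace{-\Theta_{\theta,n}\mathcal{S}_n(C_n^*,\alpha_n^*,\theta_n^*)}_{\text{(I) stochastic leading term}}+\underbrace{(J-\Theta_{\theta,n}\Sigma_n)d_n}_{\text{(II) approximate-inverse error}}-\underbrace{\Theta_{\theta,n}R_n}_{\text{(III) Taylor remainder}}.$$

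Term (II) is bounded in $\ell_\infty$ by $\max_j(\sigma_j/\tau_j)\,\|d_n\|_1$ via the node-wise Lasso inequality \eqref{eq:node_wise_lasso}. Lemma \ref{lem:convergence_rates} implies $\|d_n\|_1=O_P(n r_n s_n)$, the $n$ coming from the $\alpha$- and $C$-blocks, so the second and third parts of (D2) — namely $\max_j\tau_j^{-1}=O_P(1)$ and $n^{3/2}\sqrt{T}r_n s_n\max_j\sigma_j=o(1)$ — give $\sqrt{nT}\,\|\text{(II)}\|_\infty=o_P(1)$. For term (III), the crucial structural observation is that $\Psi_{n,i}$ is linear in $(\alpha_i,C_{i\bdot})$ and depends on $(\beta,\gamma)$ only through $\nu_0$ and $g$; hence most third-order partial derivatives of $\textrm{LS}_i$ vanish and the surviving ones are controlled by the Lipschitz constants supplied by (D1). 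A careful accounting bounds $\|R_n\|_\infty$ by a multiple of $n\,\log^2(nT)\,\check S_n\, r_n^2 s_n^2$, and the first part of (D2) then yields $\sqrt{nT}\,\|\Theta_{\theta,n}R_n\|_\infty=o_P(1)$.

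It remains to analyse term (I), which equals $\frac{2}{nT}\Theta_{\theta,n}\sum_{i=1}^n\int_0^T S_{n,i}(t)\,dM_{n,i}(t)$. I would first replace $\Theta_{\theta,n}$ by $\Theta_{0,\theta,n}$: by the second part of (D3) the operator $\infty$-norm discrepancy is $o_P\bigl((\max(1,\max_i\|C_{n,i\bdot}^*\|_1)\log(nT))^{-1}\bigr)$, while a Hawkes concentration argument in the spirit of Lemma \ref{lem:lambda_rate} gives $\|\mathcal{S}_n(C_n^*,\alpha_n^*,\theta_n^*)\|_\infty=O_P\bigl(\max(1,\max_i\|C_{n,i\bdot}^*\|_1)\sqrt{\log(nT)/(nT)}\bigr)$, so the replacement costs $o_P(1/\sqrt{nT})$. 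The remaining quantity, multiplied by $\sqrt{nT}$ and pre-multiplied by $(\Theta_{0,\theta,n}V_n\Theta_{0,\theta,n}^T)^{-1/2}$, is a $(p+1)$-dimensional stochastic integral whose predictable quadratic variation at time $T$ converges in probability to $4I_{p+1}$ by the first part of (D4) together with $\Theta_{0,\theta,n}V_n\Theta_{0,\theta,n}^T\to M_0$ from (D3); the Lindeberg condition follows from the third-moment bound in the second part of (D4). A multivariate martingale CLT (e.g.\ Rebolledo's theorem) then delivers the $\mathcal{N}(0,4I_{p+1})$ limit, and Slutsky finishes the argument.

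The hardest step is the control of the Taylor remainder (III). The parameter vector $d_n$ lives in dimension $p+1+n+n^2$, the relevant derivatives of $\mathcal{S}_n$ are effectively fourth-order tensors of $\textrm{LS}_i$, and only by exploiting the bilinear dependence of $\Psi_{n,i}$ on $(\alpha_i,C_{i\bdot})$ versus its genuine nonlinearity in $(\beta,\gamma)$ can one isolate and bound the surviving cross-terms. Keeping this remainder $o_P(1/\sqrt{nT})$ as both $n$ and $T$ grow is exactly what forces the stringent condition $n^{3/2}\sqrt{T}\log^2(nT)\check S_n\|\Theta_{\theta,n}\|_\infty r_n^2 s_n^2=o_P(1)$ imposed in (D2), and verifying that our remainder bounds fit comfortably beneath this threshold constitutes the principal technical burden of the proof.
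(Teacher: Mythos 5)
Your proposal follows the same strategy as the paper's proof: the identical three-term decomposition into the leading martingale term, the node-wise-Lasso approximate-inverse error, and the Taylor remainder, with (D2) killing terms (II) and (III), (D3) and a concentration bound allowing the swap $\Theta_{\theta,n}\to\Theta_{0,\theta,n}$, and Rebolledo's theorem with (D4) handling the leading term. The only divergence is that the paper first derives the KKT stationarity conditions for the penalized problem and then observes that the Lagrange multipliers cancel out of the final expansion, whereas you substitute the de-biasing formula directly into the Taylor expansion of the unpenalized score — algebraically equivalent and slightly more direct, since the multipliers never enter the estimate itself.
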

The proof of the Theorem is presented in Section \ref{subsec:proofs_debiasing}.

\subsection{Third Stage Estimation}
\label{subsec:consistency}
In this section, we study the performance of the third stage estimators $\hat{C}_n$ and $\hat{\alpha}_n$ which are obtained from the de-biased estimator $\hat{\theta}_n$. More specifically, we define for every $\theta\in\Theta$
\begin{align}
(\hat{C}_n(\theta),\hat{\alpha}_n(\theta)):=&\argmin{(C,\alpha)\in \mathcal{H}_n(\theta)}\frac{1}{n}\sum_{i=1}^n\left(\frac{1}{T}\LS_i(C_{i\bdot},\alpha_i,\theta)+2\omega_i\|C_{i\bdot}\|_1\right), \label{eq:partial_optim}
\end{align}
where
$$\mathcal{H}_n(\theta) = \{(C,\alpha): (C,\alpha,\theta) \in {\cal H}_n\}.$$
The third stage estimator is, hence, given by $(\hat{C}_n,\hat{\alpha}_n)=(\hat{C}_n(\overline{\theta}_n),\hat{\alpha}_n(\overline{\theta}_n))$. Note that, similarly as in \eqref{eq:split_estimator}, the representation in \eqref{eq:partial_optim} allows conveniently for a term-wise optimization of the sum. This allows us to formulate all results and assumptions separately for each vertex. Specifically, if interest lies on a subset of vertices, it is not required to compute the entire estimator, but one may restrict the rows of $C$ and entries of $\alpha$ of interest.

While we keep assuming in the following that the true intensity functions of the observed counting processes have the Hawkes form as in \eqref{eq:Hawkes_dynamics}, we need to acknowledge in this section that the estimate for $\theta_n^*$ is not correct. Therefore, we have to study the estimators for $C_n^*$ and $\alpha_n^*$ that best represent the data under a potentially wrong $\overline{\theta}_n$. Hence, the main result of this paper is an oracle type inequality (similar to, e.g., Theorem 6.2 in \citet{GB11}). The key difference is that we study a random loss. Therefore, we will have to define, for each vertex, a random compatibility constant.

For a given $\theta$, we call a pair $(C,\alpha)$ an oracle in this context if it best approximates the true, unknown intensity functions $\lambda_{n,i}$ among all $(C,\alpha)$ that have the same sparsity structure as $C_n^*$. The following lemma contains a formal definition. Its proof will be given in Section \ref{subsec:proofs_consistency}.
\begin{lemma}
\label{lem:od}
Let (PE1) hold and fix $\theta\in\Theta$. There exists $(C_n^*(\theta),\alpha_n^*(\theta),\theta)\in\overline{\mathcal{H}}_n$ (the closure of $\mathcal{H}_n$) such that we have for all $i\in\{1,...,n\}$
$$(C_n^*(\theta),\alpha_n^*(\theta))\in\argmin{(C,\alpha)}\left\|\Psi_{n,i}(\cdot;C_{i\bdot},\alpha_i,\theta)-\lambda_{n,i}\right\|_T^2,$$
where the argmin is taken over all $(C,\alpha)\in\mathcal{H}_n(\theta)$ with $S_i(C)=S_i(C_n^*)$ for all $i=1,...,n$. $(C_n^*(\theta),\alpha_n^*(\theta))$ is called the \emph{oracle for given $\theta$}.
\end{lemma}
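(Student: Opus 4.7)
The plan is to observe that the optimization decouples across vertices, since $\Psi_{n,i}(t; C_{i\bdot}, \alpha_i, \theta)$ depends only on the $i$-th row of $C$ and on $\alpha_i$. Fix $\theta = (\beta, \gamma) \in \Theta$ and $i \in \{1, \ldots, n\}$, write $S_i := S_i(C_n^*)$, and set $K_\gamma := (\int_0^\infty g(t;\gamma) dt)^{-1}$. Passing to the closure of the strict inequality $\|C_{i\bdot}\|_1 < K_\gamma$ built into $\mathcal{H}_n$, the $i$-th subproblem becomes the minimization of
$$f_i(c, a) := \|\Psi_{n,i}(\cdot; c, a, \theta) - \lambda_{n,i}\|_T^2$$
over the closed convex set $\overline{F}_i := \{(c, a) \in [0,\infty)^n \times [0, \infty) : c_j = 0 \text{ for } j \notin S_i,\ \|c\|_1 \leq K_\gamma\}$. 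Continuity of $f_i$ and density of the open constraint set in its closure show that the infimum over the feasible region in $\mathcal{H}_n(\theta)$ intersected with the sparsity restriction coincides with the minimum over $\overline{F}_i$.

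The functional $f_i$ is a non-negative, continuous, convex quadratic in $(c, a)$. The $c$-component of $\overline{F}_i$ lies in the compact simplex $\{c \in [0,\infty)^{|S_i|} : \|c\|_1 \leq K_\gamma\}$; the only remaining subtlety is that $a$ is not a priori bounded, and I would handle this through a coercivity argument in $a$. With $\phi_i(t) := \nu_0(X_{n,i}(t); \beta)$ and $h_{ij}(t) := \int_{-\infty}^{t-} g(t-r;\gamma) dN_{n,j}(r)$, a direct expansion shows that the coefficient of $a^2$ in $f_i$ equals $\int_0^T \phi_i(t)^2 dt$, while the linear-in-$a$ contribution is $-2a \int_0^T \phi_i(t)\bigl(\lambda_{n,i}(t) - \sum_{j} c_j h_{ij}(t)\bigr) dt$, whose magnitude is uniformly bounded over the compact $c$-slice (the path-wise integrals are finite under (A0)-(A1) and (PE1), since each $N_{n,j}$ has almost surely finitely many events on $[0, T]$ and $\nu_0(X_{n,i}(\cdot);\beta)$ is bounded).

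If $\int_0^T \phi_i(t)^2 dt > 0$, the strict positivity of the $a^2$-coefficient dominates the at-most-linear growth of the remaining terms, so $f_i(c, a) \to \infty$ as $a \to \infty$ uniformly over the compact $c$-slice. The minimization can then be restricted to $\overline{F}_i \cap \{a \leq A_0\}$ for some finite $A_0$, a compact set on which continuity of $f_i$ delivers a minimizer. In the degenerate case $\phi_i \equiv 0$ almost everywhere on $[0, T]$, $f_i$ does not depend on $a$; one then sets $\alpha_{n,i}^*(\theta) := 0$ and reduces to a continuous minimization over the compact $c$-slice, which again attains its minimum. Assembling the row-wise minimizers yields the pair $(C_n^*(\theta), \alpha_n^*(\theta)) \in \overline{\mathcal{H}}_n(\theta)$ required by the lemma, with $S_i(C_n^*(\theta)) \subseteq S_i(C_n^*)$ by construction. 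The only genuinely delicate point is this coercivity dichotomy in $a$; everything else is standard convex analysis on a compact set.
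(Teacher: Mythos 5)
Your proof is correct and follows the same decoupling strategy as the paper — row-wise minimization on the closure of the constraint set, then reassembly into a single $(C_n^*(\theta),\alpha_n^*(\theta))$. The substantive difference is that the paper's proof simply asserts ``by continuity, the minimum is attained on $\overline{\mathcal{H}}_n(\theta)$,'' which, read literally, is not sufficient: the $\alpha$-coordinates of $\overline{\mathcal{H}}_n(\theta)$ are unbounded, so closedness plus continuity alone does not yield a minimizer. (The paper's claim can be rescued, for instance, by invoking the Frank--Wolfe theorem for convex quadratics bounded below on a polyhedral set, but this is left implicit.) Your coercivity dichotomy — positive $\int_0^T \phi_i^2\,dt$ forces $f_i(c,a)\to\infty$ uniformly on the compact $c$-simplex, while $\phi_i\equiv0$ makes $f_i$ constant in $a$ so one may set $\alpha_{n,i}^*(\theta)=0$ — is an elementary and explicit way to close exactly that gap. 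You also correctly observe that attaining the minimum requires relaxing $S_i(C)=S_i(C_n^*)$ to $S_i(C)\subseteq S_i(C_n^*)$ (i.e.\ $c_j=0$ for $j\notin S_i$), since the exact-support condition is not closed; this relaxation is implicit in the paper's use of $\overline{\mathcal{H}}_n(\theta)$ but you make it explicit, which is the right reading of the lemma statement. In short: same route, but your version supplies the existence argument the paper elides.
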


Note that $(C_n^*(\theta_n^*),\alpha_n^*(\theta_n^*))=(C_n^*,\alpha_n^*)$. In order to avoid notation overload, we define the individual compatibility constant directly for the true active sets $S_i(C_n^*)$.

\begin{definition}
\label{def:rcc}
For any $i\in\{1,...,n\}$, $\theta\in\Theta$, and any $L>0$ we define the \emph{individual random compatibility constant for $i$, $\theta$, and $L$} by
$$\phi_{i,\textrm{comp}}(L;\theta):=\sqrt{\inf_{(C,\alpha) \in \tilde\cR_n(i;\theta,L)}\frac{\frac{1}{T}\|\Psi_{n,i}(\cdot;C_{i\bdot},\alpha_i,\theta)-\Psi_{n,i}(\cdot;C_{n,i\bdot}^*(\theta),\alpha_{n,i}^*(\theta),\theta)\|^2_T}{\|C_{iS_i(C_n^*)}-C_{n,iS_i(C_n^*)}^*(\theta)\|_2^2+|\alpha_i-\alpha_{n,i}^*(\theta)|^2}},$$
where $\tilde{\cR}_n(i;\theta,L) \subseteq \mathcal{H}_n(\theta)$ is the set of all pairs $(C,\alpha) \in \mathcal{H}_n(\theta)$ for which
\begin{align*}
\|C_{n,iS_i(C_n^*)^c}-C_{n,iS_i(C_n^*)^c}^*(\theta)\|_1\leq7\left\|C_{n,iS_i(C_n^*)}-C^*_{n,iS_i(C_n^*)}(\theta)\right\|_1+3L\left|\alpha_i(\theta)-\alpha^*_{n,i}(\theta)\right|.
\end{align*}
\end{definition}

The above definition can be used to formulate a restricted eigenvalue condition similar to the one that can be found in Chapter 6.2 in \cite{GB11} or \citet{BYT09} and thus falls in the general class of compatibility conditions. We define
\begin{align*}
\mathcal{E}_i(C,\alpha,\theta):=\textrm{LS}_i(C_{i\bdot},\alpha_i,\theta)+2\int_0^T\Psi_{n,i}(t;C_{i\bdot},\alpha_i,\theta)dM_{n,i}(t).
\end{align*}
Then, $\mathcal{E}(C,\alpha,\theta)=\sum_{i=1}^n\mathcal{E}_i(C,\alpha,\theta)$. Furthermore, for $i=1,...,n$ and $a_n,d_{n,i}>0$, define the events
\begin{align*}
\mathcal{T}_n^{(i)}(a_n,d_{n,i}):=&\left\{\sup_{\overline{\beta}\in K_{\beta}}\left|\frac{2}{T}\int_0^T\nu_0(X_{n,i}(t);\overline{\beta})dM_{n,i}(t)\right|\leq a_n\right\} \\
&\cap\left\{\sup_{j=1,...,n}\sup_{\overline{\gamma}\in K_{\gamma}}\left|\frac{2}{T}\int_0^T\int_0^{t-}g(t-r;\overline{\gamma})dN_{n,j}(r)dM_{n,i}(t)\right|\leq d_{n,i}\right\}.
\end{align*}
Using the above notation, we can now formulate the following oracle result, the proof of which is similar to that of Theorem 6.2 in \citet{GB11}; we provide the details for our setting for the sake of completeness in Appendix \ref{sup:consistency}.

\begin{theorem}
\label{thm:oracle}
Suppose that (PE1) holds and let $\theta$ be an arbitrary $\Theta$-valued random variable. Suppose that $3d_{n,i}\leq\omega_i$ and let $L>\sup_{i=1,...,n}a_n/\omega_i$. For all $i\in\{1,...,n\}$, define (let $x/0:=\infty$ for all $x\in\IR$)
$$\epsilon^*_i(\theta):=\frac{4}{T}\left\|\Psi_{n,i}(\cdot;C^*_{n,i\bdot}(\theta),\alpha_{n,i}^*(\theta),\theta)-\lambda_{n,i}\right\|_T^2+9\cdot\frac{4\omega_i^2|S_i(C_n^*)|+a_n^2}{\phi_{i,\textrm{comp}}^2(L;\theta)},$$
where $(C_n^*(\theta),\alpha_n^*(\theta))$ is the oracle for $\theta$ as in Lemma \ref{lem:od}. Then, on the event $\mathcal{T}_n^{(i)}(a_n,d_{n,i})$,
\begin{align*}
&\frac{1}{T}\left\|\Psi_{n,i}(\cdot;\hat{C}_{n,i\bdot}(\theta),\hat{\alpha}_{n,i}(\theta),\theta)-\lambda_{n,i}\right\|_T^2+2\omega_i\|\hat{C}_{n,i\bdot}(\theta)-C^*_{n,i\bdot}(\theta)\|_1 \\
&\qquad+2a_n|\hat{\alpha}_n(\theta)_{n,i}-\alpha^*_{n,i}(\theta)|\leq2\epsilon^*_i(\theta).
\end{align*}
\end{theorem}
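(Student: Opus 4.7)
The plan is to adapt the standard Lasso oracle inequality argument (e.g.\ Theorem 6.2 of B\"uhlmann and van de Geer) to the multivariate counting-process setting. Since the penalized least-squares criterion in \eqref{eq:partial_optim} decouples over $i$, I will fix $\theta$ and $i$ throughout and do everything at the vertex level. The starting point is the \emph{basic inequality}: by optimality of $(\hat{C}_{n,i\bdot}(\theta),\hat{\alpha}_{n,i}(\theta))$ applied to the feasible competitor $(C^*_{n,i\bdot}(\theta),\alpha^*_{n,i}(\theta))$ (a small continuity argument handles that the oracle lies in $\overline{\mathcal{H}}_n$), one gets $T^{-1}\mathrm{LS}_i(\hat{C},\hat{\alpha},\theta) + 2\omega_i \|\hat{C}_{n,i\bdot}\|_1 \le T^{-1}\mathrm{LS}_i(C^*,\alpha^*,\theta) + 2\omega_i \|C^*_{n,i\bdot}\|_1$. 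I then rewrite each $\mathrm{LS}_i$ by inserting $dN_{n,i}=\lambda_{n,i}\,dt+dM_{n,i}$, which gives $\mathrm{LS}_i(C,\alpha,\theta) = \|\Psi_{n,i}(\cdot;C,\alpha,\theta)-\lambda_{n,i}\|_T^2 - \|\lambda_{n,i}\|_T^2 - 2\int_0^T \Psi_{n,i}\,dM_{n,i}$. Subtracting, the $\|\lambda_{n,i}\|_T^2$ terms cancel and the basic inequality becomes one involving squared prediction errors and a martingale remainder.

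The next step is to bound the martingale remainder. Because $\Psi_{n,i}$ is linear in $(C_{i\bdot},\alpha_i)$ for fixed $\theta$, the remainder expands into $(\hat{\alpha}_{n,i}-\alpha^*_{n,i})\cdot\frac{2}{T}\int_0^T\nu_0(X_{n,i}(t);\beta)\,dM_{n,i}(t) + \sum_j(\hat{C}_{n,ij}-C^*_{n,ij})\cdot\frac{2}{T}\int_0^T\int_0^{t-}g(t-r;\gamma)\,dN_{n,j}(r)\,dM_{n,i}(t)$. On the event $\mathcal{T}_n^{(i)}(a_n,d_{n,i})$, each empirical process factor is bounded uniformly in $\beta\in K_\beta$ resp.\ $\gamma\in K_\gamma$, so the remainder is at most $a_n|\hat{\alpha}_{n,i}-\alpha^*_{n,i}(\theta)|+d_{n,i}\|\hat{C}_{n,i\bdot}-C^*_{n,i\bdot}(\theta)\|_1$. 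Now I exploit the support property of the oracle: by construction $C^*_{n,iS^c}(\theta)=0$ where $S:=S_i(C_n^*)$, so $\|\hat{C}_{n,i\bdot}\|_1-\|C^*_{n,i\bdot}(\theta)\|_1\ge\|\hat{C}_{n,iS^c}-C^*_{n,iS^c}(\theta)\|_1-\|\hat{C}_{n,iS}-C^*_{n,iS}(\theta)\|_1$. Splitting the $\|\hat{C}-C^*\|_1$ term on the RHS and using the assumption $3d_{n,i}\le\omega_i$ to absorb $d_{n,i}\|\hat{C}_{n,iS^c}-C^*_{n,iS^c}\|_1$ on the LHS yields the intermediate inequality
\begin{align*}
\tfrac{1}{T}\|\Psi_{n,i}(\cdot;\hat{C},\hat{\alpha},\theta)-\lambda_{n,i}\|_T^2 + \tfrac{5\omega_i}{3}\|\hat{C}_{n,iS^c}-C^*_{n,iS^c}(\theta)\|_1 \le \tfrac{1}{T}\|\Psi_{n,i}(\cdot;C^*,\alpha^*,\theta)-\lambda_{n,i}\|_T^2 + a_n|\hat{\alpha}_{n,i}-\alpha^*_{n,i}(\theta)| + \tfrac{7\omega_i}{3}\|\hat{C}_{n,iS}-C^*_{n,iS}(\theta)\|_1.
\end{align*}

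From here I run the usual dichotomy on whether the approximation error $E^*:=T^{-1}\|\Psi_{n,i}(\cdot;C^*,\alpha^*,\theta)-\lambda_{n,i}\|_T^2$ dominates the two estimation terms on the RHS. In the case where $E^*$ dominates, the inequality directly delivers the approximation-error part of $2\epsilon_i^*$ and a bound on the $S^c$ penalty, and adding $2\omega_i\|\hat{C}-C^*\|_1+2a_n|\hat{\alpha}-\alpha^*|$ on both sides absorbs what remains. In the complementary case, one gets the cone inequality $\|\hat{C}_{n,iS^c}-C^*_{n,iS^c}(\theta)\|_1\le 7\|\hat{C}_{n,iS}-C^*_{n,iS}(\theta)\|_1+3L|\hat{\alpha}_{n,i}-\alpha^*_{n,i}(\theta)|$, where the condition $L>\sup_i a_n/\omega_i$ is precisely what lets the $a_n$-term be absorbed into $3L|\hat{\alpha}-\alpha^*|$ with the correct sign. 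Thus $(\hat{C}_{n,i\bdot},\hat{\alpha}_{n,i})\in\tilde{\mathcal{R}}_n(i;\theta,L)$ and the individual random compatibility constant $\phi_{i,\mathrm{comp}}(L;\theta)$ applies, giving $\|\hat{C}_{n,iS}-C^*_{n,iS}(\theta)\|_2^2+|\hat{\alpha}_{n,i}-\alpha^*_{n,i}(\theta)|^2 \le T^{-1}\phi_{i,\mathrm{comp}}^{-2}\|\Psi_{n,i}(\cdot;\hat{C},\hat{\alpha},\theta)-\Psi_{n,i}(\cdot;C^*,\alpha^*,\theta)\|_T^2$. Combining with $\|v_S\|_1\le|S|^{1/2}\|v_S\|_2$, the triangle-inequality bound $\|\Psi(\hat{C})-\Psi(C^*)\|_T^2\le 2\|\Psi(\hat{C})-\lambda\|_T^2+2\|\Psi(C^*)-\lambda\|_T^2$, and an AM-GM step $2xy\le\eta x^2+y^2/\eta$ on each of the two offending products $\omega_i|S|^{1/2}\|\hat{C}_{n,iS}-C^*_{n,iS}\|_2$ and $a_n|\hat{\alpha}-\alpha^*|$, I absorb the $T^{-1}\|\Psi(\hat{C})-\lambda\|_T^2$ term on the LHS and land on the variance contribution $9(4\omega_i^2|S_i(C_n^*)|+a_n^2)/\phi_{i,\mathrm{comp}}^2(L;\theta)$ and the approximation contribution $4E^*$ that together make up $\epsilon_i^*(\theta)$, with overall factor $2$ from the doubling.

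The main obstacle is calibrating the constants in the dichotomy and the AM-GM splitting so that the final bound is exactly $2\epsilon_i^*$ rather than a larger multiple; in particular one needs the coefficient on $\|\Psi(\hat{C})-\lambda\|_T^2/T$ arising from the compatibility step to be less than one so it can be absorbed into the LHS, which forces the appearance of the factor $9$ in front of the variance term and the factor $4$ in front of $E^*$. A secondary subtlety is that the oracle need not lie inside $\mathcal{H}_n$ itself; this is resolved either by a direct approximation argument (minimizers over $\mathcal{H}_n(\theta)$ can be compared with any element of its closure by continuity of $\mathrm{LS}_i$ and the $\ell_1$-norm) or by observing that the basic inequality is preserved under limits.
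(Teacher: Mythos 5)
Your proposal follows the paper's proof of Theorem \ref{thm:oracle} essentially step for step: the same basic inequality from optimality at the oracle competitor, the same martingale bound on $\mathcal{T}_n^{(i)}(a_n,d_{n,i})$ exploiting linearity of $\Psi_{n,i}$ in $(C_{i\bdot},\alpha_i)$, the same use of the oracle's support together with $\omega_i\geq 3d_{n,i}$ to move the penalty onto $S_i(C_n^*)^c$, the same Case I/Case II dichotomy, and in Case II the same entry into the cone $\tilde{\cR}_n(i;\theta,L)$ via $L>\sup_i a_n/\omega_i$ followed by the individual compatibility constant. Your intermediate inequality carries a slightly sharper coefficient ($\tfrac{5}{3}\omega_i$ rather than the paper's $\omega_i$) in front of the $S^c$-penalty, which is self-consistent and harmless. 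Your remark that the oracle lies only in $\overline{\mathcal{H}}_n$ and that the basic inequality survives passage to the closure is a subtlety the paper leaves implicit and your continuity argument handles it correctly.

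The one step of your sketch that would not deliver the stated constants is the final absorption in Case II. Write $y := T^{-1/2}\|\Psi_{n,i}(\cdot;\hat{C}_{n,i\bdot}(\theta),\hat{\alpha}_{n,i}(\theta),\theta)-\lambda_{n,i}\|_T$, $z := T^{-1/2}\|\Psi_{n,i}(\cdot;C_{n,i\bdot}^*(\theta),\alpha_{n,i}^*(\theta),\theta)-\lambda_{n,i}\|_T$, and $x := 8\sqrt{4\omega_i^2|S_i(C_n^*)|+a_n^2}/\phi_{i,\textrm{comp}}(L;\theta)$. You propose squaring the triangle inequality, bounding the squared difference by $2Ty^2+2Tz^2$, then AM-GM. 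This forces identical coefficients on $y^2$ and $z^2$; since the coefficient on $y^2$ must not exceed one in order to be absorbed into the left-hand side, the optimal choice yields at best $\tfrac12 x^2 + y^2 + z^2$ for the product $x\cdot T^{-1/2}\|\Psi_{n,i}(\cdot;\hat{C},\hat{\alpha},\theta)-\Psi_{n,i}(\cdot;C^*,\alpha^*,\theta)\|_T$, giving a variance constant of $\tfrac12\cdot 64 = 32$ instead of the $18$ required to equal $2\epsilon^*_i(\theta)$. The paper avoids this by keeping the triangle inequality unsquared (so the estimate reads $x(y+z)$) and applying the asymmetric weighted bound $x(y+z)\le\tfrac{9}{32}x^2+y^2+8z^2$, which is the sum of the two AM-GMs $xy\le\tfrac14 x^2+y^2$ and $xz\le\tfrac1{32}x^2+8z^2$: this puts coefficient $1$ on $y^2$ (absorbed), coefficient $8$ on $z^2$, and $\tfrac{9}{32}\cdot 64 = 18$ on the variance term --- exactly $2\epsilon_i^*(\theta)$. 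Replacing your squared triangle inequality with this asymmetric split is all that is needed to make the sketch land on the stated bound.
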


To derive convergence rates from Theorem \ref{thm:oracle}, we have to assume a lower bound on $\phi_{i,\textrm{comp}}(L;\theta)$. Since  the following is a random condition that applies to individuals, we call it the \emph{Individual Random Compatibility Condition}.\\[-9pt]

\textbf{Individual Random Compatibility Condition (IRCC)}  \emph{For $L>0$, $\phi_0>0$, and $U\subseteq\Theta$, we define the event
$$\Omega_{IRCC}^{(i)}(L,\phi_0;U):=\left\{\inf_{\theta\in U}\phi_{i,\textrm{comp}}(L;\theta)>\phi_0\right\}.$$
For each realisation in $\Omega_{IRCC}^{(i)}(L,\phi_0;\theta)$ we say that the \emph{individual random compatibility condition (IRCC)} holds.}

\begin{remark}
\label{rem:ind_comp}
Using the notation from Section \ref{sec:computation}, we see that
\begin{align*}
&\frac{1}{T}\left\|\Psi_{n,i}(\cdot;C_{i\bdot},\alpha_i,\theta)-\Psi_{n,i}(\cdot;C^*_{n,i\bdot}(\theta),\alpha^*_{n,i}(\theta),\theta)\right\|_T^2 \\
=&\begin{pmatrix}
\alpha_i-\alpha_{n,i}^*(\theta) \\ C_{i\bdot}^T-C_{n,i\bdot}^*(\theta)^T
\end{pmatrix}^T\underbrace{\frac{1}{T}\begin{pmatrix}
V_{n,ii}(\beta) & G_{n,i\bdot}(\gamma) \\
G_{n,i\bdot}(\gamma)^T & \Gamma_n(\gamma)
\end{pmatrix}}_{=:M(\theta)}\begin{pmatrix}
\alpha_i-\alpha_{n,i}^*(\theta) \\ C_{i\bdot}^T-C_{n,i\bdot}^*(\theta)^T
\end{pmatrix}.
\end{align*}
Therefore, the fraction in the defintion of $\phi_{i,\textrm{comp}}(L;\theta)$ is lower bonded by the Rayleigh-coefficient of the positive-semidefinite matrix $M(\theta)$ and, hence, $\phi_{i,\textrm{comp}}(L;\theta)$ is lower bounded by the smallest eigenvalue of $M(\theta)$. In the case of large $T$, which is the scenario we consider in this subsection, it is plausible that this smallest eigenvalue is bounded from below. Since $M(\theta)$ is continuous in $\theta$, it seem even plausible that there is a uniform lower bound on the smallest eigenvalues of $M(\theta)$ for all $\theta\in U$, where $U\subseteq\Theta$ is a small subset.
\end{remark}

\begin{corollary}
\label{cor:ind_conv_rate}
Let Assumptions (A0), (A1), (PE1)-(PE3), and (D1)-(D4) hold. Let $a_n$ and $d_{n,i}$ be as in Lemma \ref{lem:lambda_rate}, with the constants chosen such that the right hand side of \eqref{eq:probT_bound} converges to zero, and set $\omega_i:=3d_{n,i}$. Suppose that there are constants $\phi_0$, $r$, $\kappa_1$, $\kappa_2$, $K_1$, $K_2>0$ such that
\begin{align*}
&\IP\left(\bigcap_{i=1}^n\Omega_{\textrm{IRCC}}^{(i)}\left(L,\phi_0;B_r(\theta_n^*)\right)\right)\to1, \\
&\IP\left(\kappa_1\frac{\log(nT)}{\sqrt{T}}\leq a_n\leq K_1\frac{\log(nT)}{\sqrt{T}}\right)\to1, \\
&\IP\left(\kappa_2\frac{\log^2(nT)}{\sqrt{T}}\leq d_{n,i}\leq K_2\frac{\log^2(nT)}{\sqrt{T}}\textrm{ for all }i=1,...,n\right)\to1,
\end{align*}
where $B_r(\theta_n^*)\subseteq\Theta$ denotes a ball of radius $r$ centred around $\theta_n^*$. Suppose furthremore that
\begin{equation}
\label{eq:growth_condition}
\sup_{i=1,...,n}\frac{(1+|S_i(C_n^*)|)(1+\|C_{n,i\bdot}^*\|_1^2+\|C_{n,i\bdot}^*(\overline{\theta}_n)\|_1^2)^2\log^2(nT)}{n}=O_P(1).
\end{equation}
Then,
\begin{align*}
&\frac{1}{T}\left\|\Psi_{n,i}(\cdot;\hat{C}_{n,i\bdot},\hat{\alpha}_{n,i},\overline{\theta}_n)-\lambda_{n,i}\right\|_2^2+2\omega_i\|\hat{C}_{n,i\bdot}-C^*_{n,i\bdot}\|_1+2a_n|\hat{\alpha}_{n,i}-\alpha^*_{n,i}| \\
&\qquad\qquad=O_P(1)\cdot\left((1+|S_i(C_n^*)|)\frac{\log^4(nT)}{T}\right), \\
&\|\hat{C}_{n,i\bdot}-C^*_{n,i\bdot}\|_1=O_P(1)\cdot\left((1+|S_i(C_n^*)|)\frac{\log^2(nT)}{\sqrt{T}}\right), \\
&|\hat{\alpha}_{n,i}-\alpha^*_{n,i}|=O_P(1)\cdot\left((1+|S_i(C_n^*)|)\frac{\log^3(nT)}{\sqrt{T}}\right).
\end{align*}
\end{corollary}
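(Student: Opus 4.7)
The plan is to apply the oracle inequality Theorem~\ref{thm:oracle} at $\theta=\overline{\theta}_n$, and then convert the resulting bounds, which are expressed relative to the $\theta$-dependent oracle $(C_n^*(\overline{\theta}_n),\alpha_n^*(\overline{\theta}_n))$, into bounds relative to the true parameters $(C_n^*,\alpha_n^*)$. The hypotheses of the theorem can be verified with high probability: the condition $3d_{n,i}\leq\omega_i$ holds by the choice $\omega_i=3d_{n,i}$; the condition $L>\sup_i a_n/\omega_i$ is met by any fixed $L$ since $a_n/\omega_i\sim 1/\log(nT)\to 0$ by the rate assumptions; the event $\bigcap_{i=1}^n\mathcal{T}_n^{(i)}(a_n,d_{n,i})$ has probability tending to one by Lemma~\ref{lem:lambda_rate} (choosing $\alpha_1,\ldots,\alpha_4$ so the right-hand side of~\eqref{eq:probT_bound} vanishes); and by Theorem~\ref{thm:de-biasing} we have $\overline{\theta}_n\in B_r(\theta_n^*)$ with probability tending to one, so the assumed IRCC uniformly in a ball around $\theta_n^*$ yields $\phi_{i,\textrm{comp}}(L;\overline{\theta}_n)\geq\phi_0$ simultaneously for all $i$.

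The leading term of $\epsilon_i^*(\overline{\theta}_n)$, namely $\frac{4}{T}\|\Psi_{n,i}(\cdot;C_{n,i\bdot}^*(\overline{\theta}_n),\alpha_{n,i}^*(\overline{\theta}_n),\overline{\theta}_n)-\lambda_{n,i}\|_T^2$, is then controlled by using the oracle's defining optimality. Because $(C_{n,i\bdot}^*,\alpha_{n,i}^*)$ has the correct support $S_i(C_n^*)$ and is therefore feasible in the minimization of Lemma~\ref{lem:od} at $\theta=\overline{\theta}_n$, the oracle value is upper bounded by
\begin{equation*}
\tfrac{1}{T}\|\Psi_{n,i}(\cdot;C_{n,i\bdot}^*,\alpha_{n,i}^*,\overline{\theta}_n)-\Psi_{n,i}(\cdot;C_{n,i\bdot}^*,\alpha_{n,i}^*,\theta_n^*)\|_T^2,
\end{equation*}
using $\lambda_{n,i}=\Psi_{n,i}(\cdot;C_{n,i\bdot}^*,\alpha_{n,i}^*,\theta_n^*)$. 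The Lipschitz estimates in (PE3) combined with the support property $\mathrm{supp}(g)\subseteq[0,A]$ from (PE2) bound the integrand pointwise by $K_\alpha L_\nu\|\overline\beta_n-\beta_n^*\|+L_g|\overline\gamma_n-\gamma_n^*|\cdot\|C_{n,i\bdot}^*\|_1\cdot\max_j[N_{n,j}(t)-N_{n,j}(t-A)]$. Squaring, integrating over $[0,T]$, and using Lemma~\ref{lem:omega_lemma} on $\Omega_{\mathcal N}$ to bound the window-count uniformly by $\mathcal{N}=O(\log(nT))$, together with $\|\overline{\theta}_n-\theta_n^*\|^2=O_P(1/(nT))$ from Theorem~\ref{thm:de-biasing}, produces a bound of order $(1+\|C_{n,i\bdot}^*\|_1^2)\log^2(nT)/(nT)$. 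The growth condition~\eqref{eq:growth_condition} is precisely what guarantees this is $o_P(\log^4(nT)(1+|S_i(C_n^*)|)/T)$, so it is absorbed by the LASSO-penalty contribution $36\omega_i^2|S_i(C_n^*)|/\phi_0^2$ and we obtain $\epsilon_i^*(\overline{\theta}_n)=O_P((1+|S_i(C_n^*)|)\log^4(nT)/T)$.

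Theorem~\ref{thm:oracle} then yields the first claimed bound, and dividing by $2\omega_i\sim\log^2(nT)/\sqrt T$ and $2a_n\sim\log(nT)/\sqrt T$ respectively gives the corresponding rates for $\|\hat{C}_{n,i\bdot}-C_{n,i\bdot}^*(\overline{\theta}_n)\|_1$ and $|\hat{\alpha}_{n,i}-\alpha_{n,i}^*(\overline{\theta}_n)|$. It remains to replace the $\theta$-oracle by the truth. Writing $S=S_i(C_n^*)$ and $\xi=(\alpha_i,C_{iS})$, the oracle solves the quadratic programme $\xi_i^*(\theta)=A_i(\theta)^{-1}b_i(\theta)$ with $A_i(\theta)=\int_0^T Z_i(t;\theta)Z_i(t;\theta)^T\,dt$ and $b_i(\theta)=\int_0^T Z_i(t;\theta)\lambda_{n,i}(t)\,dt$, where $Z_i$ collects $\nu_0(X_{n,i}(\cdot);\beta)$ and the convolutions $\int g(\cdot-r;\gamma)\,dN_{n,j}(r)$ for $j\in S$. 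By Remark~\ref{rem:ind_comp} the operator $T A_i(\overline{\theta}_n)^{-1}$ has norm bounded by $1/\phi_0^2$ on the event of the IRCC, and $A_i,b_i$ are continuously differentiable in $\theta$ by (D1) with derivatives of order $T\cdot\mathrm{poly}(|S|,\log(nT))$ on $\Omega_{\mathcal N}$. A first-order Taylor expansion of $\xi_i^*$ combined with $\|\overline{\theta}_n-\theta_n^*\|=O_P(1/\sqrt{nT})$ gives an oracle-to-truth distance that, after converting to $\ell_1$ via $\|\cdot\|_1\leq\sqrt{|S|+1}\|\cdot\|_2$, is strictly dominated by the estimator-to-oracle rate under~\eqref{eq:growth_condition}. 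A triangle inequality then finishes the three claimed bounds.

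The main obstacle is the second step: the history-dependent term $\sum_{j=1}^n C_{n,ij}^*\int_0^{t-}[g(t-r;\overline{\gamma}_n)-g(t-r;\gamma_n^*)]\,dN_{n,j}(r)$ mixes the randomness of all connected processes with the random fluctuation of $\overline{\theta}_n$, so controlling it uniformly requires jointly invoking (PE2), (PE3), the window-count tail bound of Lemma~\ref{lem:omega_lemma}, and the asymptotic normality rate from Theorem~\ref{thm:de-biasing}. The growth condition~\eqref{eq:growth_condition} is calibrated exactly so that the resulting bound of order $(1+\|C_{n,i\bdot}^*\|_1^2)\log^2(nT)/n$ stays below the penalty-driven rate $(1+|S_i|)\log^4(nT)$; weakening it would leave the oracle-misspecification error as the binding term and change the final rates.
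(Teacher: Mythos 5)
Your overall plan matches the paper's: verify the hypotheses of Theorem~\ref{thm:oracle} with high probability, apply it at $\theta=\overline{\theta}_n$, and upgrade the resulting bounds (relative to the $\theta$-dependent oracle $(C_{n,i\bdot}^*(\overline{\theta}_n),\alpha_{n,i}^*(\overline{\theta}_n))$) to bounds against the truth via a triangle inequality. The preliminary verification steps also coincide with the paper's.

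Your bound on the $\Psi$-misspecification term in $\epsilon_i^*(\overline{\theta}_n)$ is a genuine shortcut. You exploit the oracle's defining optimality to pass directly from $\bigl\|\Psi_{n,i}(\cdot;C_n^*(\overline{\theta}_n),\alpha_n^*(\overline{\theta}_n),\overline{\theta}_n)-\lambda_{n,i}\bigr\|_T^2$ to $\bigl\|\Psi_{n,i}(\cdot;C_n^*,\alpha_n^*,\overline{\theta}_n)-\Psi_{n,i}(\cdot;C_n^*,\alpha_n^*,\theta_n^*)\bigr\|_T^2$, and then the Lipschitz estimates of (PE3) with the window bound of Lemma~\ref{lem:omega_lemma} give $O_P\bigl((1+\|C_{n,i\bdot}^*\|_1^2)\log^2(nT)/(nT)\bigr)$. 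The paper instead first bounds the oracle-to-truth parameter distance (Lemma~\ref{lem:diff}) and then translates that into a $\Psi$ bound, paying for an extra $(1+|S_i|)^2\log^4(nT)$. Both are lower-order, but your route is cleaner. You should, however, flag the implicit feasibility claim: the inequality requires $(C_n^*,\alpha_n^*)\in\overline{\mathcal{H}}_n(\overline{\theta}_n)$, which holds with probability tending to one because $a_0<1$ in (PE2) is a strict inequality and $\overline{\theta}_n\to\theta_n^*$.

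The oracle-to-truth step has a real gap. You assert that the oracle solves the quadratic program $\xi_i^*(\theta)=A_i(\theta)^{-1}b_i(\theta)$ and Taylor-expand in $\theta$. But the oracle in Lemma~\ref{lem:od} is a \emph{constrained} minimizer over $\mathcal{H}_n(\theta)$, which imposes non-negativity on $\alpha_i$ and on all $C_{ij}$, plus a row-sum constraint. The closed form $A_i^{-1}b_i$ is the unconstrained minimizer; if any constraint binds (even the row-sum constraint, whose right-hand side depends on $\gamma$ and hence moves with $\overline{\theta}_n$), the formula is wrong and the implicit-function argument does not apply. At $\theta_n^*$ the oracle is interior by (PE1), but interiority at the random $\overline{\theta}_n$ is exactly the kind of stability statement you are trying to prove, so you cannot assume it. The paper avoids this by proving Lemma~\ref{lem:diff} without inverting a Hessian: it pairs a Lipschitz-in-$\theta$ bound on the quadratic criterion (its observation \eqref{eq:FO}) with the strong-convexity lower bound coming from $\phi_{i,\mathrm{comp}}(L;\theta_n^*)$ (observation \eqref{eq:SO}); both hold over the constrained set, giving the robust version of "Lipschitz parametric family plus local strong convexity implies Lipschitz argmin." To close your gap you would need either to show the constraints are non-binding at $\overline{\theta}_n$ with high probability, or to replace the Taylor expansion by a compatibility-constant argument as in Lemma~\ref{lem:diff}.
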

The proof is conceptually clear, but has some technical difficulties, which we deal with in Section \ref{subsec:proofs_consistency}. Note that the conditions on $a_n$ and $d_{n,i}$ are only restrictive about the lower bound. The upper bounds can be established on $\Omega_{\mathcal{N}}$ for $\mathcal{N}=6\mathcal{N}_0\log n$. Inspecting the formulas for $d_{n,i}$ and $a_n$ in Lemma \ref{lem:lambda_rate} shows that the upper bounds are driven by the stochastic integral over $[0,T]$ with respect to $N_{n,i}$. Therefore, the lower bounds on $a_n$ and $d_{n,i}$ stated in Corollary \ref{cor:ind_conv_rate} require that $N_{n,i}([0,T])=O_P(T)$, i.e., that $T$ is indeed the size of the observation window. We consider this assumption rather weak. \eqref{eq:growth_condition} is also a weak requirement because by sparsity $|S_i(C_n^*)|\ll n$ and $\|C_{n,i\bdot}^*\|_1$ is restricted via Assumption (PE2).

\section{Empirical Results}
\label{sec:empirical_results}
In this section, we will firstly discuss computational challenges of the estimator and how we can still obtain a computationally feasible estimator. We give the ideas here and postpone the technical details to Appendix \ref{sec:computation}. Afterwards, in Section \ref{subsec:cross-validation}, we will suggest a method for how to choose the tuning parameters. Finally, we provide a simulation study in Section \ref{subsec:simulations}.

\subsection{Implementation}
We have written the R-package \emph{Hausal}\footnote{Available for download on \url{https://github.com/akreiss/Hausal}}. It contains functions for simulation and estimation of our model. In the R-code, a penalty for $\alpha_n^*$ can be specified, but we discuss the algorithm for the case without penalty on $\alpha_n^*$. In this section, we make some remarks about the computational challenges. In Lemma \ref{lem:LAR_statement} in Appendix \ref{subsec:LAR}, we show that the optimization problems \eqref{eq:estimator} and \eqref{eq:split_estimator} can be reformulated as classical least-squares problems. Therefore, the very efficient LAR algorithm (cf. \cite{EHJT04}) can be used to optimize the LASSO penalized criterion with respect to $C$ and $\alpha$ if the other parameters are fixed. As another complication, we note that the least squares form provided by Lemma \ref{lem:LAR_statement} does not include an intercept. However, the LAR algorithm requires in this case that the input data is centralized, cf. Algorithm 5.1 in \cite{HTW15}. Therefore, the reformulation provided in Lemma \ref{lem:LAR_statement} cannot be directly used. We show in Lemma \ref{lem:design} in Appendix \ref{subsec:lasso_without_intercept} how an arbitrary least-squares problem can be stated in the required form.

Note that the minimization with respect to $\theta$ depends on the exact model formulation and cannot be treated generally. But since $\theta$ is low dimensional, optimization in $\theta$ can be performed reasonably efficiently by standard solvers. Another problem is that the criterion function is not convex in $\theta$. We approach this problem by solving several problems with random starting values. In a subsequent step, the best of these runs is then refined. 

Overall, we can compute solutions of \eqref{eq:split_estimator} using Algorithm \ref{alg:Calpha}. Being able to solve this, we can compute the solution to \eqref{eq:estimator} as described in Algorithm \ref{alg:theta}.

\begin{algorithm}
\caption{Compute $(\hat{C}_n(\theta),\hat{\alpha}_n(\theta))$ as solutions of \eqref{eq:split_estimator}}\label{alg:Calpha}
\begin{algorithmic}
\State \textbf{Input}: $\theta\in\Theta$ and $\textrm{tol}>0$
\State \textbf{Output:} Solution to \eqref{eq:split_estimator} for all $i=1,...,n$
\State Set $\alpha=1\in\IR^n$
\State Set $C=0\in\IR^{n\times n}$
\While{Progress in $\alpha$ and $C$ is less than $\textrm{tol}$}
\State Optimize \eqref{eq:split_estimator} for all $i=1,...,n$ with respect to $C$ keeping $\alpha$ and $\theta$ fixed
\State Optimize \eqref{eq:split_estimator} for all $i=1,...,n$ with respect to $\alpha$ keeping $C$ and $\theta$ fixed
\State Compute progress as the change in $\alpha$ and $C$ compared to the previous values
\EndWhile
\State \Return $(C,\alpha)$
\end{algorithmic}
\end{algorithm}

\begin{algorithm}
\caption{Compute $(\check{C}_n,\check{\alpha},\check{\theta}_n)$ as solution of \eqref{eq:estimator}}\label{alg:theta}
\begin{algorithmic}
\State \textbf{Input}: $K\in\IN$ and $\textrm{tol}_1,\textrm{tol}_2,\textrm{tol}_3>0$
\State \textbf{Output:} Solution to \eqref{eq:estimator}
\For{$i\gets 1$ to $K$}
    \State Compute random value $\theta_1\in\Theta$
    \State Use convex optimization with tolerance $\textrm{tol}_2$ starting from $\theta_1$ to optimize $\theta\mapsto\frac{1}{n}\sum_{i=1}^n\left(\frac{1}{T}\textrm{LS}_i(\hat{C}_{n,i\bdot}(\theta),\hat{\alpha}_{n,i}(\theta),\theta)+2\omega_i\|\hat{C}_{n,i\cdot}(\theta)\|_1\right)$, where $(\hat{C}_{n,i}(\theta),\hat{\alpha}_{n,i}(\theta))$ is computed using Algorithm \ref{alg:Calpha} with $\textrm{tol}=\textrm{tol}_1$. 
\EndFor
\State $\theta_2\gets$ Optimizer from above that yields the lowest value of the criterion function
\State Perform another convex optimization as above starting from $\theta_2$ with tolerance $\textrm{tol}_3$.
\State $\check{\theta}_n\gets$ Optimizer from previous step
\State \Return $(\hat{C}_n(\check{\theta}_n),\hat{\alpha}_n(\check{\theta}_n),\check{\theta}_n)$
\end{algorithmic}
\end{algorithm}

\subsection{Tuning Parameter Choice}
\label{subsec:cross-validation}

Choosing $\omega$ is particularly challenging as $\omega\in[0,\infty)^n$ is in fact a collection of tuning parameters. But we note that in \eqref{eq:split_estimator}, for any given $\theta$, it is possible to choose $\omega_i$ independently of each other. While this is true for \eqref{eq:split_estimator}, it does not hold for \eqref{eq:estimator}, where we also optimize over $\theta$. Nevertheless, we take the independence in \eqref{eq:split_estimator} as motivation for a cross-validation procedure that searches for the tuning parameters in parallel.

Recall furthermore that we have discussed in Remark \ref{rem:tuning} that the results from Lemma \ref{lem:lambda_rate} can be used to compute theoretically guided tuning parameters. However, we have noted in our simulations that (possibly due to a sub-optimal nature of the constants) these choices yield very high penalties such that the resulting networks are too sparse. We suggest therefore the following cross-validation procedure. We split the time interval $[0,T]$ in the training time $[0,S]$ and the testing time $[S,T]$ for some $S<T$. We then fit the model on the time $[0,S]$ for a certain choice of $\omega$. The quality of the fit is then evaluated individually for each vertex by computing $\textrm{LS}_i$ on the time interval $[S,T]$ for each $i=1,...,n$. This procedure is repeated for several choices of $\omega$ and the cross-validated $\omega$ is given by selecting individually for each vertex $i$ that choice of $\omega_i$ that yielded the lowest least-squares error. Algorithm \ref{alg:CV} gives a schematic overview of the procedure.

\begin{algorithm}
\caption{Cross-Validation}\label{alg:CV}
\begin{algorithmic}
\State \textbf{Input}: $M\in\IN$, $S<T$
\State \textbf{Output:} $\omega$
\State Compute $\omega_0$ from Lemma \ref{lem:lambda_rate} based on $[0,T]$
\State Compute estimate $(C_0,\alpha_0,\theta_0)$ using $\omega=\omega_0$
\For{$m\gets 1$ to $M$}
    \State Compute $(\textrm{LS}_1,...,\textrm{LS}_n)$ based on $[S,T]$ and $(C_{m-1},\alpha_{m-1},\theta_{m-1})$
    \State Compute $\omega_{m,i}$ for each $i=1,...,n$ based on $\textrm{LS}_i$ (and potentially previous values, but ignoring $\textrm{LS}_j$ for $j\neq i$), e.g., by Golden-Section Search
    \State Compute new estimates $(C_m,\alpha_m,\theta_m)$ based on $[0,S]$ using $\omega=\omega_m$
    
\EndFor
\For{$i\gets$ to $n$}
    \State $\omega_i\gets$ That $(\omega_{m,i})_{m=1,...,M}$ that yielded the lowest value of $\textrm{LS}_i$ in the previous for-loop
\EndFor
\State \Return $\omega$
\end{algorithmic}
\end{algorithm}

\subsection{Simulation Study}
\label{subsec:simulations}

\subsubsection{Data Generating Process}
\label{subsubsec:DGP}
We consider $n=10$ many vertices which are observed over $T=34$ \emph{days}. The baseline intensities depend on $q=1$ covariate, which we update hourly, that is, $X_{n,i}(t)$ is piecewise constant on intervals that have length $1/24$. The values of the covariates is the same for all vertices and servers as the \emph{common-driver}. Let now $r$ denote the segments on which the covariates are constant, i.e., for $r=1$, $X_{n,i}(r)$ denotes the value of the covariate process on the interval $[(r-1)/24,r/24)$. We define next a mean process $\mu$ that is, similarly as the covariate, only defined on each hour segment $r$. We choose randomly $8$ time points at which an exponential decay is started that increases the global covariate instantaneously by $0.8$, has a decay rate of $0.05$, and lasts for $10$ days. The mean value function that was produced in this way is provided by the dots in Figure \ref{fig:common_driver}. The value of $X_{n,i}$ (that is the same for all $i$) is obtained by adding independent $\mathcal{N}(0,0.05^2)$-noise to the mean value function. The function  $\nu_0(x;\beta)$ is chosen as $\exp(x\beta)$ with $\beta:=1T$.

\begin{figure}
    \centering
    \includegraphics[width=\textwidth]{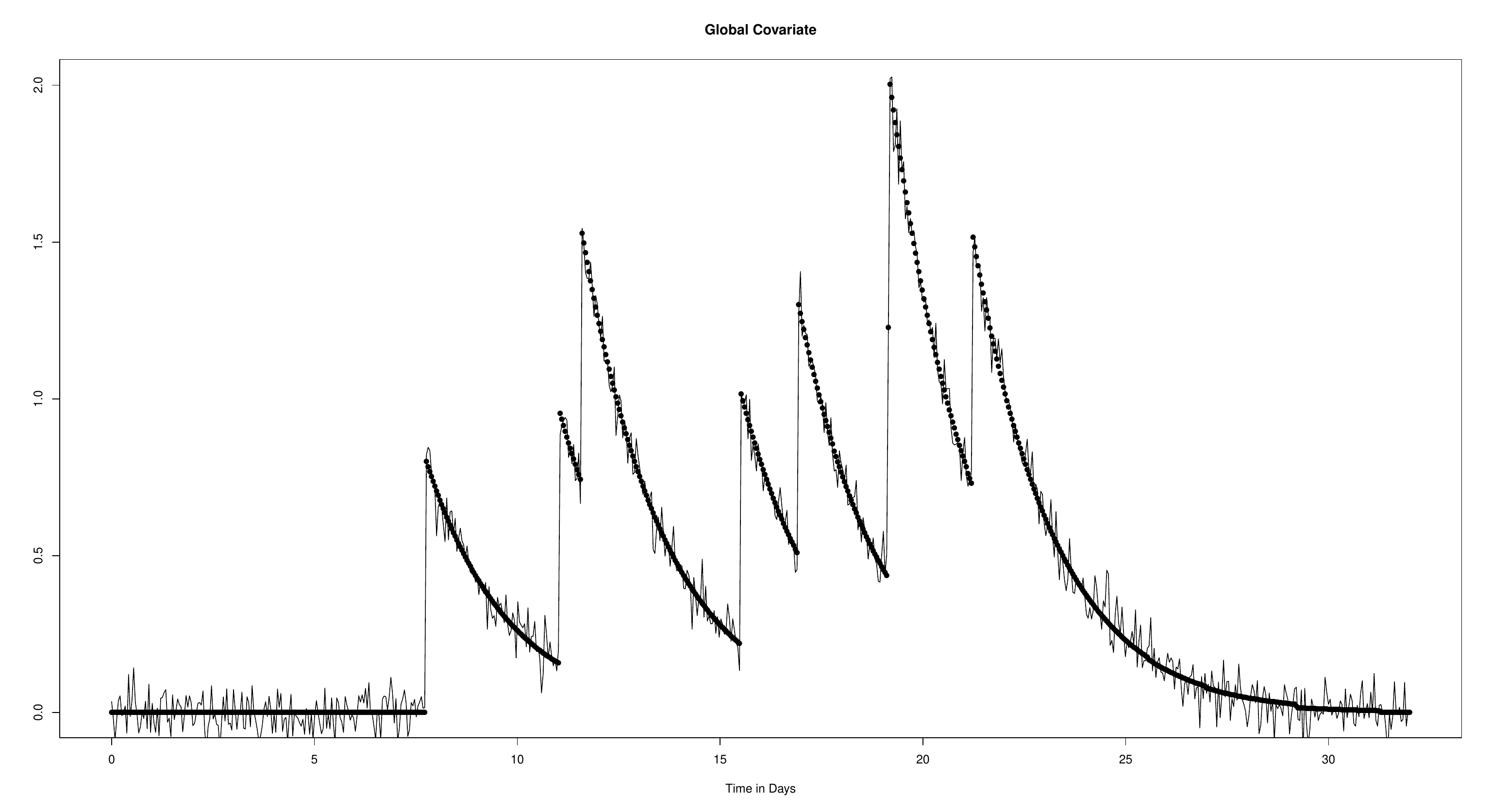}
    \caption{Dots show the mean value of the global covariate and the solid shows a the actual values used in $X_{n,i}^{(1)}$.}
    \label{fig:common_driver}
\end{figure}

The individual baseline intensities $\alpha_{n,i}$ are selected uniformly at random from the interval $[0.5,1]$. Its exact values are shown in Figure \ref{fig:alpha} in Appendix \ref{sup:additional_simulation}. The interaction network is created by selecting for each vertex uniformly at random another vertex that influences this vertex with weight $0.5$. The resulting network is shown in the upper left panel of Figure \ref{fig:avg_networks}. Finally, we let $g(t;\gamma)=\exp(-\gamma t)$ and $\gamma_0=1.1$

Overall, the model comprises $n^2=100$ network parameters, $n=10$ individual activities, $q=1$ covariate parameters and $1$ exponential decay parameter. Thus, we have to find $112$ parameters.

\subsubsection{Simulation Results}
We compute $N=400$ realizations of the model presented in Section \ref{subsubsec:DGP}. In order to illustrate the importance of including common drivers, we fit two models in each realization: The \emph{full} model as described in Section \ref{subsubsec:DGP} and a \emph{slim-oracle} model that contains no covariates but assumes the true value of $\gamma_0$ to be known. To select the tuning parameter, for computational reasons, we use the cross-validation from Section \ref{subsec:cross-validation} only for one data set in the full model and use the same values for $\omega$ throughout the whole simulation. We use box-constrained optimization for $\beta$ and $\gamma$ with bounds $[-10,10]$ for $\beta$ and $[0.1,5]$ for $\gamma$.

We discuss firstly the results of the estimation for $\beta_0$ and $\gamma_0$. These estimations are only reasonable to discuss in the full model, where these parameters are considered unknown. Table \ref{tab:beta_gamma} shows the results. As expected, the de-biasing leads to a decrease of bias and an increase of the  variance in the estimation of $\beta_0$ and $\gamma_0$. In the case of $\gamma_0$, we see a strong increase of the variance, which may be explained by outlying values of the estimator as is indicated by the discrepancy between the median absolute deviation (MAD) and the RMSE. There are also boundary effects caused by many estimates close to $0$, which is a natural lower bound for $\gamma$.

\begin{table}
\centering
\caption{Estimation results for $\beta_0$ and $\gamma_0$}
\label{tab:beta_gamma}
\begin{tabular}{l|rr|rrr||rr|rrr}
  \hline
 & \multicolumn{5}{c}{$\beta_0$} & \multicolumn{5}{c}{$\gamma_0$} \\
 & \multicolumn{2}{c}{Bias} & \multicolumn{3}{c}{} & \multicolumn{2}{c}{Bias} & \multicolumn{3}{c}{} \\
De-biased & Mean & Median & SD & MAD & RMSE & Mean & Median & SD & MAD & RMSE \\ 
  \hline
No & 1.24 & 0.27 & 2.21 & 0.29 & 2.53 & -0.39 & -0.73 & 1.16 & 0.77 & 1.22 \\ 
Yes & 0.79 & 0.07 & 2.33 & 0.33 & 2.46 &  0.08 & -0.68 & 2.59 & 0.75 & 2.59 \\ 
   \hline
\end{tabular}
\end{table}

In Figure \ref{fig:beta}, we compare the histograms of the estimators. We can clearly see that the estimators for $\beta_0$ behave much closer to a normal distribution after the de-biasing step. The normal distribution in the histogram is fitted to the estimators that lie in the interval $[-3,3]$. Since some estimates lie at the constraint, we believe that the algorithm has not converged in these few cases and we therefore restricted to the range $[-3,3]$ for fitting the normal distribution to exclude outliers. We show in Figure \ref{fig:gamma} in Appendix \ref{sup:additional_simulation} the same plots for $\gamma_0$. They show that, even though the estimation is not as convincing as for $\beta_0$, the de-biased estimators resemble a normal distribution more closely, however, with an shifted mean value (recall that the overall mean of the data lies close to the true value).

\begin{figure}
    \centering
    \includegraphics[height=9cm,width=0.8\textwidth]{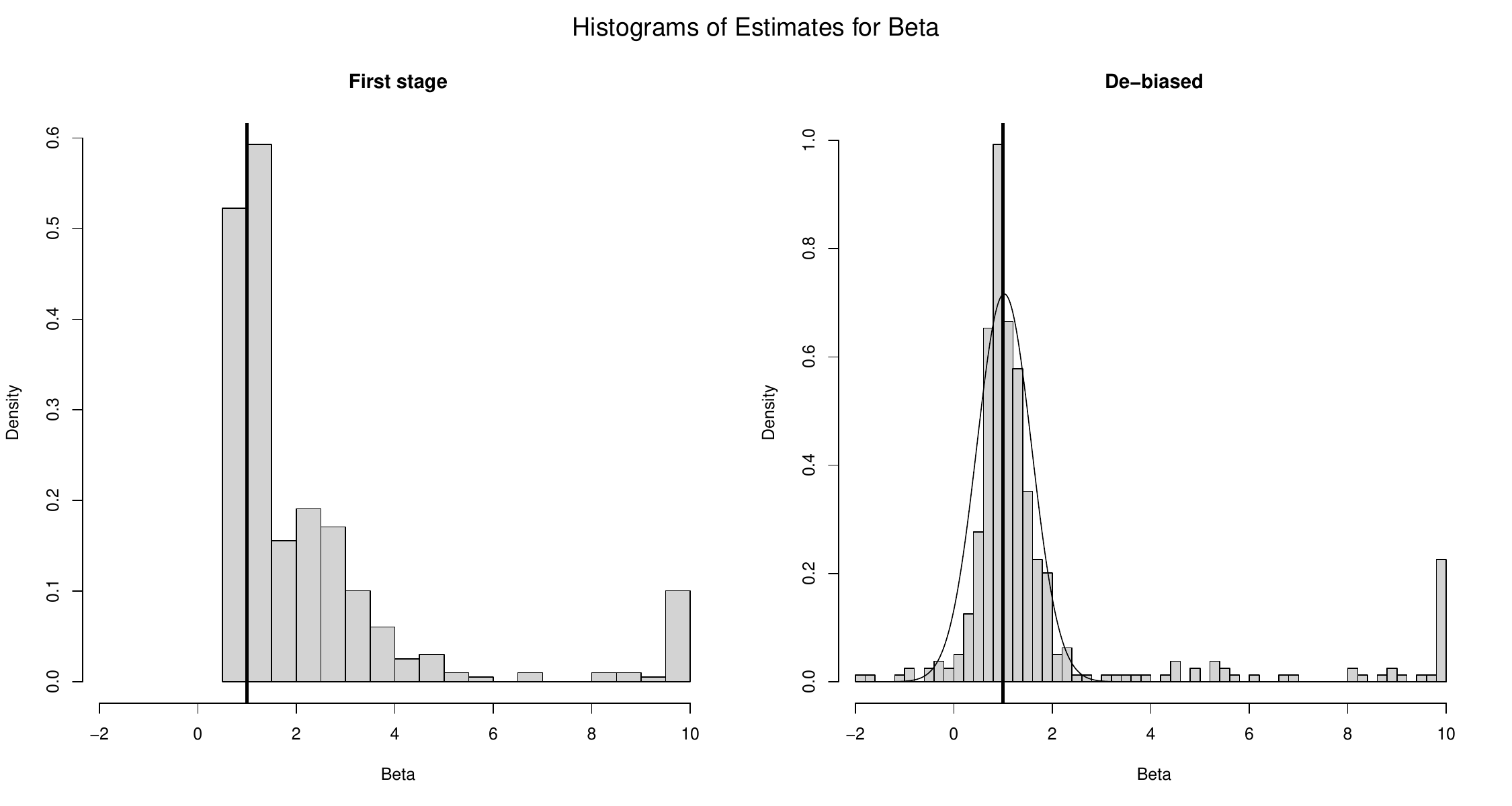}
    \caption{Histograms of estimators for $\beta_0$, the vertical line shows the true value and the normal distribution is chosen to align the data for illustration.}
    \label{fig:beta}
\end{figure}

We turn next to the estimation of the network itself. We firstly compute the average weight of each edge over all simulations and show in Figure \ref{fig:avg_networks} the fifteen edges that receive the highest average weight. We can see that in the full model these edges are the same for both stages and 7 out of these 10 edges correspond to the true interactions. We can hence see that the full model, on average, gives the true interactions the highest weights. If, however, we do not include the common driver, as shown in the lower-right panel of Figure \ref{fig:avg_networks}, the estimate is worse and only 5 out of 10 edges are recognized. We can hence see that it is important to have a model that can account for common drivers.

\begin{figure}
    \centering
    \includegraphics[width=\textwidth]{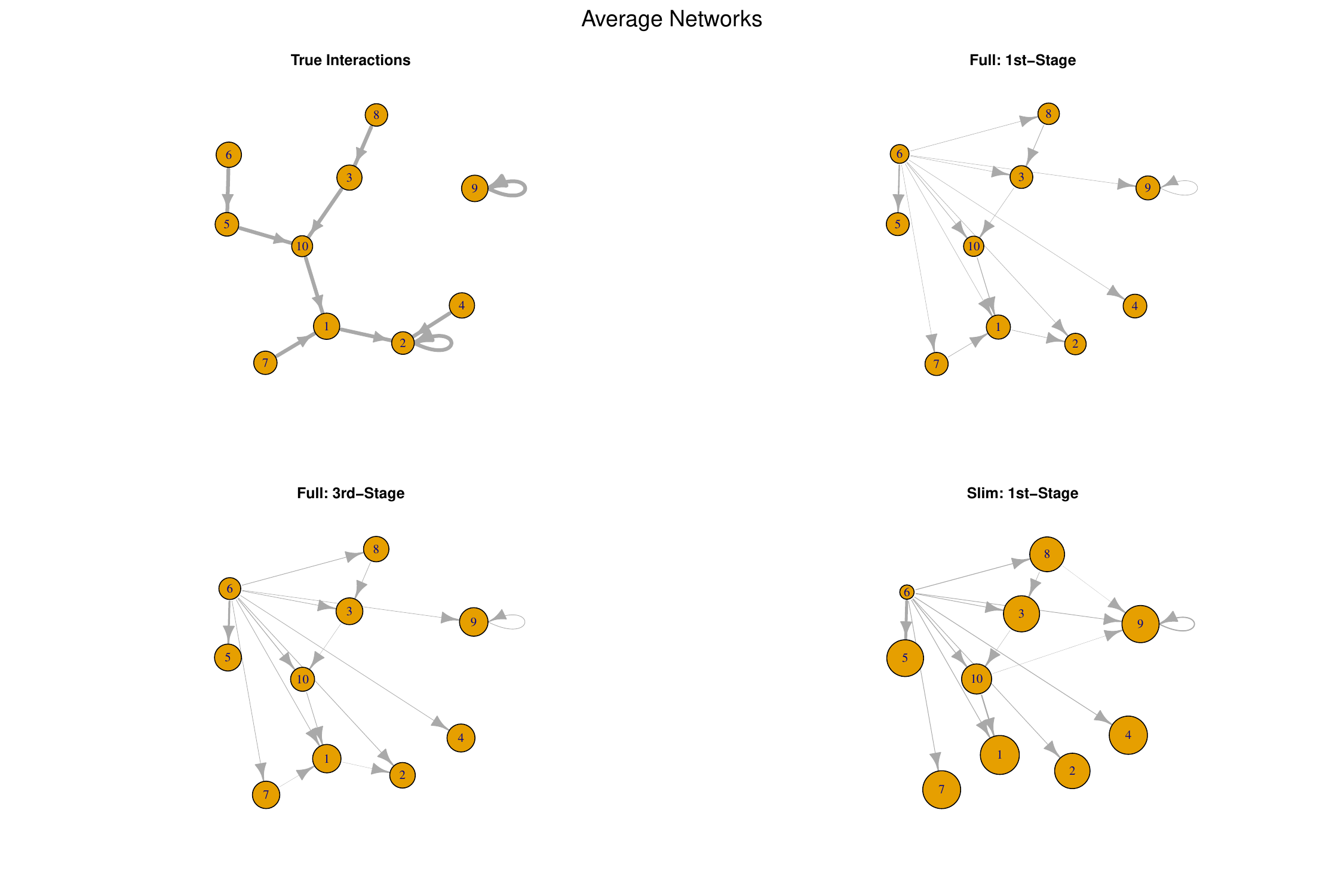}
    \caption{The top left panel shows the true interactions. The other panels show (in the indicated scenarios) those $15$ edges that receive on average the highest weights over all $N$ simulations. Edge thickness is proportional to their weight and vertex size is proportional to the value of $\alpha_{n,i}$ (true or estimated).}
    \label{fig:avg_networks}
\end{figure}

To investigate the situation further, we show in Figure \ref{fig:detections} the percentage of non-zero estimations per edge in each model before and after the de-biasing step. In the full model, we see that the networks after de-biasing of $\theta$ seem to be generally less dense compared to the first stage estimation (cf. Figure \ref{fig:sparsity} in Appendix \ref{sup:additional_simulation} for histograms of the number of detected edges). While this is desirable for the zero entries of $C_n$, this behavior is undesirable in the non-zero entries. Nevertheless, in both stages, most of the zero entries of $C_n$ are less frequently selected than the non-zero edges. Therefore, both estimators seem to perform reasonably well when it comes to estimation of the network. The slim-oracle estimator performs a little worse in the sense that some of the non-zero edges are detected less often and some zero edges are detected more often (both compared to the full estimators). However, the slim-oracle also detects many zero edges less often. Therefore, we conclude that omission of the common driver induces some spurious effects, but using the true $\gamma_0$ (what the slim-oracle does) helps to estimate the network correctly. Table \ref{tab:confusion} shows the average number of correctly detected edges (true positives) along with all other cases. Comparing the two estimators in the full model, we conclude that the 3rd-stage estimator is less often correct, but has fewer wrong detections.

\begin{figure}
    \centering
    \includegraphics[width=\textwidth]{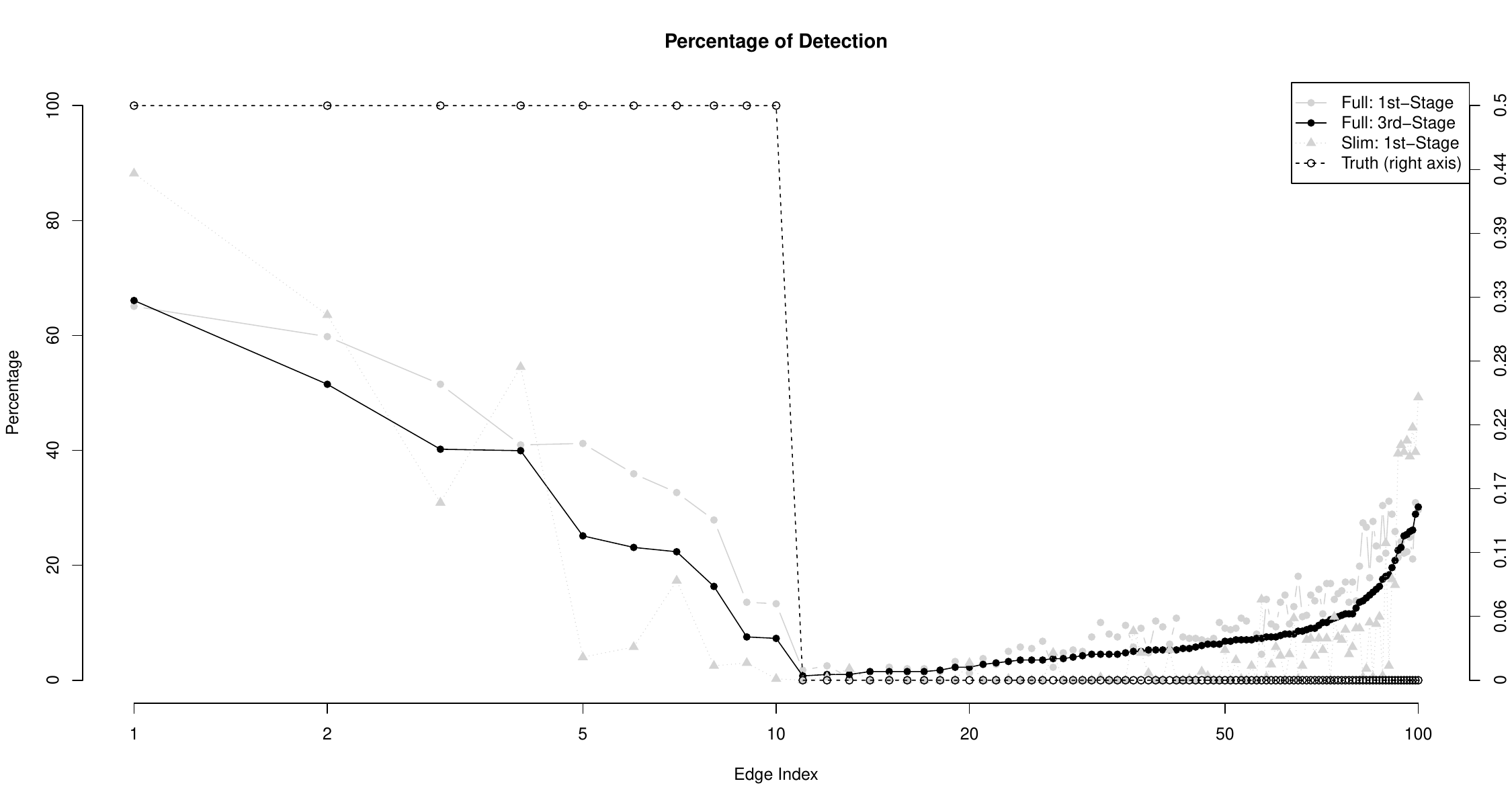}
    \caption{Percentage of detections per edge in all considered scenarios.}
    \label{fig:detections}
\end{figure}

\begin{table}
    \centering
    \caption{Confusion matrices for the different scenarios (on average per simulation). There are $10$ true edges and $90$ non-edges.}
    \label{tab:confusion}
    \begin{tabular}{l||rr|rr|rr}
    \hline
 & \multicolumn{2}{c}{Full: 1st-Stage} & \multicolumn{2}{c}{Full: 3rd-Stage} & \multicolumn{2}{c}{Slim: 1st-Stage} \\
 & Detected & Not detected & Detected & Not detected & Detected & Not detected \\ 
  \hline
True edges     &  3.82 &  6.18 & 2.99 &  7.01  & 2.70 &  7.30 \\ 
True non-edges & 11.05 & 78.95 & 8.33 & 81.67  & 6.21 & 83.79 \\ 
   \hline
    \end{tabular}
\end{table}

In Table \ref{tab:C} we show the detailed estimation results for the non-zero entries of the matrix $C_n$. Figure \ref{fig:avgC} in Appendix \ref{sup:additional_simulation}, shows a visual account of the estimation of all entries of the matrix $C_n$. Table \ref{tab:C} shows that all estimates of $C_n$ are downwards biased, which is no surprise due to the LASSO penalty. We observe that in the full model the second stage seems to experience a larger bias. This, together with the results from the previous paragraph, is an indication for a too strong penalization after the de-biasing. Recall that we have (for computational reasons) not recomputed the tuning parameter after the de-biasing stage. When we look at the slim-oracle model, we see that the difference is not so systematic. Finally, Figure \ref{fig:Cmse} in Appendix \ref{sup:additional_simulation} shows the root mean squared error of the estimation of all of $C_n$. After de-biasing, the RMSE is a bit higher. This might be due to the higher bias induced by the higher sparsity.

\begin{table}
\centering
\caption{Estimation performance of the non-zero entries of $C_n$}
\label{tab:C}
\begin{tabular}{l||rrr|rrr||rrr}
  \hline
 & \multicolumn{6}{c}{Full} & \multicolumn{3}{c}{Slim} \\
 & \multicolumn{3}{c}{1st Stage} & \multicolumn{3}{c}{3rd Stage} & \multicolumn{3}{c}{1st Stage} \\
 & Bias & SD & RMSE & Bias & SD & RMSE & Bias & SD & RMSE  \\ 
  \hline
  $C_{7,1 }$ & -0.44 & 0.09 & 0.45 & -0.47 & 0.07 & 0.48 & -0.50 & 0.03 & 0.50 \\ 
  $C_{10,1}$ & -0.41 & 0.11 & 0.43 & -0.43 & 0.13 & 0.45 & -0.33 & 0.19 & 0.38 \\ 
  $C_{1,2 }$ & -0.45 & 0.10 & 0.46 & -0.47 & 0.08 & 0.48 & -0.49 & 0.05 & 0.49 \\ 
  $C_{2,2 }$ & -0.48 & 0.06 & 0.49 & -0.49 & 0.05 & 0.49 & -0.49 & 0.04 & 0.50 \\ 
  $C_{4,2 }$ & -0.46 & 0.08 & 0.47 & -0.48 & 0.05 & 0.49 & -0.50 & 0.03 & 0.50 \\ 
  $C_{8,3 }$ & -0.42 & 0.11 & 0.44 & -0.44 & 0.12 & 0.46 & -0.44 & 0.12 & 0.46 \\ 
  $C_{6,5 }$ & -0.31 & 0.33 & 0.46 & -0.24 & 0.46 & 0.52 & -0.11 & 0.28 & 0.30 \\ 
  $C_{9,9 }$ & -0.44 & 0.10 & 0.45 & -0.43 & 0.13 & 0.45 & -0.35 & 0.19 & 0.40 \\ 
  $C_{3,10}$ & -0.46 & 0.09 & 0.47 & -0.47 & 0.09 & 0.48 & -0.47 & 0.09 & 0.48 \\ 
  $C_{5,10}$ & -0.48 & 0.05 & 0.49 & -0.49 & 0.04 & 0.49 & -0.50 & 0.00 & 0.50 \\ 
   \hline
\end{tabular}
\end{table}

In Figure \ref{fig:avg_networks}, the size of the vertices is proportional to $\alpha_n$ for the true network and, in the other cases, it is proportional to the average of the estimates in the respective situations. Clearly, in the slim model, the individual activity of the vertices, i.e., $\alpha_n$ is over-estimated. In the first-stage full model the estimates seem to be too low, while, in the de-biased full model, the estimation appears only slightly too high. We present the detailed results in Table \ref{tab:alpha}. It can be seen that indeed the third stage provides higher estimates for $\alpha_n$ in the full model. In the slim model, the estimates for $\alpha_n$ are even higher. This is potentially a consequence from omitting the common driver. A visual display of the results is provided in Figure \ref{fig:alpha} in Appendix \ref{sup:additional_simulation}.

\begin{table}
\centering
\caption{Estimation Results for $\alpha_n$}
\label{tab:alpha}
\begin{tabular}{l|rrr|rrr||rrr|rrr}
  \hline
  & \multicolumn{6}{c}{Full} & \multicolumn{3}{c}{Slim} \\
   & \multicolumn{3}{c}{1st Stage} & \multicolumn{3}{c}{3rd Stage} & \multicolumn{3}{c}{1st Stage}\\
 & Bias & SD & RMSE & Bias & SD & RMSE & Bias & SD & RMSE  \\ 
  \hline
$\alpha_{n, 1}$ & -0.17 & 0.64 & 0.66 &  0.20 & 0.72 & 0.75 &  1.50 & 0.34 & 1.54 \\ 
$\alpha_{n, 2}$ & -0.09 & 0.51 & 0.52 &  0.22 & 0.60 & 0.64 &  1.31 & 0.41 & 1.37 \\
$\alpha_{n, 3}$ & -0.19 & 0.59 & 0.62 &  0.14 & 0.66 & 0.67 &  1.21 & 0.34 & 1.26 \\ 
$\alpha_{n, 4}$ & -0.13 & 0.62 & 0.63 &  0.23 & 0.68 & 0.72 &  1.47 & 0.32 & 1.51 \\ 
$\alpha_{n, 5}$ & -0.06 & 0.58 & 0.58 &  0.26 & 0.63 & 0.68 &  1.40 & 0.29 & 1.43 \\ 
$\alpha_{n, 6}$ & -0.43 & 0.45 & 0.62 & -0.23 & 0.59 & 0.63 & -0.59 & 0.35 & 0.68 \\ 
$\alpha_{n, 7}$ & -0.03 & 0.60 & 0.60 &  0.32 & 0.65 & 0.72 &  1.57 & 0.31 & 1.60 \\ 
$\alpha_{n, 8}$ & -0.08 & 0.54 & 0.54 &  0.22 & 0.61 & 0.64 &  1.24 & 0.34 & 1.29 \\ 
$\alpha_{n, 9}$ & -0.20 & 0.71 & 0.74 &  0.19 & 0.82 & 0.84 &  1.25 & 0.55 & 1.36 \\ 
$\alpha_{n,10}$ & -0.06 & 0.48 & 0.48 &  0.21 & 0.58 & 0.62 &  0.83 & 0.44 & 0.94 \\ 
   \hline
\end{tabular}
\end{table}

\section{Conclusion}
\label{sec:conclusion}
In this paper, we have studied a high-dimensional Hawkes model that incorporates covariates in the baselines. These covariates can serve as common drivers to all Hawkes processes. We study a regime in which both, the size of the network and the length of the observation period, go to infinity. This introduces mathematical challenges because the different parameters of the model can be estimated with different convergence rates. To obtain the correct rate, we have suggested to use the de-biasing technique from \cite{vdGBRD14}. Our results show that obtaining the fast convergence rate for $\theta$ using the de-biasing is possible under some assumptions. These assumptions, unfortunately, restrict the level of sparsity. If one is only interested in the slower convergence rate, the sparsity is allowed to be much lower. An interesting future research direction would therefore be to understand if this restriction in the sparsity is an artifact from the Lasso and the proof techniques that we have applied here, or if the lower sparsity for the fast convergence rates is indeed a theoretical boundary.

In our simulation study, we provide some first suggestions why it is important to incorporate covariates that affect the entire network (common drivers). In our example, we see that ignoring such common drivers leads to less accurate results about the estimation of the influence structure.

Further research directions include the discussion of covariates in the excitation kernels $g$. In our simulations, estimation of $\gamma$ appears to be a difficult task even though our model assumes this excitation to be the same for all interactions. Nevertheless, it is arguably quite interesting for applications to assume that the interactions might be heterogeneous. It is then of interest to learn these interactions. A more direct question, would be to generalize the results to multiple observations, where one observes several multivariate Hawkes processes with the same set of parameters but potentially different covariates.

\newpage

\textbf{Appendix} \\
We discuss in Appendix \ref{sec:computation} details about the implementation of the procedure presented in this paper. Appendix \ref{sec:Hawkes} contains theoretical results about Hawkes processes that are useful for our theory. In Appendix \ref{sec:proofs}, we present the main proofs that were left out in the main paper. Furthermore, in Appendix \ref{sup:HawkesBounds} additional details for the proofs of Appendix \ref{subsec:bounds_Hawkes} are given. Appendix \ref{sup:single_consistency} contains the proofs of Theorem \ref{thm:pre_estimate_consistency} and further details for the proof of Lemma \ref{lem:lambda_rate}. Appendix \ref{sup:results_debiasing} contains proofs needed in Section \ref{subsec:results_debiasing}. Appendix \ref{sup:consistency} shows more technical details for the proofs from Section \ref{subsec:consistency}, and, finally, Appendix \ref{sup:additional_simulation} shows additional simulation results.

\appendix

\section{Algorithmic and computational considerations}
\label{sec:computation}
\subsection{Using the LAR algorithm}
\label{subsec:LAR}
In this Section we show how the optimization problems \eqref{eq:estimator} and \eqref{eq:split_estimator} can be reformulated in order to use the LAR algorithm (cf. \cite{EHJT04}) . The LAR algorithm is designed to optimize $\|Y-\delta-X\gamma\|_2^2+\lambda\|\gamma\|_1$ in $(\delta,\gamma)\in\IR^{q+1}$ for a given vector $Y\in\IR^n$ and a given matrix $X\in\IR^{n\times q}$. We refer to this set-up as the pure least squares linear model. Neither \eqref{eq:estimator} nor \eqref{eq:split_estimator} is in this form if we consider all three parameters $(C,\alpha,\theta)$ at once. Therefore, we keep two parameters fixed and minimize with respect to the third parameter. For $\theta=(\beta,\gamma)\in\Theta$ and $i=1,...,n$, we define matrices $V_n(\beta),A_n(\gamma),G_n(\gamma),\Gamma_n(\theta)\in\IR^{n\times n}$ via
\begin{align*}
V_{n,ii}(\beta):=&\int_0^T\nu_0(X_{n,i}(t);\beta)^2dt,\quad V_{n,ij}=0\textrm{ if }i\neq j, \\
\Gamma_{n,ij}(\gamma)=&\int_0^T\int_{-\infty}^{t-}g(t-s;\gamma)dN_{n,i}(s)\cdot\int_{-\infty}^{t-}g(t-r;\gamma)dN_{n,j}(r)dt, \\
G_{n,ij}(\theta):=&\int_0^T\nu_0(X_{n,i}(t);\beta)\int_{-\infty}^{t-}g(t-s;\gamma)dN_{n,j}(s)dt, \\
A_{n,ij}(\gamma):=&\int_0^T\int_{-\infty}^{t-}g(t-s;\gamma)dN_{n,j}(s)dN_{n,i}(t),
\end{align*}
and a vector $v_n(\beta)\in\IR^n$ via
$$v_{n,i}(\beta):=\int_0^T\nu_0(X_{n,i}(t);\beta)dN_{n,i}(t).$$
Using these definitions we can rewrite
\begin{align*}
\int_0^T\Psi_{n,i}(t;c,a,\theta)^2dt=&a^2V_{n,ii}(\beta)+c\Gamma_n(\gamma)c^T+2acG_{n,i\bdot}(\theta)^T \textrm{ and}\\
\int_0^T\Psi_{n,i}(t;c,a,\theta)dN_{n,i}(t)=&av_{n,i}(\beta)+cA_{n,i\bdot}(\gamma)^T.
\end{align*}
This implies
\begin{align}
\LS_i(C,a,\theta)=&a^2V_{n,ii}(\beta)+c\Gamma_n(\gamma)c^T+2acG_{n,i\bdot}(\theta)^T-2av_{n,i}(\beta)-2cA_{n,i\bdot}(\gamma)^T,  \label{eq:LS_matrix1} \\
\sum_{i=1}^n\LS_i(C_{n,i\bdot},\alpha_i,\theta)=&\alpha^TV_n(\beta)\alpha+\textrm{tr}\left(C_n\Gamma_n(\gamma)C_n^T\right)+2\alpha^T\textrm{diag}\left(C_nG_n(\theta)^T\right) \nonumber \\
&\quad\quad-2\alpha^Tv_n(\beta)-2\textrm{tr}\left(C_nA_n(\gamma)^T\right), \label{eq:LS_matrix2}
\end{align}
where $\textrm{tr}(A)$ denotes the trace of the matrix $A$ and $\textrm{diag}(A)$ the diagonal of the matrix $A$ written as column vector. The following result serves as a definition of $\Gamma_n^{-1}$ in case $\Gamma_n$ is not invertible.
\begin{lemma}
\label{lem:gamma_sym}
The matrix $\Gamma_n(\gamma)$ is positively semi-definite and symmetric. It can hence be written as $\Gamma_n(\gamma)=M\Lambda M^T$ for a diagonal matrix $\Lambda=\textrm{diag}(\lambda_1,...,\lambda_r,0,...,0)$ with $\lambda_1\geq...\geq\lambda_r>0$ and $r\leq n$ and an orthogonal matrix $M$. Denote,
$$\Lambda^{\frac{1}{2}}:=\textrm{diag}(\sqrt{\lambda_1},...,\sqrt{\lambda}_r,0,...,0)\textrm{ and }\Lambda^{-\frac{1}{2}}:=\textrm{diag}\left(\frac{1}{\sqrt{\lambda_1}},...,\frac{1}{\sqrt{\lambda}_r},0,...,0\right)$$
as well as $\Gamma_n(\gamma)^{\frac{1}{2}}:=M\Lambda^{\frac{1}{2}}M^T$ and $\Gamma_n(\gamma)^{-\frac{1}{2}}:=M\Lambda^{-\frac{1}{2}}M^T$. It holds that $\Gamma_n(\gamma)^{\frac{1}{2}}\Gamma_n(\gamma)^{\frac{1}{2}}=\Gamma_n(\gamma)$.
\end{lemma}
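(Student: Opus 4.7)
The plan is to recognize $\Gamma_n(\gamma)$ as a Gram matrix and then invoke standard linear algebra. Define the (random) function $h_{n,i}(t;\gamma) := \int_{-\infty}^{t-} g(t-s;\gamma)\,dN_{n,i}(s)$, so that $\Gamma_{n,ij}(\gamma) = \int_0^T h_{n,i}(t;\gamma)\,h_{n,j}(t;\gamma)\,dt = \langle h_{n,i}(\cdot;\gamma), h_{n,j}(\cdot;\gamma)\rangle_{L^2([0,T])}$. Symmetry is then immediate from commutativity of the product under the integral, and positive semi-definiteness follows from the identity
\begin{equation*}
v^T \Gamma_n(\gamma) v = \int_0^T \Bigl(\sum_{i=1}^n v_i\, h_{n,i}(t;\gamma)\Bigr)^{\!2}\, dt \;\ge\; 0 \qquad \text{for every } v\in\IR^n.
\end{equation*}

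Given that $\Gamma_n(\gamma)$ is real, symmetric, and positive semi-definite, the spectral theorem yields an orthogonal matrix $M$ and real eigenvalues which are non-negative. Arranging the positive eigenvalues in decreasing order and collecting the zero eigenvalues at the end produces the claimed decomposition $\Gamma_n(\gamma) = M\Lambda M^T$ with $\Lambda=\textrm{diag}(\lambda_1,\ldots,\lambda_r,0,\ldots,0)$, where $r=\mathrm{rank}(\Gamma_n(\gamma))\le n$.

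Finally, since $\Lambda^{1/2}$ and $\Lambda^{-1/2}$ commute as diagonal matrices and $M^T M = I$ by orthogonality, the definition of $\Gamma_n(\gamma)^{1/2}:=M\Lambda^{1/2} M^T$ immediately yields
\begin{equation*}
\Gamma_n(\gamma)^{1/2}\,\Gamma_n(\gamma)^{1/2} = M\Lambda^{1/2} M^T M \Lambda^{1/2} M^T = M\Lambda^{1/2}\Lambda^{1/2} M^T = M\Lambda M^T = \Gamma_n(\gamma),
\end{equation*}
which establishes the last claim. No step here is delicate: the only subtlety worth spelling out is simply verifying that the Gram representation holds, after which the rest is a textbook consequence of the spectral theorem.
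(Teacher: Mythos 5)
Your proof is correct and takes essentially the same approach as the paper: both establish positive semi-definiteness via the identity $v^T\Gamma_n(\gamma)v=\int_0^T\bigl(\sum_i v_i h_{n,i}(t;\gamma)\bigr)^2\,dt\ge 0$, after which the spectral decomposition and the square-root verification are routine (the paper simply says "the remaining statements are easy to check"). Your framing of $\Gamma_n(\gamma)$ as an $L^2([0,T])$ Gram matrix is a nice way to make symmetry and PSD-ness transparent, but it is the same underlying argument.
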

\begin{proof}
Let $x\in\IR^n$ be arbitrary. Then,
\begin{align*}
x^T\Gamma_n(\gamma)x=&\int_0^T\left(\sum_{j=1}^n\int_{-\infty}^{t-}g(t-r;\gamma)dN_{n,j}(r)x_j\right)^2dt\geq0.
\end{align*}
The remaining statements are easy to check.
\end{proof}
\begin{remark}
Let $w_n(t):=\left(\int_{-\infty}^{t-}g(t-r;\gamma)dN_{n,j}(r)\right)_{j=1,...,n}$. The above proof shows that $\Gamma_n(\gamma)$ is rank deficient if there is $x\in\IR^n\setminus\{0\}$ such that $w_n(t)^Tx=0$ for all $t\in[0,T]$. A typical reason why this could happen is if for some $i$, $w_{n,i}(t)=0$ for all $t\in[0,T]$, i.e., if $N_{n,i}$ does not jump at all. Recall that counting process with probability $1$ jump at different times. Hence, $w_{n,i}(t)\neq w_{n,j}(t)$ for all $i\neq j$ and all $t$ with probability one if $w_{n,i}(t)\neq0$ and $w_{n,j}(t)\neq0$. Furthermore, since $w_n(t)$ changes whenever there is a jump in any of the processes $N_{n,i}$, we regard other situations in which $\Gamma_n(\gamma)$ is rank deficient as highly unlikely.
\end{remark}
The next lemma shows that if some process not jumping is the only reason for rank deficiency of $\Gamma_n(\gamma)$, we can rewrite the optimization tasks \eqref{eq:estimator} and \eqref{eq:split_estimator} in the form required by the LAR algorithm.
\begin{lemma}
\label{lem:LAR_statement}
Assume that $V_{n,ii}(\beta)>0$ for all $i=1,...,n$. Suppose without loss of generality that $N_{n,j}$ does not jump for $j\in L$ with $L=\{n_0+1,...,n\}$ or $L=\emptyset$. Let $\overline{\Gamma}_n(\gamma)$ denote the upper left $n_0\times n_0$ block of $\Gamma_n(\gamma)$. If $\overline{\Gamma}_n(\gamma)$ is positive definite, it holds that (below $c\in\IR^n$ is a row-vector)
\begin{align}
\LS_i(c,a,\theta)-\left\|\Gamma_n(\gamma)^{-\frac{1}{2}}\left(A_{n,i\bdot}(\gamma)^T-aG_{n,i\bdot}(\theta)^T\right)-\Gamma_n(\gamma)^{\frac{1}{2}}c^T\right\|_2^2&=f(a,\theta), \label{eq:easyC} \\
\sum_{i=1}^n\LS_i(C_{n,i\bdot},\alpha_i,\theta)-\left\|V_n(\beta)^{-\frac{1}{2}}\left(v_n(\beta)-\textrm{diag}(C_nG_n(\theta)^T)\right)-V_n(\beta)^{\frac{1}{2}}\alpha\right\|_2^2&=f(C_n,\theta), \label{eq:easyNU}
\end{align}
where $f(a,\theta)$ is a not further specified function of $a$ and $\theta$, which is independent of $c$, and similarly $f(C_n,\theta)$ is not a function of $\alpha$.
\end{lemma}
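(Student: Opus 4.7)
The plan is to verify both identities by direct completion-of-the-square, using the matrix formulas \eqref{eq:LS_matrix1} and \eqref{eq:LS_matrix2} for $\LS_i$ and $\sum_i \LS_i$. Both identities have the same shape ``quadratic form plus a linear coupling'', and the task reduces to matching the linear coupling against the cross term produced by expanding the square. The only subtlety, and the main obstacle, is that $\Gamma_n(\gamma)$ is in general only positive semi-definite, so $\Gamma_n^{-1/2}$ in Lemma \ref{lem:gamma_sym} is a pseudo-inverse and the identity $\Gamma_n^{1/2}\Gamma_n^{-1/2}=I$ fails on the null space. Everything will hinge on showing that the vector we want to invert actually lies in $\operatorname{range}(\Gamma_n)$.

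For \eqref{eq:easyC}, I would set $b_i(a,\theta):=A_{n,i\bdot}(\gamma)^T-a\,G_{n,i\bdot}(\theta)^T\in\IR^n$ and expand
\[
\bigl\|\Gamma_n^{-1/2}b_i-\Gamma_n^{1/2}c^T\bigr\|_2^2
= b_i^T\Gamma_n^{-1/2}\Gamma_n^{-1/2}b_i\;-\;2c\,\Gamma_n^{1/2}\Gamma_n^{-1/2}b_i\;+\;c\,\Gamma_n\,c^T,
\]
where $P:=\Gamma_n^{1/2}\Gamma_n^{-1/2}=M\operatorname{diag}(\mathbf 1_{n_0},\mathbf 0_{n-n_0})M^T$ is the orthogonal projection onto $\operatorname{range}(\Gamma_n)$. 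Subtracting this from \eqref{eq:LS_matrix1} cancels the $c\,\Gamma_n c^T$ term and leaves a linear-in-$c$ piece equal to $-2cb_i+2c\,Pb_i$, so cancellation is complete iff $Pb_i=b_i$, i.e. $b_i\in\operatorname{range}(\Gamma_n)$. Here is where the structural hypothesis about $L$ enters: whenever $N_{n,j}$ makes no jumps on $[0,T]$, the inner integral $\int_{-\infty}^{t-}g(t-r;\gamma)\,dN_{n,j}(r)$ vanishes identically in $t$, so the $j$-th column (and row) of $\Gamma_n$, the $j$-th column of $G_n$, and the $j$-th column of $A_n$ are all zero. Combined with the assumption that the $n_0\times n_0$ upper block $\overline{\Gamma}_n(\gamma)$ is positive definite, this gives $\operatorname{range}(\Gamma_n)=\IR^{n_0}\times\{0\}^{n-n_0}$, and both $A_{n,i\bdot}^T$ and $G_{n,i\bdot}^T$ already live there, hence so does $b_i$. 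Thus $Pb_i=b_i$ and the remainder $f(a,\theta)=a^2 V_{n,ii}(\beta)-2a\,v_{n,i}(\beta)-b_i^T\Gamma_n^+ b_i$ is free of $c$, as claimed.

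For \eqref{eq:easyNU} no such issue arises: the hypothesis $V_{n,ii}(\beta)>0$ makes the diagonal matrix $V_n(\beta)$ genuinely invertible, so $V_n^{1/2}$ and $V_n^{-1/2}$ are ordinary square roots. Writing $w:=V_n(\beta)^{-1/2}\bigl(v_n(\beta)-\operatorname{diag}(C_n G_n(\theta)^T)\bigr)$ and expanding
\[
\bigl\|w-V_n(\beta)^{1/2}\alpha\bigr\|_2^2 = w^T w-2\bigl(v_n(\beta)-\operatorname{diag}(C_nG_n(\theta)^T)\bigr)^T\alpha+\alpha^T V_n(\beta)\alpha,
\]
the three $\alpha$-dependent terms are exactly those in \eqref{eq:LS_matrix2} involving $\alpha$, so subtracting this from $\sum_i\LS_i(C_{i\bdot},\alpha_i,\theta)$ leaves $f(C_n,\theta)=\operatorname{tr}(C_n\Gamma_n C_n^T)-2\operatorname{tr}(C_nA_n^T)-w^Tw$, which is independent of $\alpha$.

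I expect the only delicate point to require some explicit comment to be the equality $P b_i=b_i$ in the first part: it is the reason for introducing the set $L$ and the upper-left block $\overline{\Gamma}_n(\gamma)$ in the hypothesis, and it justifies the use of the pseudo-inverse $\Gamma_n^{-1/2}$ in the reformulation. Once that is established, both identities reduce to algebraic identities verifiable by a term-by-term comparison with \eqref{eq:LS_matrix1} and \eqref{eq:LS_matrix2}.
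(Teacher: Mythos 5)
Your proof is correct and takes essentially the same approach as the paper's: you both reduce the problem to verifying that the $c$-dependent (resp.\ $\alpha$-dependent) cross terms cancel, and you both identify the key facts that $\Gamma_n(\gamma)^{1/2}\Gamma_n(\gamma)^{-1/2}$ is the projection $I_{n_0,n}$ onto $\operatorname{range}(\Gamma_n)$ and that the columns of $A_n(\gamma)$ and $G_n(\theta)$ indexed by $L$ vanish, so that $A_{n,i\bdot}^T - aG_{n,i\bdot}^T$ lies in that range. The only stylistic difference is that the paper verifies the identities by showing the $c$- (resp.\ $\alpha$-) derivative of the difference vanishes, whereas you expand the squared norm and match terms directly, which has the minor advantage of making $f(a,\theta)$ and $f(C_n,\theta)$ explicit.
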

\begin{proof}
Clearly, $\Gamma_n(\gamma)$ equals $0$ outside the upper left block $\overline{\Gamma}_n(\gamma)$. By assumption $\overline{\Gamma}_n=WDW^T$, for some orthogonal matrix $W\in\IR^{n_0\times n_0}$ and $D$ a $n_0\times n_0$ diagonal matrix with positive entries. It follows that $M$ from Lemma \ref{lem:gamma_sym} is a block-diagonal matrix with $W$ in its upper left and a $(n-n_0)\times(n-n_0)$ identity matrix in the bottom right. Similarly, $\Lambda$ equals $D$ in the upper right and equals $0$ everywhere else. Using these considerations it is direct to compute that $\Gamma_n(\gamma)^{-\frac{1}{2}}\Gamma_n(\gamma)^{\frac{1}{2}}=I_{n_0,n}$, where $I_{n_0,n}$ is a diagonal matrix with $n_0$ $1$s in the first entries of the diagonal and $n-n_0$ $0$s at the end.

We show the statements now by showing that the corresponding derivatives equal zero: Using the above argument and \eqref{eq:LS_matrix1},
\begin{align*}
&\frac{d}{dc}\eqref{eq:easyC} \\
=&2\Gamma_n(\gamma)c^T+2aG_{n,i\bdot}(\theta)^T-2A_{n,i\bdot}(\gamma)^T+2\left(I_{n_0,n}\left(A_{n,i\bdot}(\gamma)^T-aG_{n,i\bdot}(\theta)^T\right)-\Gamma_n(\gamma)c^T\right) \\
=&2a\left(I_n-I_{n_0,n}\right)G_{n,i\bdot}(\theta)^T-2\left(I_n-I_{n_0,n}\right)A_{n,i\bdot}(\gamma)^T=0
\end{align*}
because $A_{n,ij}(\gamma)=0$ for all $j\geq n_0+1$ and $G_{n,ij}(\theta)=0$ for all $j\geq n_0+1$.

In a similar fashion, using \eqref{eq:LS_matrix2}, we get
\begin{align*}
&\frac{d}{d\alpha}\eqref{eq:easyNU} \\
=&2V_n(\beta)\alpha+2\textrm{diag}(C_nG_n(\theta)^T)-2v_n(\beta)+2\left(v_n(\beta)-\textrm{diag}(C_nG_n(\theta)^T)-V_n(\beta)\alpha\right)=0.
\end{align*}
This completes the proof.
\end{proof}

Equation \eqref{eq:easyC} shows that optimization in $c$ can be understood as a pure least squares linear model with $\delta=0$. Equation \eqref{eq:easyNU} shows the same about optimization in $\alpha$. Moreover, we can see from \eqref{eq:easyNU} that finding the baseline parameter $\alpha$ is (unsurprisingly) the same problem as regressing constants on the number of events after subtracting the events induced by self-excitement. The equation \eqref{eq:easyC} is less easy to interpret. Lastly we note that, if the parameter $\alpha$ is left unpenalized (i.e. $\omega_\alpha=0$), we can compute the minimizer for fixed $C$ and $\theta$ by solving $\partial_{\alpha}\sum_{i=1}^n\LS_i(\hat{C}_{n,i\bdot},\alpha,\hat{\theta}_n)=0$ to be $\alpha=V_n(\hat{\beta}_n)^{-1}\left(v_n(\hat{\beta}_n)-\textrm{diag}\left(\hat{C}_nG_n\left(\hat{\theta}_n\right)^T\right)\right)$.

Note finally that, in order to use the LAR algorithm for an efficient computation of the Lasso estimator, we have to either include an intercept $\delta$ that remains unpenalized or provide centralized data, cf. Algorithm 5.1 in \cite{HTW15}. Since neither is the case in \eqref{eq:easyC} and \eqref{eq:easyNU}, we need to make a further argument. This will be given in the next subsection.

\subsection{Lasso with intercept}
\label{subsec:lasso_without_intercept}
Suppose we are interested in solving
\begin{equation}
\label{eq:pls}
\hat{\gamma}_n:=\argmin{\gamma\in\IR^p}\|Y-X\gamma\|_2^2+\lambda\|\gamma\|_1=\argmin{\gamma}\,Y^TY+\gamma^TX^TX\gamma-2Y^TX\gamma+\lambda\|\gamma\|_1
\end{equation}
efficiently for general $Y\in\IR^n$ and $X\in\IR^{n\times p}$. The highly efficient LAR algorithm, cf. Algorithm 5.1 in \cite{HTW15}, requires that
\begin{align} 
&\frac{1}{N}\sum_{i=1}^nY_i=0 \textrm{ and } \label{eq:LAR_cond1} \\
&\frac{1}{N}\sum_{i=1}^nX_{ij}=0 \textrm{ for each } j=1,...,p. \label{eq:LAR_cond2}
\end{align}
We show in this section how LAR can be used in other scenarios as well. Since we believe that this discussion might be of general interest, we discuss this problem generally and therefore the notation in this subsection is independent of the rest of the paper. However, this observation might have been made elsewhere. Typically, if \eqref{eq:LAR_cond1} and \eqref{eq:LAR_cond2} do not hold, one centralizes them. This essentially means that one assumes an unpenalized intercept. In our scenario (and possibly in others too), the intercept is a more complicated part of the model (in our case the baseline intensity). Therefore it cannot simply be removed manually and it is also penalized. We describe in this section what to do in such a case. The following result is a preparation for the main statement.

\begin{lemma}
\label{lem:xtilde}
Let
$$E:=\begin{pmatrix}
\frac{\sqrt{6}}{6} & -\frac{\sqrt{2}}{2}\\
\frac{\sqrt{6}}{6} & \frac{\sqrt{2}}{2}
\end{pmatrix} \textrm{ and }v:=\begin{pmatrix}
-\frac{\sqrt{6}}{3} & 0
\end{pmatrix}.$$
For any even $m\in\IN$, $m\geq2$ define the matrices (all blocks $E$ and $v$ appear $m/2$ times below)
$$\tilde{X}_m:=\begin{pmatrix}
E &   &       \\
  & \ddots &     \\
  &        & E \\ 
v &        &   \\
  & \ddots &   \\
  &        & v  
\end{pmatrix}\in\IR^{\frac{3m}{2}\times m}.$$
For odd $m\in\IN$, $m\geq2$ define (all blocks $E$ and $v$ appear $(m-1)/2$ times below) 
$$\tilde{X}_m:=\begin{pmatrix}
E &        &   &                     \\
  & \ddots &   &                     \\
  &        & E &                     \\
  &        &   & \frac{\sqrt{2}}{2}  \\
v &        &   &                     \\
  & \ddots &   &                     \\
  &        & v &                     \\
  &        &   & -\frac{\sqrt{2}}{2}\end{pmatrix}\in\IR^{\frac{3m+1}{2}\times m}.$$
It holds for all $m\geq 2$ that $\tilde{X}_m^T\tilde{X}_m=I_m$ and $\sum_i\tilde{X}_{m,ij}=0$ for all $j=1,...,m$, where $I_m$ denotes the $m\times m$ identity matrix.
\end{lemma}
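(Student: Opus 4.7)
The claim is a pure block–matrix calculation, so the plan is simply to verify the two identities by computing on a single $E$/$v$ block and then observing that blocks for different column groups live on disjoint row ranges, so all cross terms vanish.

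First, I would record the two key $2\times 2$ computations. Direct multiplication gives
\begin{equation*}
E^{T}E=\begin{pmatrix}\tfrac{1}{3} & 0 \\ 0 & 1\end{pmatrix},\qquad v^{T}v=\begin{pmatrix}\tfrac{2}{3} & 0 \\ 0 & 0\end{pmatrix},
\end{equation*}
so $E^{T}E+v^{T}v=I_{2}$. Next I would note that the column sums of $E$ are $(2\cdot\tfrac{\sqrt 6}{6},\,0)=(\tfrac{\sqrt 6}{3},0)$, and the single row $v$ contributes $(-\tfrac{\sqrt 6}{3},0)$, so the combined column sums over the two row blocks are $(0,0)$.

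For even $m$, the matrix $\tilde X_{m}$ is a block–diagonal arrangement of $E$-blocks (rows $1,\dots,m$) on top of a block–diagonal arrangement of $v$-blocks (rows $m+1,\dots,3m/2$), with each column group occupying only its own pair of $E$-rows and its own $v$-row. Hence columns from different groups are orthogonal, and the Gram matrix restricted to a single group is exactly $E^{T}E+v^{T}v=I_{2}$, giving $\tilde X_{m}^{T}\tilde X_{m}=I_{m}$. The column sums computed above also vanish group by group.

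For odd $m$, the first $m-1$ columns form exactly the even case (with $(m-1)/2$ copies of $E$ and $v$), so their Gram sub-matrix is $I_{m-1}$ and their column sums are zero. The last column is supported on two rows disjoint from all other columns, with entries $\tfrac{\sqrt 2}{2}$ and $-\tfrac{\sqrt 2}{2}$; consequently it is orthogonal to every other column, has squared norm $\tfrac12+\tfrac12=1$, and sums to $0$. Assembling these pieces yields $\tilde X_{m}^{T}\tilde X_{m}=I_{m}$ and zero column sums. There is no real obstacle here beyond careful bookkeeping of which rows each block occupies.
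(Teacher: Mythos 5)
Your proof is correct and takes essentially the same approach as the paper, namely a direct verification exploiting the block structure of $\tilde X_m$. Your organization via the single identity $E^TE+v^Tv=I_2$ together with the disjoint-row-support observation is a cleaner way to package what the paper does by entry-by-entry case analysis on $[\tilde X_m^T\tilde X_m]_{k,l}$, but the underlying computations are the same.
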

\begin{proof}
Let $k\in\{1,...,m\}$. Then,
\begin{align*}
\left[\tilde{X}_m^T\tilde{X}_m\right]_{k,k}=\sum_{j}\tilde{X}_{m,jk}^2=\left\{\begin{array}{cc}
\left(\frac{\sqrt{6}}{6}\right)^2+\left(\frac{\sqrt{6}}{6}\right)^2+\left(-\frac{\sqrt{6}}{3}\right)^2  & \textrm{ if }k\textrm{ odd and }k<\frac{3m+1}{2}  \\
\left(\frac{-\sqrt{2}}{2}\right)^2+\left(\frac{\sqrt{2}}{2}\right)^2  & \textrm{ if }k\textrm{ even}  \\
\left(\frac{\sqrt{2}}{2}\right)^2+\left(-\frac{\sqrt{2}}{2}\right)^2  & \textrm{ if }k=\frac{3m+1}{2}\textrm{ and }m\textrm{ odd}  \\
\end{array}\right\}=1
\end{align*}
as required. Moreover, for $k\neq l$, we obtain $\left[\tilde{X}_m^T\tilde{X}_m\right]_{k,l}=\sum_{j}\tilde{X}_{m,jk}\tilde{X}_{m,jl}=0$ if $|k-l|\geq2$. If $k$ is even and $l=k+1$, the statement remains true because then the columns $k$ and $l$ cover different blocks. Similarly, if $k$ is odd and $l=k-1$, the statement remains true. Finally, it also holds if $l=m$ or $k=m$, when $m$ is odd. So we have left to check the situation that $k<m$ is odd and $l=k+1$ (and the case that $k\leq m$ is even and $l=k-1$ but this works in the same way)
\begin{align*}
\left[\tilde{X}_m^T\tilde{X}_m\right]_{m,k(k+1)}=\sum_{j}\tilde{X}_{m,jk}\tilde{X}_{m,j(k+1)}=-\frac{\sqrt{6}}{6}\cdot\frac{\sqrt{2}}{2}+\frac{\sqrt{6}}{6}\cdot\frac{\sqrt{2}}{2}=0.
\end{align*}
This proves $\tilde{X}_m^T\tilde{X}_m=I_m$. The property $\sum_i\tilde{X}_{m,ij}=0$ for $j=1,...,m$ is easy to check.

\end{proof}
With the help of the above lemma, we prove the main result of this section, which provides the desired representation of \eqref{eq:pls} in which \eqref{eq:LAR_cond1} and \eqref{eq:LAR_cond2} hold:
\begin{lemma}
\label{lem:design}
Let $Y,X,n,p$ be as in \eqref{eq:pls} and let $\tilde{X}_m$ be defined as in Lemma \ref{lem:xtilde} for $m=n$. Define $\overline{Y}:=\tilde{X}_mY$ and $\overline{X}:=\tilde{X}_mX$. Then, $\overline{Y}$ and $\overline{X}$ fulfill \eqref{eq:LAR_cond1} and \eqref{eq:LAR_cond2} and
$$\overline{\gamma}_n:=\argmin{\gamma\in\IR^p}\left\|\overline{Y}-\overline{X}\gamma\right\|_2^2+\lambda\|\gamma\|_1$$
equals $\hat{\gamma}_n$ from \eqref{eq:pls}.
\end{lemma}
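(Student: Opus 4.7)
The strategy is to exploit the two structural properties of $\tilde{X}_m$ that were already verified in Lemma \ref{lem:xtilde}: orthonormal columns ($\tilde{X}_m^T \tilde{X}_m = I_m$) and zero column sums ($\sum_i \tilde{X}_{m,ij} = 0$ for every $j$). Given these, the assertion splits into two near-immediate observations, one for the centering conditions and one for the equivalence of minimizers.

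First, I would verify the LAR centering conditions \eqref{eq:LAR_cond1} and \eqref{eq:LAR_cond2}. For $\overline{Y} = \tilde{X}_m Y$, swapping the order of summation gives
$$\sum_{i} \overline{Y}_i \;=\; \sum_{i}\sum_{j=1}^m \tilde{X}_{m,ij} Y_j \;=\; \sum_{j=1}^m Y_j \sum_{i} \tilde{X}_{m,ij} \;=\; 0,$$
using the zero-column-sum property. The same argument applied column-by-column to $\overline{X} = \tilde{X}_m X$ yields $\sum_{i} \overline{X}_{ij} = 0$ for each $j = 1,\ldots,p$. Thus both \eqref{eq:LAR_cond1} and \eqref{eq:LAR_cond2} hold for the transformed data automatically.

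Second, I would show that the penalized least-squares objectives coincide as functions of $\gamma$. Because $\tilde{X}_m$ is a linear isometry from $\IR^m$ into $\IR^{3m/2}$ (respectively $\IR^{(3m+1)/2}$), for every $\gamma \in \IR^p$,
$$\left\|\overline{Y} - \overline{X}\gamma\right\|_2^2 \;=\; \left\|\tilde{X}_m(Y - X\gamma)\right\|_2^2 \;=\; (Y - X\gamma)^T \tilde{X}_m^T \tilde{X}_m (Y - X\gamma) \;=\; \|Y - X\gamma\|_2^2.$$
Since the penalty $\lambda\|\gamma\|_1$ does not depend on the data at all, the two penalized objectives are identical as functions of $\gamma$, and hence their argmins agree, i.e.\ $\overline{\gamma}_n = \hat{\gamma}_n$.

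The main obstacle here is conceptual rather than computational: one must recognize that all LAR actually needs is that the data be passed through some isometry whose columns sum to zero, and then to exhibit such an isometry explicitly. That explicit construction is the real work and it already lives in Lemma \ref{lem:xtilde}; the present lemma is the bookkeeping step that packages the two ingredients (isometry and zero column sums) into the desired statement about the Lasso problem.
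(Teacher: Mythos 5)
Your proof is correct and follows essentially the same approach as the paper's: both arguments use $\tilde{X}_m^T\tilde{X}_m = I_m$ from Lemma \ref{lem:xtilde} to show the two objectives agree as functions of $\gamma$, and the zero-column-sum property to verify the centering conditions. Your phrasing of the first step as an isometry, $\|\overline{Y}-\overline{X}\gamma\|_2^2 = \|\tilde{X}_m(Y-X\gamma)\|_2^2 = \|Y-X\gamma\|_2^2$, is slightly more streamlined than the paper's expansion into quadratic-form terms, but it is the same computation.
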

\begin{proof}
Using Lemma \ref{lem:xtilde}, we see that
\begin{align*}
\left\|\overline{Y}-\overline{X}\gamma\right\|_2^2&=\overline{Y}^T\overline{Y}+\gamma^T\overline{X}^T\overline{X}\gamma-2\overline{Y}^T\overline{X}\gamma=Y^T\tilde{X}_m^T\tilde{X}_mY+\gamma^TX^T\tilde{X}_m^T\tilde{X}_mX\gamma-2Y^T\tilde{X}_m^T\tilde{X}_mX \\
=&Y^TY+\gamma^TX^TX\gamma-2Y^TX.
\end{align*}
Comparing this with \eqref{eq:pls} and since the penalty remains unchanged, it is clear that $\hat{\gamma}_n=\overline{\gamma}_n$. It remains to check \eqref{eq:LAR_cond1} and \eqref{eq:LAR_cond2}. We have by Lemma \ref{lem:xtilde} for all $j=1,...,p$
\begin{align*}
&\sum_i\overline{Y}_i=\sum_i\sum_{k=1}^n\tilde{X}_{m,ik}Y_k=\sum_{k=1}^nY_k\sum_i\tilde{X}_{m,ik}=0, \\
&\sum_i\overline{X}_{ij}=\sum_i\sum_{k=1}^n\tilde{X}_{m,ik}X_{kj}=\sum_{k=1}^nX_{kj}\sum_i\tilde{X}_{m,ik}=0,
\end{align*}
and the proof is complete.
\end{proof}

Lemma \ref{lem:design} can now be used to formulate a pure least squares problem which is equivalent to our estimation problem. This least squares problem, in turn, can be solved effectively by using the LAR algorithm.

\section{Useful results about Hawkes processes}
\label{sec:Hawkes}
\subsection{Existence and identifiability of the Hawkes process in our model}
\label{subsec:existence}

\begin{lemma}
\label{lem:existence}
Under Assumptions (A0) and (A1), for any choice of $n\in\IN$ and parameters $(C,\alpha,\theta)\in\mathcal{H}_n,$ there is a unique multivariate counting process $N_n=(N_{n,1},...,N_{n,n})$ with intensity functions given by \eqref{eq:Hawkes_dynamics} with $(C_n^*,\alpha_n^*,\theta_n^*)=(C,\alpha,\theta)$ and $N_{n,i}((-\infty, 0)) =0$. It holds $\IE(N_{n,i}([a,b]))<\infty$ for any finite interval $[a,b]$ and all $i=1,...,n$. In particular, the processes $M_{n,i}(t):=N_{n,i}(t)-\int_0^t\lambda_{n,i}(s)ds$ are local, square-integrable martingales.
\end{lemma}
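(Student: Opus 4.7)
The plan is to construct $N_n$ via a Poisson-cluster (branching) representation of Hawkes--Oakes type, adapted to the non-stationary, covariate-driven baseline. First I would introduce, for each coordinate $i\in\{1,\ldots,n\}$, an independent inhomogeneous ``immigrant'' Poisson process on $[0,\infty)$ with intensity $t\mapsto\alpha_i\,\nu_0(X_{n,i}(t);\beta)$; by (A1) this intensity is predictable with respect to $\mathcal{F}_{n,t}$ and bounded by $\alpha_i\overline{\nu}_i$. Each realized event at time $\sigma$ in coordinate $j$ then spawns, independently of everything else, an inhomogeneous Poisson number of ``offspring'' in coordinate $i$ on $(\sigma,\infty)$ with intensity $r\mapsto C_{ij}\,g(r-\sigma;\gamma)$. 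The process $N_n$ is declared to be the superposition of all immigrants together with the descendants produced along all generations. One then checks, by conditioning on the history up to $t-$ and using the standard superposition formula for Poisson processes, that the $\mathcal{F}_{n,t}$-intensity of this $N_n$ equals exactly \eqref{eq:Hawkes_dynamics}.

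The finiteness of $\mathbb{E}(N_{n,i}([a,b]))$ is then a Galton--Watson argument. The mean-offspring matrix of the embedded branching process has entries $m_{ij}:=C_{ij}\int_0^{\infty}g(u;\gamma)\,du$, so by the definition of $\mathcal{H}_n$ its maximum row-sum satisfies $\max_i\sum_j m_{ij}<1$, i.e.\ the branching process is strictly subcritical. A geometric-series bound on $\sum_{k\geq 0}(m^k \mathbf{1})_i$ yields a finite, deterministic upper bound $K^*$ on the expected total size of a cluster started by one ancestor, uniform in the ancestor's coordinate and time. Combined with the bounded immigration rate this gives $\mathbb{E}(N_{n,i}([a,b]))\leq K^*(b-a)\,\max_j(\alpha_j\overline{\nu}_j)<\infty$ for any finite interval.

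For uniqueness I would use the thinning representation on the same Poisson reservoir: if $N_n$ and $\tilde N_n$ both satisfy \eqref{eq:Hawkes_dynamics}, then the coupled difference $\Delta_i(t):=\mathbb{E}|N_{n,i}(t)-\tilde N_{n,i}(t)|$ satisfies a linear integral inequality whose kernel is $|C_{ij}|g(\cdot;\gamma)$; since the convolution operator has spectral radius strictly below $1$ on any compact interval, a Gronwall-type iteration forces $\Delta_i\equiv 0$. Finally, the martingale statement is classical (see \cite{ABGK93}, Ch.~II): the right-continuity of $\mathcal{F}_{n,t}$ in (A0), together with predictability of $X_{n,i}$ in (A1) and the left-continuity in $t$ of $\int_{-\infty}^{t-}g(t-r;\gamma)dN_{n,j}(r)$, makes $\lambda_{n,i}$ predictable, and the just-established bound $\mathbb{E}(N_{n,i}([0,t]))<\infty$ makes $\int_0^t\lambda_{n,i}(s)ds$ integrable; hence $M_{n,i}$ is a true, square-integrable martingale on every $[0,t]$ and a local square-integrable martingale overall.

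The main obstacle is that the classical Hawkes cluster construction is normally carried out under a constant or at least deterministic Poisson immigration rate, whereas here the baseline $\alpha_i\nu_0(X_{n,i}(t);\beta)$ is driven by a possibly random covariate and the asserted intensity is relative to the extended filtration $\mathcal{F}_{n,t}$, not only the filtration generated by $N_n$. The step that requires most care is therefore verifying that the constructed $N_n$ has the claimed $\mathcal{F}_{n,t}$-intensity: one must ensure that the Poisson reservoirs used for immigrants and for offspring are independent of $\mathcal{F}_{n,t}$ conditionally on the past of $X_{n,i}$, so that the innovation theorem applies and the intensity computed by superposition coincides with \eqref{eq:Hawkes_dynamics}. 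Once this is handled, the remaining ingredients are standard applications of (A0)--(A1) and the subcriticality encoded in $\mathcal{H}_n$.
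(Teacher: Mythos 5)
Your approach---construct $N_n$ via a Hawkes--Oakes cluster (branching) representation, read off strict subcriticality from the row-sum constraint in $\mathcal{H}_n$, and deduce finite means by a Galton--Watson bound---is the same strategy the paper uses, and your subcriticality computation is correct: $\max_i\sum_j C_{ij}\int_0^\infty g(u;\gamma)\,du<1$ by the definition of $\mathcal{H}_n$, so the expected total cluster size is finite. (Minor slip: since clusters contributing events to $[a,b]$ can be rooted at any immigrant in $[0,b]$, the bound should be of order $K^*\cdot b\cdot\max_j \alpha_j\overline{\nu}_j$ rather than $K^*(b-a)\max_j\alpha_j\overline{\nu}_j$; it is still finite, so the conclusion is unaffected.)

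Where the two proofs genuinely differ is precisely on the obstacle you flag at the end and do not resolve: constructing an immigrant Poisson stream with the random, $\mathcal{F}_{n,t}$-predictable intensity $\alpha_i\nu_0(X_{n,i}(t);\beta)$ and then verifying that the superposed process has the asserted $\mathcal{F}_{n,t}$-intensity. The paper sidesteps this by first invoking Theorem 7 of Brémaud--Massoulié to obtain a \emph{stationary majorant} Hawkes process $\overline{N}$ whose baseline is the deterministic constant $\alpha_i\overline{\nu}_i$, so that its cluster representation involves only deterministic Poisson intensities; the eigenvalue bound you also derive makes that theorem applicable. It then obtains the target process from $\overline{N}$ by predictable thinning of the immigrant stream with acceptance probability $\nu_0(X_{n,i}(t);\beta)/\overline{\nu}_i\leq 1$, and by discarding immigrants before time $0$ together with all their descendants. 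This stationary-majorant-plus-thinning device is the specific idea that both resolves your acknowledged obstacle and, as a bonus, bounds $N_n$ pathwise from above by a process with finite expectations, giving $\mathbb{E}\,N_{n,i}([a,b])<\infty$ without a separate Galton--Watson computation. For uniqueness you propose a Gronwall-type contraction on $\mathbb{E}|N_{n,i}-\tilde N_{n,i}|$ driven by the subcritical convolution kernel; the paper instead appeals to the uniqueness of the cluster representation. Your route can be made to work, but as written, the step you single out as ``requiring the most care'' remains a gap, and the thinning argument is what is missing to close it.
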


The proof of this lemma is using the cluster representation which is as follows (see also \cite{HO71}).  Consider a multivariate Hawkes process $N=(N_1,...,N_n)$ with intensity functions
$$\lambda_{i}(t):=\nu_i+\sum_{j=1}^n\int_{-\infty}^{t-}h_{t,i,j}(t-r)dN_j(r)$$
for a vector $\nu\in[0,\infty)^n$ and deterministic functions $h_{t,i,j}:[0,\infty)\to[0,\infty)$ that we allow to depend on $t$. Consider independent counting processes $N_{b,i}$ with constant intensities $\nu_i$ for $i=1,...,n$. Let $\mathcal{W}(0):=(\mathcal{W}_1(0),...,\mathcal{W}_n(0))$ be a vector of given, discrete sets $\mathcal{W}_i(0)\subseteq\IR$. Having defined the sets $\mathcal{W}(K-1)$ the random sets $\mathcal{W}(K)$ are defined as follows:
$$\mathcal{W}_i(K):=\bigcup _{j=1}^n \bigcup_{t\in \mathcal{W}_j(K-1)}\left(t+\{\textrm{jumps of }N_{t,i,j}\}\right),$$
where in every term of the union the process $N_{t,i,j}$ is a different independent copy of a counting process with intensity $h_{t,i,j}(\cdot)$. The sets $\mathcal{W}_i(K)$ are called events of $i$ in generation $K$. It can be shown that if $\mathcal{W}(0)$ contains the sets of jump points of the process $N_{b,i}$, then it holds that
$$N_i\sim \bigcup_{K=0}^{\infty}\mathcal{W}_i(K),$$
see \cite{HO71}. We now use the cluster representation to prove Lemma \ref{lem:existence}. 

\begin{proof}[Proof of Lemma \ref{lem:existence}]
Theorem 7 in \citet{BM96}, states that there is a unique multivariate Hawkes process $\overline{N}$ (with finite expectations) with intensity given by \eqref{eq:Hawkes_dynamics},  where $\nu_0(X_{n,i}(t);\beta)$ is replaced by $\overline{\nu}_i$ if the largest eigenvalue of the matrix
$$A:=C\int_0^{\infty}g(r;\gamma)dr$$
is strictly less than $1$. This, in turn, holds by the following argument: Suppose that $\mu\in\IC$ and $v = (v_1,\ldots,v_n) \in\IC^n\setminus\{0\}$ are an eigenvalue-eigenvector pair of $C$, i.e., $Cv=\mu v$. Let $k\in\{1,...,n\}$ be such that $|v_k|=\|v\|_{\infty}$. Then (since $v\neq0$, $|v_k|>0$),
$$|\mu|\|v\|_{\infty}=|\mu v_k|=\left|\left(Cv\right)_k\right|\leq \sum_{j=1}^nC_{k,j}|v_j|\leq\|C_{k\cdot}\|_1\|v\|_{\infty}.$$
Thus, $|\mu|\leq\|C_{k\cdot}\|_1<\left(\int_0^{\infty}g(r;\gamma)dr\right)^{-1}$ and, hence, any eigenvalue of $A$ is in absolute value smaller than $1$.

The process $\overline{N}$ has stationary increments and the expected number of events on any finite interval is finite. Note that the stationary process has cluster representation with $h_{t,i,j}(s)=g(s;\gamma)$ and $\nu_i=\alpha_{0,i}\overline{\nu}_i$. From this cluster representation of $\overline{N}$ we can easily construct the cluster representation of a process $N_n$ which has dynamics described by \eqref{eq:Hawkes_dynamics}: In a first step one removes all $t<0$ from $\mathcal{W}_i(0)$, then, in a second step, since $\alpha_{0,i}\nu_0(X_{n,i}(t);\beta)\leq\nu_i$, we can thin the process $N_{b,i}$ from the cluster representation such that it has time-varying intensity $\alpha_{0,i}\nu_0(X_{n,i}(t);\beta)$. Then in the construction of $\mathcal{W}_i(K)$ one removes the sets
$$\left(t+\{\textrm{jumps of }N_{t,i,j}\}\right)$$
for those $t$ that have been removed from $\mathcal{W}_j(K-1)$. Thus, by thinning of the stationary process $\overline{N}$, we obtain a process $N_n$ with dynamics \eqref{eq:Hawkes_dynamics}. Because $N_n$ is bounded by $\overline{N}$, $N_n$ has also finite expectations. Furthermore, because the cluster representation of the process is uniquely defined we have that the process is unique.
\end{proof}

\subsection{Bounds on increments of Hawkes processes}
\label{subsec:bounds_Hawkes}
In this section, we prove that Hawkes processes have almost uniformly bounded increments (up to a $\log$-factor). Let $N_{n,i}$ for $i=1,...,n$ be Hawkes processes with dynamics \eqref{eq:Hawkes_dynamics}. We study below the probability of the following event for given $A>0$ and $\mathcal{N}>0$
$$\Omega_{\mathcal{N}}:=\left\{\forall k\in\IN_0\cap\left[0,\frac{T}{A}\right]:\, |N_{n,i}\left[kA,(k+1)A\right)|\leq\frac{\mathcal{N}}{2} \textrm{ for all } i\in\{1,...,n\}\right\}.$$
\begin{lemma}
\label{lem:omega_lemma}
Suppose that (A0) and (A1) hold. Suppose, in addition, that $\textrm{supp}(g(\cdot;\gamma_n^*))\subseteq[0,A]$ for some $A>0$ and that
$$a_0:=\sup_{i\in\{1,...,n\}}\|C_{n,i\bdot}^*\|_1\int_0^Tg(t;\gamma_n^*)dt<1.$$
Since $a_0<1$, there are $\epsilon\in(0,1)$ and $r>0$ such that $|e^x-1|\leq\frac{\epsilon}{a_0}|x|$ for all $|x|\leq r$. Let $a:=r(1-\epsilon)$. Then, for $\mathcal{N}:=6\mathcal{N}_0\cdot \log Tn$ and any $\mathcal{N}_0>0$, it holds that
$$\IP\left(\overline{\Omega}_{\mathcal{N}}^c\right)\leq\left(2\exp\left(\frac{A\|\overline{\nu}\|_{\infty}\|\alpha\|_{\infty}\cdot r\epsilon^2}{a_0(1-\epsilon)}\right)+\exp\left(A\|\overline{\nu}\|_{\infty}\|\alpha\|_{\infty}\left(e^a-1\right)\right)\right)\frac{T+A}{AT}(nT)^{1-a\mathcal{N}_0}.$$
\end{lemma}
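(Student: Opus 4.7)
The plan is a standard union bound plus Chernoff estimate, where the nontrivial content is a uniform bound on the moment generating function of increments of a subcritical multivariate Hawkes process.

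First I would apply a union bound: there are at most $(T+A)/A$ disjoint intervals $[kA,(k+1)A)$ covering $[0,T]$ and $n$ processes, so
\begin{equation*}
\IP(\Omega_{\mathcal{N}}^c)\leq\frac{n(T+A)}{A}\,\sup_{k,i}\IP\!\left(N_{n,i}[kA,(k+1)A)>\tfrac{\mathcal{N}}{2}\right).
\end{equation*}
For each summand I would apply the Chernoff estimate with the tilt $a=r(1-\epsilon)$ specified in the statement,
\begin{equation*}
\IP\!\left(N_{n,i}[kA,(k+1)A)>\tfrac{\mathcal{N}}{2}\right)\leq e^{-a\mathcal{N}/2}\IE\!\left[e^{aN_{n,i}[kA,(k+1)A)}\right],
\end{equation*}
so that the negative power of $nT$ in the statement is produced by $e^{-a\mathcal{N}/2}$ (with $\mathcal{N}=6\mathcal{N}_0\log(nT)$) after combining with the cardinality of the union bound.

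The heart of the proof is a uniform bound on the MGF $\IE\bigl[e^{aN_{n,i}[kA,(k+1)A)}\bigr]$. My approach is to use the exponential (Dol\'eans--Dade) supermartingale associated with the counting process,
\begin{equation*}
Z_a(t):=\exp\!\left(aN_{n,i}(t)-(e^a-1)\int_0^t\lambda_{n,i}(u)\,du\right),\qquad \IE[Z_a(t)]\leq 1,
\end{equation*}
together with the pathwise estimate on the integrated intensity on a length-$A$ interval
\begin{equation*}
\int_{kA}^{(k+1)A}\lambda_{n,i}(u)\,du\leq A\,\|\alpha\|_\infty\|\overline{\nu}\|_\infty+a_0\,\sup_j N_{n,j}\bigl[kA-A,(k+1)A\bigr),
\end{equation*}
which follows from $\mathrm{supp}\,g(\cdot;\gamma_n^*)\subseteq[0,A]$, from $\nu_0(X_{n,i}(t);\beta_n^*)\leq\overline{\nu}_i$ (A1), and from $\|C_{n,i\bdot}^*\|_1\int_0^Ag(t;\gamma_n^*)\,dt\leq a_0$. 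An equivalent and perhaps more transparent route is to invoke the cluster representation of the Hawkes process developed in the proof of Lemma~\ref{lem:existence}: each event in $[kA,(k+1)A)$ is a descendant of some baseline event, and the descendant trees are subcritical (multi-type) Galton--Watson trees whose mean offspring is bounded by $a_0<1$, so they have finite exponential moments for $a$ in a neighbourhood of $0$.

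The specific choice $a=r(1-\epsilon)$ is calibrated so that $(e^a-1)\,a_0\leq\epsilon a$, a strict geometric contraction with ratio $\epsilon<1$. Iterating the self-referential MGF inequality across successive intervals and summing the geometric series $\sum_{j\geq 0}\epsilon^j=1/(1-\epsilon)$ produces the two exponential factors appearing in the statement: one term, with exponent proportional to $r\epsilon^2/(a_0(1-\epsilon))$, collects the accumulated self-excitation contributions, while the other, with exponent $(e^a-1)A\|\overline{\nu}\|_\infty\|\alpha\|_\infty$, is the direct baseline contribution on a single interval. Assembling this MGF bound with the Chernoff step and the union-bound prefactor gives the stated estimate.

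\textbf{Main obstacle.} The principal difficulty is the self-referential nature of the Hawkes intensity: events that fall in $[kA,(k+1)A)$ feed back into $\lambda_{n,i}$ at later times \emph{within the same interval}, so a naive Chernoff estimate does not close. The subcriticality assumption $a_0<1$, together with the carefully tuned $a=r(1-\epsilon)$, converts this feedback into a strict contraction whose geometric series is summable, yielding a finite MGF bound. A secondary technical point is that the exponential supermartingale directly controls only the deflated quantity $\IE[e^{aN-(e^a-1)\int\lambda}]$; extracting an honest bound on $\IE[e^{aN}]$ requires either conditioning on the predictable part of the intensity (using $\mathcal{F}_{kA}$) or appealing to the cluster representation, and this is where the integrability conditions provided by (PE2) and the bound $\|\alpha\|_\infty\|\overline{\nu}\|_\infty<\infty$ are used.
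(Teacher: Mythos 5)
Your high-level template (union bound over intervals and vertices, Chernoff with tilt $a=r(1-\epsilon)$, MGF bound exploiting $a_0<1$) is exactly the paper's, and you have correctly identified the within-interval feedback as the central obstacle. But your primary route — the Dol\'eans--Dade supermartingale plus iteration over successive intervals — does not close as written, and you have glossed over precisely the point where it fails. The exponential supermartingale controls only $\IE\bigl[e^{aN-(e^a-1)\int\lambda}\bigr]$; to convert this into a bound on $\IE[e^{aN_{n,i}[kA,(k+1)A)}]$ you must decouple $N$ from $\int\lambda$, and conditioning on $\mathcal{F}_{kA}$ does \emph{not} make $\int_{kA}^{(k+1)A}\lambda_{n,i}\,du$ deterministic, because the compact support of $g$ means this integral involves $N_{n,j}$ on $[kA-A,(k+1)A)$, i.e.\ on the \emph{current} interval as well as the previous one. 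Moreover, your pathwise estimate produces not a single count but the weighted sum $\bigl(\int_0^A g\bigr)\sum_j C_{n,ij}^* N_{n,j}[kA-A,(k+1)A)$, whose components are mutually dependent across $j$, so the MGF of this sum does not factor, and a geometric-series iteration across intervals does not simply "absorb" it. The contraction $a_0(e^a-1)\leq\epsilon a$ you set up is the right calibration, but you have not shown that a self-referential inequality with this structure actually has a finite fixed point; that is the technical heart of the matter.

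The paper resolves this by the cluster representation, which you mention only as an "equivalent and perhaps more transparent" alternative but do not develop. In the paper the key ingredient is Lemma~\ref{lem:goodN}, which establishes via a genuine fixed-point argument over the offspring-generating functional $\phi_n$ (not an interval-by-interval iteration) the two quantitative bounds $\sum_l|\log\IE(e^{s^TW^l})|\leq r$ and $\sum_{k,l}|\IE(e^{s^T(W^l-W^l(k))})-1|\leq r\epsilon^2/(a_0(1-\epsilon))$. The paper then (i) thins to the stationary upper-bound process with baseline $\overline{\nu}_i$ so that the supremum over $k$ in the union bound is legitimate, and (ii) splits $N_{n,i}[-A,0)$ into three pieces by the ancestor generation (baseline events in $[-A,0)$, their first-generation offspring, and clusters rooted further back), each of which is a compound Poisson; this three-way split is why the Chernoff step carries an exponent $e^{-a\mathcal{N}/6}$ (hence $(nT)^{-a\mathcal{N}_0}$ with $\mathcal{N}=6\mathcal{N}_0\log nT$), not the $e^{-a\mathcal{N}/2}$ you wrote, and why the two exponential prefactors in the statement have precisely the forms $r\epsilon^2/(a_0(1-\epsilon))$ and $e^a-1$. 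Without the cluster decomposition, Lemma~\ref{lem:goodN}, and the thinning step, your sketch does not produce the stated bound.
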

For the proof of Lemma \ref{lem:omega_lemma}, fix $l\in\{1,...,n\}$, and consider the cluster presentation introduced in Section \ref{subsec:existence} with initial sets $\mathcal{W}_i(0)$ as follows: $\mathcal{W}_l(0)$ contains exactly one element and $\mathcal{W}_j(0)=\emptyset$ for $j\neq l$. For $i\in\{1,...,n\}$, denote $W_i^l(K):=\left|\bigcup_{k=0}^K\mathcal{W}_i(k)\right|$ and $W_i^l:=W_i^l(\infty)$. Repeating this construction for each $l\in\{1,...,n\}$ yields random vectors $W^l(k)=\left(W_1^l(k),...,W_n^l(k)\right)\in\IN^n$ for $l=1,..,n$ and $k\in\IN\cup\{\infty\}$.
We firstly show the following result, which is a slightly stronger version of Lemma 1 and Proposition 2 in \citet{HRBR15} and fits in our setting.
\begin{lemma}
\label{lem:goodN}
Let $N_i$ for $i=1,...,n$ be Hawkes processes with intensity function as in \eqref{eq:Hawkes_dynamics} where $\nu_0(X_{n,i}(t);\beta_n^*)$ is replaced by $\overline{\nu}_i$ and $(C_n^*,\alpha_n^*,\gamma_n^*)=(C,\alpha,\gamma)$ arbitrary. Suppose that
$$a_0:=\sup_{i\in\{1,...,n\}}\|C_{i\bdot}\|_1\int_0^Tg(t;\gamma)dt<1.$$
Since $a_0<1$, there are $\epsilon\in(0,1)$ and $r>0$ such that $|e^x-1|\leq\frac{\epsilon}{a_0}|x|$ for all $|x|\leq r$. Then, for all $s\in[0,\infty)^n$ with $\|s\|_1\leq r(1-\epsilon)$ it holds that
\begin{align}
&\sum_{l=1}^n\left|\log\IE\left(e^{s^TW^l}\right)\right|\leq r, \label{eq:lem11} \\
&\sum_{k=0}^{\infty}\sum_{l=1}^n\left|\IE\left(e^{s^T(W^l-W^l(k))}\right)-1\right|\leq\frac{r\epsilon^2}{a_0(1-\epsilon)}. \label{eq:lem12}
\end{align}
\end{lemma}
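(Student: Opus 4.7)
The plan is to exploit the cluster (branching) representation of the Hawkes process from Section~\ref{subsec:existence}. Starting from a single event at node $l$ in generation~$0$, the offspring dynamics yield the branching decomposition $W^l \overset{d}{=} e_l + \sum_i \sum_{\alpha=1}^{N_i^{(l)}} W^{i,(\alpha)}$, where $N_i^{(l)} \sim \mathrm{Poi}(C_{il} G_\gamma)$ with $G_\gamma := \int_0^T g(t;\gamma)\,dt$ and the $W^{i,(\alpha)}$ are i.i.d.\ copies of $W^i$. Taking log-Laplace transforms $M_l := \log \IE[e^{s^T W^l}]$ then gives the fixed-point equation $M_l = s_l + G_\gamma \sum_i C_{il}(e^{M_i}-1)$, and the same argument applied to the truncations $W^l(K)$ produces $M_l^K := \log \IE[e^{s^T W^l(K)}] = s_l + G_\gamma \sum_i C_{il}(e^{M_i^{K-1}}-1)$ with base $M_l^0=s_l$.

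For \eqref{eq:lem11}, I would prove by induction on $K$ that $\|M^K\|_1 \le \|s\|_1/(1-\epsilon)$; combined with the hypothesis $\|s\|_1 \le r(1-\epsilon)$, this gives the claim after monotone convergence ($W^l(K)\uparrow W^l$). Non-negativity of $W^l(K)$ forces $M_j^{K-1}\ge 0$, and the inductive hypothesis forces $|M_j^{K-1}| \le \|M^{K-1}\|_1 \le r$, so the linearization $|e^{M_j^{K-1}}-1| \le (\epsilon/a_0) M_j^{K-1}$ is available. The recursion then reads, componentwise, $M^K \le s + B M^{K-1}$ with $B := (\epsilon/a_0)G_\gamma C^T$, and the crucial $\ell^1$-identity
\[
\mathbf{1}^T B \;=\; (\epsilon/a_0) G_\gamma (C \mathbf{1})^T \;\le\; (\epsilon/a_0) G_\gamma \cdot \sup_i \|C_{i\bdot}\|_1 \cdot \mathbf{1}^T \;\le\; \epsilon \cdot \mathbf{1}^T
\]
converts the transpose action of $C^T$ into the row-sum bound on $C$ supplied by the hypothesis. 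Hence $\|M^K\|_1 \le \|s\|_1 + \epsilon \|M^{K-1}\|_1$, closing the induction.

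For \eqref{eq:lem12}, I would condition on the generation-$K$ count vector $U^l(K):=(|\mathcal W_i(K)|)_i$. Each event in generation~$K$ at node $j$ launches an independent subtree distributed like $\tilde W^j := W^j - e_j$ with log-Laplace $\tilde M_j := M_j - s_j \ge 0$, yielding $\IE[e^{s^T(W^l-W^l(K))}] = \IE[\exp(\tilde M^T U^l(K))]$. Since $U^l_i(K)\mid U^l(K-1)\sim \mathrm{Poi}((CG_\gamma U^l(K-1))_i)$ independently over $i$, iterating the Poisson m.g.f.\ gives $\IE[\exp(v^T U^l(K))] = \exp((T^K v)_l)$, where the nonlinear operator $T$ is defined by $(Tv)_j = G_\gamma \sum_i C_{ij}(e^{v_i}-1)$. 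Exactly the same linearization/$\ell^1$-contraction argument as in the previous paragraph (using $Tv\le Bv$ when $0\le v_i\le r$) gives $\|T^k \tilde M\|_1 \le \epsilon^k \|\tilde M\|_1$. Combining this with $\|\tilde M\|_1 = \|M\|_1 - \|s\|_1 \le \|s\|_1\epsilon/(1-\epsilon) \le r\epsilon$ (which comes from \eqref{eq:lem11}) and the pointwise bound $|e^{(T^K\tilde M)_l}-1|\le (\epsilon/a_0)(T^K\tilde M)_l$, summation over $K$ and $l$ yields $r\epsilon^2/[a_0(1-\epsilon)]$ as required.

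The main technical obstacle will be making the linearization $|e^x-1|\le(\epsilon/a_0)|x|$ self-consistent: its validity demands $|x|\le r$, which is precisely what we are trying to control for $M^K$ and $T^K\tilde M$. Both parts sidestep circularity by running the induction on finite truncations where every quantity is manifestly finite, passing to the limit only at the end via monotone convergence. A secondary subtlety is that the branching recursion naturally carries $C^T$, since offspring \emph{at} node $i$ from a parent \emph{at} node $l$ are indexed by $C_{il}$, whereas the hypothesis only bounds the row sums of $C$; the contraction $\mathbf{1}^T B \le \epsilon\cdot \mathbf{1}^T$ is the single place where this conversion happens and where the row-sum condition is actually used.
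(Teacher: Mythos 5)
Your proposal is correct and takes essentially the same approach as the paper: both proofs exploit the cluster representation, reduce the log--Laplace transform of $W^l(K)$ and of $W^l-W^l(K)$ to iterates of the offspring map $\phi_n(v)_l = G_\gamma\sum_j C_{jl}(e^{v_j}-1)$ via the Poisson m.g.f.\ and the branching Markov property, and close via the $\ell^1$-contraction $\|\phi_n(v)\|_1\le\epsilon\|v\|_1$ on $\{\|v\|_1\le r\}$, which is the single place the row-sum bound $a_0<1$ is used, with monotone convergence supplying the passage from finite truncations to $W^l$. The only difference is one of bookkeeping in the derivation of \eqref{eq:lem12}: you condition directly on the generation-$K$ count vector and use subtree independence to get $\IE[e^{s^T(W^l-W^l(K))}]=\exp((T^K\tilde M)_l)$, whereas the paper reaches the same identity by a nested induction over finite truncations $W^l(p)-W^l(k)$ and then lets $p\to\infty$; since $\tilde M = M-s = \phi_n(x(s))$ at the fixed point, $T^K\tilde M = \phi_n^{\circ(K+1)}(x(s))$ and the two expressions coincide, yielding the same bound $r\epsilon^2/(a_0(1-\epsilon))$.
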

The proof takes many ideas of the proofs of Lemma 1 and Proposition 2 in \citet{HRBR15}. However, for our purposes, we need to refine some of the arguments in order to prove a stronger bound on the sum of the expectations. For completeness, we provide the proof in Section \ref{sup:HawkesBounds} of the Appendix.

\begin{proof}[Proof of Lemma \ref{lem:omega_lemma}]
In this proof, we use again ideas from Proposition 2 of \citet{HRBR15}. Note that $\IP(\Omega_{\mathcal{N}})\geq\IP(\overline{\Omega}_{\mathcal{N}})$, where $\overline{\Omega}_{\mathcal{N}}$ is defined as $\Omega_{\mathcal{N}}$ but where the counting processes are of the form \eqref{eq:Hawkes_dynamics} with $\overline{\nu}_i$ instead of $\nu_0(X_{n,i}(t);\beta_n^*)$. By a thinning argument, it is clear that these new processes can be used to form a stationary upper bound of the original processes. In the following, all counting processes are understood to be the stationary upper bounds. By stationarity, we have
\begin{align}
\IP\left(\overline{\Omega}_{\mathcal{N}}^c\right)\leq&\sum_{k=1}^{\lceil\frac{T}{A}\rceil}\sum_{i=1}^n\IP\left(|N_{n,i}[kA,(k+1)A)|>\frac{\mathcal{N}}{2}\right) \nonumber \\
\leq&\frac{T+A}{A}\sum_{i=1}^n\IP\left(|N_{n,i}[-A,0)|>\frac{\mathcal{N}}{2}\right). \label{eq:u_bound}
\end{align}
Recall that, in the cluster representation, $N_{b,j}$ denotes the $0$-th generation events and that we collected them in $W_j(0)$. Consider now such a $0$-th generation event from the interval $[-(k+1)A,-kA)$. Since $g(\cdot;\gamma)$ is supported on $[0,A]$, we conclude that any $1$st generation event must have occurred until time $-(k-1)A$. Continuing this argument inductively explains that every $0$-th generation event from the interval $[-(k+1)A,-kA)$ can only spawn offspring events in the interval $[-A,0)$ from the $k$-th generation onwards. Therefore, the number of events that a $0$-th generation event in process $N_{b,l}$ in the interval $[-(k+1)A,-kA)$ can produce in the process $N_{n,i}$ in the interval $[-A,0)$ is upper bounded (in distribution) by $N_i^l(k):=W_i^l-W_i^l(k-1)$ for $k\geq1$ and $N_i^l(0):=W_i^l$. We abuse notation, and denote by $N_i^l(k)$ a collection of jointly independent random variables with the aforementioned marginal distributions. Denote by $N_{i,m}^l(k)$ iid copies of $N_i^l(k)$ indexed by $m$. Since every event must be a member of a cluster that eventually originates from a $0$-th generation event, and since the clusters evolve independently, we conclude that for each $x\geq0$
\begin{align}
&\IP(|N_{n,i}[-A,0)|\geq x)
\leq\IP\left(\sum_{k=0}^{\infty}\sum_{l=1}^n\sum_{m=1}^{|N_{b,l}[-(k+1)A,-kA)|}N_{i,m}^l(k)\geq x\right) \nonumber \\
\leq&\IP\Bigg(\sum_{k=1}^{\infty}\sum_{l=1}^n\sum_{m=1}^{|N_{b,l}[-(k+1)A,-kA)|}N_{i,m}^l(k) \nonumber \\
&\qquad+\sum_{l=1}^n\sum_{m=1}^{|N_{b,l}[-A,0)|}\left(N_{i,m}^l(0)-W_i^l(0)\right)+|N_{b,i}[-A,0)|\geq x\Bigg) \nonumber \\
\leq&\IP\left(\sum_{k=1}^{\infty}\sum_{l=1}^n\sum_{m=1}^{|N_{b,l}[-(k+1)A,-kA)|}N_{i,m}^l(k)\geq\frac{x}{3}\right)+\IP\left(\sum_{l=1}^n\sum_{m=1}^{|N_{b,l}[-A,0)|}\left(N_{i,m}^l(0)-W_i^l(0)\right)\geq\frac{x}{3}\right) \nonumber \\
&\qquad+\IP\left(|N_{b,i}[-A,0)|\geq\frac{x}{3}\right) \nonumber \\
\leq&2\IP\left(\sum_{k=1}^{\infty}\sum_{l=1}^n\sum_{m=1}^{|N_{b,l}[-(k+1)A,-kA)|}N_{i,m}^l(k)\geq\frac{x}{3}\right)+\IP\left(|N_{b,i}[-A,0)|\geq\frac{x}{3}\right) \nonumber \\
\leq&e^{-\frac{ax}{3}}\left(2\IE\left(e^{a\sum_{k=1}^{\infty}\sum_{l=1}^n\sum_{m=1}^{|N_{b,l}[-(k+1)A,-kA)|}N_{i,m}^l(k)}\right)+\IE\left(e^{a|N_{b,i}[-A,0)|}\right)\right)
\label{eq:u_bound_split}
\end{align}
The penultimate inequality holds because $N_{i,m}^l(0)-W_i^l(0)$ has the same distribution as $N_{i,m}^l(1)$, and $N_{b,l}[-(k+1)A,-kA)$ are independent. We begin with bounding the first expectation. Below, we make use of the independence of the involved random variables and the fact that $N_{b,l}[-(k+1)A,-kA)$ is Poisson distributed with rate $A\overline{\nu}_i\alpha_i$
\begin{align*}
&\log\IE\left(e^{a\sum_{k=1}^{\infty}\sum_{l=1}^n\sum_{m=1}^{|N_{b,l}[-(k+1)A,-kA)|}N_{i,m}^l(k)}\right) \\
=&\sum_{k=1}^{\infty}\sum_{l=1}^n\log\IE\left(\IE\left(e^{a N_{i,m}^l(k)}\right)^{|N_{b,l}[-(k+1)A,-kA)|}\right)=\sum_{k=1}^{\infty}\sum_{l=1}^nA\overline{\nu}_i\alpha_i\left(\IE\left(e^{a N_{i,m}^l(k)}\right)-1\right) \\
\leq&A\|\overline{\nu}\|_{\infty}\|\alpha\|_{\infty}\frac{r\epsilon^2}{a_0(1-\epsilon)},
\end{align*}
by \eqref{eq:lem12} of Lemma \ref{lem:goodN}, which we apply here with $s\in[0,\infty)^n$ with $s_i=a=r(1-\epsilon)$ and $s_j=0$ for $j\neq i$ because $N_{i,m}^l(k)\sim W_i^l-W_i^l(k-1)$. Since, $|N_{b,i}[-A,0)|$ follows a Poisson distribution with parameter $A\overline{\nu}_i\alpha_i$, we get for the second expectation in \eqref{eq:u_bound_split} that
$$\log\IE\left(e^{a|N_{b,i}[-A,0)|}\right)=A\alpha_i\overline{\nu}_i\left(e^a-1\right)\leq A\|\overline{\nu}\|_{\infty}\|\alpha\|_{\infty}\left(e^a-1\right).$$
Using the previous two displays in \eqref{eq:u_bound_split}, we find
$$\IP(|N_{n,i}[-A,0)|\geq x)\leq e^{-\frac{ax}{3}}\left(2\exp\left(\frac{A\|\overline{\nu}\|_{\infty}\|\alpha\|_{\infty}\cdot r\epsilon^2}{a_0(1-\epsilon)}\right)+\exp\left(A\|\overline{\nu}\|_{\infty}\|\alpha\|_{\infty}\left(e^a-1\right)\right)\right).$$
Plugging the above with $x=\mathcal{N}/2$ in \eqref{eq:u_bound} yields
\begin{align*}
\IP\left(\overline{\Omega}_{\mathcal{N}}^c\right)\leq&\frac{T+A}{A}\sum_{i=1}^n\left(e^{-\frac{a\mathcal{N}}{6}}\left(2\exp\left(A\|\overline{\nu}\|_{\infty}\|\alpha\|_{\infty}\frac{r\epsilon^2}{a_0(1-\epsilon)}\right)+\exp\left(A\|\overline{\nu}\|_{\infty}\|\alpha\|_{\infty}\left(e^a-1\right)\right)\right)\right) \\
\leq&\left(2\exp\left(A\|\overline{\nu}\|_{\infty}\|\alpha\|_{\infty}\frac{r\epsilon^2}{a_0(1-\epsilon)}\right)+\exp\left(A\|\overline{\nu}\|_{\infty}\|\alpha\|_{\infty}\left(e^a-1\right)\right)\right)\frac{T+A}{AT}(nT)^{1-a\mathcal{N}_0}.
\end{align*}
The proof is complete.
\end{proof}

\section{Proofs}
\label{sec:proofs}

\subsection{Proofs of Section \ref{subsec:single_consistency}}
\label{subsec:proofs_single_consistency}

\begin{proof}[Proof of Lemma \ref{lem:lambda_rate}]
The main tool for the  proof is the exponential inequality from Theorem 3 in \citet{HRBR15}, and we use ideas from the proofs of Theorem 2 and Proposition 2 in \citet{HRBR15}. Note that for any $t\in[0,T]$ the interval $[t,t+A)$ can be covered by two intervals of the form $[kA,(k+1)A)$. Then, we have on $\Omega_{\mathcal{N}}$ that $N_{n,i}[t,t+A)\leq \mathcal{N}$ for all $t\in[0,T]$ and $n\in\IN$. Thus, we conclude that on $\Omega_{\mathcal{N}}$ for all $i,j=1,...,n$
\begin{align}
\sup_{\overline{\beta}\in K_{\beta}}\int_0^T\nu_0(X_{n,i}(t);\overline{\beta})^2dN_{n,i}(t)&\leq\left(\frac{T}{A}+1\right)\overline{\nu}_i^2\mathcal{N}, \label{eq:V0} \\
\sup_{t\in[0,T]}\sup_{\overline{\gamma}\in K_{\gamma}}\left|\int_0^{t-}g(t-r;\overline{\gamma})dN_{n,j}(r)\right|&\leq\overline{g}\mathcal{N}. \label{eq:Bj}
\end{align}
We fix now $\mathcal{N}:=6\mathcal{N}_0\cdot\log(Tn)$ and study the four parts of $\mathcal{T}_n$ separately. In each part, a term $\IP(\Omega_{\mathcal{N}}^c)$ will appear. This term is always bounded using Lemma \ref{lem:omega_lemma}, which we may apply by (PE2) for each $n\in\IN$ with the same choice of $a_0,r,\epsilon$, yielding the last term in \eqref{eq:probT_bound}. Define furthermore the stopping time
$$\tau_n:=T\wedge \inf\left\{t\in[0,\infty):\exists i,j\in\{1,...,n\}, \sup_{\overline{\gamma}\in K_{\gamma}}\left|\int_0^{t-}g(t-r;\overline{\gamma})dN_{n,j}(r)\right|>\overline{g}\mathcal{N}\right\}.$$
we have that $\tau_n=T$ on $\Omega_{\mathcal{N}}$.

\underline{Part involving $a_n$}: We can use a classical \emph{chaining light} argument as follows. Let $K_{\beta,n,\eta}\subseteq K_{\beta}$ be finite such that for any $\overline{\beta}\in K_{\beta}$ there exists $P_n(\overline{\beta})\in K_{\beta,n,\eta}$ such that $\|\overline{\beta}-P_n(\overline{\beta})\|\leq\eta$. It is possible to choose $K_{\beta,n,\eta}$ such that $|K_{\beta,n,\eta}|\leq K_0\eta^{-p}$ for some constant $K_0>0$. We define firstly a constant $c_1''$ such that
$$2L_{\nu}\left(\frac{\mathcal{N}(T+A)}{TA}+K_{\alpha}\|\overline{\nu}\|_{\infty}+\sup_{i=1,...,n}\|C_{n,i\cdot}^*\|_1\overline{g}\mathcal{N}\right)\leq c_1''\max\left(1,\sup_{i=1,...,n}\|C_{n,i\cdot}^*\|_1\right)\log(nT).$$
This definition is, at this point, unmotivated. Later in the proof, it will turn out that, with $c_1''$ as above, the following choice of $\eta>0$ is the right one for our purposes
$$\eta:=\frac{4\|\overline{\nu}\|_{\infty}\left(\log n+p\log T+\alpha_1\log(nT)\right)}{c_1''\sqrt{\mu-\phi(\mu)}T\max\left(1,\sup_{i=1,...,n}\|C_{n,i\cdot}^*\|_1\right)\log(nT)}.$$
These definitions will be important later. We note firstly that
\begin{align}
&\IP\left(\sup_{i=1,...,n}\sup_{\overline{\beta}\in K_{\beta}}\frac{2}{T}\left|\int_0^T\nu_0(X_{n,i}(t);\overline{\beta})dM_{n,i}(t)\right|\geq a_n\right) \nonumber \\
\leq&\IP\left(\sup_{i=1,...,n}\sup_{\overline{\beta}\in K_{\beta}}\frac{2}{T}\left|\int_0^T\nu_0(X_{n,i}(t);\overline{\beta})dM_{n,i}(t)\right|\geq a_n,\Omega_{\mathcal{N}}\right)+\IP(\Omega_{\mathcal{N}}^c) \nonumber \\
\leq&\IP\Bigg(\sup_{i=1,...,n}\sup_{\overline{\beta}\in K_{\beta}}\Bigg|\frac{2}{T}\int_0^T\nu_0(X_{n,i}(t);\overline{\beta})dM_{n,i}(t) \nonumber \\
&\qquad\qquad-\frac{2}{T}\int_0^T\nu_0(X_{n,i}(t);P_n(\overline{\beta}))dM_{n,i}(t)\Bigg|\geq \frac{a_n}{2},\Omega_{\mathcal{N}}\Bigg) \label{eq:an_cont} \\
&+\IP\left(\sup_{i=1,...,n}\sup_{\overline{\beta}\in K_{\beta,n,\eta}}\frac{2}{T}\left|\int_0^T\nu_0(X_{n,i}(t);\overline{\beta})dM_{n,i}(t)\right|\geq \frac{a_n}{2},\Omega_{\mathcal{N}}\right)+\IP(\Omega_{\mathcal{N}}^c). \label{eq:an_emp_proc}
\end{align}
The probability in \eqref{eq:an_emp_proc} can be bounded from above using union bound:
\begin{align}
&\IP\left(\sup_{i=1,...,n}\sup_{\overline{\beta}\in K_{\beta,n,\eta}}\frac{2}{T}\left|\int_0^T\nu_0(X_{n,i}(t);\overline{\beta})dM_{n,i}(t)\right|\geq \frac{a_n}{2},\Omega_{\mathcal{N}}\right) \nonumber \\
\leq&nK_0\eta^{-p}\sup_{i=1,...,n}\sup_{\overline{\beta}\in K_{\beta,n,\eta}}\IP\left(\frac{2}{T}\left|\int_0^T\nu_0(X_{n,i}(t);\overline{\beta})dM_{n,i}(t)\right|\geq \frac{a_n}{2},\Omega_{\mathcal{N}}\right). \label{eq:an_ub}
\end{align}
We handle this term by applying Theorem 3 of \citet{HRBR15} for a uni-variate process, i.e., in the notation of \citet{HRBR15}, $M=1$. We keep the notation as in the theorem for the convenience of the reader. We have $H(t)=\frac{4}{T}\nu_0(X_{n,i}(t);\overline{\beta})$, which is uniformly bounded in absolute value by $B:=\frac{4\overline{\nu}_i}{T}$. The integral conditions are hence true and we consider the constant stopping time $\tau=T$. We furthermore define for any $x>0$
$$\hat{V}^{\mu}_a:=\frac{\mu}{\mu-\phi(\mu)}\int_0^T\frac{16\nu_0(X_{n,i}(t);\overline{\beta})^2}{T^2}dN_{n,i}(t)+\frac{x}{\mu-\phi(\mu)}\cdot\frac{16\overline{\nu}_i^2}{T^2}.$$
On $\Omega_{\mathcal{N}}$, it holds by \eqref{eq:V0} that
$$w:=\frac{16x\overline{\nu}_i^2}{T^2(\mu-\phi(\mu))}\leq\hat{V}^{\mu}_a\leq\frac{16\mu\overline{\nu}_i^2\mathcal{N}(T+A)}{T^2A(\mu-\phi(\mu))}+\frac{16x\overline{\nu}_i^2}{(\mu-\phi(\mu))T^2}=:v.$$
We hence obtain from Theorem 3 of \citet{HRBR15} for $\epsilon=1$ that, by letting $x=\log n+p\log T+\alpha_1\log(nT)$, that there exists $c_1'>0$ such that
\begin{align*}
&\IP\left(\int_0^T\frac{2}{T}\nu_0(X_{n,i}(t);\overline {\beta})dM_{n,i}(t)\geq \frac{a_n}{2}, \Omega_{\mathcal{N}}\right) \\
\leq&\IP\Bigg(\int_0^T\frac{4}{T}\nu_0(X_{n,i}(t);\overline{\beta})dM_{n,i}(t)\geq 2\sqrt{\hat{V}^{\mu}_ax}+\frac{Bx}{3}, w\leq\hat{V}^{\mu}\leq v, \\
&\qquad\qquad\sup_{t\in[0,\tau]}\frac{4\nu_0(X_{n,i}(t);\overline{\beta})}{T}\leq\frac{4\overline{\nu}_i}{T}\Bigg) \\
\leq&2\left(\log_2\left(\frac{v}{w}\right)+1\right)e^{-x}=2\left(\log_2\left(1+\frac{\mu\mathcal{N}(T+A)}{Ax}\right)+1\right)e^{-x}\leq c_1'\log T \cdot n^{-1}T^{-p}(nT)^{-\alpha_1},
\end{align*}
where we used $\mathcal{N}=6\mathcal{N}_0\log(nT)$ in the last inequality. Combining the above with \eqref{eq:an_ub} yields
\begin{align*}
&\IP\left(\sup_{i=1,...,n}\sup_{\overline{\beta}\in K_{\beta,n,\eta}}\frac{2}{T}\left|\int_0^T\nu_0(X_{n,i}(t);\overline{\beta})dM_{n,i}(t)\right|\geq \frac{a_n}{2},\Omega_{\mathcal{N}}\right) \\
\leq&nK_0\eta^{-p}2c_1'\log T \cdot n^{-1}T^{-p}(nT)^{-\alpha_1} \\
\leq&nK_02c_1'\log T \cdot n^{-1}T^{-p}(nT)^{-\alpha_1} \\
&\qquad\times\frac{(c_1'')^p(\mu-\phi(\mu))^{\frac{p}{2}}T^p\max\left(1,\sup_{i=1,...,n}\|C_{n,i\cdot}^*\|_1\right)^p\log(nT)^p}{4^p\|\overline{\nu}\|_{\infty}^p\left(\log n+p\log T+\alpha_1\log(nT)\right)^p} \\
\leq&c_1\frac{\max\left(1,\sup_{i=1,...,n}\|C_{n,i\cdot}^*\|_1^p\right)\log T}{(nT)^{\alpha_1}}
\end{align*}
for a suitable choice of $c_1>0$.

We turn now to \eqref{eq:an_cont}. For ease of notation, we denote below $d|M_{n,i}|(t):=dN_{n,i}(t)+\lambda_{n,i}(t)dt$. On the event $\Omega_{\mathcal{N}}$, the Lipschitz continuity of $\nu_0$ (expressed as differentiability and bounded derivative in (PE3)) together with the properties of $N_{n,i}$ on $\Omega_{\mathcal{N}}$ implies
\begin{align*}
&\sup_{i=1,...,n}\sup_{\overline{\beta}\in K_{\beta}}\left|\frac{2}{T}\int_0^T\nu_0(X_{n,i}(t);\overline{\beta})dM_{n,i}(t)-\frac{2}{T}\int_0^T\nu_0(X_{n,i}(t);P_n(\overline{\beta}))dM_{n,i}(t)\right| \\
\leq&\sup_{i=1,...,n}\sup_{\overline{\beta}\in K_{\beta}}\frac{2}{T}\int_0^T\left|\nu_0(X_{n,i}(t);\overline{\beta})-\nu_0(X_{n,i}(t);P_n(\overline{\beta}))\right|d|M_{n,i}|(t) \\
\leq&\sup_{i=1,...,n}\frac{2L_{\nu}\eta}{T}\Bigg(\int_0^TdN_{n,i}(t)+\int_0^T\alpha_{n,i}^*\nu_0(X_{n,i}(t);\beta_n^*) \\
&\qquad\qquad+\sum_{j=1}^nC_{n,ij}^*\int_0^{t-}g(t-r;\gamma_n^*)dN_{n,j}(r)dt\Bigg) \\
\leq&\sup_{i=1,...,n}\frac{2L_{\nu}\eta}{T}\left(\frac{\mathcal{N}(T+A)}{A}+TK_{\alpha}\overline{\nu}_i+T\|C_{n,i\cdot}^*\|_1\overline{g}\mathcal{N}\right) \\
\leq&2L_{\nu}\left(\frac{\mathcal{N}(T+A)}{AT}+K_{\alpha}\|\overline{\nu}\|_{\infty}+\sup_{i=1,...,n}\|C_{n,i\cdot}^*\|_1\overline{g}\mathcal{N}\right)\eta \\
\leq&c_1''\max\left(1,\sup_{i=1,...,n}\|C_{n,i\cdot}^*\|_1\right)\log(nT)\eta,
\end{align*}
where $c_1''$ was chosen such that the last inequality holds. By choice of $\eta$, we obtain on $\Omega_{\mathcal{N}}$
$$c_1''\max\left(1,\sup_{i=1,...,n}\|C_{n,i\cdot}\|_1\right)\log(nT)\eta\leq\sqrt{\sup_{i=1,..,n}wx}\leq\sqrt{\hat{V}_{a,\mathcal{T}}^{\mu}x}\leq\frac{a_n}{2}.$$
This, finally, implies $\eqref{eq:an_cont}=0$.

The arguments for the remaining parts work similarly, and we defer the details to Appendix \ref{subsec:details_proof_lambda_rate}.
\end{proof}

\begin{proof}[Proof of Corollary \ref{cor:lambda_rate}]
Choose $\mathcal{N}_0$ such that $a\mathcal{N}_0>1$ ($a$ as in Lemma \ref{lem:omega_lemma}). Then, Lemma \ref{lem:omega_lemma} implies $\IP(\Omega_{\mathcal{N}})\to1$ for $\mathcal{N}:=6\mathcal{N}_0\log(nT)$. Note furthermore that $\IP(\mathcal{T}_n(a_n,b_n,d_n,e_n))$ increases if we increase $a_n,b_n,d_n,e_n$. Therefore, it is sufficient to find deterministic upper bounds on the sequences $a_n,b_n,d_n,e_n$ provided by Lemma \ref{lem:lambda_rate}. Due to the formulas given there, we may restrict to the case when $\Omega_{\mathcal{N}}$ holds true. Below, $K$ is a constant that may change from appearance to appearance, but it never depends on quantities that change with $n$. Using the bounds on the $\hat{V}^{\mu}$ expressions from the proof of Lemma \ref{lem:lambda_rate}, we obtain
\begin{align*}
a_n\leq&K\left(\sqrt{\left(\frac{\log(nT)}{T}+\frac{\log(nT)}{T^2}\right)\log(nT)}+\frac{\log(nT)}{T}\right)\leq K\frac{\log(nT)}{\sqrt{T}}, \\
b_n\leq&K\left(\sqrt{\left(\frac{\log(nT)}{nT}+\frac{\log(nT)}{n^2T^2}\right)\log(nT)}+\frac{\log(nT)}{nT}\right)\leq K\frac{\log(nT)}{\sqrt{nT}}, \\
d_{n,i}\leq&K\left(\sqrt{\left(\frac{\log^3(nT)}{T}+\frac{\log^3(nT)}{T^2}\right)\log(nT)}+\frac{\log^2(nT)}{T}\right)\leq K\frac{\log^2(nT)}{\sqrt{T}}, \\
e_n\leq&K\Bigg(\sqrt{\left(\frac{\log^3(nT)\frac{1}{n}\sum_{i=1}^n\|C_{n,i\cdot}^*\|_1^2}{nT}+\frac{\log^3(nT)\max_{i=1,...,n}\|C_{n,i\cdot}^*\|_1^2}{n^2T^2}\right)\log(nT)} \\
&\qquad\qquad+\frac{\max_{i=1,...,n}\|C_{n,i\cdot}^*\|_1\log^2(nT)}{nT}\Bigg) \\
\leq&K\frac{\log^2(nT)\cdot\sqrt{\frac{1}{n}\sum_{i=1}^n\|C_{n,i\cdot}^*\|_1^2}}{\sqrt{nT}}.
\end{align*}
Note that, importantly, the constant $K$ for bounding $d_{n,i}$ does not depend on $i$. These bounds imply the statement since $\IP(\Omega_{\mathcal{N}})\to1$ by Lemma \ref{lem:omega_lemma}.
\end{proof}

\begin{proof}[Proof of Lemma \ref{lem:convergence_rates}]
In the situation of Corollary \ref{cor:lambda_rate}, the requirements of Theorem \ref{thm:pre_estimate_consistency} are fulfilled if we choose for a suitable $K_{\tilde{b}}>0$
\begin{align*}
\tilde{b}_n& =K_{\tilde{b}}\frac{\log^2(nT)\sqrt{\max\left(1,\frac{1}{n}\sum_{i=1}^n\|C_{n,i\cdot}^*\|_1^2\right)}}{\sqrt{T}} \\
&\qquad\geq\frac{\max\left(K_b\log(nT),K_e\log^2(nT)\sqrt{\frac{1}{n}\sum_{i=1}^n\|C_{n,i\cdot}^*\|_1^2}\right)}{\sqrt{nT}}, \\
\omega_i& =K_d\frac{\log^2(nT)}{\sqrt{T}}.
\end{align*}
Moreover, using these definitions and the statements of Corollary \ref{cor:lambda_rate}, we obtain that
\begin{align*}
\mathcal{L}(C_n^*,\alpha_n^*)^2& = O_P\Bigg(\frac{\log^2(nT)}{T}+\frac{\log^4(nT)}{T}\cdot\frac{1}{n}\sum_{i=1}^n|S_i(C_n^*)| \\
&\qquad\qquad\qquad+\frac{\log^4(nT)}{T}\cdot\max\left(1,\frac{1}{n}\sum_{i=1}^n\|C_{n,i\cdot}^*\|_1^2\right)\Bigg) \\
& = O_P\Big(\frac{\log^4(nT)}{T}\left(1+\frac{1}{n}\sum_{i=1}^n|S_i(C_n^*)|+\frac{1}{n}\sum_{i=1}^n\|C_{n,i\cdot}^*\|_1^2\right)\Big) = O_P\Big(\frac{\log^4(nT)}{T}s_n\Big).
\end{align*}
Using the statement of Theorem \ref{thm:pre_estimate_consistency}, we conclude if $\phi_{\textrm{comp}}(S_1(C_n^*),...,S_n(C_n^*);L;\tilde{\mathcal{H}}_n)$ is uniformly bounded from below,
\begin{align*}
\frac{1}{n}\left\|\check{C}_n-C_n^*\right\|_1 &= O_P\left(\frac{\log^2(nT)}{\sqrt{T}}s_n\right), \\
\frac{1}{n}\left\|\check{\alpha}_n-\alpha_n^*\right\|_1& =O_P\left(\frac{\log^3(nT)}{\sqrt{T}}s_n\right), \\
\left\|\check{\theta}_n-\theta_n^*\right\|_1& =O_P\left(\frac{\log^2(nT)}{\sqrt{T}}\frac{s_n}{\sqrt{\max\left(1,\frac{1}{n}\sum_{i=1}^n\|C_{n,i\cdot}^*\|_1^2\right)}}\right).
\end{align*}
\end{proof}

\subsection{Proofs of Section \ref{subsec:results_debiasing}}
\label{subsec:proofs_debiasing}
We begin by listing the derivatives of $\Psi_{n,i}(t;C_{i\bdot},\alpha_i,\theta)$ for $i,k,x,y\in\{1,...,n\}$
\begin{align*}
\partial_{\alpha_k}\Psi_{n,i}(t;C_{i\bdot},\alpha_i,\theta)=&\nu_0(X_{n,i}(t);\beta)\Ind(i=k), \\
\partial_{C_{xy}}\Psi_{n,i}(t;C_{i\bdot},\alpha_i,\theta)=&\int_0^{t-}g(t-r;\gamma)dN_{n,y}(r)\Ind(i=x), \\
\partial_{\beta}\Psi_{n,i}(t;C_{i\bdot},\alpha_i,\theta)=&\alpha_i\nu_0'(X_{n,i}(t);\beta), \\
\partial_{\gamma}\Psi_{n,i}(t;C_{i\bdot},\alpha_i,\theta)=&\sum_{j=1}^nC_{ij}\int_0^{t-}g'(t-r;\gamma)dN_{n,j}(r),
\end{align*}
where $\nu_0'$ and $g'$ denote the first derivatices of $\nu_0$ resp. $g$ with respect to $\beta$ resp. $\gamma$. Many second order derivatives equal zero. We list here only the non-zero derivatives for $i,k,x,y\in\{1,...,n\}$
\begin{align*}
\partial_{\beta}\partial_{\alpha_k}\Psi_{n,i}(t;C_{i\bdot},\alpha_i,\theta)=&\nu_0'(X_{n,i}(t);\beta)\Ind(i=k), \\
\partial_{\gamma}\partial_{C_{xy}}\Psi_{n,i}(t;C_{i\bdot},\alpha_i,\theta)=&\int_0^{t-}g'(t-r);\gamma)dN_{n,y(r)}\Ind(i=x), \\
\partial_{\beta}\partial_{\beta}\Psi_{n,i}(t;C_{i\bdot},\alpha_i,\theta)=&\alpha_0\nu_0''(X_{n,i}(t);\beta), \\
\partial_{\gamma}\partial_{\gamma}\Psi_{n,i}(t;C_{i\bdot},\alpha_i,\theta)=&\sum_{j=1}^nC_{ij}\int_0^{t-}g''(t-r;\gamma)dN_{n,j}(r),
\end{align*}
where $\nu_0''$ and $g''$ denote the second derivatives of $\nu_0$ resp. $g$ with respect to $\beta$ resp. $\gamma$.

We present now two technical lemmas, which we prove in Appendix \ref{sup:results_debiasing}.

\begin{lemma}
\label{lem:Rn}
Let (A0), (A1), (PE1)-(PE3), and (D1) hold. Let $(C_1,\alpha_1,\theta_1)$ and $(C_2,\alpha_2,\theta_2)$ be arbitrary sets of (random) parameters from the parameter space. Let $R_n$ be a matrix such that each row of $R_n$ equals the corresponding row of $\Sigma_n(C,\alpha,\theta)$, where $(C,\alpha,\theta)$ may depend on the row, but all $(C,\alpha,\theta)$ are supposed to lie between $(C_1,\alpha_1,\theta_1)$ and $(C_2,\alpha_2,\theta_2)$. Then,
\begin{align*}
&\frac{\max_{a,b}\left|R_{n,ab}-\Sigma_{n,ab}(C_1,\alpha_1,\theta_1)\right|}{\frac{1}{n}\|\alpha_1-\alpha_2\|_1+\frac{1}{n}\|C_1-C_2\|_1+\left(\frac{1}{n}\sum_{i=1}^n\|C_{1,i\bdot}\|_1+1\right)\|\theta_1-\theta_2\|_1} \\
=&O_P\left(\max_{i=1,...,n}(\|C_{1,i\bdot}\|_1,\|C_{2,i\bdot}\|_1,1)\log^2(nT)\right).
\end{align*}
\end{lemma}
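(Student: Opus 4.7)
The plan is to work throughout on the high-probability event $\Omega_{\mathcal{N}}$ of Lemma \ref{lem:omega_lemma} with $\mathcal{N} := 6\mathcal{N}_0 \log(nT)$ and $\mathcal{N}_0$ chosen so that $\IP(\Omega_{\mathcal{N}}) \to 1$. As derived in the proof of Lemma \ref{lem:lambda_rate}, on $\Omega_{\mathcal{N}}$ we have the uniform deterministic bounds
$\sup_{t\in[0,T]} \sup_{\overline{\gamma}\in K_\gamma} \int_0^{t-} g(t-r;\overline{\gamma})\, dN_{n,j}(r) \leq \overline{g}\mathcal{N}$, together with analogous bounds for $g'$ and $g''$ (invoking (D1)), and $N_{n,i}(T) \leq (T/A+1)\mathcal{N}$. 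In particular, $\lambda_{n,i}(t) \leq K_{\alpha}\|\overline{\nu}\|_{\infty} + K_C \overline{g}\mathcal{N} = O(\mathcal{N})$, so $\int_0^T d|M_{n,i}|(t) \leq N_{n,i}(T) + \int_0^T \lambda_{n,i}(t)\, dt = O(T\mathcal{N})$.

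Second, I write the second derivative of $\text{LS}_i$ in a form that separates a deterministic part from a stochastic integral. Using $dN_{n,i} = \lambda_{n,i}\, dt + dM_{n,i}$,
\begin{align*}
\partial_a\partial_b \LS_i(C_{i\bdot},\alpha_i,\theta)
= 2\int_0^T \partial_a\Psi_{n,i}\,\partial_b\Psi_{n,i}\,dt
+ 2\int_0^T (\Psi_{n,i}-\lambda_{n,i})\,\partial_a\partial_b\Psi_{n,i}\,dt
- 2\int_0^T \partial_a\partial_b\Psi_{n,i}\,dM_{n,i}(t),
\end{align*}
so that $R_{n,ab} - \Sigma_{n,ab}(C_1,\alpha_1,\theta_1) = \frac{1}{nT}\sum_{i=1}^n$ of the difference of these three pieces, evaluated at the intermediate point indexing row $a$ and at $(C_1,\alpha_1,\theta_1)$.

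Third, I bound each of the three pieces by a Lipschitz argument in the parameters. Using the explicit formulas for $\partial\Psi$ and $\partial^2\Psi$ listed at the start of Section~\ref{subsec:proofs_debiasing}, along with (PE2), (PE3) and (D1), each of $\Psi_{n,i}$, $\partial_p\Psi_{n,i}$ and $\partial_p\partial_q\Psi_{n,i}$ is bounded on $\Omega_{\mathcal{N}}$ by a product of a factor $\max(1,\|C_{i\bdot}\|_1)$ and a factor $O(\mathcal{N})$ (the $\|C_{i\bdot}\|_1$ only appears when a $\gamma$-derivative is taken). Their Lipschitz constants in $(C_{i\bdot},\alpha_i,\theta)$ have the same form, using that Lipschitz continuity of $g$ and $\nu_0$ and their first two derivatives holds with constants that are either deterministic or carry an extra factor $\mathcal{N}$ coming from $\int g^{(k)}\,dN$. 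Applying these bounds to the differences, integrating over $[0,T]$ (giving a factor $T$), and dividing by $nT$, the first two pieces are bounded rowwise in terms of
$\frac{1}{n}\|\alpha_1-\alpha_2\|_1 + \frac{1}{n}\|C_1-C_2\|_1 + (\frac{1}{n}\sum_i\|C_{1,i\bdot}\|_1 + 1)\|\theta_1-\theta_2\|_1$, with prefactor $\max_i(\|C_{1,i\bdot}\|_1,\|C_{2,i\bdot}\|_1,1)\cdot O(\log^2(nT))$. For the stochastic piece, the Lipschitz difference of $\partial_a\partial_b\Psi_{n,i}$ times $\int d|M_{n,i}|=O(T\mathcal{N})$ yields the same type of bound, now with the deterministic $O(T\mathcal{N})$ supplying the second power of $\mathcal{N}$.

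Fourth, I aggregate. The worst case is $(a,b)=(\gamma,\gamma)$: here the product $\partial_\gamma\Psi \cdot \partial_\gamma\Psi$ already has two factors $\|C_{i\bdot}\|_1\mathcal{N}$, and the Lipschitz difference in $\gamma$ of this product contributes an extra factor $\|C_{i\bdot}\|_1 L_{g'}\mathcal{N}|\gamma_1-\gamma_2|$. After averaging over $i$, one factor of $\|C_{i\bdot}\|_1$ is absorbed by the $(\frac{1}{n}\sum_i\|C_{1,i\bdot}\|_1+1)\|\theta_1-\theta_2\|_1$ term in the denominator, leaving the single power $\max_i(\|C_{1,i\bdot}\|_1,\|C_{2,i\bdot}\|_1,1)\log^2(nT)$ claimed in the lemma. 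The main obstacle is precisely this bookkeeping: confirming that in every one of the $(\alpha,\alpha)$, $(\alpha,C)$, $(\alpha,\beta)$, $(\alpha,\gamma)$, $(C,C)$, $(C,\beta)$, $(C,\gamma)$, $(\beta,\beta)$, $(\beta,\gamma)$, $(\gamma,\gamma)$ pair patterns, the extra factors of $\|C_{i\bdot}\|_1$ produced by $\gamma$-derivatives are correctly matched against the $(\frac{1}{n}\sum_i\|C_{1,i\bdot}\|_1+1)$ factor multiplying $\|\theta_1-\theta_2\|_1$ in the denominator, so that no case exceeds the advertised rate.
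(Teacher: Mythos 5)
Your proposal is correct and follows essentially the same route the paper takes: restrict to the event $\Omega_{\mathcal{N}}$, split $\partial_a\partial_b\LS_i$ into its deterministic and jump-measure parts, bound each piece pathwise via Lipschitz estimates in $(C_{i\bdot},\alpha_i,\theta)$, and verify case by case that the extra $\|C_{i\bdot}\|_1$ factors produced by $\gamma$-derivatives are absorbed by the weight $\frac{1}{n}\sum_i\|C_{1,i\bdot}\|_1+1$ on $\|\theta_1-\theta_2\|_1$ in the denominator. The only cosmetic difference is that you rewrite $\int\partial_a\partial_b\Psi_{n,i}\,dN_{n,i}=\int\partial_a\partial_b\Psi_{n,i}\,\lambda_{n,i}\,dt+\int\partial_a\partial_b\Psi_{n,i}\,dM_{n,i}$ (so the deterministic block carries $\Psi_{n,i}-\lambda_{n,i}$ rather than $\Psi_{n,i}$), whereas the paper keeps the term $-2\int\partial_a\partial_b\Psi_{n,i}\,dN_{n,i}$ intact and bounds it directly by $N_{n,i}[0,T]=O(T\mathcal{N})$ on $\Omega_{\mathcal{N}}$; both yield the same $O(T\mathcal{N})$ total-variation bound and hence the same rate.
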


\begin{lemma}
\label{lem:Dassump}
Let (A0), (A1), and (PE1)-(PE3) hold. Then ($\vee$ denotes maximum),
$$\left\|\frac{1}{\sqrt{nT}}\sum_{i=1}^n \int_0^T\begin{pmatrix}
    \partial_{\theta} \\ \partial_{\alpha} \\ \partial_C
\end{pmatrix}\Psi_{n,i}(t;C_{n,i\cdot}^*,\alpha_{n,i}^*,\theta_n^*)dM_{n,i}(t)\right\|_{\infty}=O_P\left((1\vee\max_{i=1,...,n}\|C_{n,i\bdot}^*\|_1)\log nT\right).$$
\end{lemma}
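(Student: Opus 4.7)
The target is a sup-norm bound on a random vector of dimension $p+1+n+n^2$ whose entries are stochastic integrals against the compensated Hawkes martingales $M_{n,i}$. My plan is to copy the template of the proof of Lemma~\ref{lem:lambda_rate}: work on the event $\Omega_\mathcal{N}$ of Lemma~\ref{lem:omega_lemma} with $\mathcal{N}=6\mathcal{N}_0\log(nT)$ (which has probability tending to one, under a suitable choice of $\mathcal{N}_0$), apply the Bernstein-type inequality of Theorem~3 of \citet{HRBR15} coordinate by coordinate at confidence level $x$ of order $\log(nT)$, and close with a union bound over the $p+1+n+n^2=O(n^2)$ coordinates; the extra $\log n$ factor contributed by the union is absorbed into $\log(nT)$.

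\textbf{Structure of the coordinates.} Using the derivative formulas listed at the start of Section~\ref{subsec:proofs_debiasing}, every coordinate has one of the following shapes. The $\partial_{\beta_\ell}$ and $\partial_\gamma$ coordinates are genuine sums $(nT)^{-1/2}\sum_{i=1}^n\int_0^T H_i(t)\,dM_{n,i}(t)$ of $n$ orthogonal martingale integrals, with $H_i(t)=\alpha_{n,i}^*\partial_{\beta_\ell}\nu_0(X_{n,i}(t);\beta_n^*)$ in the first case and $H_i(t)=\sum_{j=1}^n C_{n,ij}^*\int_0^{t-}g'(t-r;\gamma_n^*)\,dN_{n,j}(r)$ in the second. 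The $\partial_{\alpha_k}$ and $\partial_{C_{xy}}$ coordinates collapse to single-martingale integrals because $\partial_{\alpha_k}\Psi_{n,i}$ and $\partial_{C_{xy}}\Psi_{n,i}$ vanish unless $i=k$, respectively $i=x$. On $\Omega_\mathcal{N}$, Assumptions (PE2) and (PE3) provide the uniform pathwise bounds $\|\partial_\beta\nu_0(X_{n,i}(t);\beta_n^*)\|\le L_\nu$, $\nu_0(X_{n,i}(t);\beta_n^*)\le \bar\nu_i$, $\int_0^{t-}g(t-r;\gamma_n^*)\,dN_{n,j}(r)\le \bar g\mathcal{N}$, $\bigl|\int_0^{t-}g'(t-r;\gamma_n^*)\,dN_{n,j}(r)\bigr|\le L_g\mathcal{N}$, and $\lambda_{n,i}(t)\le K_\alpha\bar\nu_i+\|C_{n,i\bdot}^*\|_1\bar g\mathcal{N}$; these immediately yield an $L^\infty$ bound $B$ on each predictable integrand and a pathwise bound on the predictable quadratic variation $\int_0^T H_i(t)^2\lambda_{n,i}(t)\,dt$. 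Plugging them into Theorem~3 of \citet{HRBR15} and taking the union bound over all coordinates gives the stated $O_P\bigl((1\vee\max_i\|C_{n,i\bdot}^*\|_1)\log(nT)\bigr)$ rate.

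\textbf{Main obstacle.} The delicate block is the $\partial_\gamma$ coordinate: the integrand $H_i$ already scales like $\|C_{n,i\bdot}^*\|_1\mathcal{N}$, the intensity $\lambda_{n,i}$ carries another factor of $\|C_{n,i\bdot}^*\|_1\mathcal{N}$ on $\Omega_\mathcal{N}$, and the sum runs over all $n$ components, so a naive pathwise control of $\frac{1}{nT}\sum_i\int_0^T H_i^2\,\lambda_{n,i}\,dt$ threatens to introduce a spurious extra $\log$-factor. Matching the stated rate will require a careful trade-off between the Gaussian term $\sqrt{\hat V^\mu x}$ and the Bernstein term $Bx/\sqrt{nT}$ in Theorem~3 of \citet{HRBR15}, and in particular a tighter control of $\int_0^T\lambda_{n,i}(t)\,dt$ than the crude pathwise bound; using the Doob decomposition $\int_0^T\lambda_{n,i}(t)\,dt=N_{n,i}(T)-M_{n,i}(T)$ together with an in-probability control of $N_{n,i}(T)$ seems the right way to tighten it, in parallel with the treatment of the corresponding $e_n$ term in the proof of Lemma~\ref{lem:lambda_rate}. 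The $\partial_\beta$, $\partial_\alpha$ and $\partial_C$ blocks are then routine given the pointwise bounds above.
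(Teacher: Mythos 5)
Your high-level plan is exactly the paper's: restrict to $\Omega_\mathcal{N}$ with $\mathcal{N}=6\mathcal{N}_0\log(nT)$, apply Theorem~3 of \citet{HRBR15} coordinate by coordinate, and close with a union bound over the $O(n^2)$ coordinates, using the fact that $\partial_{\alpha_k}\Psi_{n,i}$ and $\partial_{C_{xy}}\Psi_{n,i}$ vanish unless $i=k$ (resp.\ $i=x$), so that those blocks are single-martingale integrals treated with $M=1$, while $\partial_\beta$ and $\partial_\gamma$ use the multivariate form with $M=n$. That part matches.

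Your \textbf{Main obstacle} paragraph, however, misdiagnoses the variance quantity. Theorem~3 of \citet{HRBR15} is stated in terms of an \emph{observed} variance proxy $\hat V^\mu$ built from $\int_0^\tau H^2\,dN_{n,i}$, not the predictable quadratic variation $\int_0^T H^2\lambda_{n,i}\,dt$. On $\Omega_\mathcal{N}$ this proxy is controlled trivially by counting jumps, $\int_0^{\tau_n} H^2\,dN_{n,i}\le \|H\|_\infty^2\,N_{n,i}[0,T]\lesssim \|H\|_\infty^2\,\mathcal{N}(T+A)/A$, so there is no need to control $\int\lambda_{n,i}\,dt$ at all and no role for the Doob decomposition you propose; that idea is circuitous anyway, since controlling $M_{n,i}(T)$ would again require a concentration inequality with its own variance proxy. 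What the paper does instead, and what you omit, is to clamp the integrand with the stopping time $\tau_n$ (equal to $T$ on $\Omega_\mathcal{N}$) so that the integrability and boundedness requirements of Theorem~3 hold identically, not merely on $\Omega_\mathcal{N}$; you should incorporate this, since one cannot simply restrict a stochastic integral to $\Omega_\mathcal{N}$ inside the exponential inequality. Your instinct that $\partial_\gamma$ is the delicate block is sound, though for a slightly different reason: there $H_i$ carries a factor $\|C_{n,i\bdot}^*\|_1\mathcal{N}$ and jump-counting contributes a further $\mathcal{N}$, so $\hat V^\mu\lesssim S^2\mathcal{N}^3/A$ with $S=\max_i\|C_{n,i\bdot}^*\|_1$, giving $\sqrt{\hat V^\mu x}\approx S\log^2(nT)$; the paper's bound $K''S\log(nT)$ in that step appears optimistic by one $\log$ factor, and no trick in your proposal closes that gap either.
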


\begin{proof}[Proof of Theorem \ref{thm:de-biasing}]
We use the KKT conditions, cf. Satz 8.3.4 in \cite{JS04}, to characterize $(\check{C}_n,\check{\alpha}_n,\check{\theta}_n)$. More precisely, define the Lagrange function
\begin{align*}
&L:\mathcal{H}_n\times[0,\infty)^{n^2+n}\to\IR, \\
&((C,\alpha,\theta),(\mu_C,\mu_{\alpha}))\mapsto\frac{1}{n}\sum_{i=1}^n\left(\frac{1}{T}\textrm{LS}_i(C_{i\bdot},\alpha_i,\theta)+2\omega_i\|C_{i\bdot}\|_1\right) \\
&\qquad\qquad+\sum_{i,j=1}^n\mu_{C,ij}(-C_{ij})+\sum_{i=1}^n\mu_{\alpha,i}(-\alpha_i).
\end{align*}
Since $\Theta$ is an open set and since the inequality constraint in $\mathcal{H}_n$ is a strict inequality, only the non-negativity constraints are potentially binding. Moreover, the KKT conditions therefore imply that there is $(\mu_C,\mu_{\alpha})$ such that
$$L((C,\alpha,\theta),(\mu_C,\mu_{\alpha}))\geq L((\check{C}_n,\check{\alpha}_n,\check{\theta}_n),(\mu_C,\mu_{\alpha}))$$
for all $(C,\alpha,\theta)$ which fulfill all constraints of $\mathcal{H}_n$ other than potentially the non-negativity. Since this is an open set, we may conclude that
\begin{align}
0=&\frac{1}{nT}\sum_{i=1}^n\begin{pmatrix}
    \partial_{\theta} \\ \partial_{\alpha} \\ \partial_C
\end{pmatrix}\textrm{LS}_i(\check{C}_{n,i\bdot},\check{\alpha}_{n,i},\check{\theta}_n)+\frac{1}{n}\begin{pmatrix}
    0_{p+1} \\ 0_n \\ 2\omega_{1}1_n \\ \vdots \\ 2\omega_{n}1_n
\end{pmatrix}+\begin{pmatrix}
    0_{p+1} \\ -\mu_{\alpha} \\ -\mu_C
\end{pmatrix}, \label{eq:debias1}
\end{align}
where $0_p$ and $1_p$ are vectors of zeros or ones of size $p$, and $\mu_C\in\IR^{n^2}$ is the vector
$$(\mu_{C,11},...,\mu_{C,1n},\mu_{C,21},...,\mu_{2n},...,\mu_{C,n1},...,\mu_{C,nn}).$$
Moreover, $\mu_{C,ij}\check{C}_{n,ij}=0$ and $\mu_{\alpha,i}\check{\alpha}_{n,i}=0$ for all $i,j=1,...,n$. We may rearrange \eqref{eq:debias1} to obtain
\begin{equation}
    \begin{pmatrix}
    0_{p+1} \\ -\mu_{\alpha} \\ \frac{2\omega_{1}}{n}1_n-\mu_{C,1\bdot} \\ \vdots \\ \frac{2\omega_{n}}{n}1_n-\mu_{C,n\bdot}
\end{pmatrix}=-\frac{1}{nT}\sum_{i=1}^n\begin{pmatrix}
    \partial_{\theta} \\ \partial_{\alpha} \\ \partial_C
\end{pmatrix}\textrm{LS}_i(\check{C}_{n,i\bdot},\check{\alpha}_{n,i},\check{\theta}_n). \label{eq:debias2}
\end{equation}
Recall that $(C_n^*,\alpha_n^*,\theta_n^*)$ denotes the true parameters. We then have using 
\begin{align*}
&\frac{1}{nT}\sum_{i=1}^n\begin{pmatrix}
    \partial_{\theta} \\ \partial_{\alpha} \\ \partial_C
\end{pmatrix}\textrm{LS}_i(\check{C}_{n,i\bdot},\check{\alpha}_{n,i},\check{\theta}_n) \\
=&\underbrace{\frac{1}{nT}\sum_{i=1}^n\begin{pmatrix}
    \partial_{\theta} \\ \partial_{\alpha} \\ \partial_C
\end{pmatrix}\textrm{LS}_i(C_{n,i\bdot}^*,\alpha_{n,i}^*,\theta_n^*)}_{=:W_n}+\underbrace{\frac{1}{nT}\sum_{i=1}^n\begin{pmatrix}
    \partial_{\theta} \\ \partial_{\alpha} \\ \partial_C
\end{pmatrix}^2\textrm{LS}_i(\check{C}_{n,i\bdot},\check{\alpha}_{n,i},\check{\theta}_n)}_{=\Sigma_n(\check{C}_n,\check{\alpha}_n,\check{\theta}_n)}\cdot\begin{pmatrix}
  \check{\theta}_n-\theta_n^* \\  \check{\alpha}_n-\alpha_n^* \\ \check{C}_n-C_n^*
\end{pmatrix} \\
&\qquad+\left(R_n-\Sigma_n(\check{C}_n,\check{\alpha}_n,\check{\theta}_n)\right)\begin{pmatrix}
  \check{\theta}_n-\theta_n^* \\  \check{\alpha}_n-\alpha_n^* \\ \check{C}_n-C_n^*
\end{pmatrix},
\end{align*}
where $R_n$ is a matrix which is obtained from $\Sigma_n$ by replacing $\check{C}_n,\check{\alpha}_n,\check{\theta}_n$ in each row by a different intermediate point. Replacing the above in \eqref{eq:debias1} and multiplying with a (for now) arbitrary matrix $\Theta_n$ yields
\begin{align*}
0=&-\Theta_nW_n-\Theta_n\Sigma_n(\check{C}_n,\check{\alpha}_n,\check{\theta}_n)\cdot\begin{pmatrix}
  \check{\theta}_n-\theta_n^* \\  \check{\alpha}_n-\alpha_n^* \\ \check{C}_n-C_n^*
\end{pmatrix}-\Theta_n\left(R_n-\Sigma_n(\check{C}_n,\check{\alpha}_n,\check{\theta}_n)\right)\begin{pmatrix}
  \check{\theta}_n-\theta_n^* \\  \check{\alpha}_n-\alpha_n^* \\ \check{C}_n-C_n^*
\end{pmatrix} \\
&\qquad-\Theta_n\begin{pmatrix}
    0_{p+1} \\ -\mu_{\alpha} \\ \frac{2\omega_{1}}{n}1_n-\mu_{C,1\bdot} \\ \vdots \\ \frac{2\omega_{n}}{n}1_n-\mu_{C,n\bdot}
\end{pmatrix}.
\end{align*}
Adding $(\check{\theta}_n-\theta_n^*,\check{\alpha}_n-\alpha_n^*,\check{C}_n-C_n^*)$ on both sides and replacing \eqref{eq:debias2} yields
\begin{align}
&\sqrt{nT}\left(\begin{pmatrix}
  \check{\theta}_n \\  \check{\alpha}_n \\ \check{C}_n
\end{pmatrix}-\frac{1}{nT}\sum_{i=1}^n\Theta_n\begin{pmatrix}
    \partial_{\theta} \\ \partial_{\alpha} \\ \partial_C
\end{pmatrix}\textrm{LS}_i(\check{C}_{n,i\bdot},\check{\alpha}_{n,i},\check{\theta}_n)-\begin{pmatrix}
  \theta_n^* \\  \alpha_n^* \\ C_n^*
\end{pmatrix}\right) \nonumber \\
=&-\sqrt{nT}\Theta_nW_n+\sqrt{nT}\left(I_{p+1+n+n^2}-\Theta_n\Sigma_n(\check{C}_n,\check{\alpha}_n,\check{\theta}_n)\right)\cdot\begin{pmatrix}
  \check{\theta}_n-\theta_n^* \\  \check{\alpha}_n-\alpha_n^* \\ \check{C}_n-C_n^*
\end{pmatrix} \nonumber \\
&\qquad-\sqrt{nT}\Theta_n\left(R_n-\Sigma_n(\check{C}_n,\check{\alpha}_n,\check{\theta}_n)\right)\begin{pmatrix}
  \check{\theta}_n-\theta_n^* \\  \check{\alpha}_n-\alpha_n^* \\ \check{C}_n-C_n^*
\end{pmatrix}. \label{eq:debiasing_expansion}
\end{align}
Combining \eqref{eq:debiased_lasso} with \eqref{eq:debiasing_expansion} yields
\begin{align}
\sqrt{nT}\left(\overline{\theta}_n-\theta_n^*\right)=&-\sqrt{nT}\Theta_{\theta,n}W_n+\sqrt{nT}(J-\Theta_{\theta,n}\Sigma_n(\check{C}_n,\check{\alpha}_n,\check{\theta}_n))\begin{pmatrix}
    \check{\theta}_n-\theta_n^* \\ \check{\alpha}_n-\alpha_n^* \\ \check{C}_n-C_n^*
\end{pmatrix} \nonumber \\
&\qquad-\sqrt{nT}\Theta_{\theta,n}\left(R_n-\Sigma_n(\check{C}_n,\check{\alpha}_n,\check{\theta}_n)\right)\begin{pmatrix}
    \check{\theta}_n-\theta_n^* \\ \check{\alpha}_n-\alpha_n^* \\ \check{C}_n-C_n^*
\end{pmatrix}. \label{eq:dbe}
\end{align}
We firstly argue that the last two terms converge to zero. We begin with the second term. Note that each row of $R_n$ contains the remainder term in a first-order Taylor expansion. Therefore, for each row $j$ of $R_n$, there are $\tilde{C}_n^{(j)}$, $\tilde{\alpha}_n^{(j)}$, $\tilde{\theta}_n^{(j)}$ which lie on the connecting lines between $C_n^*$ and $\check{C}_n$, $\alpha_n^*$ and $\check{\alpha}_n$, $\theta_n^*$ and $\check{\theta}_n$, respectively, such that
$$R_{n,j\cdot}=\Sigma_{n,j\cdot}\left(\tilde{C}^{(j)}_n,\tilde{\alpha}^{(j)}_n,\tilde{\theta}^{(j)}_n\right).$$
Then, $R_n$ is as in Lemma \ref{lem:Rn}. Therefore, Lemma \ref{lem:Rn} implies that
\begin{align*}
&\left\|\left(R_n-\Sigma_n\left(\check{C}_n,\check{\alpha}_n,\check{\theta}_n\right)\right)\begin{pmatrix}
    \check{\theta}_n-\theta_n^* \\ \check{\alpha}_n-\alpha_n^* \\ \check{C}_n-C_n^*
\end{pmatrix}\right\|_{\infty}\leq\max_{a,b}\left|R_{n,ab}-\Sigma_{n,ab}\left(\check{C}_n,\check{\alpha}_n,\check{\theta}_n\right)\right|\cdot\left\|\begin{pmatrix}
    \check{\theta}_n-\theta_n^* \\ \check{\alpha}_n-\alpha_n^* \\ \check{C}_n-C_n^*
\end{pmatrix}\right\|_1 \\ \\
\leq&O_P(n\log^2(nT))\max_{i=1,...,n}\left(\|C_{n,i\bdot}^*\|_1,\|\check{C}_{n,i\bdot}\|_1,1\right)\left(\frac{1}{n}\sum_{i=1}^n\|C_{n,i\bdot}^*\|_1+1\right) \\
&\qquad\qquad\times\left(\frac{1}{n}\|C_n^*-\check{C}_n\|_1+\frac{1}{n}\|\alpha_n^*-\check{\alpha}_n\|_1+\|\theta_n^*-\check{\theta}_n\|_1\right)^2 \\
=&O_P\left(n\log^2(nT)\check{S}_nr_n^2s_n^2\right),
\end{align*}
where $\check{S}_n$ and $r_n$ are defined as in Section \ref{subsec:debiasing}. We hence get
\begin{align}
&\left\|\sqrt{nT}\Theta_{\theta,n}\left(R_n-\Sigma_n(\check{C}_n,\check{\alpha}_n,\check{\theta}_n)\right)\begin{pmatrix}
    \check{\theta}_n-\theta_n^* \\ \check{\alpha}_n-\alpha_n^* \\ \check{C}_n-C_n^*
\end{pmatrix}\right\|_{\infty} \nonumber \\
\leq&\sqrt{nT}\left\|\Theta_{\theta,n}\right\|_{\infty}\left\|\left(R_n-\Sigma_n(\check{C}_n,\check{\alpha}_n,\check{\theta}_n)\right)\begin{pmatrix}
    \check{\theta}_n-\theta_n^* \\ \check{\alpha}_n-\alpha_n^* \\ \check{C}_n-C_n^*
\end{pmatrix}\right\|_{\infty} \nonumber \\
=&O_P\left(n^{\frac{3}{2}}\sqrt{T}\log^2(nT)\check{S}_n\left\|\Theta_{\theta,n}\right\|_{\infty}r_n^2s_n^2\right)=o_P(1), \label{eq:term2}
\end{align}
where the last step follows by Assumption (D2). Furthermore, by \eqref{eq:node_wise_lasso}
\begin{align}
&\left\|\sqrt{nT}(J-\Theta_{\theta,n}\Sigma_n(\check{C}_n,\check{\alpha}_n,\check{\theta}_n))\begin{pmatrix}
    \check{\theta}_n-\theta_n^* \\ \check{\alpha}_n-\alpha_n^* \\ \check{C}_n-C_n^*
\end{pmatrix}\right\|_{\infty} \nonumber \\
\leq&\sqrt{nT}\underset{b=1,...,p+1+n+n^2}{\max_{a=1,...,p+1}}\left|J_{a,b}-\left[\Theta_{\theta,n}\Sigma_n(\check{C}_n,\check{\alpha}_n,\check{\theta}_n)\right]_{ab}\right|\left\|\begin{pmatrix}
    \check{\theta}_n-\theta_n^* \\ \check{\alpha}_n-\alpha_n^* \\ \check{C}_n-C_n^*
\end{pmatrix}\right\|_1 \nonumber \\
=&O_P\left(n^{\frac{3}{2}}\sqrt{T}r_ns_n\right)\max_{j=1,...,p+1}\frac{\sigma_j}{\tau_j}=o_P(1), \label{eq:term1}
\end{align}
where the last step follows by Assumption (D2). Plugging \eqref{eq:term1} and \eqref{eq:term2} in \eqref{eq:dbe} yields
\begin{align}
\sqrt{nT}\left(\overline{\theta}_n-\theta_n^*\right)=&-\sqrt{nT}\Theta_{\theta,n}W_n+o_P(1). \label{eq:rootn_expansion}
\end{align}
We hence need to study the asymptotic behaviour of $-\sqrt{nT}\Theta_{\theta,n}W_n$. Recall that $M_{n,i}$ are the counting process martingales. We have by definition of $W_n$ and $M_{n,i}$
\begin{align}
-\sqrt{nT}\Theta_{\theta,n}W_n=&-\Theta_{\theta,n}\frac{1}{\sqrt{nT}}\sum_{i=1}^n\begin{pmatrix}
    \partial_{\theta} \\ \partial_{\alpha} \\ \partial_C
\end{pmatrix}\textrm{LS}_i(C_{n,i}^*,\alpha_{n,i}^*,\theta_n^*) \nonumber \\
=&-\Theta_{\theta,n}\frac{1}{\sqrt{nT}}\sum_{i=1}^n\Bigg(\int_0^T2\begin{pmatrix}
    \partial_{\theta} \\ \partial_{\alpha} \\ \partial_C
\end{pmatrix}\Psi_{n,i}(t;C_{n,i\cdot}^*,\alpha_{n,i}^*,\theta_n^*)\Psi_{n,i}(t;C_{n,i\cdot}^*,\alpha_{n,i}^*,\theta_n^*)dt \nonumber \\
&\qquad-2\int\begin{pmatrix}
    \partial_{\theta} \\ \partial_{\alpha} \\ \partial_C
\end{pmatrix}\Psi_{n,i}(t;C_{n,i\cdot}^*,\alpha_{n,i}^*,\theta_n^*)dN_{n,i}(t)\Bigg) \nonumber \\
=&\Theta_{\theta,n}\frac{1}{\sqrt{nT}}\sum_{i=1}^n \int_0^T2\begin{pmatrix}
    \partial_{\theta} \\ \partial_{\alpha} \\ \partial_C
\end{pmatrix}\Psi_{n,i}(t;C_{n,i\cdot}^*,\alpha_{n,i}^*,\theta_n^*)dM_{n,i}(t). \label{eq:theta}
\end{align}
We then have
\begin{align*}
&\left\|\left(\Theta_{\theta,n}-\Theta_{0,\theta,n}\right)\frac{1}{\sqrt{nT}}\sum_{i=1}^n \int_0^T2\begin{pmatrix}
    \partial_{\theta} \\ \partial_{\alpha} \\ \partial_C
\end{pmatrix}\Psi_{n,i}(t;C_{n,i\cdot}^*,\alpha_{n,i}^*,\theta_n^*)dM_{n,i}(t)\right\|_{\infty} \\
\leq&\left\|\Theta_{\theta,n}-\Theta_{0,\theta,n}\right\|_{\infty}\left\|\frac{1}{\sqrt{nT}}\sum_{i=1}^n \int_0^T2\begin{pmatrix}
    \partial_{\theta} \\ \partial_{\alpha} \\ \partial_C
\end{pmatrix}\Psi_{n,i}(t;C_{n,i\cdot}^*,\alpha_{n,i}^*,\theta_n^*)dM_{n,i}(t)\right\|_{\infty} \\
=&O_P\left(\left\|\Theta_{\theta,n}-\Theta_{0,\theta,n}\right\|_{\infty}\max(1,\max_{i=1,...,n}\|C_{n,i\bdot}^*\|_1)\log nT\right) \\
=&o_P(1),
\end{align*}
by Assumption (D3) and Lemma \ref{lem:Dassump}. Using the above in \eqref{eq:theta} shows that
\begin{align*}
-\sqrt{nT}\Theta_{\theta,n}W_n=&\frac{1}{\sqrt{nT}}\sum_{i=1}^n \int_0^T2\Theta_{0,\theta,n}\begin{pmatrix}
    \partial_{\theta} \\ \partial_{\alpha} \\ \partial_C
\end{pmatrix}\Psi_{n,i}(t;C_{n,i\cdot}^*,\alpha_{n,i}^*,\theta_n^*)dM_{n,i}(t)+o_P(1).
\end{align*}
Note that $\Theta_{0,\theta,n}V_n\Theta_{0,\theta,n}^T$ is positive definite by Assumption (D3) and hence the matrix $(\Theta_{0,\theta,n}V_n\Theta_{0,\theta,n}^T)^{-\frac{1}{2}}$ is well defined. Using the previous display in \eqref{eq:rootn_expansion}, we obtain
\begin{align*}
&\sqrt{nT}\left(\Theta_{0,\theta,n}V_n\Theta_{0,\theta,n}^T\right)^{-\frac{1}{2}}\left(\overline{\theta}_n-\theta_n^*\right) \\
=&\frac{2}{\sqrt{nT}}\sum_{i=1}^n \int_0^T\left(\Theta_{0,\theta,n}V_n\Theta_{0,\theta,n}^T\right)^{-\frac{1}{2}}\Theta_{0,\theta,n}S_{n,i}(t)dM_{n,i}(t)+o_P(1)
\end{align*}
again by Assumption (D3), where we recall that
$$S_{n,i}(t):=\begin{pmatrix}
    \partial_{\theta} \\ \partial_{\alpha} \\ \partial_C
\end{pmatrix}\Psi_{n,i}(t;C_{n,i\cdot}^*,\alpha_{n,i}^*,\theta_n^*).$$
Let $a\in\IR^p$ be arbitrary. We show now that
\begin{equation}
\label{eq:asymp_norm1}
\frac{1}{\sqrt{nT}}\sum_{i=1}^n\int_0^Ta^T\left(\Theta_{0,\theta,n}V_n\Theta_{0,\theta,n}^T\right)^{-\frac{1}{2}}\Theta_{0,\theta,n}S_{n,i}(t)dM_{n,i}(t)\overset{d}{\to}\mathcal{N}(0,\|a\|_2^2).
\end{equation}
The main tool for the proof is Rebolledo's Martingale Central Limit Theorem (Theorem II.5.1 \cite{ABGK93}). We study first the quadratic variation process:
\begin{align*}
&\left\langle\frac{1}{\sqrt{nT}}\sum_{i=1}^n\int_0^Ta^T\left(\Theta_{0,\theta,n}V_n\Theta_{0,\theta,n}^T\right)^{-\frac{1}{2}}\Theta_{0,\theta,n}S_{n,i}(t)dM_{n,i}(t)\right\rangle \\
=&\frac{1}{nT}\sum_{i=1}^n\int_0^T\left(a^T\left(\Theta_{0,\theta,n}V_n\Theta_{0,\theta,n}^T\right)^{-\frac{1}{2}}\Theta_{0,\theta,n}S_{n,i}(t)\right)^2\Psi_{n,i}(t;C_{n,i\cdot}^*,\alpha_{n,i}^*,\theta_n^*)dt \\
=&a^T\left(\Theta_{0,\theta,n}V_n\Theta_{0,\theta,n}^T\right)^{-\frac{1}{2}} \\
&\times\Theta_{0,\theta,n}\frac{1}{nT}\sum_{i=1}^n\int_0^TS_{n,i}(t)S_{n,i}(t)^T\Psi_{n,i}(t;C_{n,i\cdot}^*,\alpha_{n,i}^*,\theta_n^*)dt\Theta_{0,\theta,n}^T \\
&\qquad\qquad\times\left(\Theta_{0,\theta,n}V_n\Theta_{0,\theta,n}^T\right)^{-\frac{1}{2}}a \\
\overset{\IP}{\to}&a^TM_0^{-\frac{1}{2}}M_0M_0^{-\frac{1}{2}}a=\|a\|_2^2,
\end{align*}
by Assumptions (D3) and (D4). Let now $\epsilon>0$ be arbitrary. The martingale
\begin{align*}
&\frac{1}{\sqrt{nT}}\sum_{i=1}^n\int_0^Ta^T\left(\Theta_{0,\theta,n}V_n\Theta_{0,\theta,n}^T\right)^{-\frac{1}{2}}\Theta_{0,\theta,n}S_{n,i}(t) \\
&\qquad\qquad\times\Ind\left(\left|\frac{1}{\sqrt{nT}}a^T\left(\Theta_{0,\theta,n}V_n\Theta_{0,\theta,n}^T\right)^{-\frac{1}{2}}\Theta_{0,\theta,n}S_{n,i}(t)\right|>\epsilon\right)dM_{n,i}(t)
\end{align*}
contains all jumps of size larger than $\epsilon$. We have to prove  that its quadratic variation converges to zero:
\begin{align*}
&\Bigg\langle\frac{1}{\sqrt{nT}}\sum_{i=1}^n\int_0^Ta^T\left(\Theta_{0,\theta,n}V_n\Theta_{0,\theta,n}^T\right)^{-\frac{1}{2}}\Theta_{0,\theta,n}S_{n,i}(t) \\
&\qquad\times\Ind\left(\left|\frac{1}{\sqrt{nT}}a^T\left(\Theta_{0,\theta,n}V_n\Theta_{0,\theta,n}^T\right)^{-\frac{1}{2}}\Theta_{0,\theta,n}S_{n,i}(t)\right|>\epsilon\right)dM_{n,i}(t)\Bigg\rangle \\
=&\frac{1}{nT}\sum_{i=1}^n\int_0^T\left(a^T\left(\Theta_{0,\theta,n}V_n\Theta_{0,\theta,n}^T\right)^{-\frac{1}{2}}\Theta_{0,\theta,n}S_{n,i}(t)\right)^2 \\
&\qquad\times\Ind\left(\left|\frac{1}{\sqrt{nT}}a^T\left(\Theta_{0,\theta,n}V_n\Theta_{0,\theta,n}^T\right)^{-\frac{1}{2}}\Theta_{0,\theta,n}S_{n,i}(t)\right|>\epsilon\right)\Psi_{n,i}(t;C_{n,i\cdot}^*,\alpha_{n,i}^*,\theta_n^*)dt \\
\leq&\frac{1}{\epsilon(nT)^{\frac{3}{2}}}\sum_{i=1}^n\int_0^T\left|a^T\left(\Theta_{0,\theta,n}V_n\Theta_{0,\theta,n}^T\right)^{-\frac{1}{2}}\Theta_{0,\theta,n}S_{n,i}(t)\right|^3\Psi_{n,i}(t;C_{n,i\cdot}^*,\alpha_{n,i}^*,\theta_n^*)dt \\
=&o_P(1),
\end{align*}
by Assumption (D4). This proves \eqref{eq:asymp_norm1} by Rebolledo's Martingale Central Limit Theorem (see above). This, in turn, implies the statement because $a\in\IR^p$ was arbitrary.
\end{proof}

\subsection{Proofs of Section \ref{subsec:consistency}}
\label{subsec:proofs_consistency}
\begin{proof}[Proof of Lemma \ref{lem:od}]
Let $i$ be fixed. By continuity, the minimum in the definition of the oracle is attained on $\overline{\mathcal{H}}_n(\theta)\subseteq\overline{\mathcal{H}}_n$. Choose an arbitrary minimizer and denote it by $C^{(i)},\alpha^{(i)}$. Note that $C^{(i)}_{j\bdot}$ and $\alpha^{(i)}_j$ for $j\neq i$ are irrelevant when it comes to computation of the objective function. This holds for all $i$. Thus, by selecting the $i$-th row of $C^{(i)}$ and the $i$-th entry of $\alpha^{(i)}$, we can merge all these minimizers into a single $(C_n^*(\theta),\alpha_n^*(\theta))$ at which the minimum is attained for all $i$ simultaneously.
\end{proof}

Before proving Corollary \ref{cor:ind_conv_rate}, we present a technical lemma, which we prove in Appendix \ref{sup:consistency}.
\begin{lemma}
\label{lem:diff}
Let Assumptions (A0), (A1), and (PE1)-(PE3) hold. We have that
\begin{align*}
&\left\|\begin{pmatrix}
\alpha_{n,i}^*(\overline{\theta}_n)-\alpha_{n,i}^*(\theta_n^*) \\ C_{n,i\bdot}^*(\overline{\theta}_n)-C_{n,i\bdot}^*(\theta_n^*)
\end{pmatrix}\right\|_1 \\
=&O_P(1)\cdot\left(\frac{1+|S_i(C_n^*)|}{\phi_{i,\textrm{comp}}(L;\theta_n^*)^2}\log^2(nT)\cdot\left\|\overline{\theta}_n-\theta_n^*\right\|_2\left(1+\|C_{n,i\bdot}^*\|_1^2+\|C_{n,i\bdot}^*(\overline{\theta}_n)\|_1^2\right)\right).
\end{align*}
\end{lemma}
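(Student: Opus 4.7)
My plan is to exploit the first-order optimality conditions for the oracle at both $\overline{\theta}_n$ and $\theta_n^*$ and translate their difference into the desired $L_1$-bound using the individual compatibility constant at $\theta_n^*$. Since $\Psi_{n,i}(\cdot;C,\alpha,\theta)$ is linear in $(C_{i\bdot},\alpha_i)$, the loss $\|\Psi_{n,i}(\cdot;C_{i\bdot},\alpha_i,\theta)-\lambda_{n,i}\|_T^2$ is a quadratic form in the coordinates indexed by $S_i(C_n^*)\cup\{\alpha\text{-coord.}\}$, on which the oracle is supported. Writing $U(\cdot;\theta)$ for the stacked vector of $\nu_0(X_{n,i}(\cdot);\beta)$ and $\{\int_0^{\cdot -}g(\cdot-r;\gamma)dN_{n,j}(r)\}_{j\in S_i(C_n^*)}$, letting $G(\theta):=\langle U(\cdot;\theta),U(\cdot;\theta)^{T}\rangle_T$, and collecting $v^*(\theta):=(\alpha_{n,i}^*(\theta),C_{n,iS_i(C_n^*)}^*(\theta))$, the first-order conditions at each $\theta$ read $G(\theta)v^*(\theta)=\langle U(\cdot;\theta),\lambda_{n,i}\rangle_T$ (boundary issues are avoided because $\alpha_{n,i}^*,C_{n,ij}^*$ are strictly positive on $S_i(C_n^*)$, so by continuity the oracle at $\overline{\theta}_n$ stays in the interior with high probability).

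Subtracting these two FOCs and using $\lambda_{n,i}=U(\cdot;\theta_n^*)^{T}v^*(\theta_n^*)$ yields
\[
G(\theta_n^*)\,\delta \;=\; \langle U(\cdot;\overline{\theta}_n)-U(\cdot;\theta_n^*),\,\lambda_{n,i}\rangle_T - [G(\overline{\theta}_n)-G(\theta_n^*)]\,v^*(\overline{\theta}_n) \;=:\; e,
\]
where $\delta:=v^*(\overline{\theta}_n)-v^*(\theta_n^*)$. Since $\delta$ is supported on the active set of size $1+|S_i(C_n^*)|$ and zero elsewhere, the restricted vector trivially satisfies the compatibility constraint in Definition \ref{def:rcc} (the off-support $L_1$-norm is zero). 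Hence, by Remark \ref{rem:ind_comp} and the event $\Omega^{(i)}_{IRCC}(L,\phi_{i,\textrm{comp}}(L;\theta_n^*);\{\theta_n^*\})$, the restriction of $G(\theta_n^*)/T$ to the active coordinates has smallest eigenvalue at least $\phi_{i,\textrm{comp}}(L;\theta_n^*)^2$, giving $\|\delta\|_2\leq (T\phi_{i,\textrm{comp}}(L;\theta_n^*)^2)^{-1}\|e\|_2$, and then $\|\delta\|_1\leq\sqrt{1+|S_i(C_n^*)|}\,\|\delta\|_2$.

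To bound $\|e\|_\infty$, I will estimate each coordinate of $e$ using assumption (PE3) (Lipschitz continuity of $\nu_0$ in $\beta$ and of $g$ in $\gamma$) together with the uniform bound $\int_0^{t-}g(t-r;\gamma)dN_{n,j}(r)\leq\overline{g}\,\mathcal{N}$ from Lemma \ref{lem:omega_lemma} on the event $\Omega_{\mathcal{N}}$ with $\mathcal{N}=6\mathcal{N}_0\log(nT)$. Each entry of $\langle U(\cdot;\overline{\theta}_n)-U(\cdot;\theta_n^*),\lambda_{n,i}\rangle_T$ is a length-$T$ integral of a factor bounded by $L_\nu\|\overline{\beta}_n-\beta_n^*\|+L_g\,\mathcal{N}\,|\overline{\gamma}_n-\gamma_n^*|$ against $\lambda_{n,i}$, which itself is bounded on $\Omega_{\mathcal{N}}$ by $K_{\alpha}\|\overline{\nu}\|_\infty+\overline{g}\,\mathcal{N}\,\|C_{n,i\bdot}^*\|_1$; the $[G(\overline{\theta}_n)-G(\theta_n^*)]v^*(\overline{\theta}_n)$ term contributes an analogous factor involving $\|C_{n,i\bdot}^*(\overline{\theta}_n)\|_1$. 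Overall,
\[
\|e\|_\infty \;\lesssim\; T\log^2(nT)\,\|\overline{\theta}_n-\theta_n^*\|_2\,\bigl(1+\|C_{n,i\bdot}^*\|_1+\|C_{n,i\bdot}^*(\overline{\theta}_n)\|_1\bigr).
\]

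Combining the previous displays with $\|e\|_2\leq\sqrt{1+|S_i(C_n^*)|}\,\|e\|_\infty$ produces
\[
\|\delta\|_1 \;\lesssim\; \frac{1+|S_i(C_n^*)|}{\phi_{i,\textrm{comp}}(L;\theta_n^*)^2}\,\log^2(nT)\,\|\overline{\theta}_n-\theta_n^*\|_2\bigl(1+\|C_{n,i\bdot}^*\|_1+\|C_{n,i\bdot}^*(\overline{\theta}_n)\|_1\bigr),
\]
and using $1+x\leq 2(1+x^2)$ absorbs the linear $\|C\|_1$ into $\|C\|_1^2$. The main obstacle is keeping track of the slightly subtle applicability of the compatibility constant at $\theta_n^*$: one must check that $v^*(\overline{\theta}_n)\in\mathcal{H}_n(\theta_n^*)$ so that the compatibility inequality actually provides a lower bound on $G(\theta_n^*)$; this follows from the continuity of $\int_0^\infty g(t;\gamma)\,dt$ in $\gamma$ and the strict feasibility of $v^*(\theta_n^*)$ inside $\mathcal{H}_n(\theta_n^*)$, combined with $\overline{\theta}_n\overset{\IP}{\to}\theta_n^*$ from Theorem \ref{thm:de-biasing}.
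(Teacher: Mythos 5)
Your proposal is correct in spirit and reaches the same (in fact slightly tighter, linear in $\|C\|_1$) bound, but it takes a genuinely different route from the paper's. The paper avoids first-order conditions entirely: it works purely with the minimization property of the oracle, sandwiching the value $T^{-1}\|\Psi_{n,i}(\cdot;C_{n,i\bdot}^*(\overline{\theta}_n),\alpha_{n,i}^*(\overline{\theta}_n),\overline{\theta}_n)-\lambda_{n,i}\|_T^2$ between an upper bound coming from testing against $(C_{n,i\bdot}^*,\alpha_{n,i}^*)$ and a lower bound obtained from the quadratic perturbation estimate (their display \eqref{eq:FO}), and then converting the squared-loss bound into a squared $L_1$ bound via the compatibility constant (their \eqref{eq:SO}) and a two-case analysis. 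You instead differentiate and subtract the stationarity equations $G(\theta)v^*(\theta)=b(\theta)$ and invert $G(\theta_n^*)$ on the active coordinates; this is cleaner linear algebra and produces the result more directly.

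The one place your argument has more to prove than the paper's is the appeal to stationarity. You need the oracle $(C_n^*(\overline{\theta}_n),\alpha_n^*(\overline{\theta}_n))$ to sit in the interior of the feasible set, both for the non-negativity constraints and for the row-sum constraint $\|C_{i\bdot}\|_1<(\int_0^\infty g(t;\overline{\gamma}_n)dt)^{-1}$, so that no KKT multipliers appear. You flag this and invoke continuity, but continuity of the oracle map $\theta\mapsto(C_n^*(\theta),\alpha_n^*(\theta))$ is not free: it requires uniqueness of the restricted least-squares solution, i.e., invertibility of the restricted Gram matrix $G(\theta)$ in a neighborhood of $\theta_n^*$, and without this the subtraction $v^*(\overline{\theta}_n)-v^*(\theta_n^*)$ is not even well-defined. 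When $\phi_{i,\textrm{comp}}(L;\theta_n^*)=0$ the lemma's bound is vacuous, so you may restrict to the case $\phi_{i,\textrm{comp}}(L;\theta_n^*)>0$, which does give positive definiteness of $G(\theta_n^*)$ on the active block and hence (together with continuity of $G(\cdot)$ on $\Omega_{\mathcal N}$ and $\overline{\theta}_n\overset{\IP}{\to}\theta_n^*$) uniqueness and local continuity of the oracle. This is fixable, but it is a genuine extra step that the paper's variational argument does not need, since the minimization inequality holds equally well at boundary points. The feasibility issue of whether $(C_n^*(\overline{\theta}_n),\alpha_n^*(\overline{\theta}_n))$ lies in $\mathcal H_n(\theta_n^*)$ when applying $\phi_{i,\textrm{comp}}(L;\theta_n^*)$ is shared by both proofs and is not an additional weakness of your route.

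Once those points are tightened, the rest is sound: your bound $\|\delta\|_2\leq(T\phi_{i,\textrm{comp}}(L;\theta_n^*)^2)^{-1}\|e\|_2$ follows from $\phi_{i,\textrm{comp}}^2\|\delta\|_2^2\leq T^{-1}\delta^TG(\theta_n^*)\delta=T^{-1}\delta^Te\leq T^{-1}\|\delta\|_2\|e\|_2$, the $\|e\|_\infty$ estimate on $\Omega_{\mathcal N}$ using (PE2)--(PE3) gives the correct $\log^2(nT)$ and $\|C\|_1$ dependence, and the Cauchy--Schwarz factors $\sqrt{1+|S_i(C_n^*)|}$ on both $\delta$ and $e$ give the $1+|S_i(C_n^*)|$ prefactor as stated.
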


\begin{proof}[Proof of Corollary \ref{cor:ind_conv_rate}]
We argue next that we may restrict to the event that both of the following are true:
\begin{enumerate}
\item The statement of Theorem \ref{thm:oracle} applies for a suitable $L>0$.
\item $\inf_{i=1,...,n}\phi_{i,\textrm{comp}}(L;\overline{\theta}_n)\geq\phi_0$
\end{enumerate}
It is enough to argue that each individual statement holds with probability converging to 1.

\underline{To (1):}  Since $\omega_i=3d_{n,i}$, we have by assumption that $a_n/\omega_i=O_P(1)$ uniformly and, hence, there is $L>0$ such that $\IP(L\geq a_n/\omega_i\textrm{ for all }i=1,...,n)\to1$: Thus, $L>0$ as required in Theorem \ref{thm:oracle} can be found with high probability. \\
Let, in addition to $a_n$ and $d_{n,i}$, also $b_n, e_n$ be chosen as in Lemma \ref{lem:lambda_rate}. Since
$$\mathcal{T}_n(a_n,b_n,d_n,e_n)\subseteq\mathcal{T}_n^{(i)}(a_n,d_{n,i}),$$
we have $\IP(\bigcap_{i=1}^n\mathcal{T}_n^{(i)}(a_n,d_{n,i}))\to1$ by Lemma \ref{lem:lambda_rate}. \\
On the intersection of these events, which has probability converging to 1, the statement of Theorem \ref{thm:oracle} applies. \\
\underline{To (2):} By assumption, $\IP\left(\bigcap_{i=1}^n\Omega_{\textrm{IRCC}}^{(i)}\left(L,\phi_0;B_r(\theta_n^*)\right)\right)\to1$. Since, by Theorem \ref{thm:de-biasing}, $\overline{\theta}_n-\theta_n^*\overset{\IP}{\to}0$, we have that $\overline{\theta}_n\in B_r(\theta_n^*)$ with probability converging to one for any fixed $r>0$. These together imply the statement.

For proving the rate of convergence, we may thus restrict to the event that both statements are true. For $\theta=\overline{\theta}_n$, we conclude, hence, from Theorem \ref{thm:oracle} that
\begin{equation}
\label{eq:ind_bound}
\frac{1}{T}\left\|\Psi_{n,i}(\cdot;\hat{C}_{n,i\bdot},\hat{\alpha}_{n,i},\overline{\theta}_n)-\lambda_{n,i}\right\|_T^2+2\omega_i\|\hat{C}_{n,i\bdot}-C^*_{n,i\bdot}(\overline{\theta}_n)\|_1+2a_n|\hat{\alpha}_{n,i}-\alpha^*_{n,i}(\overline{\theta}_n)|\leq2\epsilon^*_i(\overline{\theta}_n).
\end{equation}
When keeping in mind that $C_n^*=C_n^*(\theta_n^*)$, we obtain for the right hand side
\begin{align}
\epsilon_i^*(\overline{\theta}_n)&\leq\frac{4}{T}\left\|\Psi_{n,i}\left(\cdot;C_{n,i}^*(\overline{\theta}_n),\alpha_{n,i}^*(\overline{\theta}_n),\overline{\theta}_n\right)-\Psi_{n,i}\left(\cdot;C_{n,i}^*(\theta_n^*),\alpha_{n,i}^*(\theta_n^*),\theta_n^*\right)\right\|_2^2 \nonumber \\
&\qquad\qquad\qquad\qquad\qquad\qquad\qquad\qquad\qquad\qquad+9\frac{\omega_i^2|S_i(C_n^*)|+a_n^2}{\phi_0^2}. \label{eq:eps_over_bound}
\end{align}
We bound the first expression. Note that by Lemma \ref{lem:diff}, since $\phi_{i,\textrm{comp}}(L;\theta_n^*)\geq\phi_0>0$, and by Theorem \ref{thm:de-biasing} we have
\begin{align}
&\left\|\begin{pmatrix}
\alpha_{n,i}^*(\overline{\theta}_n)-\alpha_{n,i}^*(\theta_n^*) \\ C_{n,i\bdot}^*(\overline{\theta}_n)-C_{n,i\bdot}^*(\theta_n^*)
\end{pmatrix}\right\|_1 \nonumber \\
=&O_P(1)\cdot\left(\frac{1+|S_i(C_n^*)|}{\phi_{i,\textrm{comp}}(L;\theta_n^*)^2}\log^2(nT)\cdot\left\|\overline{\theta}_n-\theta_n^*\right\|_2\left(1+\|C_{n,i\bdot}^*\|_1^2+\|C_{n,i\bdot}^*(\overline{\theta}_n)\|_1^2\right)\right) \nonumber \\
=&O_P(1)\cdot\left(\frac{1+|S_i(C_n^*)|}{\sqrt{nT}}\log^2(nT)\left(1+\|C_{n,i\bdot}^*\|_1^2+\|C_{n,i\bdot}^*(\overline{\theta}_n)\|_1^2\right)\right). \label{eq:opt_bound}
\end{align}
To make use of this bound, we define
\begin{align*}
v_i(t;\theta):=&\begin{pmatrix}
\nu_0(X_{n,i}(t);\beta) & \int_0^{t-}g(t-r;\gamma)dN_{n,1}(r) & \dots & \int_0^{t-}g(t-r;\gamma)dN_{n,n}(r)
\end{pmatrix}^T.
\end{align*}
Then,
$$\Psi_{n,i}(t;c,a,\theta)=v_i(t;\theta)^T\begin{pmatrix}
a \\ c
\end{pmatrix}.$$
Moreover, on the event $\Omega_{\mathcal{N}}$ with $\mathcal{N}=\mathcal{N}_0\log n$, where $\mathcal{N}_0>0$ is chosen such that $\IP(\Omega_{\mathcal{N}})\to1$ by Lemma \ref{lem:omega_lemma}, we can make use of the boundedness and differentiability assumptions from (PE2) and (PE3) to obtain the bounds
\begin{equation}
\label{eq:v_bound}
\sup_{t\in[0,T]}\|v_i(t;\overline{\theta}_n)-v_i(t;\theta_n^*)\|_{\infty}\leq R_1\|\overline{\theta}_n-\theta_n^*\|\log(nT),\,\sup_{t\in[0,T]}\|v_i(t;\theta_n^*)\|_{\infty}\leq R_2\log(nT),
\end{equation}
where $R_1,R_2>0$ are suitable constants. Finally, since $(\alpha_n^*(\overline{\theta}_n),C_n^*(\overline{\theta}_n))\in\mathcal{H}_n(\overline{\theta}_n)$ and $\overline{\theta}_n\in\Theta$, we have that $\|C_{n,i\bdot}^*(\overline{\theta}_n)\|_1\leq|S_i(C_n^*)|K_C$ by (PE1). This insight together with \eqref{eq:opt_bound}, \eqref{eq:v_bound}, and Theorem \ref{thm:de-biasing} shows that
\begin{align*}
&\frac{1}{T}\left\|\Psi_{n,i}\left(\cdot;C_{n,i}^*(\overline{\theta}_n),\alpha_{n,i}^*(\overline{\theta}_n),\overline{\theta}_n\right)-\Psi_{n,i}\left(\cdot;C_{n,i}^*(\theta_n^*),\alpha_{n,i}^*(\theta_n^*),\theta_n^*\right)\right\|_T^2 \\
=&\frac{1}{T}\left\|v_i(t;\overline{\theta}_n)^T\begin{pmatrix}
\alpha_{n,i}^*(\overline{\theta}_n) \\ C_{n,i\bdot}^*(\overline{\theta}_n) \end{pmatrix}-v_i(t;\theta_n^*)^T\begin{pmatrix}
\alpha_{n,i}^*(\theta_n^*) \\ C_{n,i\bdot}^*(\theta_n^*) \end{pmatrix}\right\|_T^2 \\
=&\frac{1}{T}\left\|\left(v_i(t;\overline{\theta}_n)-v_i(t;\theta_n^*)\right)^T\begin{pmatrix}
\alpha_{n,i}^*(\overline{\theta}_n) \\ C_{n,i\bdot}^*(\overline{\theta}_n) \end{pmatrix}-v_i(t;\theta_n^*)^T\begin{pmatrix}
\alpha_{n,i}^*(\theta_n^*)-\alpha_{n,i}^*(\overline{\theta}_n) \\ C_{n,i\bdot}^*(\theta_n^*)-C_{n,i\bdot}^*(\overline{\theta}_n) \end{pmatrix}\right\|_T^2 \\
\leq&\frac{2}{T}\left(\left\|\left(v_i(t;\overline{\theta}_n)-v_i(t;\theta_n^*)\right)^T\begin{pmatrix}
\alpha_{n,i}^*(\overline{\theta}_n) \\ C_{n,i\bdot}^*(\overline{\theta}_n) \end{pmatrix}\right\|_T^2+\left\|v_i(t;\theta_n^*)^T\begin{pmatrix}
\alpha_{n,i}^*(\theta_n^*)-\alpha_{n,i}^*(\overline{\theta}_n) \\ C_{n,i\bdot}^*(\theta_n^*)-C_{n,i\bdot}^*(\overline{\theta}_n) \end{pmatrix}\right\|_T^2\right) \\
\leq&\frac{2}{T}\Bigg(T\sup_{t\in[0,T]}\|v_i(t;\overline{\theta}_n)-v_i(t;\theta_n^*)\|_{\infty}^2\left\|\begin{pmatrix}
\alpha_{n,i}^*(\overline{\theta}_n) \\ C_{n,i\bdot}^*(\overline{\theta}_n) \end{pmatrix}\right\|_1^2 \\
&\qquad\qquad\qquad+T\sup_{t\in[0,T]}\|v_i(t;\theta_n^*\|_{\infty}^2\left\|\begin{pmatrix}
\alpha_{n,i}^*(\theta_n^*)-\alpha_{n,i}^*(\overline{\theta}_n) \\ C_{n,i\bdot}^*(\theta_n^*)-C_{n,i\bdot}^*(\overline{\theta}_n) \end{pmatrix}\right\|_1^2\Bigg) \\
=&O_P(1)\cdot\left(\frac{(1+|S_i(C_n^*)|)^2\log^6(nT)}{nT}(1+\|C_{n,i\bdot}^*(\overline{\theta}_n)\|_1^2+\|C_{n,i\bdot}^*\|_1^2)^2\right).
\end{align*}
Using the above and the bounds on $\omega_i$ an $a_n$ in \eqref{eq:eps_over_bound} yields
\begin{align}
&\epsilon_i^*(\overline{\theta}_n) \nonumber \\
=&O_P(1)\cdot\left(\frac{(1+|S_i(C_n^*)|)^2\log^6(nT)}{nT}(1+\|C_{n,i\bdot}^*(\overline{\theta}_n)\|_1^2+\|C_{n,i\bdot}^*\|_1^2)^2\right) \nonumber \\
&\qquad\qquad\qquad\qquad\qquad\qquad\qquad+O_P(1)\cdot\left((1+|S_i(C_n^*)|)\frac{\log^4(nT)}{T}\right) \nonumber \\
=&O_P(1) \nonumber \\
&\times\left((1+|S_i(C_n^*)|)\frac{\log^4(nT)}{T}\left(\frac{(1+|S_i(C_n^*)|)\log^2(nT)}{n}(1+\|C_{n,i\bdot}^*(\overline{\theta}_n)\|_1^2+\|C_{n,i\bdot}^*\|_1^2)^2+1\right)\right) \nonumber \\
=&O_P(1)\cdot\left((1+|S_i(C_n^*)|)\frac{\log^4(nT)}{T}\right), \label{eq:eps_star_bound}
\end{align}
where we used the growth condition \eqref{eq:growth_condition} in the last step. Recall that $C_n^*=C_n^*(\theta_n^*)$ and $\alpha_n*=\alpha_n^*(\theta_n^*)$. Then, using \eqref{eq:opt_bound} together with the assumptions on $a_n$ and $\omega_i=3d_{n,i}$, we obtain
\begin{align*}
&2\omega_i\left\|\hat{C}_{n,i\bdot}-C_{n,i\bdot}^*\right\|_1+2a_n\left|\hat{\alpha}_{n,i}-\alpha_{n,i}^*\right| \\
\leq&2\omega_i\left\|\hat{C}_{n,i\bdot}-C_{n,i\bdot}^*(\overline{\theta}_n)\right\|_1+2\omega_i\left\|C_{n,i\bdot}^*(\overline{\theta}_n)-C_{n,i\bdot}^*(\theta_n^*)\right\|_1+2a_n\left|\hat{\alpha}_{n,i}-\alpha_{n,i}^*(\overline{\theta}_n)\right| \\
&\qquad\qquad\qquad\qquad\qquad+2a_n\left|\alpha_{n,i}^*(\overline{\theta}_n)-\alpha_{n,i}^*(\theta_n^*)\right| \\
\leq&2\omega_i\left\|\hat{C}_{n,i\bdot}-C_{n,i\bdot}^*(\overline{\theta}_n)\right\|_1+2a_n\left|\hat{\alpha}_{n,i}-\alpha_{n,i}^*(\overline{\theta}_n)\right| \\
&\qquad\qquad+O_P(1)\cdot\left(\frac{\max(\omega_i,a_n)(1+|S_i(C_n^*)|)\log^2(nT)(1+\|C_{n,i\bdot}^*\|_1^2+\|C_{n,i\bdot}^*(\overline{\theta}_n)\|_1^2)}{\sqrt{nT}}\right) \\
=&2\omega_i\left\|\hat{C}_{n,i\bdot}-C_{n,i\bdot}^*(\overline{\theta}_n)\right\|_1+2a_n\left|\hat{\alpha}_{n,i}-\alpha_{n,i}^*(\overline{\theta}_n)\right| \\
&\qquad\qquad+O_P(1)\cdot\left(\frac{(1+|S_i(C_n^*)|)\log^4(nT)(1+\|C_{n,i\bdot}^*\|_1^2+\|C_{n,i\bdot}^*(\overline{\theta}_n)\|_1^2)}{\sqrt{n}T}\right) \\
=&2\omega_i\left\|\hat{C}_{n,i\bdot}-C_{n,i\bdot}^*(\overline{\theta}_n)\right\|_1+2a_n\left|\hat{\alpha}_{n,i}-\alpha_{n,i}^*(\overline{\theta}_n)\right|+O_P(1)\cdot\left(\frac{(1+|S_i(C_n^*)|)\log^4(nT)}{T}\right).
\end{align*}
Using the above estimate together with \eqref{eq:ind_bound} and \eqref{eq:eps_star_bound} yields finally
\begin{align*}
&\frac{1}{T}\left\|\Psi_{n,i}(\cdot;\hat{C}_{n,i\bdot},\hat{\alpha}_{n,i},\overline{\theta}_n)-\lambda_{n,i}\right\|_2^2+2\omega_i\|\hat{C}_{n,i\bdot}-C^*_{n,i\bdot}\|_1+2a_n|\hat{\alpha}_{n,i}-\alpha^*_{n,i}| \\
\leq&\frac{1}{T}\left\|\Psi_{n,i}(\cdot;\hat{C}_{n,i\bdot},\hat{\alpha}_{n,i},\overline{\theta}_n)-\lambda_{n,i}\right\|_2^2+2\omega_i\|\hat{C}_{n,i\bdot}-C^*_{n,i\bdot}(\overline{\theta}_n)\|_1+2a_n|\hat{\alpha}_{n,i}-\alpha^*_{n,i}(\overline{\theta}_n)| \\
&\qquad\qquad+O_P(1)\cdot\left(\frac{(1+|S_i(C_n^*)|)\log^4(nT)}{T}\right) \\
\leq&2\epsilon^*_i(\overline{\theta}_n)+O_P(1)\cdot\left(\frac{(1+|S_i(C_n^*)|)\log^4(nT)}{T}\right) \\
=&O_P(1)\cdot\left((1+|S_i(C_n^*)|)\frac{\log^4(nT)}{T}\right)+O_P(1)\cdot\left(\frac{(1+|S_i(C_n^*)|)\log^4(nT)}{T}\right),
\end{align*}
which we wanted to prove. The stated convergence rates for $\hat{C}_{n,i\bdot}$ and $\hat{\alpha}_{n,i}$ are then direct consequences.
\end{proof}

\section{Proofs of Section \ref{subsec:bounds_Hawkes}}
\label{sup:HawkesBounds}
\begin{proof}[Proof of Lemma \ref{lem:goodN}]
The proof takes many ideas from the proofs of Lemma 1 and Proposition 2 in \citet{HRBR15}. However, for our purposes, we need to refine some of the arguments in order to prove a stronger bound on the sum of the expectations. To this end, we need to keep track of the constants and use the sparsity assumption in order to obtain a uniform result which holds for all $n\in\IN$.  In the interest of completeness we give the full proof here. Denote for all $i,l=1,...,n$ by
$$K_i^l(0):=W_i^l(0) \textrm{ and }K_i^l(k):=W_i^l(k)-W_i^l(k-1) \textrm{ for } k\geq1$$
the number of events in the $k$-th generation in a cluster which has started from an initial event in the $l$-th process. Let $K^l(k):=(K_1^l(k),...,K_n^l(k))$ and define the logarithm of the Laplace transform of $K^l(1)$ by
$$\phi_{n,l}(s):=\log\IE\left(e^{s^TK^l(1)}\right).$$
Let finally $\phi_n:=(\phi_{n,1},...,\phi_{n,n})$. Denote $a:=\int_0^{\infty}g(t;\gamma)dt$. We note that $K^l_j(1)$ for $j=1,...,n$ equals the number of events of a counting process with intensity function $C_{jl}g(t;\gamma)$ and hence $K^l_j(1)$ is Poisson distributed with rate $C_{n,jl}a$. Moreover, for any $l$, the variables $K_1^l(1),...,K_n^l(1)$ are independent. It is therefore direct to compute that
\begin{equation}
\label{eq:def_phi}
\phi_{n,l}(s)=\sum_{j=1}^nC_{n,jl}a\left(e^{s_j}-1\right).
\end{equation}
Then, we obtain the following estimate for all $s\in\IR^n$ with $\|s\|_1\leq r$ (recall the defition of $r$ and $\epsilon$, and note that in particular $|s_j|\leq r$ for all $j$)
\begin{equation}
\label{eq:phi_contracts}
\|\phi_n(s)\|_1\leq \sum_{l,j=1}^nC_{n,jl}a\left|e^{s_j}-1\right|\leq a_0\sum_{j=1}^n\left|e^{s_j}-1\right|\leq\epsilon\|s\|_1.
\end{equation}
Thus, all $\phi_n$ are contractions with the same constant $\epsilon$. Moreover, we prove next that for any $p\geq1$ and all $s\in\IR^n$ we have
\begin{equation}
\label{eq:rec_K}
\IE\left(e^{s^TK^l(p)}\Big|K^l(p-1),...,K^l(0)\right)=e^{\phi_n(s)^TK^l(p-1)}.
\end{equation}
\begin{proof}[Proof of \eqref{eq:rec_K}]
We bring the following intuition in formulas: \emph{The distribution of the number of new events in the $p$-th generation of the $i$-th process can be obtained by starting for each event in any of the processes $j=1,...,n$ in the $(p-1)$-th generation a new counting process with intensity $C_{n,ij}g(t;\gamma)$ and summing all their event numbers (which are iid copies of $K_i^j(1)$)}. To write this in formulas, let $K_{i,r}^j(1)$ be independent copies of $K_i^j(1)$. Then, the following equality holds in distribution
$$K_i^l(p)=\sum_{j=1}^n\sum_{r=1}^{K_j^l(p-1)}K_{i,r}^j(1).$$
From this we obtain
\begin{align*}
&\IE\left(e^{s^TK^l(p)}\Big|K^l(p-1),...,K^l(0)\right)=\prod_{j=1}^n\prod_{r=1}^{K_j^l(p-1)}\IE\left(e^{\sum_{i=1}^ns_iK_{i,r}^j(1)}\Big|K^l(p-1),...,K^l(0)\right) \\
=&\prod_{j=1}^n\prod_{r=1}^{K_j^l(p-1)}e^{\phi_{n,j}(s)}=\prod_{j=1}^ne^{\phi_{n,j}(s)K_j^l(p-1)}=e^{\phi_n(s)^TK^l(p-1)}.
\end{align*}
\end{proof}
We consider now the following recursively defined functions $g^{(p)}:\IR^n\to\IR^n$
$$g^{(0)}(s):=s \textrm{ and } g^{(p)}(s)=s+\phi_n\left(g^{(p-1)}(s)\right), \textrm{ for } p\geq1.$$
In the next step we prove the following relation for all $k,m\in\IN_0$ and $s\in\IR^n$ with $k\geq0$, $m\geq1$, and $k+m\geq2$:
\begin{align}
&\IE\left(e^{-s^T(W^l(k)-W^l(k+m-2))+g^{(1)}(s)^TK^l(k+m-1)}\Big|K^l(k),...,K^l(0)\right) \nonumber \\
&\qquad\qquad=\IE\left(e^{g^{(m-1)}(s)^TK^l(k+1)}\Big|K^l(k),...,K^l(0)\right). \label{eq:red}
\end{align}
\begin{proof}[Proof of \eqref{eq:red}]
We show \eqref{eq:red} via induction over $m\geq1$. For $m=1$ we have by the definitions of $K^l(k)$, $g^{(p)}$ and by \eqref{eq:rec_K} for all $k\geq1$
\begin{align*}
&\IE\left(e^{s^TK^l(k+1)}\Big|K^l(k),...,K^l(0)\right)=e^{\phi_n(s)^TK^l(k)}=e^{\left(-s+g^{(1)}(s)\right)^TK^l(k)} \\
=&\IE\left(e^{-s^T(W^l(k)-W^l(k-1))+g^{(1)}(s)^TK^l(k)}\Big|K^l(k),....,K^l(0)\right).
\end{align*}
The case $k=0$ and $m=2$ is trivial. Thus, the induction start is complete. In the induction step, we suppose $k\geq1$ first. Suppose now that \eqref{eq:red} holds for some $m-1$, i.e. $m\geq2$. We have then by definition of $g^{(p)}$, \eqref{eq:rec_K}, the induction hypothesis and definition of $K^l(k)$ for all $k\geq1$
\begin{align*}
&\IE\left(e^{g^{(m-1)}(s)^TK^l(k+1)}\Big|K^l(k),...,K^l(0)\right) \\
=&\IE\left(e^{s^TK^l(k+1)}e^{\phi_n(g^{(m-2)}(s))^TK^l(k+1)}\Big|K^l(k),...,K^l(0)\right) \\
=&\IE\left(e^{s^TK^l(k+1)}\IE\left(e^{g^{(m-2)}(s)^TK^l(k+2)}\Big|K^l(k+1),...,K^l(0)\right)\Big|K^l(k),...,K^l(0)\right) \\
=&\IE\Bigg(e^{s^TK^l(k+1)}\IE\left(e^{-s^T(W^l(k+1)-W^l(k+m-2))+g^{(1)}(s)^TK^l(k+m-1)}\Big|K^l(k+1),...,K^l(0)\right) \\
&\quad\qquad\qquad\qquad\Big|K^l(k),...,K^l(0)\Bigg) \\
=&\IE\left(e^{-s^T(W^l(k+1)-K^l(k+1)-W^l(k+m-2))+g^{(1)}(s)^TK^l(k+m-1)}\Big|K^l(k),...,K^l(0)\right) \\
=&\IE\left(e^{-s^T(W^l(k)-W^l(k+m-2))+g^{(1)}(s)^TK^l(k+m-1)}\Big|K^l(k),...,K^l(0)\right)
\end{align*}
and the induction is complete. It remains to consider the case $k=0$ which can be proven by similar arguments.
\end{proof}
By the tower rule, \eqref{eq:red} holds also without the conditions in the expectations. Let now either $k=0$ and $p\geq2$ or $p>k\geq1$ for $p,k\in\IN_0$. Then, we can apply \eqref{eq:red} with $m=p-k$ to obtain (use first the definition ok $K^l(p)$, in the second equality \eqref{eq:rec_K}, and in the third equality \eqref{eq:red})
\begin{align*}
\IE\left(e^{s^T(W^l(p)-W^l(k))}\right)&=\IE\left(e^{-s^TW^l(k)+s^TW^l(p-1)}\IE\left(e^{s^TK^l(p)}\Big|K^l(p-1),...,K^l(0)\right)\right) \\
&=\IE\left(e^{-s^T(W^l(k)-W^l(p-2))+g^{(1)}(s)^TK^l(p-1)}\right)=\IE\left(e^{g^{(p-k-1)}(s)^TK^l(k+1)}\right).
\end{align*}
By applying \eqref{eq:rec_K} repeatedly, we can continue
\begin{align*}
&=\IE\left(\IE\left(e^{g^{(p-k-1)}(s)^TK^l(k+1)}\Big|K^l(k),...,K^l(1)\right)\right) \\
&=\IE\left(e^{\phi_n(g^{(p-k-1)}(s))^TK^l(k)}\right)=...=\IE\left(e^{\phi_n^{\circ (k+1)}(g^{(p-k-1)}(s))^TK^l(0)}\right)=e^{\left[\phi_n^{\circ (k+1)}(g^{(p-k-1)}(s))\right]_l},
\end{align*}
where $\phi_n^{\circ(k)}:=\phi_n\circ\dots\circ\phi_n$ $k$-times. By manually checking the case $p=1$ and $k=0$ we conclude that the following equality holds for all $p>k\geq0$
\begin{equation}
\label{eq:phi_equality}
\IE\left(e^{s^T\left(W^l(p)-W^l(k)\right)}\right)=e^{\left[\phi_n^{\circ(k+1)}\left(g^{(p-k-1)}(s)\right)\right]_l}.
\end{equation}
In particular, we obtain for $k=0$
$$\log\IE\left(e^{s^TW^l(p)}\right)=s_l+\phi_n\left(g^{(p-1)}(s)\right)_l=g^{(p)}(s)_l.$$
By using \eqref{eq:phi_contracts} it can be shown that $\|g^{(p)}(s)\|_1\leq r$ for all $p\in\IN$ if $\|s\|_1\leq r(1-\epsilon)$. Note that for each fixed $i$ the sequence $W_i^l(K)$ is increasing as $K$ grows. Hence, $W_i^l=\lim_{K\to\infty}W_i^l(K)\in\IN_0\cup\{\infty\}$ exists. Let $s\in[0,\infty)^n$ be arbitrary. By the monotone convergence theorem we conclude that
\begin{align}
x_l(s)&:=\log\IE\left(e^{s^T W^l}\right)=\log\IE\left(\lim_{K\to\infty}e^{s^TW^l(K)}\right)=\lim_{K\to\infty}\log\IE\left(e^{s^TW^l(K)}\right)=\lim_{K\to\infty}g^{(K)}(s)_l. \label{eq:Wlim}
\end{align}
Thus, we get for $\|s\|_1\leq r(1-\epsilon)$,
$$\sum_{l=1}^n\left|\log\IE\left(e^{s^TW^l}\right)\right|=\|x(s)\|_1=\lim_{K\to\infty}\left\|g^{(K)}(s)\right\|_1\leq r.$$
Thus, we have proven \eqref{eq:lem11}.

In order to prove \eqref{eq:lem12}, we firstly note \eqref{eq:lem11} implies for $\|s\|_1\leq r(1+\epsilon)$ with $s\in[0,\infty)^n$
$$\log\IE\left(e^{s^T(W^l-W^l(k))}\right)\leq\log\IE\left(e^{s^TW^l}\right)\leq r$$
because $s\in[0,\infty)$ and monotonicity of $W_i^l(k)$ in $k$ imply that the expectations are larger or equal than one and hence, the logarithms are non-negative. Thus, by definition of $\epsilon,r$
$$\left|\IE\left(e^{s^T(W^l-W^l(k))}\right)-1\right|\leq\frac{\epsilon}{a_0}\log\IE\left(e^{s^T(W^l-W^l(k))}\right).$$
We therefore prove in the following that
$$\sum_{k=0}^{\infty}\sum_{l=1}^n\log\IE\left(e^{s^T(W^l-W^l(k))}\right)\leq\frac{r\epsilon}{1-\epsilon}.$$
We use monotone convergence, \eqref{eq:phi_equality}, and continuity of $\phi_n$ to obtain
\begin{align*}
&\sum_{l=1}^n\log\IE\left(e^{s^T(W^l-W^l(k))}\right)=\sum_{l=1}^n\lim_{p\to\infty}\log\IE\left(e^{s^T(W^l(p)-W^l(k))}\right) \\
=&\sum_{l=1}^n\left[\phi_n^{\circ(k+1)}\left(\lim_{p\to\infty} g^{(p-k-1)}(s)\right)\right]_l=\sum_{l=1}^n\left[\phi_n^{\circ(k+1)}(x(s))\right]_l\leq \left\|\phi_n^{\circ (k+1)}(x(s))\right\|_1.
\end{align*}
Note that $\|\phi^{\circ k}_n(x(s))\|_1\leq \epsilon^k r$ by \eqref{eq:phi_contracts} and since $\|x(s)\|_1\leq r$ as we have just proven in \eqref{eq:lem11}. Hence,
$$\sum_{k=0}^{\infty}\sum_{l=1}^n\log\IE\left(e^{s^T(W^l-W^l(k))}\right)\leq\sum_{k=0}^{\infty}\epsilon^{k+1}r=\frac{\epsilon r}{1-\epsilon}.$$
Thus, the proof of the lemma is complete.
\end{proof}

\section{Proofs of Section \ref{subsec:single_consistency}}
\label{sup:single_consistency}
\begin{proof}[Proof of Theorem \ref{thm:pre_estimate_consistency}]
The proof runs along the same lines of the proof of Theorem 6.2 in \citet{GB11}. However, for completeness we repeat it here in our setting. Define
\begin{align*}
\check{P}_n:=\frac{1}{n}\sum_{i=1}^n\omega_i\|\check{C}_{n,i\bdot}\|_1 \textrm{ and } P^*_n:=\frac{1}{n}\sum_{i=1}^n\omega_i\|C_{n,i\bdot}^*\|_1.
\end{align*}
With this, we obtain by the definition of $(\check{C}_n,\check{\alpha}_n,\check{\theta}_n)$,
\begin{align}
&\frac{1}{nT}\mathcal{E}(\check{C}_n,\check{\alpha}_n,\check{\theta}_n)+2\check{P}_n \nonumber \\
=&\frac{1}{nT}\sum_{i=1}^n\left(\textrm{LS}_i(\check{C}_{n,i\bdot},\check{\alpha}_{n,i},\check{\theta}_n)+2\int_0^T\Psi_{n,i}(t;\check{C}_{n,i\bdot},\check{\alpha}_{n,i},\check{\theta}_n)dM_{n,i}(t)\right)+2\check{P}_n \nonumber \\
\leq&\frac{1}{nT}\sum_{i=1}^n\left(\textrm{LS}_i(C_{n,i\bdot}^*,\alpha_{n,i}^*,\theta_n^*)+2\int_0^T\Psi_{n,i}(t;\check{C}_{n,i\bdot},\check{\alpha}_{n,i},\check{\theta}_n)dM_{n,i}(t)\right)+2P_n^* \nonumber \\
=&\frac{1}{nT}\mathcal{E}(C_n^*,\alpha_n^*,\theta_n^*)+2P_n^* \nonumber \\
&\quad\quad+\sum_{i=1}^n\Bigg(\frac{2}{nT}\int_0^T\Psi_{n,i}(t;\check{C}_{n,i\bdot},\check{\alpha}_{n,i},\check{\theta}_{n,i})dM_{n,i}(t) \nonumber\\
&\qquad\qquad-\frac{2}{nT}\int_0^T\Psi_{n,i}(t;C_{n,i\bdot}^*,\alpha_{n,i}^*,\theta_n^*)dM_{n,i}(t)\Bigg) \label{eq:pre_basic_inequality}
\end{align}
On the event $\mathcal{T}_n(a_n,b_n,d_n,e_n)$, defined in \eqref{eq:def_T}, we have
\begin{align*}
&\left|\frac{2}{nT}\sum_{i=1}^n\left(\int_0^T\Psi_{n,i}(t;\check{C}_{n,i\bdot},\check{\alpha}_{n,i},\check{\theta}_{n,i})dM_{n,i}(t)-\int_0^T\Psi_{n,i}(t;C_{n,i\bdot}^*,\alpha_{n,i}^*,\theta_n^*)dM_{n,i}(t)\right)\right| \\
\leq&\left|\frac{2}{nT}\sum_{i=1}^n\int_0^T\left(\check{\alpha}_{n,i}-\alpha_{n,i}^*\right)\nu_0(X_{n,i}(t);\check{\beta}_n)dM_{n,i}(t)\right| \\
&+\left|\frac{2}{nT}\sum_{i=1}^n\int_0^T\alpha_{n,i}^*\left(\nu_0(X_{n,i}(t);\check{\beta}_n)-\nu_0(X_{n,i}(t);\beta_n^*)\right)dM_{n,i}(t)\right| \\
&+\left|\frac{2}{nT}\sum_{i=1}^n\sum_{j=1}^n\int_0^T\int_0^{t-}g(t-r;\check{\gamma}_n)dN_{n,j}(r)dM_{n,i}(t)\cdot\left(\check{C}_{n,ij}-C_{n,ij}^*\right)\right| \\
&+\left|\frac{2}{nT}\sum_{i=1}^n\sum_{j=1}^n\int_0^T\int_0^{t-}\left(g(t-r;\check{\gamma}_n)-g(t-r;\gamma_n^*)\right)dN_{n,j}(r)dM_{n,i}(t)\cdot C_{n,ij}^*\right| \\
\leq&\frac{2}{T}\sup_{i=1,...,n}\sup_{\overline{\beta}\in K_{\beta}}\left|\int_0^T\nu_0(X_{n,i}(t);\overline{\beta})dM_{n,i}(t)\right|\cdot\frac{1}{n}\left\|\check{\alpha}_n-\alpha_n^*\right\|_1 \\
&+\left|\frac{2}{nT}\sum_{i=1}^n\alpha_{n,i}^*\int_0^T\frac{\nu_0(X_{n,i}(t);\check{\beta}_n)-\nu_0(X_{n,i}(t);\beta_n^*)}{\left\|\check{\beta}_n-\beta_n^*\right\|_1}dM_{n,i}(t)\right|\cdot\left\|\check{\beta}_n-\beta_n^*\right\|_1 \\
&+\frac{1}{n}\sum_{i=1}^n\sup_{j=1,...,n}\left|\frac{2}{T}\int_0^T\int_0^{t-}g(t-r;\check{\gamma}_n)dN_{n,j}(r)dM_{n,i}(t)\right|\cdot\left\|\check{C}_{n,i\cdot}-C_{n,i\cdot}^*\right\|_1 \\
&+\left|\frac{2}{nT}\sum_{i=1}^n\sum_{j=1}^n\int_0^T\int_0^{t-}\frac{g(t-r;\check{\gamma}_n)-g(t-r;\gamma_n^*)}{\left|\check{\gamma}_n-\gamma_n^*\right|}dN_{n,j}(r)dM_{n,i}(t)\cdot C_{n,ij}^*\right|\cdot\left|\check{\gamma}_n-\gamma_n^*\right| \\
\leq&\frac{a_n}{n}\left\|\check{\alpha}_n-\alpha_n^*\right\|_1+b_n\left\|\check{\beta}_n-\beta_n^*\right\|_1+\frac{1}{n}\sum_{i=1}^nd_{n,i}\left\|\check{C}_{n,i\cdot}-C_{n,i\cdot}^*\right\|_1+e_n|\check{\gamma}_n-\gamma_n^*| \\
=&\frac{a_n}{n}\left\|\check{\alpha}_n-\alpha_n^*\right\|_1+\frac{1}{n}\sum_{i=1}^nd_{n,i}\left\|\check{C}_{n,i\cdot}-C_{n,i\cdot}^*\right\|_1+\tilde{b}_n\|\check{\theta}_n-\theta_n^*\|_1,
\end{align*}
where $\tilde{b}_n\geq\max(b_n,e_n)$. Using this in \eqref{eq:pre_basic_inequality} yields on the event $\mathcal{T}_n(a_n,b_n,d_n,e_n)$ the following {\em basic inequality:}
\begin{align}
\frac{1}{nT}\mathcal{E}(\check{C}_n,\check{\alpha}_n,\check{\theta}_n)+2\check{P}_n\leq&\frac{1}{nT}\mathcal{E}(C_n^*,\alpha_n^*,\theta_n^*)+2P_n^* \nonumber \\
&+\frac{a_n}{n}\left\|\check{\alpha}_n-\alpha_n^*\right\|_1+\frac{1}{n}\sum_{i=1}^nd_{n,i}\left\|\check{C}_{n,i\cdot}-C_{n,i\cdot}^*\right\|_1+\tilde{b}_n\|\check{\theta}_n-\theta_n^*\|_1. \label{eq:new_basicinequality}
\end{align}
Denote for any $S\subseteq\{1,...,n\}$
\begin{align*}
P_{n,i,S}^*:=&\frac{\omega_i}{n}\|C_{n,iS}^*\|_1,\quad\check{P}_{n,i,S}:=\frac{\omega_i}{n}\|\check{C}_{n,iS}\|_1.
\end{align*}
Note that $P_{n,i,S_i^c(C_n^*)}^*=0$. Then, we obtain (use \eqref{eq:new_basicinequality} in the second inequality, the reverse triangle inequality in the last line, and $d_{n,i}\leq\omega_i$ several times):
\begin{align}
&\frac{2}{nT}\mathcal{E}(\check{C}_n,\check{\alpha}_n,\check{\theta}_n)+2\sum_{i=1}^n\check{P}_{n,i,S_i^c(C_n^*)} \nonumber \\
\leq&\frac{2}{nT}\mathcal{E}(\check{C}_n,\check{\alpha}_n,\check{\theta}_n)+4\sum_{i=1}^n\check{P}_{n,i,S_i^c(C_n^*)}-\frac{2}{n}\sum_{i=1}^nd_{n,i}\|\check{C}_{n,iS_i^c(C_n^*)}\|_1 \nonumber \\
=&\frac{2}{nT}\mathcal{E}(\check{C}_n,\check{\alpha}_n,\check{\theta}_n)+4\check{P}_n-4\sum_{i=1}^n\check{P}_{n,i,S_i(C_n^*)}-\frac{2}{n}\sum_{i=1}^nd_{n,i}\|\check{C}_{n,iS_i^c(C_n^*)}\|_1 \nonumber \\
\leq&\frac{2}{nT}\mathcal{E}(C_n^*,\alpha_n^*,\theta_n^*)+4P_n^*+\frac{2a_n}{n}\left\|\check{\alpha}_n-\alpha_n^*\right\|_1+\frac{2}{n}\sum_{i=1}^nd_{n,i}\left\|\check{C}_{n,i\cdot}-C_{n,i\cdot}^*\right\|_1+2\tilde{b}_n\|\check{\theta}_n-\theta_n^*\|_1 \nonumber \\
&-4\sum_{i=1}^n\check{P}_{n,i,S_i(C_n^*)}-\frac{2}{n}\sum_{i=1}^nd_{n,i}\|\check{C}_{n,iS_i^c(C_n^*)}\|_1 \nonumber \\
=&\frac{2}{nT}\mathcal{E}(C_n^*,\alpha_n^*,\theta_n^*)+4P_n^*-4\sum_{i=1}^n\check{P}_{n,i,S_i(C_n^*)} \nonumber \\
&+\frac{2a_n}{n}\left\|\check{\alpha}_n-\alpha_n^*\right\|_1+\frac{2}{n}\sum_{i=1}^nd_{n,i}\|\check{C}_{n,iS_i(C_n^*)}-C_{n,iS_i(C_n^*)}^*\|_1+2\tilde{b}_n\|\check{\theta}_n-\theta_n^*\|_1 \nonumber \\
\leq&\frac{2}{nT}\mathcal{E}(C_n^*,\alpha_n^*,\theta_n^*)+4\sum_{i=1}^nP_{n,i,S_i(C_n^*)}^*-4\sum_{i=1}^n\check{P}_{n,i,S_i(C_n^*)} \nonumber \\
&+\frac{2a_n}{n}\left\|\check{\alpha}_n-\alpha_n^*\right\|_1+\frac{2}{n}\sum_{i=1}^n\omega_i\|\check{C}_{n,iS_i(C_n^*)}-C_{n,iS_i(C_n^*)}^*\|_1+2\tilde{b}_n\|\check{\theta}_n-\theta_n^*\|_1 \nonumber \\
\leq&\frac{2}{nT}\mathcal{E}(C_n^*,\alpha_n^*,\theta_n^*) \nonumber \\
&+\frac{2a_n}{n}\left\|\check{\alpha}_n-\alpha_n^*\right\|_1+\frac{6}{n}\sum_{i=1}^n\omega_i\|\check{C}_{n,iS_i(C_n^*)}-C_{n,iS_i(C_n^*)}^*\|_1+2\tilde{b}_n\|\check{\theta}_n-\theta_n^*\|_1 \nonumber
\end{align}
By the definition of $\mathcal{E}$ and since $\Psi_{n,i}(t;C_{n,i\bdot}^*,\alpha_{n,i}^*,\theta_n^*)=\lambda_{n,i}(t)$ this is equivalent to
\begin{align}
&\frac{2}{nT}\left\|\Psi_n(\cdot;\check{C}_n,\check{\alpha}_n,\check{\theta}_n)-\lambda_n\right\|_T^2+2\sum_{i=1}^n\check{P}_{n,i,S_i^c(C_n^*)} \nonumber \\
\leq&\frac{2a_n}{n}\left\|\check{\alpha}_n-\alpha_n^*\right\|_1+\frac{6}{n}\sum_{i=1}^n\omega_i\|\check{C}_{n,iS_i(C_n^*)}-C_{n,iS_i(C_n^*)}^*\|_1+2\tilde{b}_n\|\check{\theta}_n-\theta_n^*\|_1. \label{eq:new_newbi}
\end{align}
Replacing the definition of $\check{P}_{n,i,S_i^c(C_n^*,\alpha_n^*)}$, the above implies in particular that
\begin{align*}
&\frac{1}{n}\sum_{i=1}^n\omega_i\|\check{C}_{n,iS_i^c(C_n^*)}-C_{n,iS_i^c(C_n^*)}^*\|_1 \\
\leq&\frac{a_n}{n}\left\|\check{\alpha}_n-\alpha_n^*\right\|_1+\frac{3}{n}\sum_{i=1}^n\omega_i\|\check{C}_{n,iS_i(C_n^*)}-C_{n,iS_i(C_n^*)}^*\|_1+\tilde{b}_n\|\check{\theta}_n-\theta_n^*\|_1.
\end{align*}
Since we are on $\Omega_{RCC,n}(L,\tilde{H}_n)$ and $L\geq\max(3\omega_1,...,3\omega_n,a_n,\tilde{b}_n)/\min(\omega_1,...,\omega_n)$, we can apply the assertion of the random compatibility condition. More precisely, this means by using the Cauchy-Schwarz inequality that
\begin{align}
&\frac{3a_n}{n}\left\|\check{\alpha}_n-\alpha_n^*\right\|_1+\frac{8}{n}\sum_{i=1}^n\omega_i\|\check{C}_{n,iS_i(C_n^*)}-C_{n,iS_i(C_n^*)}^*\|_1+3\tilde{b}_n\|\check{\theta}_n-\theta_n\|_1 \nonumber  \\
\leq&\mathcal{L}(C_n^*,\alpha_n^*)\sqrt{\frac{1}{n}\|\check{\alpha}_n-\alpha_n^*\|^2+\frac{1}{n}\sum_{i=1}^n\|\check{C}_{n,iS_i(C_n^*)}-C_{n,iS_i(C_n^*)}^*\|^2+\|\check{\theta}_n-\theta_n^*\|^2} \nonumber \\
\leq&\frac{\mathcal{L}(C_n^*,\alpha_n^*)}{\phi_{\textrm{comp}}(S_1(C_n^*),...,S_n(C_n^*);L;\tilde{\mathcal{H}}_n)}\frac{1}{\sqrt{nT}}\|\Psi_n(\cdot;\check{C}_n,\check{\alpha}_n,\check{\theta}_n)-\lambda_n\|_T \nonumber \\
\leq&\frac{\mathcal{L}(C_n^*,\alpha_n^*)^2}{4\phi_{\textrm{comp}}(S_1(C_n^*),...,S_n(C_n^*);L;\tilde{\mathcal{H}}_n)^2}+\frac{1}{nT}\|\Psi_n(\cdot;\check{C}_n,\check{\alpha}_n,\check{\theta}_n)-\lambda_n\|_T^2. \label{eq:estimator_bound}
\end{align}
Thus, we obtain from \eqref{eq:new_newbi}
\begin{align*}
&\frac{2}{nT}\left\|\Psi_n(\cdot;\check{C}_n,\check{\alpha}_n,\check{\theta}_n)-\lambda_n\right\|_T^2 \\
&\qquad+\frac{2}{n}\sum_{i=1}^n\omega_i\|\check{C}_{n,i\cdot}-C_{n,i\cdot}^*\|_1+\frac{a_n}{n}\|\check{\alpha}_n-\alpha_n^*\|_1+\tilde{b}_n\|\check{\theta}_n-\theta_n^*\|_1 \nonumber \\
\leq&\frac{3a_n}{n}\left\|\check{\alpha}_n-\alpha_n^*\right\|_1+\frac{8}{n}\sum_{i=1}^n\omega_i\|\check{C}_{n,iS_i(C_n^*)}-C_{n,iS_i(C_n^*)}^*\|_1+3\tilde{b}_n\|\check{\theta}_n-\theta_n^*\|_1 \\
\leq&\frac{\mathcal{L}(C_n^*,\alpha_n^*)^2}{4\phi_{\textrm{comp}}(S_1(C_n^*),...,S_n(C_n^*);L;\tilde{\mathcal{H}}_n)^2}+\frac{1}{nT}\|\Psi_n(\cdot;\check{C}_n,\check{\alpha}_n,\check{\theta}_n)-\lambda_n\|_T^2.
\end{align*}
which in turn implies
\begin{align*}
&\frac{1}{nT}\left\|\Psi_n(\cdot;\check{C}_n,\check{\alpha}_n,\check{\theta}_n)-\lambda_n\right\|_T^2+\frac{2}{n}\sum_{i=1}^n\omega_i\|\check{C}_{n,i\cdot}-C_{n,i\cdot}^*\|_1+\frac{a_n}{n}\|\check{\alpha}_n-\alpha_n^*\|_1\nonumber+\tilde{b}_n\|\check{\theta}_n-\theta_n^*\|_1 \\
\leq&\frac{\mathcal{L}(C_n^*,\alpha_n^*)^2}{4\phi_{\textrm{comp}}(S_1(C_n^*),...,S_n(C_n^*);L;\tilde{\mathcal{H}}_n)^2}.
\end{align*}
\end{proof}

\subsection{Further details to the proof of Lemma \ref{lem:lambda_rate}}
\label{subsec:details_proof_lambda_rate}
\underline{Part involving $b_n$:}
We firstly note that by differentiability of $\nu_0$
\begin{align*}
&\frac{2}{nT}\sum_{i=1}^n\alpha_{n,i}^*\int_0^T\frac{\nu_0(X_{n,i}(t);\overline{\beta})-\nu_0(X_{n,i}(t);\beta_n^*)}{\left\|\overline{\beta}-\beta_n^*\right\|_1}dM_{n,i}(t) \\
=&\frac{2}{nT}\sum_{i=1}^n\alpha_{n,i}^*\int_0^T\int_0^1\frac{d}{d\beta}\nu_0\left(X_{n,i}(t);(1-s)\beta_n^*+s\overline{\beta}\right)^T\frac{\left(\overline{\beta}-\beta_n^*\right)}{\left\|\overline{\beta}-\beta_n^*\right\|_1}dsdM_{n,i}(t)
\end{align*}
Note that, by convexity, $(1-s)\beta_n^*+s\overline{\beta}\in K_{\beta}$ for all $s\in[0,T]$. We can use a standard \emph{chaining light} argument as follows: Let $K_{\beta,n,\eta}$ be as in the part involving $a_n$. Let similarly $L^1_{\eta}\subseteq\{\beta\in\IR^p:\|\beta\|_1=1\}$ be a finite set such that for each $\beta\in\IR^p$ with $\|\beta\|_1=1$ there is $Q_n(\beta)\in L^1_{\eta}$ such that $\|\beta-Q_n(\beta)\|\leq\eta$. By adjusting $K_0>0$, it is possible to choose $K_{\beta,n,\eta}$ and $L^1_{\eta}$ such that $|K_{\beta,n,\eta}|,|L^1_{\eta}|\leq K_0\eta^{-p}$. As before, we define $\eta$ at this point without motivation. Choose first $c_2''$ such that
\begin{align*}
&K_{\alpha}\left(\frac{\mathcal{N}(T+A)}{AT}+K_{\alpha}\|\overline{\nu}\|_{\infty}+\sup_{i=1,..,n}\|C_{n,i\cdot}^*\|_1\overline{g}\mathcal{N}\right)\left(\frac{D_{\nu}}{2}+L_{\nu}\right) \\
&\quad\leq c_2''\max\left(1,\sup_{i=1,...,n}\|C_{n,i\cdot}^*\|_1\right)\log(nT).
\end{align*}
Let $\eta:=\frac{4K_{\alpha}L_{\nu}(2p+\alpha_2)}{c_2''\sqrt{\mu-\phi(\mu)}\max\left(1,\sup_{i=1,...,n}\|C_{n,i\cdot}^*\|_1\right)nT}$. We conclude that
\begin{align}
&\IP\left(\sup_{\overline{\beta}\in K_{\beta}}\left|\frac{2}{nT}\sum_{i=1}^n\alpha_{n,i}^*\int_0^T\frac{\nu_0(X_{n,i}(t);\overline{\beta})-\nu_0(X_{n,i}(t);\beta_n^*)}{\left\|\overline{\beta}-\beta_n^*\right\|_1}dM_{n,i}(t)\right|>b_n\right) \nonumber \\
=&\IP\left(\sup_{\overline{\beta}\in K_{\beta}}\left|\sum_{i=1}^n\frac{2\alpha_{n,i}^*}{nT}\int_0^T\int_0^1\frac{d}{d\beta}\nu_0\left(X_{n,i}(t);(1-s)\beta_n^*+s\overline{\beta}\right)^T\frac{\left(\overline{\beta}-\beta_n^*\right)}{\left\|\overline{\beta}-\beta_n^*\right\|_1}dsdM_{n,i}(t)\right|>b_n\right) \nonumber \\
\leq&\IP\Bigg(\underset{\beta_2\in\IR^p:\|\beta_2\|_1=1}{\sup_{\beta_1\in K_{\beta},}}\left|\sum_{i=1}^n\frac{2\alpha_{n,i}}{nT}^*\int_0^T\int_0^1\frac{d}{d\beta}\nu_0\left(X_{n,i}(t);(1-s)\beta_n^*+s\beta_1\right)^T\beta_2dsdM_{n,i}(t)\right|>b_n\Bigg) \nonumber \\
\leq&\IP\Bigg(\underset{\beta_2\in\IR^p:\|\beta_2\|_1=1}{\sup_{\beta_1\in K_{\beta},}}\left|\sum_{i=1}^n\frac{2\alpha_{n,i}}{nT}^*\int_0^T\int_0^1\frac{d}{d\beta}\nu_0\left(X_{n,i}(t);(1-s)\beta_n^*+s\beta_1\right)^T\beta_2dsdM_{n,i}(t)\right|>b_n, \nonumber \\
&\qquad\qquad\qquad\qquad\qquad\qquad\qquad\Omega_{\mathcal{N}}\Bigg)+\IP(\Omega_{\mathcal{N}}^c) \nonumber \\
\leq&\IP\Bigg(\underset{\beta_2\in\IR^p:\|\beta_2\|_1=1}{\sup_{\beta_1\in K_{\beta},}}\Bigg|\sum_{i=1}^n\frac{2\alpha_{n,i}}{nT}^*\int_0^T\int_0^1\frac{d}{d\beta}\nu_0\left(X_{n,i}(t);(1-s)\beta_n^*+s\beta_1\right)^T\beta_2 \nonumber \\
&\qquad\qquad-\frac{d}{d\beta}\nu_0\left(X_{n,i}(t);(1-s)\beta_n^*+sP_n(\beta_1)\right)^TQ_n(\beta_2)dsdM_{n,i}(t)\Bigg|>\frac{b_n}{2},\Omega_{\mathcal{N}}\Bigg) \label{eq:bn_cont} \\
&+\IP\Bigg(\underset{\beta_2\in L^1_{\eta}}{\sup_{\beta_1\in K_{\beta,n,\eta},}}\left|\sum_{i=1}^n\frac{2\alpha_{n,i}}{nT}^*\int_0^T\int_0^1\frac{d}{d\beta}\nu_0\left(X_{n,i}(t);(1-s)\beta_n^*+s\beta_1\right)^T\beta_2dsdM_{n,i}(t)\right|>\frac{b_n}{2}, \nonumber \\
&\qquad\qquad\qquad\qquad\qquad\qquad\qquad\Omega_{\mathcal{N}}\Bigg)+\IP(\Omega_{\mathcal{N}}^c) \label{eq:bn:emp_proc}
\end{align}
We begin with the probability in \eqref{eq:bn:emp_proc}. By a union bound argument, we have
\begin{align}
&\IP\Bigg(\underset{\beta_2\in L^1_{\eta}}{\sup_{\beta_1\in K_{\beta,n,\eta},}}\left|\sum_{i=1}^n\frac{2\alpha_{n,i}}{nT}^*\int_0^T\int_0^1\frac{d}{d\beta}\nu_0\left(X_{n,i}(t);(1-s)\beta_n^*+s\beta_1\right)^T\beta_2dsdM_{n,i}(t)\right|>\frac{b_n}{2},\Omega_{\mathcal{N}}\Bigg) \nonumber \\
&\leq K_0^2\eta^{-2p}\underset{\beta_2\in L^1_{\eta}}{\sup_{\beta_1\in K_{\beta,n,\eta},}} \nonumber \\
&\quad\IP\left(\left|\frac{2}{nT}\sum_{i=1}^n\alpha_{n,i}^*\int_0^T\int_0^1\frac{d}{d\beta}\nu_0\left(X_{n,i}(t);(1-s)\beta_n^*+s\beta_1\right)^T\beta_2dsdM_{n,i}(t)\right|>\frac{b_n}{2},\Omega_{\mathcal{N}}\right). \label{eq:bn_ub}
\end{align}
The above probability can be handled by Theorem 3 of \citet{HRBR15}. We use again the same notation from the original paper for ease of the reader. We apply this time the multivariate version with $M=n$ and
$$H_i(t)=\frac{4\alpha_{n,i}^*}{nT}\int_0^1\frac{d}{d\beta}\nu_0\left(X_{n,i}(t);(1-s)\beta_n^*+s\beta_1\right)^T\beta_2ds.$$
We check now the conditions of the previously mentioned theorem. $|H_i(t)|\leq \frac{4K_{\alpha}}{nT}L_{\nu}=:B$ and the integral conditions are fulfilled. We consider the constant stopping time $\tau=T$. Define next
\begin{align*}
\hat{V}_b^{\mu}:=&\frac{\mu}{\mu-\phi(\mu)}\sum_{i=1}^n\int_0^T\frac{16(\alpha_{n,i}^*)^2}{n^2T^2}\left(\int_0^1\frac{d}{d\beta}\nu_0\left(X_{n,i}(t);(1-s)\beta_n^*+s\beta_1\right)^T\beta_2ds\right)^2dN_{n,i}(t) \\
&\qquad+\frac{16K_{\alpha}^2L_{\nu}^2x}{n^2T^2(\mu-\phi(\mu))}.
\end{align*}
On $\Omega_{\mathcal{N}}$, we have
$$w:=\frac{16K_{\alpha}^2L_{\nu}^2x}{n^2T^2(\mu-\phi(\mu))}\leq\hat{V}_b^{\mu}\leq\frac{16\mu K_{\alpha}^2L_{\nu}^2\mathcal{N}(T+A)}{nT^2A(\mu-\phi(\mu))}+\frac{16K_{\alpha}^2L_{\nu}^2x}{n^2T^2(\mu-\phi(\mu))}=:v.$$
Hence, application of Theorem 3 of \citet{HRBR15} shows for $\epsilon=1$ and $x=(2p+\alpha_2)\log(Tn)$
\begin{align*}
&\IP\left(\frac{2}{nT}\sum_{i=1}^n\alpha_{n,i}^*\int_0^T\int_0^1\frac{d}{d\beta}\nu_0\left(X_{n,i}(t);(1-s)\beta_n^*+s\beta_1\right)^T\beta_2dsdM_{n,i}(t)>\frac{b_n}{2},\Omega_{\mathcal{N}}\right) \\
\leq&\IP\left(\sum_{i=1}^n\int_0^TH_i(t)dM_{n,i}(t)>2\sqrt{\hat{V}_b^{\mu}x}+\frac{Bx}{3},w\leq\hat{V}_b^{\mu}\leq v,\sup_{i,t}|H_i(t)|\leq B\right) \\
\leq&2\left(\log_2\left(\frac{\mu\mathcal{N}n(T+A)}{Ax}+1\right)+1\right)e^{-x}\leq c_2'\log(nT)(nT)^{-(2p+\alpha_2)}
\end{align*}
for a suitable constant $c_2'>0$. Combining the above with \eqref{eq:bn_ub} yields
\begin{align*}
&\IP\left(\underset{\beta_2\in L^1_{\eta}}{\sup_{\beta_1\in K_{\beta,n,\eta},}}\left|\sum_{i=1}^n\frac{2\alpha_{n,i}^*}{nT}\int_0^T\int_0^1\frac{d}{d\beta}\nu_0\left(X_{n,i}(t);(1-s)\beta_n^*+s\beta_1\right)^T\beta_2dsdM_{n,i}(t)\right|>\frac{b_n}{2},\Omega_{\mathcal{N}}\right) \\
&\leq 2K_0^2\eta^{-2p}c_2'\log(nT)(nT)^{-(2p+\alpha_2)}=c_2\frac{\max\left(1,\sup_{i=1,...,n}\|C_{n,i\cdot}^*\|_1^{2p}\right)\log(nT)}{(nT)^{\alpha_2}}
\end{align*}
for a suitable choice of $c_2$.

We turn now to \eqref{eq:bn_cont}. For ease of notation, we denote below $d|M_{n,i}(t)|:=dN_{n,i}(t)+\lambda_{n,i}(t)dt$. We note that, by Lipschitz continuity of $\frac{d}{d\beta}\nu_0$, on $\Omega_{\mathcal{N}}$
\begin{align*}
&\underset{\beta_2\in\IR^p:\|\beta_2\|_1=1}{\sup_{\beta_1\in K_{\beta},}}\Bigg|\frac{2}{nT}\sum_{i=1}^n\alpha_{n,i}^*\int_0^T\int_0^1\frac{d}{d\beta}\nu_0\left(X_{n,i}(t);(1-s)\beta_n^*+s\beta_1\right)^T\beta_2 \\
&\qquad\qquad\qquad-\frac{d}{d\beta}\nu_0\left(X_{n,i}(t);(1-s)\beta_n^*+sP(\beta_1)\right)^TQ_n(\beta_2)dsdM_{n,i}(t)\Bigg| \\
=&\underset{\beta_2\in\IR^p:\|\beta_2\|_1=1}{\sup_{\beta_1\in K_{\beta},}}\Bigg|\frac{2}{nT}\sum_{i=1}^n\alpha_{n,i}^*\int_0^T\int_0^1\Bigg(\frac{d}{d\beta}\nu_0\left(X_{n,i}(t);(1-s)\beta_n^*+s\beta_1\right) \\
&\qquad\qquad\qquad\qquad\qquad-\frac{d}{d\beta}\nu_0\left(X_{n,i}(t);(1-s)\beta_n^*+sP(\beta_1)\right)\Bigg)^T\beta_2 \\
&\qquad\qquad\qquad-\frac{d}{d\beta}\nu_0\left(X_{n,i}(t);(1-s)\beta_n^*+sP(\beta_1)\right)^T(Q_n(\beta_2)-\beta_2)dsdM_{n,i}(t)\Bigg| \\
\leq&\underset{\beta_2\in\IR^p:\|\beta_2\|_1=1}{\sup_{\beta_1\in K_{\beta},}}\frac{2K_{\alpha}}{nT}\sum_{i=1}^n\int_0^T\int_0^1\Bigg\|\frac{d}{d\beta}\nu_0\left(X_{n,i}(t);(1-s)\beta_n^*+s\beta_1\right) \\
&\qquad\qquad\qquad\qquad\qquad-\frac{d}{d\beta}\nu_0\left(X_{n,i}(t);(1-s)\beta_n^*+sP(\beta_1)\right)\Bigg\|_{\infty}\|\beta_2\|_1 \\
&\qquad\qquad\qquad+\left\|\frac{d}{d\beta}\nu_0\left(X_{n,i}(t);(1-s)\beta_n^*+sP(\beta_1)\right)\right\|\|Q_n(\beta_2)-\beta_2\|dsd|M_{n,i}|(t) \\
\leq&\frac{2K_{\alpha}}{nT}\sum_{i=1}^n\int_0^T\frac{D_{\nu}}{2}\eta+L_{\nu}\eta d|M_{n,i}|(t) \\
=&\frac{2K_{\alpha}}{nT}\sum_{i=1}^n\int_0^T\frac{D_{\nu}}{2}\eta+L_{\nu}\eta dN_{n,i}(t)+\frac{2K_{\alpha}}{nT}\sum_{i=1}^n\int_0^T\left(\frac{D_{\nu}}{2}\eta+L_{\nu}\eta\right) \\
&\qquad\qquad\times\left(\alpha_{n,i}^*\nu_0(X_{n,i}(t);\beta_n^*)+\sum_{j=1}^nC_{n,ij}^*\int_0^{t-}g(t-r;\gamma_n^*)dN_{n,j}(r)\right)dt \\
\leq&2K_{\alpha}\left(\frac{\mathcal{N}(T+A)}{AT}+K_{\alpha}\|\overline{\nu}\|_{\infty}+\sup_{i=1,..,n}\|C_{n,i\cdot}^*\|_1\overline{g}\mathcal{N}\right)\left(\frac{D_{\nu}}{2}+L_{\nu}\right)\eta \\
\leq&c_2''\max\left(1,\sup_{i=1,...,n}\|C_{n,i\cdot}^*\|_1\right)\log(nT)\eta,
\end{align*}
by choice of $c_2''$, \eqref{eq:Bj}, and the fact that the number of jumps of each $N_{n,i}$ on $[0,T]$ is on $\Omega_{\mathcal{N}}$ bounded by $\mathcal{N}(T+A)/A$. By choice of $\eta$, we have that
$$c_2''\max\left(1,\sup_{i=1,...,n}\|C_{n,i\cdot}^*\|_1\right)\log(nT)\eta\leq\sqrt{wx}\leq\sqrt{\hat{V}_b^{\mu}x}\leq\frac{b_n}{2}.$$
Hence, $\eqref{eq:bn_cont}=0$.

\underline{Part involving $d_{n,i}$:} Let $K_{\gamma,n,\eta}$ be a finite, discrete grid covering $K_{\gamma}$ such that for each $\gamma\in K_{\gamma}$, there is $P_n(\gamma)\in K_{\gamma,n,\eta}$ such that $|\gamma-P_n(\gamma)|\leq\eta$. It is possible to choose $K_{\gamma,n,\eta}$ such that $|K_{\gamma,n,\eta}|\leq K_1\eta^{-1}$. Let $c_{3,i}''>0$ be such that
$$2L_g\mathcal{N}\left(\frac{(T+A)\mathcal{N}}{AT}+K_{\alpha}\overline{\nu}_i+\|C_{n,i\bdot}\|_1\overline{g}\mathcal{N}\right)\leq c_{3,i}''\log(nT)^2\max\left(1,\|C_{n,i\bdot}^*\|_1\right)$$
and 
$$\eta_i:=\frac{24\overline{g}\mathcal{N}_0\left(\log n+\log(nT)+\alpha_3\log T\right)}{\sqrt{\mu-\phi(\mu)}c_{3,i}''\log(nT)\max(1,\|C_{n,i\cdot}^*\|_1)T}.$$
We now have by standard union bound and \emph{chaining light} argument
\begin{align}
&\IP\left(\exists i\in\{1,...,n\}: \sup_{j=1,...,n}\sup_{\overline{\gamma}\in K_{\gamma}}\frac{2}{T}\left|\int_0^T\int_0^{t-}g(t-r;\overline{\gamma})dN_{n,j}(r)dM_{n,i}(t)\right|> d_{n,i}\right) \nonumber \\
\leq&n^2\sup_{i,j=1,...,n}\IP\left(\sup_{\overline{\gamma}\in K_{\gamma}}\frac{2}{T}\left|\int_0^T\int_0^{t-}g(t-r;\overline{\gamma})dN_{n,j}(r)dM_{n,i}(t)\right|> d_{n,i},\Omega_{\mathcal{N}}\right)+\IP(\Omega_{\mathcal{N}}^c) \nonumber \\
\leq&\underset{i,j=1,...,n}{n^2\sup}\IP\left(\sup_{\overline{\gamma}\in K_{\gamma}}\frac{2}{T}\left|\int_0^T\int_0^{t-}g(t-r;\overline{\gamma})-g(t-r;P_n(\overline{\gamma}))dN_{n,j}(r)dM_{n,i}(t)\right|>\frac{d_{n,i}}{2},\Omega_{\mathcal{N}}\right) \label{eq:dn_cont} \\
&+\underset{i,j=1,...,n}{n^2\sup}\IP\left(\sup_{\overline{\gamma}\in K_{\gamma,n,\eta_i}}\frac{2}{T}\left|\int_0^T\int_0^{t-}g(t-r;\overline{\gamma})dN_{n,j}(r)dM_{n,i}(t)\right|>\frac{d_{n,i}}{2},\Omega_{\mathcal{N}}\right)+\IP(\Omega_{\mathcal{N}}^c) \label{eq:dn_emp_proc}
\end{align}
For \eqref{eq:dn_emp_proc}, we can continue using union bound
\begin{align}
&n^2\sup_{i,j=1,...,n}\IP\left(\sup_{\overline{\gamma}\in K_{\gamma,n,\eta_i}}\frac{2}{T}\left|\int_0^T\int_0^{t-}g(t-r;\overline{\gamma})dN_{n,j}(r)dM_{n,i}(t)\right|>\frac{d_{n,i}}{2},\Omega_{\mathcal{N}}\right) \nonumber \\
\leq&n^2K_1\eta_i^{-1}\sup_{i,j=1,...,n}\sup_{\overline{\gamma}\in K_{\gamma,n,\eta_i}}\IP\left(\frac{2}{T}\left|\int_0^T\int_0^{t-}g(t-r;\overline{\gamma})dN_{n,j}(r)dM_{n,i}(t)\right|>\frac{d_{n,i}}{2},\Omega_{\mathcal{N}}\right). \label{eq:dn_emp_proc2}
\end{align}
We can apply now again Theorem 3 of \citet{HRBR15} in its univariate form, that is, with $M=1$. Let
$$H(t):=\frac{4}{T}\int_0^{t-}g(t-r;\overline{\gamma})dN_{n,j}(r)\Ind(t\leq\tau_n).$$
We have $|H(t)|\leq \frac{4\overline{g}\mathcal{N}}{T}=:B$ which implies the integral conditions (see also the proof of Theorem 2 of \citet{HRBR15}). Define $x:=\log n+\log(nT)+\alpha_3\log T$ and
\begin{align*}
\hat{V}_d^{\mu}:=&\frac{16\mu}{T^2(\mu-\phi(\mu))}\int_0^{\tau_n}\left(\int_0^{t-}g(t-r;\overline{\gamma})dN_{n,j}(r)\right)^2dN_{n,i}(t)+\frac{16\overline{g}^2\mathcal{N}^2x}{T^2(\mu-\phi(\mu))} \\
\hat{V}_{d,0}^{\mu}:=&\frac{16\mu}{T^2(\mu-\phi(\mu))}\int_0^T\left(\int_0^{t-}g(t-r;\overline{\gamma})dN_{n,j}(r)\right)^2dN_{n,i}(t)+\frac{16\overline{g}^2\mathcal{N}^2x}{T^2(\mu-\phi(\mu))}.
\end{align*}
We bound on $\Omega_{\mathcal{N}}$, using \eqref{eq:Bj},
$$w:=\frac{16\overline{g}^2\mathcal{N}^2x}{T^2(\mu-\phi(\mu))}\leq\hat{V}_d^{\mu}\leq\frac{16\mu}{T^2(\mu-\phi(\mu))}\frac{T+A}{A}\overline{g}^2\mathcal{N}^3+\frac{16\overline{g}^2\mathcal{N}^2x}{T^2(\mu-\phi(\mu))}=:v.$$
Using Theorem 3 of \citet{HRBR15} we hence have for $\epsilon=1$
\begin{align*}
&\IP\left(\frac{2}{T}\int_0^T\int_0^{t-}g(t-r;\overline{\gamma})dN_{n,j}(r)dM_{n,i}(t)>\frac{d_{n,i}}{2},\Omega_{\mathcal{N}}\right) \\
\leq&\IP\left(\frac{4}{T}\int_0^T\int_0^{t-}g(t-r;\overline{\gamma})dN_{n,j}(r)dM_{n,i}(t)>2\sqrt{\hat{V}_{d,0}^{\mu}x}+\frac{Bx}{3},\Omega_{\mathcal{N}}\right) \\
=&\IP\left(\frac{4}{T}\int_0^T\int_0^{t-}g(t-r;\overline{\gamma})dN_{n,j}(r)dM_{n,i}(t)>2\sqrt{\hat{V}_d^{\mu}x}+\frac{Bx}{3},\Omega_{\mathcal{N}}\right) \\
\leq&\IP\Bigg(\frac{4}{T}\int_0^T\int_0^{t-}g(t-r;\overline{\gamma})dN_{n,j}(r)dM_{n,i}(t)>2\sqrt{\hat{V}_d^{\mu}x}+\frac{Bx}{3},w\leq\hat{V}_d^{\mu}\leq v, \\
&\qquad\qquad\qquad\sup_{t\leq\tau_n}|H(t)|\leq B\Bigg) \\
\leq&2\left(\log_2\left(\frac{\mu(T+A)\mathcal{N}}{Ax}+1\right)+1\right)e^{-x}=c_3'\log T\cdot n^{-2}T^{-(1+\alpha_3)}
\end{align*}
for a suitable constant $c_3'$. Using the above in \eqref{eq:dn_emp_proc2} yields by the definition of $\eta_i$.
\begin{align*}
&n^2\sup_{i,j=1,...,n}\IP\left(\sup_{\overline{\gamma}\in K_{\gamma,n,\eta_i}}\frac{2}{T}\left|\int_0^T\int_0^{t-}g(t-r;\overline{\gamma})dN_{n,j}(r)dM_{n,i}(t)\right|>\frac{d_{n,i}}{2},\Omega_{\mathcal{N}}\right)  \\
\leq&n^2K_1\max_{i=1,...,n}\frac{\sqrt{\mu-\phi(\mu)}c_{3,i}''\log(nT)\max(1,\|C_{n,i\cdot}^*\|_1)T}{24\overline{g}\mathcal{N}_0\left(\log n+\log(nT)+\alpha_3\log T\right)}2c_3'\log T\cdot n^{-2}T^{-(1+\alpha_3)} \\
\leq&c_3\frac{\max(1,\max_{i=1,...,n}\|C_{n,i\cdot}^*\|_1)\log T}{T^{\alpha_3}}
\end{align*}
for a suitable $c_3>0$.

To bound \eqref{eq:dn_cont} we use Lipschitz continuity of $\gamma$. Denote again $d|M_{n,i}|(t):=dN_{n,i}(t)+\lambda_{n,i}(t)dt$. We have
\begin{align*}
&\IP\left(\sup_{\overline{\gamma}\in K_{\gamma}}\frac{2}{T}\left|\int_0^T\int_0^{t-}g(t-r;\overline{\gamma})-g(t-r;P_n(\overline{\gamma}))dN_{n,j}(r)dM_{n,i}(t)\right|>\frac{d_{n,i}}{2},\Omega_{\mathcal{N}}\right) \\
\leq&\IP\left(\frac{2}{T}\int_0^T\int_{t-A}^{t-}L_g\eta_i dN_{n,j}(r)d|M_{n,i}|(t)>\frac{d_{n,i}}{2},\Omega_{\mathcal{N}}\right) \\
\leq&\IP\left(\frac{2}{T}\int_0^TL_g\eta_i\mathcal{N}d|M_{n,i}|(t)>\frac{d_{n,i}}{2},\Omega_{\mathcal{N}}\right) \\
=&\IP\Bigg(\frac{2}{T}\int_0^TL_g\eta_i\mathcal{N}dN_{n,i}(t) \\
&+\frac{2}{T}\int_0^TL_g\eta_i\mathcal{N}\left(\alpha_{n,i}^*\nu_0(X_{n,i}(t);\beta_n^*)+\sum_{j=1}^nC_{n,ij}^*\int_0^{t-}g(t-r;\gamma_n^*)dN_j(r)\right)dt>\frac{d_{n,i}}{2},\Omega_{\mathcal{N}}\Bigg) \\
\leq&\IP\Bigg(\frac{2L_g\eta_i(T+A)\mathcal{N}^2}{AT}+2L_g\eta_i\mathcal{N}\left(K_{\alpha}\overline{\nu}_i+\|C_{n,i\bdot}\|_1\overline{g}\mathcal{N}\right)>\frac{d_{n,i}}{2},\Omega_{\mathcal{N}}\Bigg) \\
=&\IP\Bigg(2L_g\mathcal{N}\left(\frac{(T+A)\mathcal{N}}{AT}+K_{\alpha}\overline{\nu}_i+\|C_{n,i\bdot}\|_1\overline{g}\mathcal{N}\right)\eta_i>\frac{d_{n,i}}{2},\Omega_{\mathcal{N}}\Bigg) \\
\leq&\IP\left(c_{3,i}''\log^2(nT)\max\left(1,\|C_{n,i\bdot}^*\|_1\right)\eta_i>\frac{d_{n,i}}{2},\Omega_{\mathcal{N}}\right).
\end{align*}
By definition of $\eta_i$, we have, however $c_{3,i}''\log^2(nT)\max\left(1,\|C_{n,i\cdot}^*\|_1\right)\eta_i\leq\sqrt{wx}\leq d_{n,i}/2$ on $\Omega_{\mathcal{N}}$ and hence the above probability equals $0$.

\underline{Part involving $e_n$:} Let $K_{\gamma,\eta,n}$ be the same grid as in the part involving $d_{n,i}$. Let $c_4''>0$ be chosen such that
\begin{align*}
&\frac{D_g\mathcal{N}}{nT}\left(\|C_n^*\|_1\left(\frac{T+A}{A}\mathcal{N}+TK_{\alpha}\|\overline{\nu}\|_{\infty}\right)+\sum_{i=1}^n\|C_{n,i\cdot}^*\|_1^2\overline{g}\mathcal{N}T\right) \\
\leq&c_4''\frac{\log^2(nT)}{n}\left(\|C_n^*\|_1+\sum_{i=1}^n\|C_{n,i\cdot}^*\|_1^2\right).
\end{align*}
We define then
$$\eta:=\frac{24L_g\mathcal{N}_0(1+\alpha_4)\max_{i=1,...,n}\|C_{n,i\cdot}^*\|_1}{\sqrt{\mu-\phi(\mu)}c_4''(\|C_n^*\|_1+\sum_{i=1}^n\|C_{n,i\cdot}^*\|^2_1)T}.$$
We compute using the fundamental theorem of calculus
\begin{align}
&\IP\left(\sup_{\overline{\gamma}\in K_{\gamma}}\left|\frac{2}{nT}\sum_{i,j=1}^nC_{n,ij}^*\int_0^T\int_0^{t-}\frac{g(t-r;\overline{\gamma})-g(t-r;\gamma_n^*)}{|\overline{\gamma}-\gamma_n^*|}dN_{n,j}(r)dM_{n,i}(t)\right|>e_n\right) \nonumber \\
=&\IP\Bigg(\sup_{\overline{\gamma}\in K_{\gamma}}\left|\sum_{i,j=1}^n\frac{2C_{n,ij}^*}{nT}\int_0^T\int_0^{t-}\frac{\int_0^1\frac{d}{d\gamma}g(t-r;s\overline{\gamma}+(1-s)\gamma_n^*)ds(\overline{\gamma}-\gamma_n^*)}{|\overline{\gamma}-\gamma_n^*|}dN_{n,j}(r)dM_{n,i}(t)\right| \nonumber \\
&\qquad\qquad\qquad\qquad\qquad\qquad\qquad\qquad\qquad\qquad\qquad\qquad\qquad\qquad\qquad\qquad>e_n\Bigg) \nonumber \\
\leq&\IP\Bigg(\underset{\sigma\in\{-1,1\}}{\sup_{\overline{\gamma}\in K_{\gamma}}}\left|\sum_{i,j=1}^nC_{n,ij}^*\int_0^T\frac{2\sigma\int_0^{t-}\int_0^1\frac{d}{d\gamma}g(t-r;s\overline{\gamma}+(1-s)\gamma_n^*)ds dN_{n,j}(r)}{nT}dM_{n,i}(t)\right| \nonumber \\
&\qquad\qquad\qquad\qquad\qquad\qquad\qquad\qquad\qquad\qquad\qquad\qquad\qquad>e_n,\Omega_{\mathcal{N}}\Bigg)+\IP(\Omega_{\mathcal{N}}^c) \nonumber \\
\leq&\IP\Bigg(\sup_{\overline{\gamma}\in K_{\gamma}}\left|\sum_{i,j=1}^nC_{n,ij}^*\int_0^T\frac{2\int_0^{t-}\int_0^1\frac{d}{d\gamma}g(t-r;s\overline{\gamma}+(1-s)\gamma_n^*)dsdN_{n,j}(r)}{nT}dM_{n,i}(t)\right|>e_n, \nonumber \\
&\qquad\qquad\qquad\qquad\qquad\qquad\qquad\qquad\qquad\qquad\qquad\qquad\qquad\qquad\Omega_{\mathcal{N}}\Bigg)+\IP(\Omega_{\mathcal{N}}^c) \nonumber \\
\leq&\IP\Bigg(\sup_{\overline{\gamma}\in K_{\gamma}}\Bigg|\frac{2}{nT}\sum_{i,j=1}^nC_{n,ij}^*\int_0^T\int_0^{t-}\int_0^1\frac{d}{d\gamma}g(t-r;s\overline{\gamma}+(1-s)\gamma_n^*) \nonumber \\
&\qquad\qquad\qquad\qquad-\frac{d}{d\gamma}g(t-r;sP_n(\overline{\gamma})+(1-s)\gamma_n^*)dsdN_{n,j}(r)dM_{n,i}(t)\Bigg|>\frac{e_n}{2},\Omega_{\mathcal{N}}\Bigg) \label{eq:en_cont} \\
&+\IP\Bigg(\sup_{\overline{\gamma}\in K_{\gamma,n,\eta}}\left|\sum_{i,j=1}^nC_{n,ij}^*\int_0^T\frac{2\int_0^{t-}\int_0^1\frac{d}{d\gamma}g(t-r;s\overline{\gamma}+(1-s)\gamma_n^*)dsdN_{n,j}(r)}{nT}dM_{n,i}(t)\right| \nonumber \\
&\qquad\qquad\qquad\qquad\qquad\qquad\qquad\qquad\qquad\qquad\qquad>\frac{e_n}{2},\Omega_{\mathcal{N}}\Bigg)+\IP(\Omega_{\mathcal{N}}^c). \label{eq:en_emp_proc}
\end{align}
We study \eqref{eq:en_emp_proc} by using union bound
\begin{align}
&\IP\Bigg(\sup_{\overline{\gamma}\in K_{\gamma,n,\eta}}\left|\sum_{i,j=1}^nC_{n,ij}^*\int_0^T\frac{2\int_0^{t-}\int_0^1\frac{d}{d\gamma}g(t-r;s\overline{\gamma}+(1-s)\gamma_n^*)dsdN_{n,j}(r)}{nT}dM_{n,i}(t)\right|>\frac{e_n}{2}, \nonumber \\
&\qquad\qquad\qquad\qquad\qquad\qquad\qquad\qquad\qquad\qquad\qquad\qquad\qquad\qquad\qquad\qquad\qquad\Omega_{\mathcal{N}}\Bigg) \nonumber \\
\leq&\frac{K_1}{\eta}\sup_{\overline{\gamma}\in K_{\gamma,n,\eta}}\IP\Bigg(\left|\sum_{i,j=1}^nC_{n,ij}^*\int_0^T\frac{2\int_0^{t-}\int_0^1\frac{d}{d\gamma}g(t-r;s\overline{\gamma}+(1-s)\gamma_n^*)dsdN_{n,j}(r)}{nT}dM_{n,i}(t)\right| \nonumber \\
&\qquad\qquad\qquad\qquad\qquad\qquad\qquad\qquad\qquad\qquad\qquad\qquad\qquad\qquad\qquad>\frac{e_n}{2}, \Omega_{\mathcal{N}}\Bigg). \label{eq:en_emp_proc2}
\end{align}
The above can be handled by using Theorem 3 in \citet{HRBR15} in its multivariate version. We let to this end,
$$H_i(t):=\frac{4}{nT}\sum_{j=1}^nC_{n,ij}^*\int_0^{t-}\int_0^1\frac{d}{d\gamma}g(t-r;s\overline{\gamma}+(1-s)\gamma_n^*)dsdN_{n,j}(r)\Ind(t\leq\tau_n).$$
By definition of $\tau_n$, the integral condition is fulfilled. Moreover,
$$\sup_{i=1,...,n}\sup_{t\in[0,\tau_n]}|H_i(t)|\leq\frac{4L_g\mathcal{N}\sup_{i=1,...,n}\|C_{n,i\cdot}^*\|_1}{nT}=:B.$$
Define next for $x=(1+\alpha_4)\log nT$
\begin{align*}
&\hat{V}_e^{\mu} \\
:=&\frac{\mu}{\mu-\phi(\mu)}\sum_{i=1}^n\int_0^{\tau_n}\left(\sum_{j=1}^nC_{n,ij}^*\frac{4\int_0^{t-}\int_0^1\frac{d}{d\gamma}g(t-r;s\overline{\gamma}+(1-s)\gamma_n^*)dsdN_{n,j}(r)}{nT}\right)^2dN_{n,i}(t) \\
&\qquad\qquad+\frac{16L_g^2\mathcal{N}^2\sup_{i=1,...,n}\|C_{n,i\cdot}^*\|_1^2x}{(\mu-\phi(\mu))n^2T^2}.
\end{align*}
On $\Omega_{\mathcal{N}}$ it holds that $\tau_n=T$ and hence
\begin{align*}
w:=&\frac{16L_g^2\mathcal{N}^2\sup_{i=1,...,n}\|C_{n,i\cdot}^*\|_1^2x}{(\mu-\phi(\mu))n^2T^2} \\
\leq&\hat{V}_e^{\mu}\leq\frac{\mu}{\mu-\phi(\mu)}\frac{16L_g^2\mathcal{N}^3(T+A)}{An^2T^2}\sum_{i=1}^n\|C_{n,i\cdot}^*\|_1^2+\frac{16L_g^2\mathcal{N}^2\sup_{i=1,...,n}\|C_{n,i\cdot}^*\|_1^2x}{(\mu-\phi(\mu))n^2T^2}=:v.
\end{align*}
Thus, Theorem 3 of \citet{HRBR15} implies for $\epsilon=1$
\begin{align*}
&\IP\Bigg(\sum_{i,j=1}^nC_{n,ij}^*\int_0^T\frac{2\int_0^{t-}\int_0^1\frac{d}{d\gamma}g(t-r;s\overline{\gamma}+(1-s)\gamma_n^*)dsdN_{n,j}(r)}{nT}dM_{n,i}(t)>\frac{e_n}{2},\Omega_{\mathcal{N}}\Bigg) \\
\leq&\IP\left(\sum_{i=1}^n\int_0^{\tau_n}H_i(t)dM_{n,i}(t)>2\sqrt{\hat{V}_e^{\mu}x}+\frac{Bx}{3},w\leq\hat{V}_e^{\mu}\leq v,\sup_{i=1,...,n}\sup_{t\in[0,\tau_n]}|H_i(t)|\leq B\right) \\
\leq&2\left(\log_2\left(\frac{\mu\mathcal{N}(T+A)\sum_{i=1}^n\|C_{n,i\cdot}^*\|_1^2}{A\sup_{i=1,...,n}\|C_{n,i\cdot}^*\|_1^2x}+1\right)+1\right)e^{-x}\leq c_4'\log(nT)(nT)^{-(1+\alpha_4)}
\end{align*}
for a suitable constant $c_4'$. Using this in \eqref{eq:en_emp_proc2} yields
\begin{align*}
&\IP\Bigg(\sup_{\overline{\gamma}\in K_{\gamma,n,\eta}}\left|\sum_{i,j=1}^nC_{n,ij}^*\int_0^T\frac{2\int_0^{t-}\int_0^1\frac{d}{d\gamma}g(t-r;s\overline{\gamma}+(1-s)\gamma_n^*)dsdN_{n,j}(r)}{nT}dM_{n,i}(t)\right|>\frac{e_n}{2}, \nonumber \\
&\qquad\qquad\qquad\qquad\qquad\qquad\qquad\qquad\qquad\qquad\qquad\qquad\qquad\qquad\qquad\qquad\qquad\Omega_{\mathcal{N}}\Bigg) \nonumber \\
\leq&2K_1\eta^{-1}c_4'\log(nT)(nT)^{-(1-\alpha_4)}\leq c_4\frac{\log(nT)\left(\frac{1}{n}\|C_n^*\|_1+\frac{1}{n}\sum_{i=1}^n\|C_{n,i\cdot}^*\|_1^2\right)}{\max_{i=1,...,n}\|C_{n,i\cdot}^*\|_1(nT)^{\alpha_4}}
\end{align*}
for a suitable $c_4$.

For \eqref{eq:en_cont}, we again make the convention $d|M_{n,i}|(t)=dN_{n,i}(t)+\lambda_{n,i}(t)dt$. We then get,
\begin{align*}
&\IP\Bigg(\sup_{\overline{\gamma}\in K_{\gamma}}\Bigg|\frac{2}{nT}\sum_{i,j=1}^nC_{n,ij}^*\int_0^T\int_0^{t-}\int_0^1\frac{d}{d\gamma}g(t-r;s\overline{\gamma}+(1-s)\gamma_n^*) \nonumber \\
&\qquad\qquad\qquad\qquad-\frac{d}{d\gamma}g(t-r;sP_n(\overline{\gamma})+(1-s)\gamma_n^*)dsdN_{n,j}(r)dM_{n,i}(t)\Bigg|>\frac{e_n}{2},\Omega_{\mathcal{N}}\Bigg) \\
\leq&\IP\left(\frac{D_g\mathcal{N}}{nT}\sum_{i,j=1}^nC_{n,ij}^*\int_0^Td|M_{n,i}|(t)\eta >\frac{e_n}{2},\Omega_{\mathcal{N}}\right) \\
\leq&\IP\Bigg(\frac{D_g\mathcal{N}}{nT}\sum_{i,j=1}^nC_{n,ij}^*\Bigg(\int_0^TdN_{n,i}(t) \\
&\qquad+\int_0^T\alpha_{n,i}\nu_0(X_{n,i}(t);\beta_n^*)+\sum_{j=1}^nC_{n,ij}^*\int_0^{t-}g(t-r;\gamma_n^*)dN_{n,j}(r)dt\Bigg)\eta >\frac{e_n}{2},\Omega_{\mathcal{N}}\Bigg) \\
\leq&\IP\left(\frac{D_g\mathcal{N}}{nT}\sum_{i,j=1}^nC_{n,ij}^*\left(\frac{T+A}{A}\mathcal{N}+T\left(K_{\alpha}\overline{\nu}_i+\|C_{n,i\bdot}\|_1\overline{g}\mathcal{N}\right)\right)\eta >\frac{e_n}{2},\Omega_{\mathcal{N}}\right) \\
\leq&\IP\left(\frac{D_g\mathcal{N}}{nT}\left(\|C_n^*\|_1\left(\frac{T+A}{A}\mathcal{N}+TK_{\alpha}\|\overline{\nu}\|_{\infty}\right)+\sum_{i=1}^n\|C_{n,i\cdot}^*\|_1^2\overline{g}\mathcal{N}T\right)\eta >\frac{e_n}{2},\Omega_{\mathcal{N}}\right) \\
\leq&\IP\left(c_4''\frac{\log^2(nT)}{n}\left(\|C_n^*\|_1+\sum_{i=1}^n\|C_{n,i\cdot}^*\|_1^2\right)\eta >\frac{e_n}{2},\Omega_{\mathcal{N}}\right),
\end{align*}
where the last inequality holds by definition of $c_4''$. But $\eta$ is chosen such that the above probability equals zero.

\section{Proofs of Section \ref{subsec:results_debiasing}}
\label{sup:results_debiasing}
\begin{proof}[Proof of Lemma \ref{lem:Rn}]
Since, $R_{n,ab}=\Sigma_{n,ab}(C_a,\alpha_a,\theta_a)$ for some intermediate parameters that are the same within each row of $R_n$, we may ignore their dependence on the row and may simply study the row-wise difference between $\Sigma_n(C_1,\alpha_1,\theta_1)$ and $\Sigma_n(C,\alpha,\theta)$ for a parameter $(C,\alpha,\theta)$ that lies between $(C_1,\alpha_1,\theta_1)$ and $(C_2,\alpha_2,\theta_2)$. Recall the definition of $\Omega_{\mathcal{N}}$ from Lemma \ref{lem:omega_lemma}. On this event, we have for some constant $K>0$, by (PE1), (PE2) and (PE3)
\begin{align*}
&\left|\Psi_{n,i}(t;C_1,\alpha_1,\theta_1)-\Psi_{n,i}(t;C,\alpha,\theta)\right| \\
=&\Bigg|\alpha_{1,i}\nu_0(X_{n,i}(t);\beta_1)-\alpha_i\nu_0(X_{n,i}(t);\beta) \\
&\qquad\qquad\qquad+\sum_{j=1}^n\int_0^{t-}\left(C_{1,ij}g(t-r;\gamma_1)-C_{ij}g(t-r;\gamma)\right)dN_{n,j}(r)\Bigg| \\
\leq&K\left(|\alpha_{1,i}-\alpha_i|+\|\beta_1-\beta\|_1+\|C_{1,i\bdot}-C_{i\bdot}\|_1\mathcal{N}+|\gamma_1-\gamma|\mathcal{N}\|C_{1,i\bdot}\|_1\right),
\end{align*}
and, for a different constant $K$,
\begin{align*}
&\left|\Psi_{n,i}(t;C,\alpha,\theta)\right|\leq K\left(1+\|C_{i\bdot}\|_1\mathcal{N}\right).
\end{align*}
Proving the upper bound from the lemma is tedious but straight forward. In order to focus on the rates, we let $K$ denote a constant that may change from line to line. It might depend on all constants mentioned in the lemma but not on $n$. We use the expressions for the derivatives of $\Psi_{n,i}$ which were computed in the beginning of the chapter and the fact that many second derivatives vanish. Moreover, recall that
\begin{align*}
&\partial_a\partial_b\textrm{LS}_i(C,\alpha,\theta) \\
=&\int_0^T2\partial_a\Psi_{n,i}(t;C,\alpha,\theta)\partial_b\Psi_{n,i}(t;C,\alpha,\theta)+2\Psi_{n,i}(t;C,\alpha,\beta)\partial_a\partial_b\Psi_{n,i}(t;C,\alpha,\theta)dt \\
&\qquad-2\int_0^T\partial_a\partial_b\Psi_{n,i}(t;C,\alpha,\theta)dN_{n,i}(t).
\end{align*}
Denote by $\nu_{0,r}$ the derivative of $\nu_0$ with respect to $\beta_r$, and by $\nu_{0,rq}$ the second derivative of $\nu_0$ with respect to $\beta_r$ and $\beta_q$. In the following, we will bound the expression within the maximum of interest on $\Omega_{\mathcal{N}}$ for each possible choice of $a$ and $b$. Since $\Sigma$ is symmetric and the arguments do not depend on the specifically chosen intermediate point, we may restrict to the upper triangular matrix.

\underline{Let $a=\alpha_k$ and $b=\alpha_l$ for some $k,l\in\{1,...,n\}$:}
\begin{align*}
\left|R_{n,ab}-\Sigma_{n,ab}(C_1,\alpha_1,\theta_1)\right|=&\left|\frac{2}{nT}\int_0^T\nu_0(X_k(t);\beta_1)^2-\nu_0(X_k(t);\beta)^2dt\Ind(k=l)\right| \\
\leq&\frac{K}{n}\|\beta_1-\beta\|_1.
\end{align*}

\underline{Let $a=\alpha_k$ and $b=C_{xy}$ for some $k,x,y\in\{1,...,n\}$:}
\begin{align*}
\left|R_{n,ab}-\Sigma_{n,ab}(C_1,\alpha_1,\theta_1)\right|=&\Bigg|\frac{2}{nT}\int_0^T\nu_0(X_k(t);\beta_1)\int_0^{t-}g(t-r;\gamma_1)dN_{n,y}(r) \\
&\qquad-\nu_0(X_k(t);\beta)\int_0^{t-}g(t-r;\gamma)dN_{n,y}(r)dt\Ind(x=k)\Bigg| \\
\leq&\frac{K\mathcal{N}}{n}\|\beta_1-\beta\|_1+\frac{K\mathcal{N}}{n}|\gamma_1-\gamma|.
\end{align*}

\underline{Let $a=\alpha_k$ and $b=\beta_r$ for some $k\in\{1,...,n\}$ and $r\in\{1,...,p\}$:}
\begin{align*}
&\left|R_{n,ab}-\Sigma_{n,ab}(C_1,\alpha_1,\theta_1)\right| \\
=&\Bigg|\frac{2}{nT}\Bigg(\int_0^T\nu_0(X_k(t);\beta_1)\alpha_{1,k}\nu_{0,r}(X_k(t);\beta_1)-\nu_0(X_k(t);\beta)\alpha_k\nu_{0,r}(X_k(t);\beta)dt  \\
&\qquad+\int_0^T\Psi_{n,k}(t;C_1,\alpha_1,\theta_1)\nu_{0,r}(X_k(t);\beta_1)-\Psi_{n,k}(t;C,\alpha,\theta)\nu_{0,r}(X_k(t);\beta)dt \\
&\qquad-\int_0^T\nu_{0,r}(X_k(t);\beta_1)-\nu_{0,r}(X_k(t);\beta)dN_{n,k}(t) \Bigg)\Bigg| \\
\leq&\frac{K\mathcal{N}(1+\|C_{1,k\bdot}\|_1)}{n}\|\beta_1-\beta\|_1+\frac{K}{n}|\alpha_{1,k}-\alpha_k|+\frac{K\mathcal{N}}{n}\|C_{1,k\cdot}-C_{k\bdot}\|_1+\frac{K\mathcal{N}\|C_{1,k\bdot}\|_1}{n}|\gamma_1-\gamma|
\end{align*}

\underline{Let $a=\alpha_k$ and $b=\gamma$:}
\begin{align*}
&\left|R_{n,ab}-\Sigma_{n,ab}(C_1,\alpha_1,\theta_1)\right| \\
=&\Bigg|\frac{2}{nT}\int_0^T\nu_0(X_k(t);\beta_1)\sum_{j=1}^nC_{1,kj}\int_0^{t-}g'(t-r);\gamma_1)dN_{n,j}(r) \\
&\qquad-\nu_0(X_k(t);\beta)\sum_{j=1}^nC_{kj}\int_0^{t-1}g'(t-r;\gamma)dN_{n,j}(r)dt\Bigg|\Bigg) \\
\leq&\frac{K\|C_{1,k\bdot}\|_1\mathcal{N}}{n}\|\beta_1-\beta\|_1+\frac{K\|C_{1,k\bdot}\|_1\mathcal{N}}{n}|\gamma_1-\gamma|+\frac{K\mathcal{N}}{n}\|C_{1,k\bdot}-C_{k\bdot}\|_1.
\end{align*}

\underline{Let $a=C_{xy}$ and $b=C_{x'y'}$ for some $x,y,x',y'\in\{1,...,n\}$:}
\begin{align*}
&\left|R_{n,ab}-\Sigma_{n,ab}(C_1,\alpha_1,\theta_1)\right| \\
=&\Bigg|\frac{2}{nT}\int_0^T\int_0^{t-}g(t-r;\gamma_1)dN_{n,y}(r)\int_0^{t-}g(t-r;\gamma_1)dN_{n,y'}(r) \\
&\qquad-\int_0^{t-}g(t-r;\gamma)dN_{n,y}(r)\int_0^{t-}g(t-r;\gamma)dN_{n,y'}(r)dt\Ind(x=x')\Bigg|\leq\frac{K\mathcal{N}^2}{n}|\gamma_1-\gamma|
\end{align*}

\underline{Let $a=C_{xy}$ and $b=\beta_r$ for some $x,y\in\{1,...,n\}$ and $r\in\{1,...,p\}$:}
\begin{align*}
&\left|R_{n,ab}-\Sigma_{n,ab}(C_1,\alpha_1,\theta_1)\right| \\
=&\Bigg|\frac{2}{nT}\int_0^T\int_0^{t-}g(t-r;\gamma_1)dN_{n,y}(r)\alpha_{1,x}\nu_{0,r}(X_x(t);\beta_1) \\
&\qquad\qquad\qquad-\int_0^{t-}g(t-r;\gamma)dN_{n,y}(r)\alpha_x\nu_{0,r}(X_x(t);\beta)dt\Bigg| \\
\leq&\frac{K\mathcal{N}}{n}|\gamma_1-\gamma|+\frac{K\mathcal{N}}{n}|\alpha_{1,x}-\alpha_x|+\frac{K\mathcal{N}}{n}\|\beta_1-\beta\|_1
\end{align*}

\underline{Let $a=C_{xy}$ and $b=\gamma$ for some $x,y\in\{1,...,n\}$:}
\begin{align*}
&\left|R_{n,ab}-\Sigma_{n,ab}(C_1,\alpha_1,\theta_1)\right| \\
=&\Bigg|\frac{2}{nT}\Bigg(\int_0^T\int_0^{t-}g(t-r;\gamma_1)dN_{n,y}(r)\sum_{j=1}^nC_{1,xj}\int_0^{t-}g'(t-r;\gamma_1)dN_{n,j}(r) \\
&\qquad\qquad\qquad-\int_0^{t-}g(t-r;\gamma)dN_{n,y}(r)\sum_{j=1}^nC_{xj}\int_0^{t-}g'(t-r;\gamma)dN_{n,j}(r)dt \\
&\qquad+\int_0^T\Psi_{n,x}(t;C_1,\alpha_1,\theta_1)\int_0^{t-}g'(t-r;\gamma_1)dN_{n,y}(r) \\
&\qquad\qquad\qquad-\Psi_{n,x}(t;C,\alpha,\theta)\int_0^{t-}g'(t-r;\gamma)dN_{n,y}(r)dt \\
&\qquad-\int_0^T\int_0^{t-}g'(t-r;\gamma_1)dN_{n,y}(r)-\int_0^{t-}g'(t-r;\gamma)dN_{n,y}(r)dN_{n,x}(t)\Bigg)\Bigg|\Bigg) \\
\leq&\frac{K\mathcal{N}^2(1+\|C_{1,x\bdot}\|_1)}{n}|\gamma_1-\gamma|+\frac{K\mathcal{N}^2}{n}\|C_{1,x\bdot}-C_{x\bdot}\|_1+\frac{K\mathcal{N}}{n}|\alpha_{1,x}-\alpha_x|+\frac{K\mathcal{N}}{n}\|\beta_1-\beta\|_1
\end{align*}

\underline{Let $a=\beta_r$ and $b=\beta_q$ for some $r,q\in\{1,...,p\}$:}
\begin{align*}
&\left|R_{n,ab}-\Sigma_{n,ab}(C_1,\alpha_1,\theta_1)\right| \\
=&\Bigg|\frac{2}{nT}\sum_{i=1}^n\Bigg(\int_0^T\alpha_{1,i}^2\nu_{0,r}(X_{n,i}(t);\beta_1)\nu_{0,q}(X_{n,i}(t);\beta_1)-\alpha_i^2\nu_{0,r}(X_{n,i}(t);\beta)\nu_{0,q}(X_{n,i}(t);\beta)dt \\
&\qquad+\int_0^T\Psi_{n,i}(t;C_1,\alpha_1,\theta_1)\alpha_{1,i}\nu_{0,rq}(X_{n,i}(t);\beta_1)-\Psi_{n,i}(t;C,\alpha,\theta)\alpha_i\nu_{0,rq}(X_{n,i}(t);\beta)dt \\
&\qquad-\int_0^T\alpha_{1,i}\nu_{0,rq}(X_{n,i}(t);\beta_1)-\alpha_i\nu_{0,rq}(X_{n,i}(t);\beta)dN_{n,i}(t)\Bigg)\Bigg| \\
\leq&\frac{K(1+\max_{i=1,...,n}\|C_{1,i\bdot}\|_1)\mathcal{N}}{n}\|\alpha_1-\alpha\|_1+K\left(1+\frac{1}{n}\|C_1\|_1\right)\mathcal{N}\|\beta_1-\beta\|_1 \\
&\qquad\qquad+\frac{K\mathcal{N}}{n}\|C_1-C\|_1+\frac{K\mathcal{N}\|C_1\|_1}{n}|\gamma_1-\gamma|
\end{align*}

\underline{Let $a=\beta_r$ and $b=\gamma$ for some $r\in\{1,...,p\}$:}
\begin{align*}
&\left|R_{n,ab}-\Sigma_{n,ab}(C_1,\alpha_1,\theta_1)\right| \\
=&\Bigg|\frac{2}{nT}\sum_{i=1}^n\int_0^T\alpha_{1,i}\nu_{0,r}(X_{n,i}(t);\beta_1)\sum_{j=1}^nC_{1,ij}\int_0^{t-}g'(t-r;\gamma_1)dN_{n,j}(r) \\
&\qquad\qquad\qquad-\alpha_i\nu_{0,r}(X_{n,i}(t);\beta)\sum_{j=1}^nC_{ij}\int_0^{t-}g'(t-r;\gamma)dN_{n,j}(r)dt\Bigg|\Bigg) \\
\leq&\frac{K}{n}\max_{i=1,...,n}\|C_{1,i\bdot}\|_1\mathcal{N}\|\alpha_1-\alpha\|_1+\frac{K\|C_1\|_1}{n}\mathcal{N}\|\beta_1-\beta\|_1+\frac{K\mathcal{N}}{n}\|C_1-C\|_1 \\
&\qquad\qquad\qquad\qquad\qquad\qquad\qquad\qquad\qquad\qquad\qquad\qquad+\frac{K\|C_1\|_1}{n}\mathcal{N}|\gamma_1-\gamma|
\end{align*}

\underline{Let $a=\gamma$ and $b=\gamma$:}
\begin{align*}
&\left|R_{n,ab}-\Sigma_{n,ab}(C_1,\alpha_1,\theta_1)\right| \\
=&\Bigg|\frac{2}{nT}\sum_{i=1}^n\Bigg(\int_0^T\left(\sum_{j=1}^nC_{1,ij}\int_0^{t-}g'(t-r;\gamma_1)dN_{n,j}(r)\right)^2 \\
&\qquad\qquad-\left(\sum_{j=1}^nC_{ij}\int_0^{t-}g'(t-r;\gamma)dN_{n,j}(r)\right)^2dt \\
&\qquad+\int_0^T\Psi_{n,i}(t;C_1,\alpha_1,\theta_1)\sum_{j=1}^nC_{1,ij}\int_0^{t-}g''(t-r;\gamma_1)dN_{n,j}(r) \\
&\qquad\qquad-\Psi_{n,i}(t;C,\alpha,\theta)\sum_{j=1}^nC_{ij}\int_0^{t-}g''(t-r;\gamma)dN_{n,j}(r)dt \\
&-\int_0^T\sum_{j=1}^nC_{1,ij}\int_0^{t-}g''(t-r;\gamma_1)dN_{n,j}(r)-\sum_{j=1}^nC_{ij}\int_0^{t-}g''(t-r;\gamma)dN_{n,j}(r)dN_{n,i}(t)\Bigg)\Bigg| \\
\leq&\frac{K\mathcal{N}\max_{i=1,...,n}(1,\mathcal{N}\|C_{1,i\bdot}\|_1,\mathcal{N}\|C_{i\bdot}\|_1)}{n}\|C_1-C\|_1 \\
&+K\mathcal{N}^2\frac{1}{n}\sum_{i=1}^n\|C_{1,i\bdot}\|_1\max(\|C_{1,i\bdot}\|_1,\|C_{i\bdot}\|_1)|\gamma_1-\gamma|+\frac{K\mathcal{N}\max_{i=1,...,n}\|C_{i\bdot}\|_1}{n}\|\alpha_1-\alpha\|_1 \\
&\qquad\qquad+K\mathcal{N}\frac{1}{n}\|C\|_1\|\beta_1-\beta\|_1\Bigg) \\
\end{align*}

Combining all of these statements, we find that that on $\Omega_{\mathcal{N}}$, recalling that $(C,\alpha,\theta)$ lies between $(C_1,\alpha_1,\theta_1)$ and $(C_2,\alpha_2,\theta_2)$,
\begin{align*}
&\max_{a,b}\left\|R_n-\Sigma_n(C_1,\alpha_1,\theta_1)\right\| \\
\leq&\frac{K\mathcal{N}\left(\max_{i=1,...,n}(\|C_{2,i\bdot}\|_1,\|C_{1,i\bdot}\|_1)+1\right)}{n}\|\alpha_1-\alpha_2\|_1 \\
&\qquad\qquad\qquad+\frac{K\mathcal{N}^2\max_{i=1,...,n}(1,\|C_{2,i\bdot}\|_1,\|C_{1,i\bdot}\|_1)}{n}\|C_1-C_2\|_1 \\
&\qquad\qquad\qquad+K\mathcal{N}\left(\frac{1}{n}\max(\|C_1\|_1,\|C_2\|_1)+1\right)\|\beta_1-\beta_2\|_1 \\
&\qquad\qquad\qquad+\frac{K\mathcal{N}^2}{n}\left(\sum_{i=1}^n\|C_{1,i\bdot}\|_1\max(\|C_{1,i\bdot}\|_1,\|C_{2,i\bdot}\|_1,1)+1\right)|\gamma_1-\gamma_2| \\
\leq&K\mathcal{N}^2\max_{i=1,...,n}(\|C_{1,i\bdot}\|_1,\|C_{2,i\bdot}\|_1,1) \\
&\qquad\qquad\qquad\times\left(\frac{1}{n}\|\alpha_1-\alpha_2\|_1+\frac{1}{n}\|C_1-C_2\|_1+\left(\frac{1}{n}\sum_{i=1}^n\|C_{1,i\bdot}\|_1+1\right)\|\theta_1-\theta_2\|_1\right)
\end{align*}
Since $K$ does not depend on $n$ explicitly, the above bound holds uniformly. Recalling that for $\mathcal{N}\approx\log(nT)$, Lemma \ref{lem:omega_lemma} provides $\IP(\Omega_{\mathcal{N}})\to1$, the statement follows.
\end{proof}

\begin{proof}[Proof of Lemma \ref{lem:Dassump}]
Recall the definition of $\Omega_{\mathcal{N}}$ from Lemma \ref{lem:omega_lemma} for $\mathcal{N}=\mathcal{N}_0\log(nT)$ with a suitable $\mathcal{N}_0>0$. By a union bound argument, we have for any sequence
$$\epsilon_n:=c_0\max(1,\max_{i=1,...,n}\|C_{n,i\bdot}^*\|_1)\log(nT)$$
($c_0$ will be chosen throughout the proof),
\begin{align}
&\IP\left(\left\|\frac{1}{\sqrt{nT}}\sum_{i=1}^n \int_0^T\begin{pmatrix}
    \partial_{\theta} \\ \partial_{\alpha} \\ \partial_C
\end{pmatrix}\Psi_{n,i}(t;C_{n,i\cdot}^*,\alpha_{n,i}^*,\theta_n^*)dM_{n,i}(t)\right\|_{\infty}>\epsilon_n\right) \nonumber \\
\leq&n^2\max_{x,y=1,...,n}\IP\left(\left|\frac{1}{\sqrt{nT}}\sum_{i=1}^n \int_0^T\partial_{C_{xy}}\Psi_{n,i}(t;C_{n,i\bdot}^*,\alpha_{n,i}^*,\theta_n^*)dM_{n,i}(t)\right|>\epsilon_n,\Omega_{\mathcal{N}}\right) \label{eq:row1} \\
&+n\max_{k=1,...,n}\IP\left(\left|\frac{1}{\sqrt{nT}}\sum_{i=1}^n \int_0^T\partial_{\alpha_k}\Psi_{n,i}(t;C_{n,i\bdot}^*,\alpha_{n,i}^*,\theta_n^*)dM_{n,i}(t)\right|>\epsilon_n,\Omega_{\mathcal{N}}\right) \label{eq:row2} \\
&+p\max_{r=1,...,p}\IP\left(\left|\frac{1}{\sqrt{nT}}\sum_{i=1}^n \int_0^T\partial_{\beta_r}\Psi_{n,i}(t;C_{n,i\bdot}^*,\alpha_{n,i}^*,\theta_n^*)dM_{n,i}(t)\right|>\epsilon_n,\Omega_{\mathcal{N}}\right) \label{eq:row3} \\
&+\IP\left(\left|\frac{1}{\sqrt{nT}}\sum_{i=1}^n \int_0^T\partial_{\gamma}\Psi_{n,i}(t;C_{n,i\bdot}^*,\alpha_{n,i}^*,\theta_n^*)dM_{n,i}(t)\right|>\epsilon_n,\Omega_{\mathcal{N}}\right)+\IP(\Omega_{\mathcal{N}}^c). \label{eq:row4}
\end{align}
Since $\IP(\Omega_{\mathcal{N}}^c)\to0$ by Lemma \ref{lem:omega_lemma}, we have left to prove that the remaining probabilities converge to zero. All expressions above can be handled using Theorem 3 in \cite{HRBR15} and using the computations of the derivatives of $\Psi_{n,i}$ in the beginning of this chapter. Fix therefore $\mu\in(0,3)$ as in Theorem 3 in \cite{HRBR15} (cf. also $\mu$ from Lemma \ref{lem:lambda_rate}). Let furthermore $\tau_n$ be defined as in the proof of Lemma \ref{lem:lambda_rate}. For \eqref{eq:row1}, we define
$$H_i(t):=\frac{1}{\sqrt{nT}}\partial_{C_{xy}}\Psi_{n,i}(t;C_{n,i\bdot}^*,\alpha_{n,i}^*,\theta_n^*)=\frac{1}{\sqrt{nT}}\int_0^{t-}g(t-r;\gamma_n^*)dN_{n,y}(r)\Ind(i=x)\Ind(t\leq\tau_n)$$
Thus, we see that only $H_x$ is different from zero. We may therefore set $M=1$ in Theorem 3 of \cite{HRBR15}. It holds that $|H_x(t)|\leq \frac{K\mathcal{N}}{\sqrt{nT}}:=B$ for some constant $K$ and the integral condition holds. We then define for $x=c\log(nT)$ ($c>2$)
$$\hat{V}^{\mu}:=\frac{\mu}{\mu-\phi(\mu)}\int_0^{\tau_n}\frac{1}{nT}\left(\int_0^{t-}g(t-r;\gamma_n^*)dN_{n,y}(r)\right)^2dN_{n,x}(t)+\frac{K^2\mathcal{N}^2x}{(\mu-\phi(\mu))nT}.$$
On $\Omega_{\mathcal{N}}$, it holds that $\tau_n=T$ and furthermore for a suitable $K'>0$
$$w:=\frac{K^2\mathcal{N}^2x}{(\mu-\phi(\mu))nT}\leq\hat{V}^{\mu}\leq\frac{\mu K'\mathcal{N}^3}{(\mu-\phi(\mu))nA}+\frac{K^2\mathcal{N}^2x}{(\mu-\phi(\mu))nT}=:v.$$
Note now that on $\Omega_{\mathcal{N}}$ for $n$ large enough and a suitable constant $K''$
\begin{align*}
2\sqrt{\hat{V}^{\mu}x}+\frac{K\mathcal{N}x}{3\sqrt{nT}}\leq K''\frac{\log^2 nT}{\sqrt{n}}\leq\epsilon_n.
\end{align*}
We then have by Theorem 3 of \citet{HRBR15} using $\epsilon=1$
\begin{align*}
&\IP\left(\left|\frac{1}{\sqrt{nT}}\sum_{i=1}^n \int_0^T\partial_{C_{xy}}\Psi_{n,i}(t;C_{n,i\bdot}^*,\alpha_{n,i}^*,\theta_n^*)dM_{n,i}(t)\right|>\epsilon_n,\Omega_{\mathcal{N}}\right)  \\
\leq&2\IP\left(\int_0^{\tau_n}H_x(t)dM_{n,x}(t)>2\sqrt{\hat{V}^{\mu}x}+\frac{K\mathcal{N}x}{3\sqrt{nT}},w\leq\hat{V}^{\mu}\leq v,\sup_{t\in[0,\tau_n]}|H_x(t)|\leq B\right) \\
\leq&4\left(\log_2\left(\frac{\mu K'\mathcal{N}T}{AK^2x}+1\right)+1\right)e^{-x}=4\left(\log_2\left(\frac{\mu K'\mathcal{N}_0T}{AK^2c}+1\right)+1\right)(nT)^{-c}.
\end{align*}
Since $c>2$, the above implies that \eqref{eq:row1} converges to zero. The argument for \eqref{eq:row2} goes similarly: Define
$$H_i(t):=\frac{1}{\sqrt{nT}}\partial_{\alpha_k}\Psi_{n,i}(t;C_{n,i\bdot}^*,\alpha_{n,i}^*,\theta_n^*)=\frac{1}{\sqrt{nT}}\nu_0(X_{n,i}(t);\beta_n^*)\Ind(i=k).$$
Thus, only $H_k$ is different from zero and we may again use $M=1$ in Theorem 3 of \cite{HRBR15}. It holds that $|H_k(t)|\leq \frac{K}{\sqrt{nT}}:=B$ for some constant $K$ for all $t\in[0,T]$. The integral condition of Theorem 3 is hence fulfilled. We then define
$$\hat{V}^{\mu}:=\frac{\mu}{\mu-\phi(\mu)}\int_0^T\frac{1}{nT}\nu_0(X_k(t);\beta_n^*)^2dN_{n,k}(t)+\frac{K^2x}{(\mu-\phi(\mu))nT}.$$
On $\Omega_{\mathcal{N}}$ it holds for a constant $K'>0$,
$$w:=\frac{K^2x}{(\mu-\phi(\mu))nT}\leq\hat{V}^{\mu}\leq\frac{\mu K'\mathcal{N}}{(\mu-\phi(\mu))nA}+\frac{K^2x}{(\mu-\phi(\mu))nT}=:v.$$
On $\Omega_{\mathcal{N}}$ for $n$ large enough and a suitable constant $K''$
\begin{align*}
2\sqrt{\hat{V}^{\mu}x}+\frac{Kx}{3\sqrt{nT}}\leq K''\frac{\log nT}{\sqrt{n}}\leq\epsilon_n.
\end{align*}
We then have by Theorem 3 of \citet{HRBR15} using $\epsilon=1$
\begin{align*}
&\IP\left(\left|\frac{1}{\sqrt{nT}}\sum_{i=1}^n \int_0^T\partial_{\alpha_k}\Psi_{n,i}(t;C_{n,i\bdot}^*,\alpha_{n,i}^*,\theta_n^*)dM_{n,i}(t)\right|>\epsilon_n,\Omega_{\mathcal{N}}\right)  \\
\leq&2\IP\left(\int_0^{\tau_n}H_k(t)dM_{n,k}(t)>2\sqrt{\hat{V}^{\mu}x}+\frac{Kx}{3\sqrt{nT}},w\leq\hat{V}^{\mu}\leq v,\sup_{t\in[0,T]}|H_k(t)|\leq B\right) \\
\leq&4\left(\log_2\left(\frac{\mu K'\mathcal{N}T}{AK^2x}+1\right)+1\right)e^{-x}=4\left(\log_2\left(\frac{\mu K'\mathcal{N}_0T}{AK^2c}+1\right)+1\right)(nT)^{-c}.
\end{align*}
Hence, \eqref{eq:row2} converges to zero because $c>2$. We study now \eqref{eq:row3}. The proof follows the same ideas. Recall that we denote the first derivative of $\nu_0$ with respect to $\beta_r$ by $\nu_{0,r}$. Let
$$H_i(t):=\frac{1}{\sqrt{nT}}\partial_{\beta_r}\Psi_{n,i}(t;C_{n,i\bdot}^*,\alpha_{n,i}^*,\theta_n^*)=\frac{1}{\sqrt{nT}}\alpha_{n,i}^*\nu_{0,r}(X_{n,i}(t);\beta_n^*).$$
This time all $H_i$ are different from zero and we use the multivariate version of Theorem 3 of \cite{HRBR15}, that is, $M=n$. It holds that $|H_i(t)|\leq \frac{K}{\sqrt{nT}}:=B$ for some constant $K$ for all $t\in[0,T]$. The integral condition of Theorem 3 is hence fulfilled. We then define
$$\hat{V}^{\mu}:=\frac{\mu}{\mu-\phi(\mu)}\sum_{i=1}^n\int_0^T\frac{1}{nT}(\alpha_{n,i}^*)^2\nu_{0,r}(X_{n,i}(t);\beta_n^*)^2dN_{n,i}(t)+\frac{K^2x}{(\mu-\phi(\mu))nT}.$$
On $\Omega_{\mathcal{N}}$ it holds for a constant $K'>0$,
$$w:=\frac{K^2x}{(\mu-\phi(\mu))nT}\leq\hat{V}^{\mu}\leq\frac{\mu K'\mathcal{N}\|\alpha_n^*\|_1}{(\mu-\phi(\mu))nA}+\frac{K^2x}{(\mu-\phi(\mu))nT}=:v.$$
On $\Omega_{\mathcal{N}}$ for $n$ large enough and a suitable constant $K''$, and a good choice of $c_0$
\begin{align*}
2\sqrt{\hat{V}^{\mu}x}+\frac{Kx}{3\sqrt{nT}}\leq K''\log nT\leq\epsilon_n.
\end{align*}
We then have by Theorem 3 of \citet{HRBR15} using $\epsilon=1$
\begin{align*}
&\IP\left(\left|\frac{1}{\sqrt{nT}}\sum_{i=1}^n\int_0^T\partial_{\beta_r}\Psi_{n,i}(t;C_{n,i\bdot}^*,\alpha_{n,i}^*,\theta_n^*)dM_{n,i}(t)\right|>\epsilon_n,\Omega_{\mathcal{N}}\right)  \\
\leq&2\IP\left(\sum_{i=1}^n\int_0^TH_i(t)dM_{n,i}(t)>2\sqrt{\hat{V}^{\mu}x}+\frac{Kx}{3\sqrt{nT}},w\leq\hat{V}^{\mu}\leq v,\underset{i=1,...,n}{\sup_{t\in[0,T]}}|H_i(t)|\leq B\right) \\
\leq&4\left(\log_2\left(\frac{\mu K'\mathcal{N}\|\alpha_n^*\|_1T}{AK^2x}+1\right)+1\right)e^{-x}=4\left(\log_2\left(\frac{\mu K'\mathcal{N}_0\|\alpha_n^*\|_1T}{AK^2c}+1\right)+1\right)(nT)^{-c}.
\end{align*}
The above implies that \eqref{eq:row3} converges to zero because $\|\alpha_n^*\|_1=O(n)$. Finally, we study \eqref{eq:row4}. Let this time
$$H_i(t):=\frac{1}{\sqrt{nT}}\partial_{\gamma}\Psi_{n,i}(t;C_{n,i\bdot}^*,\alpha_{n,i}^*,\theta_n^*)=\frac{1}{\sqrt{nT}}\sum_{j=1}^nC_{n,ij}^*\int_0^{t-}g'(t-r;\gamma_n^*)dN_{n,j}(r)\Ind(t\leq\tau_n).$$
Again, all $H_i$ are different from zero and we use $M=n$ in Theorem 3 of \cite{HRBR15}. We have $|H_i(t)|\leq \frac{KS\mathcal{N}}{\sqrt{nT}}:=B$ for $S:=(1\vee\max_{i=1,...,n}\|C_{n,i\bdot}^*\|_1)$ and some constant $K$ and hence the integral condition of Theorem 3 is fulfilled. Define
\begin{align*}
&\hat{V}^{\mu}:=\frac{\mu}{\mu-\phi(\mu)}\sum_{i=1}^n\int_0^{\tau_n}\frac{1}{nT}\left(\sum_{j=1}^nC_{n,ij}^*\int_0^{t-}g'(t-r;\gamma_n^*)dN_{n,j}(r)\right)^2dN_{n,i}(t) \\
&\qquad\qquad\qquad\qquad\qquad\qquad\qquad\qquad\qquad\qquad\qquad\qquad\qquad+\frac{K^2S^2\mathcal{N}^2x}{(\mu-\phi(\mu))nT}.
\end{align*}
On $\Omega_{\mathcal{N}}$ it holds for a constant $K'>0$,
$$w:=\frac{K^2S^2\mathcal{N}^2x}{(\mu-\phi(\mu))nT}\leq\hat{V}^{\mu}\leq\frac{\mu K'\mathcal{N}^3S^2}{(\mu-\phi(\mu))A}+\frac{K^2S^2\mathcal{N}^2x}{(\mu-\phi(\mu))nT}=:v.$$
On $\Omega_{\mathcal{N}}$ for $n$ large enough and a suitable constant $K''$ and after possibly increasing $c_0$
\begin{align*}
2\sqrt{\hat{V}^{\mu}x}+\frac{KS\mathcal{N}x}{3\sqrt{nT}}\leq K''S\log nT\leq\epsilon_n.
\end{align*}
We then have by Theorem 3 of \citet{HRBR15} using $\epsilon=1$
\begin{align*}
&\IP\left(\left|\frac{1}{\sqrt{nT}}\sum_{i=1}^n\int_0^T\partial_{\gamma}\Psi_{n,i}(t;C_{n,i\bdot}^*,\alpha_{n,i}^*,\theta_n^*)dM_{n,i}(t)\right|>\epsilon_n,\Omega_{\mathcal{N}}\right)  \\
\leq&2\IP\left(\sum_{i=1}^n\int_0^{\tau_n}H_i(t)dM_{n,i}(t)>2\sqrt{\hat{V}^{\mu}x}+\frac{KS\mathcal{N}x}{3\sqrt{nT}},w\leq\hat{V}^{\mu}\leq v,\underset{i=1,...,n}{\sup_{t\in[0,T]}}|H_i(t)|\leq B\right) \\
\leq&4\left(\log_2\left(\frac{\mu K'\mathcal{N}nT}{AK^2x}+1\right)+1\right)e^{-x}=4\left(\log_2\left(\frac{\mu K'\mathcal{N}_0nT}{AK^2c}+1\right)+1\right)(nT)^{-c},
\end{align*}
which converges to zero. This completes the proof of the lemma.
\end{proof}

\section{Proofs of Section \ref{subsec:consistency}}
\label{sup:consistency}
\begin{proof}[Proof of Theorem \ref{thm:oracle}]
The proof runs exactly along the lines of the proof of Theorem 6.2 in \citet{GB11}. However, for completeness we repeat it here in our setting. We have
\begin{align*}
&\frac{1}{T}\mathcal{E}_i(\hat{C}_n(\theta),\hat{\alpha}_n(\theta),\theta)+2\omega_i\|\hat{C}_{n,i\bdot}(\theta)\|_1 \\
=&\frac{1}{T}\textrm{LS}_i(\hat{C}_{n,i\bdot}(\theta),\hat{\alpha}_{n,i}(\theta),\theta)+\frac{2}{T}\int_0^T\Psi_{n,i}(t;\hat{C}_{n,i\bdot}(\theta),\hat{\alpha}_{n,i}(\theta),\theta)dM_{n,i}(t)+2\omega_i\|\hat{C}_{n,i\bdot}(\theta)\|_1 \\
=&\frac{1}{T}\textrm{LS}_i(C_{n,i\bdot}^*(\theta),\alpha_{n,i}^*(\theta),\theta)+\frac{2}{T}\int_0^T\Psi_{n,i}(t;C_{n,i\bdot}^*(\theta),\alpha_{n,i}^*(\theta),\theta)dM_{n,i}(t)+2\omega_i\|C_{n,i\bdot}^*(\theta)\|_1 \\
&\quad+\frac{1}{T}\textrm{LS}_i(\hat{C}_{n,i\bdot}(\theta),\hat{\alpha}_{n,i}(\theta),\theta)+2\omega_i\|\hat{C}_{n,i\bdot}(\theta)\|_1 \\
&\quad\quad-\frac{1}{T}\textrm{LS}_i(C_{n,i\bdot}^*(\theta),\alpha_{n,i}^*(\theta),\theta)-2\omega_i\|C_{n,i\bdot}^*(\theta)\|_1 \\
&\quad\quad+\frac{2}{T}\int_0^T\left[\Psi_{n,i}(t;\hat{C}_{n,i\bdot}(\theta),\hat{\alpha}_{n,i}(\theta),\theta)-\Psi_{n,i}(t;C_{n,i\bdot}^*(\theta),\alpha_{n,i}^*(\theta),\theta)\right]dM_{n,i}(t) \\
\leq&\frac{1}{T}\mathcal{E}_i(C_n^*(\theta),\alpha_n^*(\theta),\theta)+2\omega_i\|C_{n,i\bdot}^*(\theta)\|_1 \\
&\quad\quad+\frac{2}{T}\int_0^T\left[\Psi_{n,i}(t;\hat{C}_{n,i\bdot}(\theta),\hat{\alpha}_{n,i}(\theta),\theta)-\Psi_{n,i}(t;C_{n,i\bdot}^*(\theta),\alpha_{n,i}^*(\theta),\theta)\right]dM_{n,i}(t),
\end{align*}
where the last inequality uses the definition of $(\hat{C}_n(\theta),\hat{\alpha}_n(\theta))$. Furthermore, we have
\begin{align*}
&\left|\frac{2}{T}\int_0^T\left[\Psi_{n,i}(t;\hat{C}_{n,i\bdot}(\theta),\hat{\alpha}_{n,i}(\theta),\theta)-\Psi_{n,i}(t;C_{n,i\bdot}^*(\theta),\alpha_{n,i}^*(\theta),\theta)\right]dM_{n,i}(t)\right| \\
\leq&\left|\frac{2}{T}\int_0^T\nu_0(X_{n,i}(t);\beta)dM_{n,i}(t)\left(\hat{\alpha}_{n,i}(\theta)-\alpha_{n,i}^*(\theta)\right)\right| \\
&+\left|\sum_{j=1}^n\frac{2}{T}\int_0^T\int_0^{t-}g(t-r;\gamma)dN_{n,j}(r)dM_{n,i}(t)\left(\hat{C}_{n,ij}(\theta)-C_{n,ij}^*(\theta)\right)\right| \\
\leq&\left|\frac{2}{T}\int_0^T\nu_0(X_{n,i}(t);\beta)dM_{n,i}(t)\right|\cdot\left|\hat{\alpha}_{n,i}(\theta)-\alpha_{n,i}^*(\theta)\right| \\
&+\sup_{j=1,...,n}\left|\frac{2}{T}\int_0^T\int_0^{t-}g(t-r;\gamma)dN_{n,j}(r)dM_{n,i}(t)\right|\cdot\left\|\hat{C}_{n,i\bdot}(\theta)-C_{n,i\bdot}^*(\theta)\right\|_1.
\end{align*}
Both of the previous two displays together yield on $\mathcal{T}^{(i)}_n(a_n,d_{n,i})$ the following {\em basic inequality:}
\begin{align}
&\frac{1}{T}\mathcal{E}_i(\hat{C}_n(\theta),\hat{\alpha}_n(\theta),\theta)+2\omega_i\|\hat{C}_{n,i\bdot}(\theta)\|_1\leq\frac{1}{T}\mathcal{E}_i(C_n^*(\theta),\alpha_n^*(\theta),\theta) \nonumber \\
&\qquad\qquad+2\omega_i\|C_{n,i\bdot}^*(\theta)\|_1+a_n\left|\hat{\alpha}_{n,i}(\theta)-\alpha_{n,i}^*(\theta)\right|+d_{n,i}\left\|\hat{C}_{n,i\bdot}(\theta)-C_{n,i\bdot}^*(\theta)\right\|_1. \label{eq:basicinequality}
\end{align}
We apply \eqref{eq:basicinequality} in the second inequality below. The first and fourth steps follow from $\omega_i\geq 3d_{n,i}$, and the last inequality is a consequence of the triangle inequality.
\begin{align}
&\frac{2}{T}\mathcal{E}_i(\hat{C}_n(\theta),\hat{\alpha}_n(\theta),\theta)+2\omega_i\|\hat{C}_{n,iS_i^c(C_n^*)}(\theta)\|_1 \nonumber \\
\leq&\frac{2}{T}\mathcal{E}_i(\hat{C}_n(\theta),\hat{\alpha}_n(\theta),\theta)+4\omega_i\|\hat{C}_{n,i\bdot}(\theta)\|_1-4\omega_i\|\hat{C}_{n,iS_i(C_n^*)}(\theta)\|_1-6d_{n,i}\|\hat{C}_{n,iS_i^c(C^*)}(\theta)\| \nonumber \\
\leq&\frac{2}{T}\mathcal{E}_i(C_n^*(\theta),\alpha_n^*(\theta),\theta)+4\omega_i\|C_{n,i\bdot}^*(\theta)\|_1-4\omega_i\|\hat{C}_{n,iS_i(C_n^*)}(\theta)\|_1-6d_{n,i}\|\hat{C}_{n,iS_i^c(C_n^*)}(\theta)\| \nonumber \\
&+2a_n\left|\hat{\alpha}_{n,i}(\theta)-\alpha^*_{n,i}\right|+2d_{n,i}\left\|\hat{C}_{n,i\bdot}(\theta)-C^*_{n,i\bdot}(\theta)\right\|_1 \nonumber \\
\leq&\frac{2}{T}\mathcal{E}_i(C_n^*(\theta),\alpha_n^*(\theta),\theta)+4\omega_i\|C_{n,i\bdot}^*(\theta)\|_1-4\omega_i\|\hat{C}_{n,iS_i(C_n^*)}(\theta)\|_1-2d_{n,i}\|\hat{C}_{n,iS_i^c(C_n^*)}(\theta)\| \nonumber \\
&+2a_n\left|\hat{\alpha}_{n,i}(\theta)-\alpha^*_{n,i}\right|+2d_{n,i}\left\|\hat{C}_{n,i\bdot}(\theta)-C^*_{n,i\bdot}(\theta)\right\|_1 \nonumber \\
\leq&\frac{2}{T}\mathcal{E}_i(C_n^*(\theta),\alpha_n^*(\theta),\theta)+4\omega_i\left(\|C_{n,i\bdot}^*(\theta)\|_1-\|\hat{C}_{n,iS_i(C_n^*)}(\theta)\|_1\right) \nonumber \\
&+2a_n\left|\hat{\alpha}_{n,i}(\theta)-\alpha^*_{n,i}\right|+\frac{2}{3}\omega_i\left\|\hat{C}_{n,iS_i(C_n^*)}(\theta)-C^*_{n,iS_i(C_n^*)}(\theta)\right\|_1 \nonumber \\
\leq&\frac{2}{T}\mathcal{E}_i(C_n^*(\theta),\alpha_n^*(\theta),\theta)+\frac{14}{3}\omega_i\left\|\hat{C}_{n,iS_i(C^*(\theta))}(\theta)-C^*_{n,iS_i(C_n^*)}(\theta)\right\|_1+2a_n\left|\hat{\alpha}_{n,i}(\theta)-\alpha^*_{n,i}\right|. \nonumber
\end{align}
By the definition of $\mathcal{E}_i(\Psi)$ this is equivalent to
\begin{align}
&\frac{2}{T}\left\|\Psi_{n,i}(\cdot;\hat{C}_{n,i\bdot}(\theta),\hat{\alpha}_{n,i}(\theta),\theta)-\lambda_{n,i}\right\|_T^2+2\omega_i\|\hat{C}_{n,iS_i^c(C_n^*)}(\theta)\|_1 \nonumber \\
\leq&\frac{2}{T}\left\|\Psi_{n,i}(\cdot;C_{n,i\bdot}^*(\theta),\alpha_{n,i}^*(\theta),\theta)-\lambda_{n,i}\right\|_T^2 \nonumber \\
&\qquad\qquad+\frac{14}{3}\omega_i\left\|\hat{C}_{n,iS_i(C^*(\theta))}(\theta)-C^*_{n,iS_i(C_n^*)}(\theta)\right\|_1+2a_n\left|\hat{\alpha}_{n,i}(\theta)-\alpha^*_{n,i}\right|. \label{eq:newbi}
\end{align}
We consider now two cases:

\underline{Case I}: Suppose that
\begin{align}
&\frac{1}{T}\left\|\Psi_{n,i}(\cdot;C_{n,i\bdot}^*(\theta),\alpha_{n,i}^*(\theta),\theta)-\lambda_{n,i}\right\|_T^2 \nonumber \\
&\qquad\qquad\qquad\geq\frac{14}{3}\omega_i\|\hat{C}_{n,iS_i(C_n^*)}(\theta)-C^*_{n,iS_i(C_n^*)}(\theta)\|_1+2a_n|\hat{\alpha}_{n,i}(\theta)-\alpha^*_{n,i}(\theta)|. \label{eq:case1}
\end{align}
In this case, \eqref{eq:newbi} implies
\begin{align*}
&\frac{2}{T}\left\|\Psi_{n,i}(\cdot;\hat{C}_{n,i\bdot}(\theta),\hat{\alpha}_{n,i}(\theta),\theta)-\lambda_{n,i}\right\|_T^2+2\omega_i\|\hat{C}_{n,iS_i^c(C_n^*)}(\theta)\|_1 \\
\leq&\frac{3}{T}\left\|\Psi_{n,i}(\cdot;C_{n,i\bdot}^*(\theta),\alpha_{n,i}^*(\theta),\theta)-\lambda_{n,i}\right\|_T^2.
\end{align*}
Using this, again, together with \eqref{eq:case1}, we obtain
\begin{align*}
&\frac{2}{T}\left\|\Psi_{n,i}(\cdot;\hat{C}_{n,i\bdot}(\theta),\hat{\alpha}_{n,i}(\theta),\theta)-\lambda_{n,i}\right\|_T^2+2\omega_i\|\hat{C}_{n,i\bdot}(\theta)-C_{n,i\bdot}^*(\theta)\|_1 \\
&\qquad\qquad+\frac{6}{7}a_n\left|\hat{\alpha}_{n,i}(\theta)-\alpha_{n,i}^*(\theta)\right|\leq\frac{24}{7}\cdot\frac{1}{T}\left\|\Psi_{n,i}(\cdot;C_{n,i\bdot}^*(\theta),\alpha_{n,i}^*(\theta),\theta)-\lambda_{n,i}\right\|_T^2
\end{align*}
and, hence, after multiplication with $7/3$
\begin{align*}
&\frac{14}{3T}\left\|\Psi_{n,i}(\cdot;\hat{C}_{n,i\bdot}(\theta),\hat{\alpha}_{n,i}(\theta),\theta)-\lambda_{n,i}\right\|_T^2+\frac{14}{3}\omega_i\|\hat{C}_{n,i\bdot}(\theta)-C_{n,i\bdot}^*(\theta)\|_1 \\
&\qquad\qquad+2a_n\left|\hat{\alpha}_{n,i}(\theta)-\alpha_{n,i}^*(\theta)\right|\leq8\cdot\frac{1}{T}\left\|\Psi_{n,i}(\cdot;C_{n,i\bdot}^*(\theta),\alpha_{n,i}^*(\theta),\theta)-\lambda_{n,i}\right\|_T^2,
\end{align*}
which is stronger than the inequality we want to prove.

\underline{Case II}: Suppose that
\begin{align*}
&\frac{1}{T}\left\|\Psi_{n,i}(\cdot;C_{n,i\bdot}^*(\theta),\alpha_{n,i}^*(\theta),\theta)-\lambda_{n,i}\right\|_T^2 \\
<&\frac{14}{3}\omega_i\|\hat{C}_{n,iS_i(C_n^*)}(\theta)-C^*_{n,iS_i(C_n^*)}(\theta)\|_1+2a_n|\hat{\alpha}_{n,i}(\theta)-\alpha^*_{n,i}(\theta)|.
\end{align*}
In this case, we obtain from \eqref{eq:newbi}
\begin{align}
&\frac{2}{T}\left\|\Psi_{n,i}(\cdot;\hat{C}_{n,i\bdot}(\theta),\hat{\alpha}_{n,i}(\theta),\theta)-\lambda_{n,i}\right\|_T^2+2\omega_i\|\hat{C}_{n,iS_i^c(C_n^*)}(\theta)\|_1 \nonumber \\
\leq&14\omega_i\left\|\hat{C}_{n,iS_i(C^*(\theta))}(\theta)-C^*_{n,iS_i(C_n^*)}(\theta)\right\|_1+6a_n\left|\hat{\alpha}_{n,i}(\theta)-\alpha^*_{n,i}\right|. \label{eq:newbi_case2}
\end{align}
Since $L\geq\sup_{i=1,...,n}a_n/\omega_i$, this implies
\begin{align*}
\|\hat{C}_{n,iS_i^c(C_n^*)}(\theta)\|_1\leq7\left\|\hat{C}_{n,iS_i(C^*(\theta))}(\theta)-C^*_{n,iS_i(C_n^*)}(\theta)\right\|_1+3L\left|\hat{\alpha}_{n,i}(\theta)-\alpha^*_{n,i}\right|.
\end{align*}
Hence, $(\hat{C}_n(\theta),\hat{\alpha}_n(\theta))\in\tilde{\cR}_n(i;\theta,L)$. This means by using the Cauchy-Schwarz inequality
\begin{align*}
&16\omega_i\left\|\hat{C}_{n,iS_i(C_n^*)}(\theta)-C^*_{n,iS_i(C_n^*)}(\theta)\right\|_1+8a_n\left|\hat{\alpha}_{n,i}(\theta)-\alpha^*_{n,i}\right| \\
\leq&8\sqrt{4\omega_i^2|S_i(C_n^*)|+a_n^2}\cdot\sqrt{\|\hat{C}_{n,iS_i(C_n^*)}-C^*_{n,iS_i(C_n^*)}(\theta)\|_2^2+|\hat{\alpha}_{n,i}(\theta)-\alpha^*_{n,i}(\theta)|^2} \\
\leq&\frac{8\sqrt{4\omega_i^2|S_i(C_n^*)|+a_n^2}}{\phi_{i,\textrm{comp}}(L;\theta)}\frac{1}{\sqrt{T}}\left\|\Psi_{n,i}(\cdot;\hat{C}_{n,i\bdot}(\theta),\hat{\alpha}_{n,i}(\theta),\theta)-\Psi_{n,i}(\cdot;C_{n,i\bdot}^*(\theta),\alpha_{n,i}^*(\theta),\theta)\right\|_T \\
\leq&18\cdot\frac{4\omega_i^2|S_i(C_n^*)|+a_n^2}{\phi_{i,\textrm{comp}}^2(L;\theta)}+\frac{1}{T}\left\|\Psi_{n,i}(\cdot;\hat{C}_{n,i\bdot}(\theta),\hat{\alpha}_{n,i}(\theta),\theta)-\lambda_{n,i}\right\|_T^2 \\
&\qquad\qquad\qquad\qquad\qquad\qquad+\frac{8}{T}\left\|\Psi_{n,i}(\cdot;C^*_{n,i\bdot}(\theta),\alpha^*_{n,i}(\theta),\theta)-\lambda_{n,i}\right\|_T^2,
\end{align*}
where we used in the last inequality that for any numbers $x,y,z\in\IR$
$$x(y+z)\leq\frac{9}{32}x^2+y^2+8z^2.$$
Thus, we obtain from \eqref{eq:newbi_case2} that
\begin{align}
&\frac{2}{T}\left\|\Psi_{n,i}(\cdot;\hat{C}_{n,i\bdot}(\theta),\hat{\alpha}_{n,i}(\theta),\theta)-\lambda_{n,i}\right\|_T^2+2\omega_i\|\hat{C}_{n,i\bdot}(\theta)-C^*_{n,i\bdot}(\theta)\|_1 \nonumber \\
&\qquad\qquad\qquad\qquad\qquad\qquad\qquad\qquad\qquad+2a_n|\hat{\alpha}_n(\theta)_{n,i}-\alpha^*_{n,i}(\theta)| \nonumber \\
\leq&14\omega_i\left\|\hat{C}_{n,iS_i(C_n^*)}(\theta)-C^*_{n,iS_i(C_n^*)}(\theta)\right\|_1+6a_n\left|\hat{\alpha}_{n,i}(\theta)-\alpha^*_{n,i}\right| \nonumber \\
&\qquad\qquad+2\omega_i\|\hat{C}_{n,iS_i(C_n^*)}(\theta)-C^*_{n,iS_i(C_n^*}(\theta)\|_1+2a_n|\hat{\alpha}_n(\theta)_{n,i}-\alpha^*_{n,i}(\theta)|  \nonumber \\
=&16\omega_i\left\|\hat{C}_{n,iS_i(C_n^*)}(\theta)-C^*_{n,iS_i(C_n^*)}(\theta)\right\|_1+8a_n\left|\hat{\alpha}_{n,i}(\theta)-\alpha^*_{n,i}\right| \nonumber \\
\leq&18\cdot\frac{4\omega_i^2|S_i(C_n^*)|+a_n^2}{\phi_{i,\textrm{comp}}^2(L;\theta)}+\frac{1}{T}\left\|\Psi_{n,i}(\cdot;\hat{C}_{n,i\bdot}(\theta),\hat{\alpha}_{n,i}(\theta),\theta)-\lambda_{n,i}\right\|_T^2 \nonumber \\
&\qquad\qquad\qquad\qquad\qquad\qquad+\frac{8}{T}\left\|\Psi_{n,i}(\cdot;C^*_{n,i}(\theta),\alpha_{n,i}^*(\theta),\theta)-\lambda_{n,i}\right\|_T^2, \nonumber
\end{align}
which in turn implies
\begin{align*}
&\frac{1}{T}\left\|\Psi_{n,i}(\cdot;\hat{C}_{n,i\bdot}(\theta),\hat{\alpha}_{n,i}(\theta),\theta)-\lambda_{n,i}\right\|_T^2+2\omega_i\|\hat{C}_{n,i\bdot}(\theta)-C^*_{n,i\bdot}(\theta)\|_1 \\
&\qquad\qquad\qquad\qquad\qquad\qquad\qquad\qquad\qquad+2a_n|\hat{\alpha}_n(\theta)_{n,i}-\alpha^*_{n,i}(\theta)| \\
\leq&18\cdot\frac{4\omega_i^2|S_i(C_n^*)|+a_n^2}{\phi_{i,\textrm{comp}}^2(L;\theta)}+\frac{8}{T}\left\|\Psi_{n,i}(\cdot;C^*_{n,i\bdot}(\theta),\alpha_{n,i}^*(\theta),\theta)-\lambda_{n,i}\right\|_T^2.
\end{align*}
\end{proof}

\begin{proof}[Proof of Lemma \ref{lem:diff}]
We begin the proof with two observations. For the first observation, we introduce the following notation
\begin{align*}
v_i(t;\theta):=&\begin{pmatrix}
\nu_0(X_{n,i}(t);\beta) & \int_0^{t-}g(t-r;\gamma)dN_{n,1}(r) & \dots & \int_0^{t-}g(t-r;\gamma)dN_{n,n}(r)
\end{pmatrix}^T.
\end{align*}
In the above notation, we have
$$\Psi_{n,i}(t;c,a,\theta)=v_i(t;\theta)^T\begin{pmatrix}
a \\ c
\end{pmatrix},\quad\lambda_{n,i}(t)=v_i(t;\theta_n^*)^T\begin{pmatrix}
\alpha_{n,i}^* \\ \left(C_{n,i\bdot}^*\right)^T
\end{pmatrix}.$$
Using the notation above, we may rewrite the criterion function from Lemma \ref{lem:od} as
\begin{align}
&\left\|\Psi_{n,i}(\cdot;c,a,\theta)-\lambda_{n,i}\right\|_T^2 \nonumber \\
=&\int_0^T\left(v_i(t;\theta)^T\begin{pmatrix}
a \\ c
\end{pmatrix}-v_i(t;\theta_n^*)^T\begin{pmatrix}
\alpha_{n,i}^* \\ \left(C_{n,i\bdot}^*\right)^T
\end{pmatrix}\right)^2dt \nonumber \\
=&\begin{pmatrix}
a \\ c
\end{pmatrix}^T\int_0^Tv_i(t;\theta)v_i(t;\theta)^Tdt\begin{pmatrix}
a \\ c
\end{pmatrix}-2\begin{pmatrix}
a \\ c
\end{pmatrix}^T\int_0^Tv_i(t;\theta)v_i(t;\theta_n^*)^Tdt\begin{pmatrix}
\alpha_{n,i}^* \\ \left(C_{n,i\bdot}^*\right)^T
\end{pmatrix} \nonumber \\
&\qquad+\begin{pmatrix}
\alpha_{n,i}^* \\ \left(C_{n,i\bdot}^*\right)^T
\end{pmatrix}^T\int_0^Tv_i(t;\theta_n^*)v_i(t;\theta_n^*)^Tdt\begin{pmatrix}
\alpha_{n,i}^* \\ \left(C_{n,i\bdot}^*\right)^T
\end{pmatrix}. \nonumber
\end{align}
This, in turn, implies
\begin{align*}
&\left|\frac{1}{T}\left\|\Psi_{n,i}(\cdot;c,a,\theta)-\lambda_{n,i}\right\|_T^2-\frac{1}{T}\left\|\Psi_{n,i}(\cdot;c,a,\theta_n^*)-\lambda_{n,i}\right\|_T^2\right| \\
\leq&\Bigg|\begin{pmatrix}
a \\ c
\end{pmatrix}^T\frac{1}{T}\int_0^Tv_i(t;\theta)v_i(t;\theta)^Tdt\begin{pmatrix}
a \\ c
\end{pmatrix}-2\begin{pmatrix}
a \\ c
\end{pmatrix}^T\frac{1}{T}\int_0^Tv_i(t;\theta)v_i(t;\theta_n^*)^Tdt\begin{pmatrix}
\alpha_{n,i}^* \\ \left(C_{n,i\bdot}^*\right)^T
\end{pmatrix} \nonumber \\
&+\begin{pmatrix}
\alpha_{n,i}^* \\ \left(C_{n,i\bdot}^*\right)^T
\end{pmatrix}^T\frac{1}{T}\int_0^Tv_i(t;\theta_n^*)v_i(t;\theta_n^*)^Tdt\begin{pmatrix}
\alpha_{n,i}^* \\ \left(C_{n,i\bdot}^*\right)^T
\end{pmatrix} \\
&-\begin{pmatrix}
a \\ c
\end{pmatrix}^T\frac{1}{T}\int_0^Tv_i(t;\theta_n^*)v_i(t;\theta_n^*)^Tdt\begin{pmatrix}
a \\ c
\end{pmatrix}+2\begin{pmatrix}
a \\ c
\end{pmatrix}^T\frac{1}{T}\int_0^Tv_i(t;\theta_n^*)v_i(t;\theta_n^*)^Tdt\begin{pmatrix}
\alpha_{n,i}^* \\ \left(C_{n,i\bdot}^*\right)^T
\end{pmatrix} \nonumber \\
&-\begin{pmatrix}
\alpha_{n,i}^* \\ \left(C_{n,i\bdot}^*\right)^T
\end{pmatrix}^T\frac{1}{T}\int_0^Tv_i(t;\theta_n^*)v_i(t;\theta_n^*)^Tdt\begin{pmatrix}
\alpha_{n,i}^* \\ \left(C_{n,i\bdot}^*\right)^T
\end{pmatrix}\Bigg| \\
=&\Bigg|\begin{pmatrix}
a \\ c
\end{pmatrix}^T\frac{1}{T}\int_0^T\left(v_i(t;\theta)-v_i(t;\theta_n^*)\right)\left(v_i(t;\theta)-v_i(t;\theta_n^*)\right)^Tdt\begin{pmatrix}
a \\ c
\end{pmatrix} \\
&+2\begin{pmatrix}
a \\ c
\end{pmatrix}^T\frac{1}{T}\int_0^Tv_i(t;\theta)v_i(t;\theta_n^*)^Tdt\begin{pmatrix}
a-\alpha_{n,i}^* \\ c-\left(C_{n,i\bdot}^*\right)^T
\end{pmatrix} \nonumber \\
&-\begin{pmatrix}
a \\ c
\end{pmatrix}^T\frac{2}{T}\int_0^Tv_i(t;\theta_n^*)v_i(t;\theta_n^*)^Tdt\begin{pmatrix}
a \\ c
\end{pmatrix}+\begin{pmatrix}
a \\ c
\end{pmatrix}^T\frac{2}{T}\int_0^Tv_i(t;\theta_n^*)v_i(t;\theta_n^*)^Tdt\begin{pmatrix}
\alpha_{n,i}^* \\ \left(C_{n,i\bdot}^*\right)^T
\end{pmatrix}\Bigg| \\
=&\Bigg|\begin{pmatrix}
a \\ c
\end{pmatrix}^T\frac{1}{T}\int_0^T\left(v_i(t;\theta)-v_i(t;\theta_n^*)\right)\left(v_i(t;\theta)-v_i(t;\theta_n^*)\right)^Tdt\begin{pmatrix}
a \\ c
\end{pmatrix} \\
&\qquad+2\begin{pmatrix}
a \\ c
\end{pmatrix}^T\frac{1}{T}\int_0^T\left(v_i(t;\theta)-v_i(t;\theta_n^*)\right)v_i(t;\theta_n^*)^Tdt\begin{pmatrix}
a-\alpha_{n,i}^* \\ c-\left(C_{n,i\bdot}^*\right)^T
\end{pmatrix}\Bigg|
\end{align*}
Let $\mathcal{N}_0>0$ be such that $\IP(\Omega_{\mathcal{N}})\to1$ for $\mathcal{N}=6\mathcal{N}_0\log(nT)$ according to Lemma \ref{lem:omega_lemma}. Note that, on  $\Omega_{\mathcal{N}}$, we have that every entry of the difference $v_i(t;\theta)-v_i(t;\theta_n^*)$ can be uniformly (in $t$ and $i$) be bounded by $\log(nT)\cdot\|\theta-\theta_n^*\|_2$ times a constant. Let us assume for the remainder of the proof that we are on the event $\Omega_{\mathcal{N}}$. Let now $\theta\in\Theta$ be arbitrary, and let $(c,a)$ be such that $c$ is $i$-th row of a matrix $C\in[0,\infty)^{n\times n}$, and $a$ is the $i$-th entry of a vector $\alpha\in(0,\infty)^n$ with $(C,\alpha)\in\mathcal{H}_n(\theta)\cup\mathcal{H}_n(\theta_n^*)$ and $c_{S_i(C_n^*)^c}=0$. For such $(a,c)$ we hence obtain on $\Omega_{\mathcal{N}}$ for suitable constants $\mathcal{C}_1,\mathcal{C}_2>0$
\begin{align}
&\left|\frac{1}{T}\left\|\Psi_{n,i}(\cdot;c,a,\theta)-\lambda_{n,i}\right\|_T^2-\frac{1}{T}\left\|\Psi_{n,i}(\cdot;c,a,\theta_n^*)-\lambda_{n,i}\right\|_T^2\right| \nonumber \\
\leq&(\mathcal{C}_1+\|c\|_1)^2\log^2(nT)\cdot\|\theta-\theta_n^*\|_2^2+(\mathcal{C}_2+\|c\|_1)\log^2(nT)\cdot\|\theta-\theta_n^*\|_2\cdot\left\|\begin{pmatrix}
a-\alpha_{n,i}^* \\ c-\left(C_{n,i\bdot}^*\right)^T
\end{pmatrix}\right\|_1. \label{eq:FO}
\end{align}
This was the first observation. For the second observation, recall that $C_n^*=C_n^*(\theta_n^*)$ and $\alpha_n^*=\alpha_n^*(\theta_n^*)$. For any $a>0$ and $c\in[0,\infty)^n$ with $c_{S_i(C_n^*)^c}=0$, we have that $(c,a)\in\tilde{\mathcal{R}}_n(i;\theta_n^*,L)$ (cf. Definition \ref{def:rcc}) and, hence,
\begin{align}
\frac{1}{T}\left\|\Psi_{n,i}(\cdot;c,a,\theta_n^*)-\lambda_{n,i}\right\|_T^2=&\frac{1}{T}\left\|\Psi_{n,i}(\cdot;c,a,\theta_n^*)-\Psi_{n,i}(\cdot;C_{n,i\bdot}^*(\theta_n^*),\alpha_{n,i}^*(\theta_n^*),\theta_n^*)\right\|_T^2 \nonumber \\
\geq&\phi_{i,\textrm{comp}}(L;\theta_n^*)^2\left(\left\|c_{S_i(C_n^*)}-C_{n,iS_i(C_n^*)}^*(\theta_n^*)\right\|_2^2+\left|a-\alpha_{n,i}^*(\theta_n^*)\right|^2\right) \nonumber \\
\geq&\phi_{i,\textrm{comp}}(L;\theta_n^*)^2\frac{1}{1+|S_i(C_n^*)|}\left\|\begin{pmatrix}
a-\alpha_{n,i}^*(\theta_n^*) \\ c-C_{n,i\bdot}^*(\theta_n^*)
\end{pmatrix}\right\|_1^2. \label{eq:SO}
\end{align}
Now, using these two observations, we can make the following argument. Firstly, we note that \eqref{eq:FO} provides an upper bound on the value of the criterion function for $\overline{\theta}_n$ at the minimizer because \eqref{eq:FO} shows that the criterion function for $\overline{\theta}_n$ lies close to the criterion function for $\theta_n^*$. More precisely, by definition of $(\alpha_n^*(\theta),C_n^*(\theta))$ and recalling that $\alpha_n^*=\alpha_n^*(\theta_n^*)$ and $C_n^*=C_n^*(\theta_n^*)$,
\begin{align*}
&\frac{1}{T}\left\|\Psi_{n,i}(\cdot;C_{n,i\bdot}^*(\overline{\theta}_n),\alpha_{n,i}^*(\overline{\theta}_n),\overline{\theta}_n)-\lambda_{n,i}\right\|_T^2 \\
\leq&\frac{1}{T}\left\|\Psi_{n,i}(\cdot;C_{n,i\bdot}^*(\theta_n^*),\alpha_{n,i}^*(\theta_n^*),\overline{\theta}_n)-\lambda_{n,i}\right\|_T^2 \\
\leq&\frac{1}{T}\left\|\Psi_{n,i}(\cdot;C_{n,i\bdot}^*(\theta_n^*),\alpha_{n,i}^*(\theta_n^*),\theta_n^*)-\lambda_{n,i}\right\|_T^2+(\mathcal{C}_1+\|C_{n,i\bdot}^*\|_1)^2\log^2(nT)\cdot\left\|\overline{\theta}_n-\theta_n^*\right\|_2^2 \\
=&(\mathcal{C}_1+\|C_{n,i\bdot}^*\|_1)^2\log^2(nT)\cdot\left\|\overline{\theta}_n-\theta_n^*\right\|_2^2
\end{align*}
Moreover, again applying \eqref{eq:FO},
\begin{align*}
&\frac{1}{T}\left\|\Psi_{n,i}(\cdot;C_{n,i\bdot}^*(\overline{\theta}_n),\alpha_{n,i}^*(\overline{\theta}_n),\overline{\theta}_n)-\lambda_{n,i}\right\|_T^2 \\
\geq&\frac{1}{T}\left\|\Psi_{n,i}(\cdot;C_{n,i\bdot}^*(\overline{\theta}_n),\alpha_{n,i}^*(\overline{\theta}_n),\theta_n^*)-\lambda_{n,i}\right\|_T^2-(\mathcal{C}_1+\|C_{n,i\bdot}^*(\overline{\theta}_n)\|_1)^2\log^2(nT)\cdot\|\overline{\theta}_n-\theta_n^*\|_2^2 \\
&\qquad-(\mathcal{C}_2+\|C_{n,i\bdot}^*(\overline{\theta}_n)\|_1)\log^2(nT)\cdot\|\overline{\theta}_n-\theta_n^*\|_2\cdot\left\|\begin{pmatrix}
\alpha_{n,i}^*(\overline{\theta}_n)-\alpha_{n,i}^* \\ C_{n,i\bdot}^*(\overline{\theta_n})-\left(C_{n,i\bdot}^*\right)^T
\end{pmatrix}\right\|_1.
\end{align*}
Putting both of the previous displays together, we obtain
\begin{align*}
&\frac{1}{T}\left\|\Psi_{n,i}(\cdot;C_{n,i\bdot}^*(\overline{\theta}_n),\alpha_{n,i}^*(\overline{\theta}_n),\theta_n^*)-\lambda_{n,i}\right\|_T^2 \\
\leq&\log^2(nT)\cdot\left\|\overline{\theta}_n-\theta_n^*\right\|_2\Bigg(\left((\mathcal{C}_1+\|C_{n,i\bdot}^*\|_1)^2+(\mathcal{C}_1+\|C_{n,i\bdot}^*(\overline{\theta}_n)\|_1)^2\right)\|\overline{\theta}_n-\theta_n^*\|_2 \\
&\qquad+(\mathcal{C}_2+\|C_{n,i\bdot}^*(\overline{\theta}_n)\|_1)\cdot\left\|\begin{pmatrix}
\alpha_{n,i}^*(\overline{\theta}_n)-\alpha_{n,i}^* \\ C_{n,i\bdot}^*(\overline{\theta_n})-\left(C_{n,i\bdot}^*\right)^T
\end{pmatrix}\right\|_1\Bigg).
\end{align*}
Using \eqref{eq:SO}, we conclude
\begin{align}
&\frac{\phi_{i,\textrm{comp}}(L;\theta_n^*)^2}{1+|S_i(C_n^*)|}\left\|\begin{pmatrix}
\alpha_{n,i}^*(\overline{\theta}_n)-\alpha_{n,i}^*(\theta_n^*) \\ C_{n,i\bdot}^*(\overline{\theta}_n)-C_{n,i\bdot}^*(\theta_n^*)
\end{pmatrix}\right\|_1^2 \nonumber \\
\leq&\log^2(nT)\cdot\left\|\overline{\theta}_n-\theta_n^*\right\|_2\Bigg(\left((\mathcal{C}_1+\|C_{n,i\bdot}^*\|_1)^2+(\mathcal{C}_1+\|C_{n,i\bdot}^*(\overline{\theta}_n)\|_1)^2\right)\|\overline{\theta}_n-\theta_n^*\|_2 \nonumber \\
&\qquad+(\mathcal{C}_2+\|C_{n,i\bdot}^*(\overline{\theta}_n)\|_1)\cdot\left\|\begin{pmatrix}
\alpha_{n,i}^*(\overline{\theta}_n)-\alpha_{n,i}^* \\ C_{n,i\bdot}^*(\overline{\theta_n})-\left(C_{n,i\bdot}^*\right)^T
\end{pmatrix}\right\|_1\Bigg). \label{eq:SFO}
\end{align}
Now, if
$$\left\|\overline{\theta}_n-\theta_n^*\right\|_2\leq\left\|\begin{pmatrix}
\alpha_{n,i}^*(\overline{\theta}_n)-\alpha_{n,i}^* \\ C_{n,i\bdot}^*(\overline{\theta}_n)-\left(C_{n,i\bdot}^*\right)^T
\end{pmatrix}\right\|_1,$$
we obtain from \eqref{eq:SFO} that
\begin{align*}
&\left\|\begin{pmatrix}
\alpha_{n,i}^*(\overline{\theta}_n)-\alpha_{n,i}^*(\theta_n^*) \\ C_{n,i\bdot}^*(\overline{\theta}_n)-C_{n,i\bdot}^*(\theta_n^*)
\end{pmatrix}\right\|_1\leq\frac{1+|S_i(C_n^*)|}{\phi_{i,\textrm{comp}}(L;\theta_n^*)^2}\log^2(nT)\cdot\left\|\overline{\theta}_n-\theta_n^*\right\|_2 \\
&\qquad\qquad\qquad\qquad\times\left((\mathcal{C}_1+\|C_{n,i\bdot}^*\|_1)^2+(\mathcal{C}_1+\|C_{n,i\bdot}^*(\overline{\theta}_n)\|_1)^2+\mathcal{C}_2+\|C_{n,i\bdot}^*(\overline{\theta}_n)\|_1\right).
\end{align*}
The other case,
$$\left\|\overline{\theta}_n-\theta_n^*\right\|_2\geq\left\|\begin{pmatrix}
\alpha_{n,i}^*(\overline{\theta}_n)-\alpha_{n,i}^* \\ C_{n,i\bdot}^*(\overline{\theta_n})-\left(C_{n,i\bdot}^*\right)^T
\end{pmatrix}\right\|_1,$$
is stronger than what we wanted to prove. Therefore, the proof is complete because $\IP(\Omega_{\mathcal{N}})\to1$.
\end{proof}

\section{Presentation of additional simulation results}
\label{sup:additional_simulation}
In this section, we collect further results of the simulation study presented in Section \ref{subsec:simulations}.

\begin{figure}
    \centering
    \includegraphics[width=\textwidth]{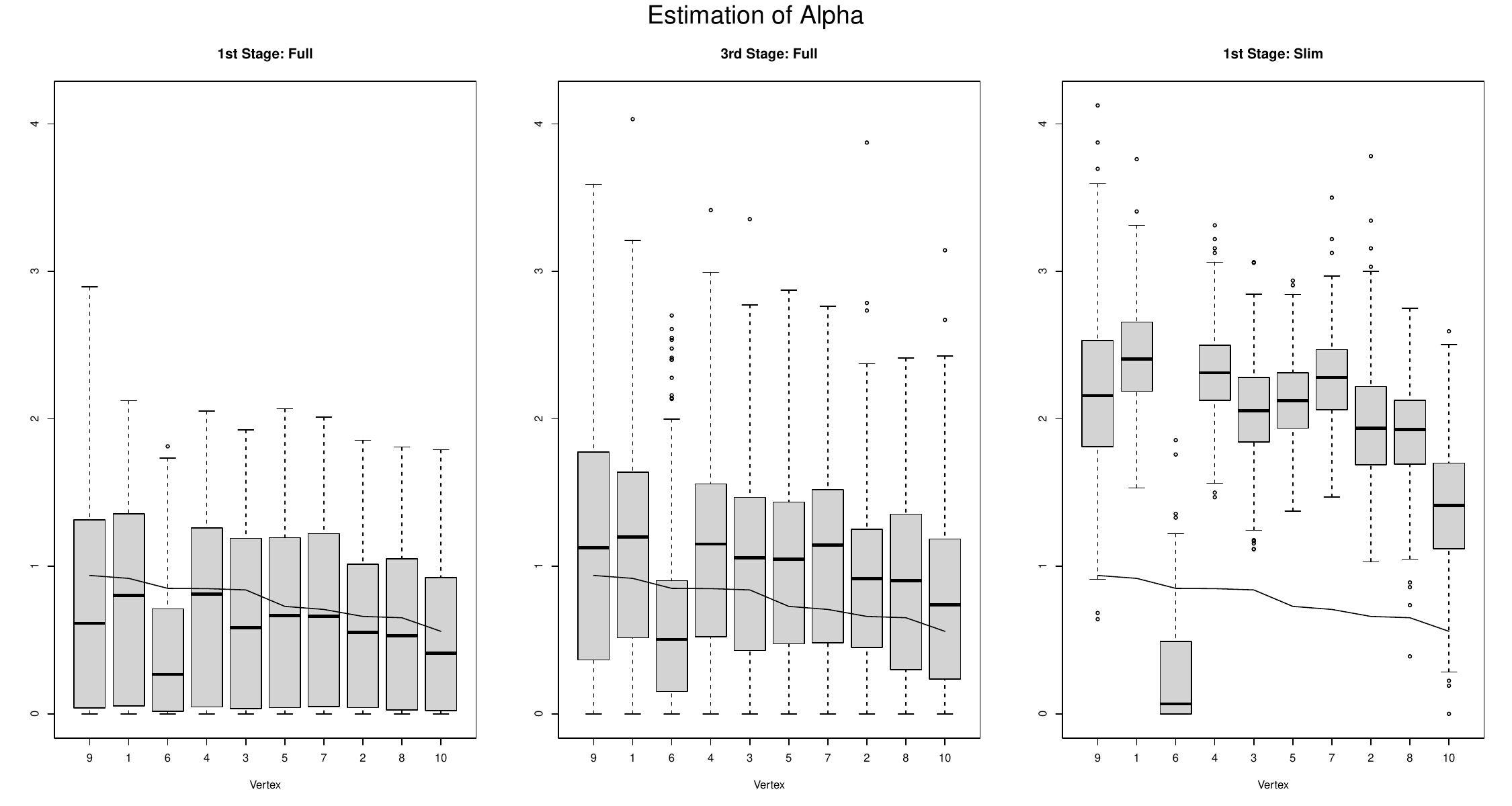}
    \caption{Box plots of the estimated values for $\alpha_{n,i}$ in all scenarios. The solid lines show the true values of $\alpha_{n,i}$ in decreasing order.}
    \label{fig:alpha}
\end{figure}

\begin{figure}
    \centering
    \includegraphics[width=\textwidth]{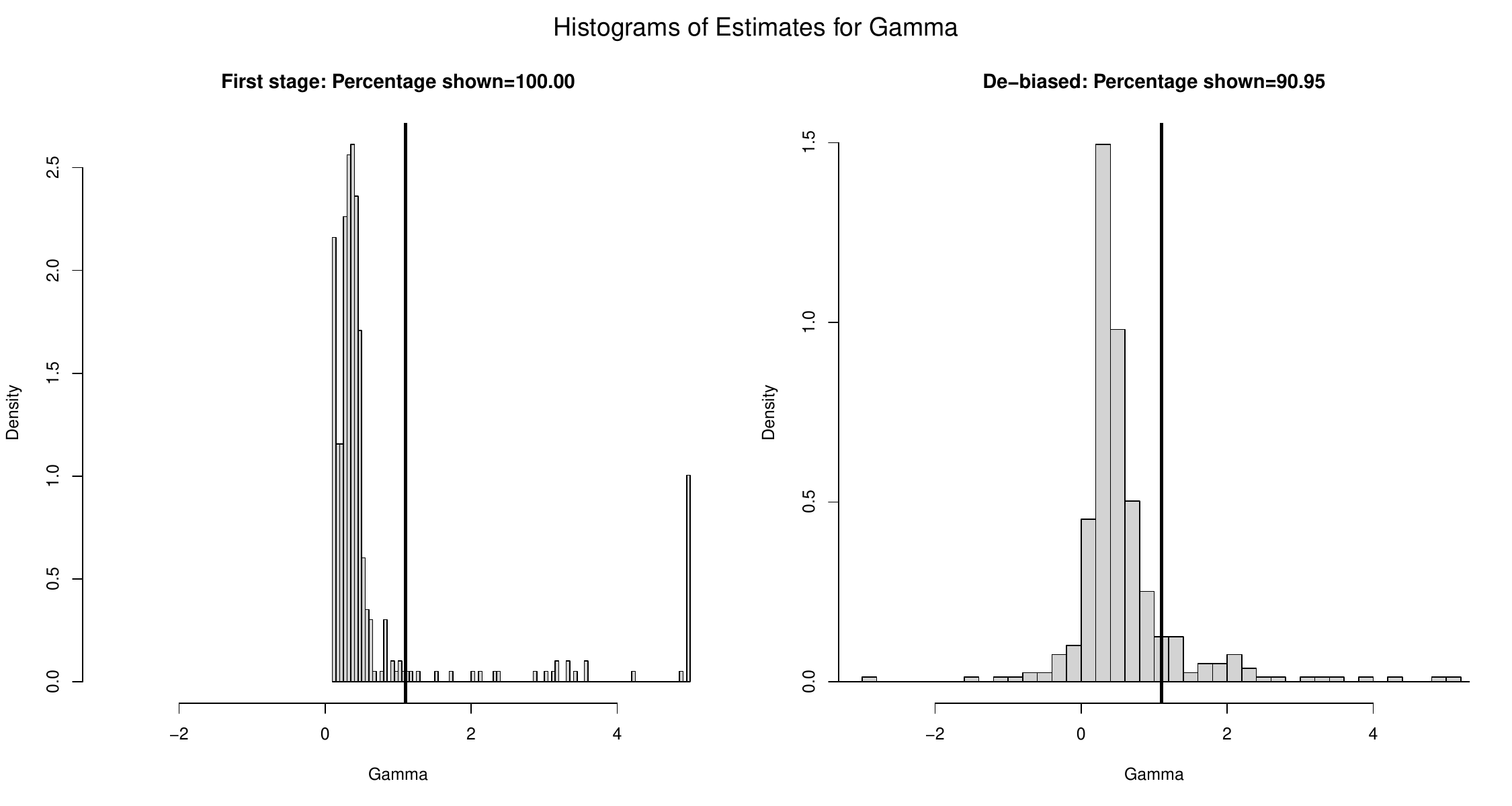}
    \caption{Histograms of estimators for $\gamma_0$. The vertical line shows the true value.}
    \label{fig:gamma}
\end{figure}

\begin{figure}
    \centering
    \includegraphics[width=\textwidth]{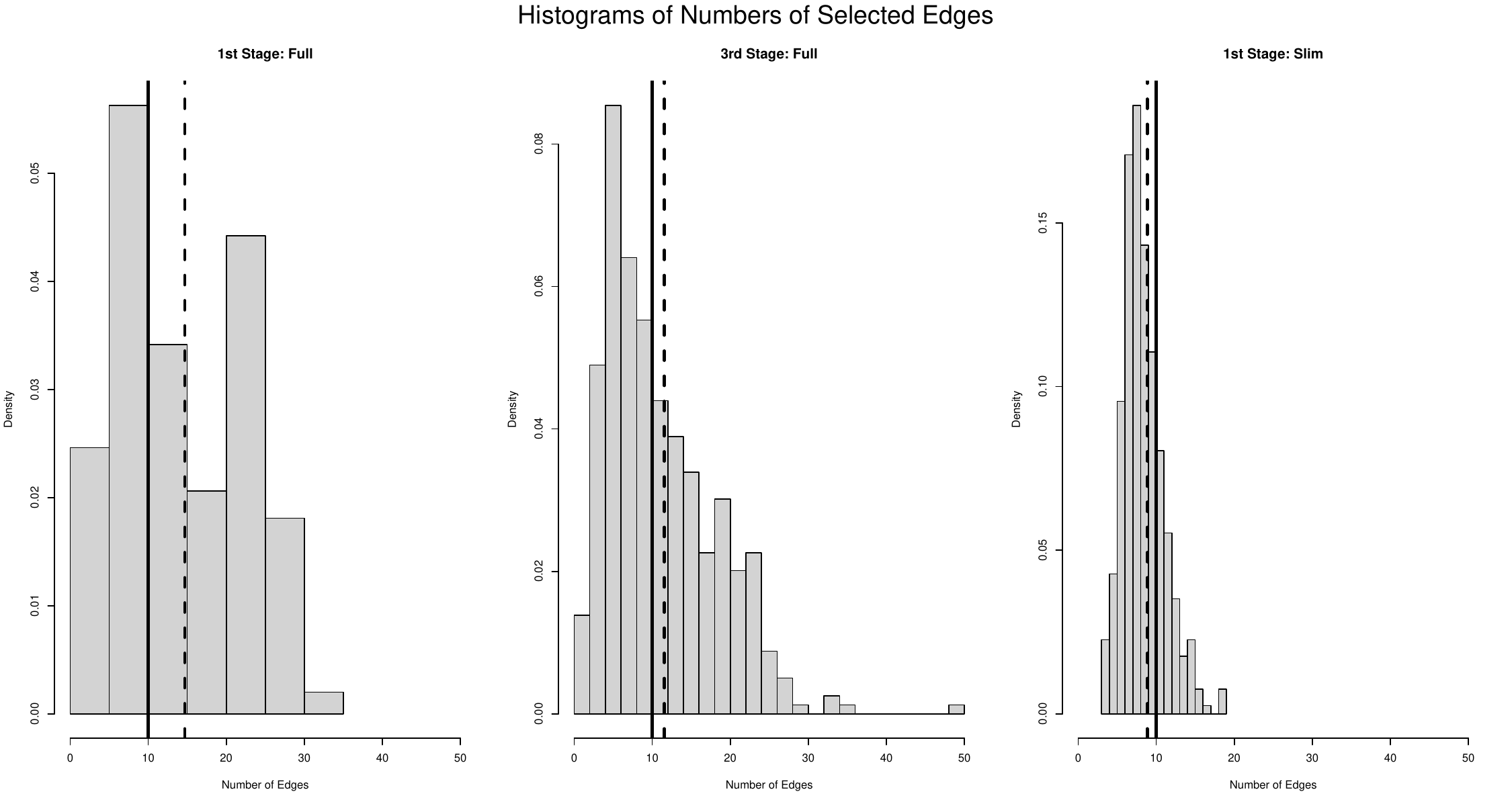}
    \caption{Histograms of the estimated number of edges. Solid lines show the true number of edges (10), and the dashed lines show the average of the estimated number of edges in each set-up.}
    \label{fig:sparsity}
\end{figure}

\begin{figure}
    \centering
    \includegraphics[width=\textwidth]{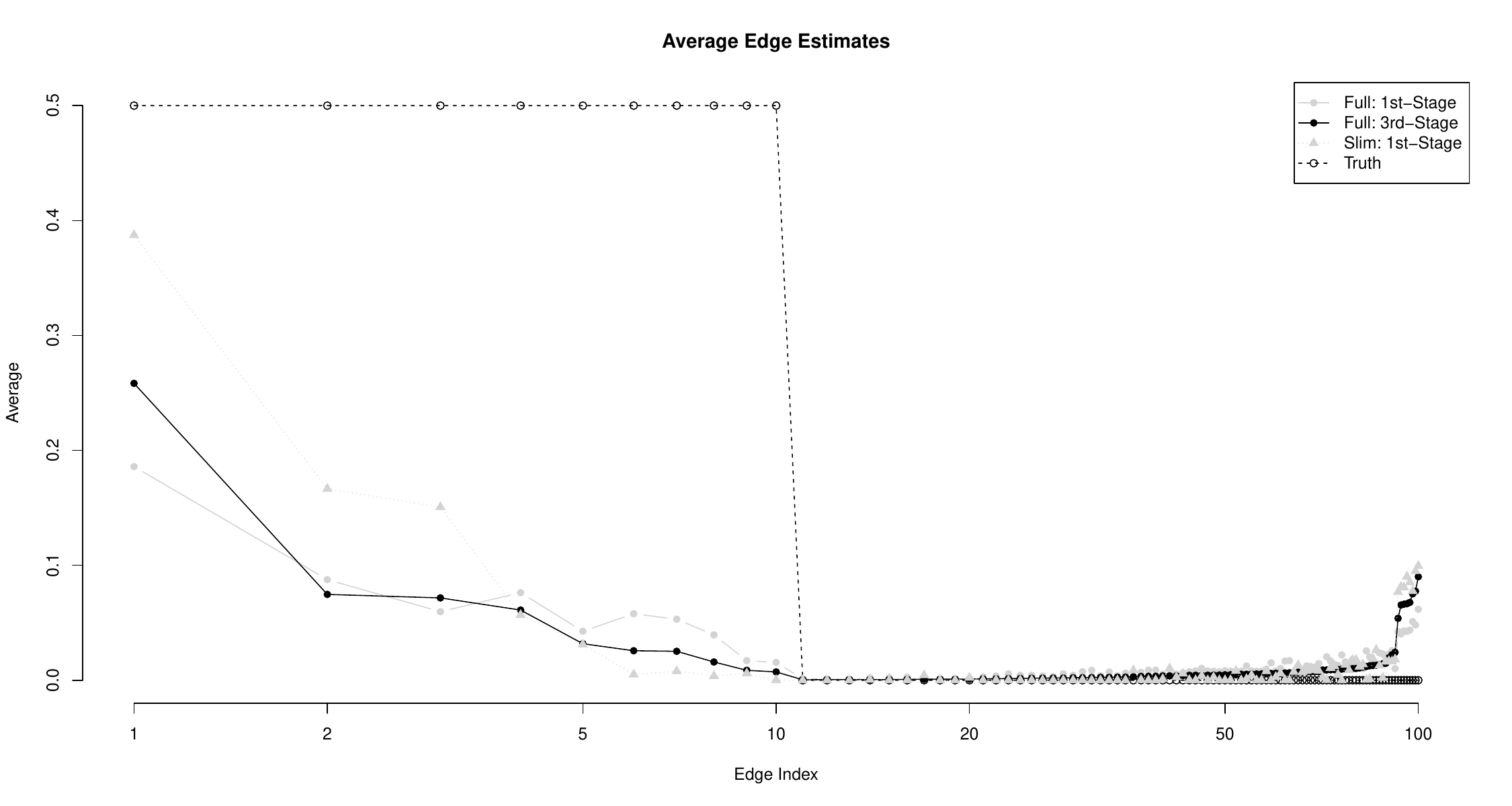}
    \caption{Average estimates of all entries of the matrix $C_n$ in the respective settings. The dotted line shows the true values.}
    \label{fig:avgC}
\end{figure}

\begin{figure}
    \centering
    \includegraphics[width=\textwidth]{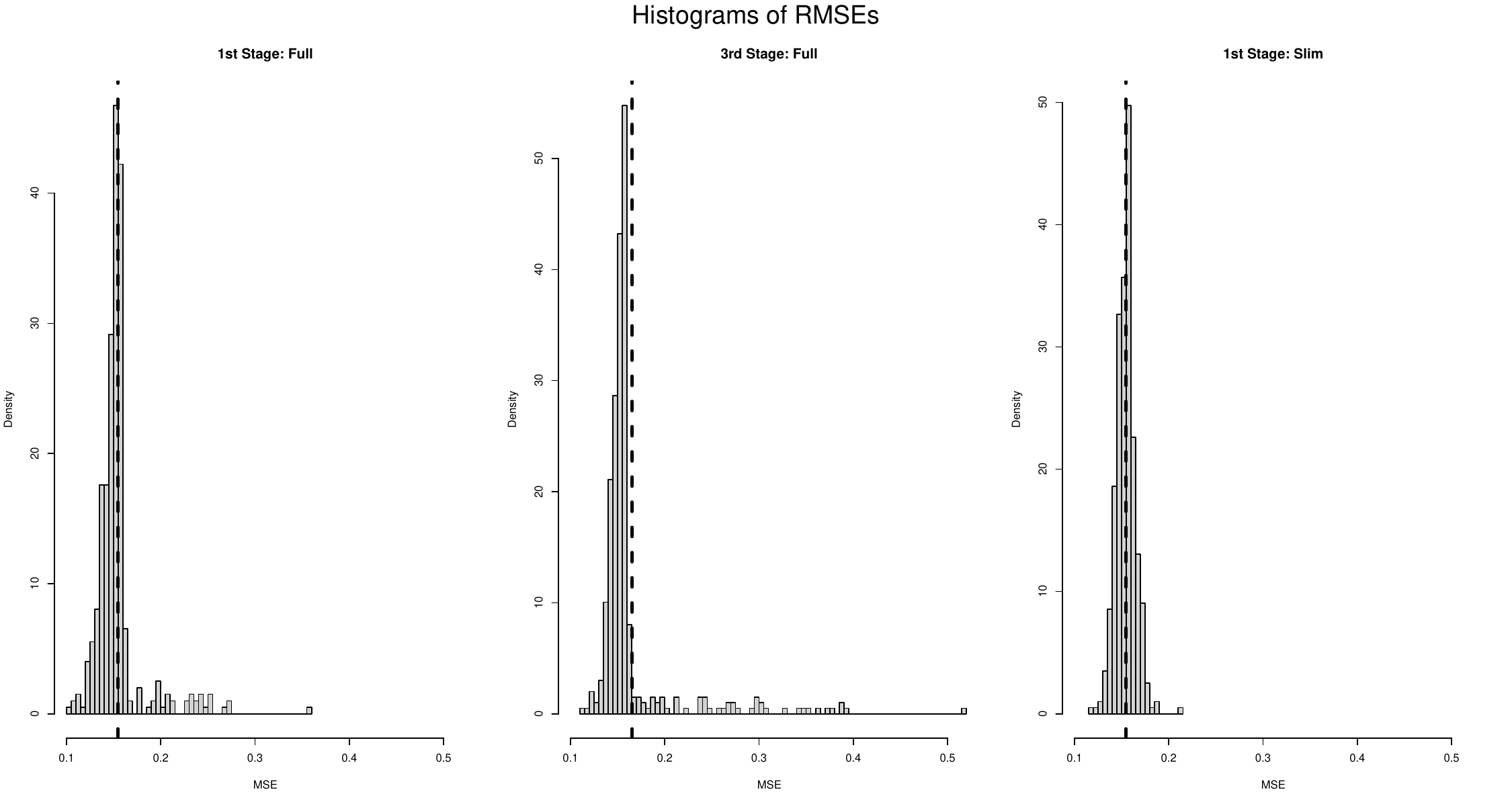}
    \caption{Histograms of mean squared errors for the entire network. The dashed lines show the averages in the respective situation.}
    \label{fig:Cmse}
\end{figure}

\newpage
\bibliography{mybib}{}
\bibliographystyle{plainnat}

\end{document}